\tikzstyle{unexpected} = [rectangle, rounded corners, minimum width=3cm, minimum height=1cm,text centered, draw=black, fill=red!30]
\tikzstyle{io} = [rectangle, rounded corners, minimum width=3cm, minimum height=1cm, text centered, draw=black, fill=blue!30]
\tikzstyle{process} = [rectangle, rounded corners, minimum width=3cm, minimum height=1cm, text centered, draw=black, fill=orange!30]
\tikzstyle{decision} = [rectangle, rounded corners, minimum width=3cm, minimum height=1cm, text centered, draw=black, fill=green!30]
\tikzstyle{arrow} = [thick,->,>=stealth]
\DeclareMathOperator{\sysmin}{sysmin}
\DeclareMathOperator{\sys}{sys}
\DeclareMathOperator{\len}{len}
\newcommand{\calA}{\mathcal{A}}
\newcommand{\D}{\mathcal{D}}
\newcommand{\Z}{\mathbb{Z}}
\newcommand{\Q}{\mathbb{Q}}
\newcommand{\Salt}{\mathcal{S}_{\rm alt}}
\newcommand{\spin}{\ifmmode{\rm Spin}\else{${\rm spin}$\ }\fi}
\newcommand{\spinc}{\ifmmode{{\rm Spin}^c}\else{${\rm spin}^c$}\fi}
\newcommand{\spinct}{\mathfrak t}
\newcommand{\spincs}{\mathfrak s}
\newcommand{\nbhd}{\mathcal{N}}
\newcommand{\e}{\bm{e}}
\newcommand{\bv}{\bm{v}}
\newcommand{\bu}{\bm{u}}
\newcommand{\bsigma}{\bm{\sigma}}
\newcommand{\norm}[1]{\left\lVert #1 \right\rVert^2}
\newcommand{\Vast}{\bBigg@{2.5}} 
\newtheorem*{rep@theorem}{\rep@title}
\newcommand{\newreptheorem}[2]{%
\newenvironment{rep#1}[1]{%
 \def\rep@title{#2 \ref{##1}}%
 \begin{rep@theorem}}%
 {\end{rep@theorem}}}
\newtheoremstyle{thm}{}{}{\itshape}{}{\bfseries}{}{ }{} 
\newtheoremstyle{definition}{}{}{}{}{\bfseries}{}{ }{} 
\theoremstyle{thm}
\newtheorem{Theorem}{Theorem}[section]
\newtheorem{thm}[Theorem]{Theorem}
\newtheorem{lem}[Theorem]{Lemma}
\newtheorem{prop}[Theorem]{Proposition}
\newtheorem{cor}[Theorem]{Corollary}
\newtheorem*{Theorem-ohne}{Theorem}
\newtheorem{con}[Theorem]{Conjecture}
\newtheorem{ques}[Theorem]{Question}
\newtheorem{alg}[Theorem]{Algorithm}
\newtheorem{strat}[Theorem]{Strategy}
\theoremstyle{definition}
\newtheorem{defi}[Theorem]{Definition}
\newtheorem{rem}[Theorem]{Remark}
\newtheorem{ex}[Theorem]{Example}
\newtheorem{case}{Case}
\newtheorem*{claim}{Claim}
\newcommand{\ra}[1]{\renewcommand{\arraystretch}{#1}} 
\definecolor{amaranth}{rgb}{0.9, 0.17, 0.31} 
\definecolor{carrotorange}{rgb}{0.93, 0.57, 0.13} 
\definecolor{citrine}{rgb}{0.89, 0.82, 0.04} 
\definecolor{dartmouthgreen}{rgb}{0.05, 0.5, 0.06} 
\definecolor{ballblue}{rgb}{0.13, 0.67, 0.8} 
\definecolor{ceruleanblue}{rgb}{0.16, 0.32, 0.75} 
\definecolor{amethyst}{rgb}{0.6, 0.4, 0.8} 
\definecolor{amber}{rgb}{1.0, 0.75, 0.0} 
\definecolor{burlywood}{rgb}{0.87, 0.72, 0.53} 
\numberwithin{equation}{section}
\begin{document}


\title{The search for alternating surgeries} 

\author{Kenneth L. Baker}
\address{Department of Mathematics, University of Miami, Coral Gables, FL 33146, USA}
\email{k.baker@math.miami.edu}

\author{Marc Kegel}
\address{Universidad de Sevilla, Dpto.\ de Álgebra,
Avda.\ Reina Mercedes s/n,
41012 Sevilla}
\email{kegelmarc87@gmail.com}

\author{Duncan McCoy}
\address{D\'{e}partment de Math\'{e}matiques, Universit\'{e} du Qu\'{e}bec \`{a} Montr\'{e}al, Canada}
\email{mc\_coy.duncan@uqam.ca}


\date{\today} 

\begin{abstract}
Surgery on a knot in $S^3$ is said to be an alternating surgery if it yields the double branched cover of an alternating link. The main theoretical contribution is to show that the set of alternating surgery slopes is algorithmically computable and to establish several structural results. Furthermore, we calculate the set of alternating surgery slopes for many examples of knots, including all hyperbolic knots in the SnapPy census. These examples exhibit several interesting phenomena including strongly invertible knots with a unique alternating surgery and asymmetric knots with two alternating surgery slopes.
We also establish upper bounds on the set of alternating surgeries, showing that an alternating surgery slope on a hyperbolic knot satisfies $|p/q|\leq 3g(K)+4$. Notably, this bound applies to lens space surgeries, thereby strengthening the known genus bounds from the conjecture of Goda and Teragaito.
\end{abstract}

\keywords{Dehn surgery, alternating surgeries, L-space knots, SnapPy census knots, changemaker lattices}

\makeatletter
\@namedef{subjclassname@2020}{%
  \textup{2020} Mathematics Subject Classification}
\makeatother

\subjclass[2020]{57R65, 57M12, 57K30  } 

\maketitle
\setcounter{tocdepth}{1}
\tableofcontents

\section{Introduction}
Given a knot $K$ in $S^3$, we say that $p/q\in \Q$ is an \textbf{alternating surgery slope} for $K$ if $p/q$-surgery on $K$ yields the double branched cover of an alternating knot or link. Since the double branched cover of a non-split alternating link is a Heegaard Floer homology L-space, knots with alternating surgeries provide examples of L-space knots. L-space knots and their properties have generated significant interest for a number of years, see for example~\cite{OS05,OS05b,Gr14,Greene2015genusbounds,GreeneLRP}. Since alternating surgeries frequently arise on some of the simplest L-space knots, such as the torus knots, and more generally the Berge knots, it is natural to wonder to what extent alternating surgeries arise on generic L-space knots.

The set of all alternating surgery slopes on $K$ will be denoted $\Salt(K)$. This paper is devoted to understanding $\Salt(K)$ both from a theoretical and a concrete computational perspective. We show that there exists an algorithm to calculate $\Salt(K)$ and develop methods to calculate $\Salt(K)$ in practice. In particular, we calculate $\Salt(K)$ for all hyperbolic knots in the SnapPy census.
\newcommand{\thmcomputable}{There exists an algorithm that, given a knot $K$ in $S^3$, returns $\Salt(K)$.}
\begin{thm}\label{thm:computable}
\thmcomputable
\end{thm}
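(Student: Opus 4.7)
The plan is to split the task into (i) effectively reducing to a finite list of candidate slopes $p/q$, and (ii) deciding membership in $\Salt(K)$ for each candidate. For (i), the starting observation is that the double branched cover of a non-split alternating link is a Heegaard Floer L-space (Ozsv\'ath--Szab\'o), so every non-trivial $p/q\in\Salt(K)$ forces $S^3_{p/q}(K)$ to be an L-space and, in particular, forces $K$ to be an L-space knot. Whether $K$ is an L-space knot is decidable, since knot Floer homology is algorithmically computable. Assuming $K$ is an L-space knot, L-space knot theory gives $|p/q|\geq 2g(K)-1$, and combined with the genus bound $|p/q|\leq 3g(K)+4$ announced in the abstract, the ratio $p/q$ is confined to an effectively bounded interval. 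Torus knots and satellite L-space knots can contribute infinite families of alternating slopes and are recognized algorithmically (via the JSJ-decomposition together with the classification of L-space surgeries on satellites); for such knots $\Salt(K)$ is returned directly from that classification.

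For (ii), given a candidate slope $p/q$, construct the closed triangulated manifold $M := S^3_{p/q}(K)$. If $M=\Sigma(L)$ for a non-split alternating link $L$, then $|H_1(M)|=\det(L)=|p|$. Bankwitz's theorem yields $\det(L)\geq c(L)$ for the crossing number of a reduced alternating diagram of $L$, so $c(L)\leq |p|$ and only finitely many alternating link types need to be tested. These can be enumerated from any tabulation of reduced alternating diagrams with at most $|p|$ crossings. For each such $L$, compute $\Sigma(L)$ (e.g., from a Goeritz matrix presentation or directly as the double cover branched along a checkerboard surface) and apply the algorithmic solution to the 3-manifold homeomorphism problem (Kuperberg, using geometrization) to decide $\Sigma(L)\cong M$; the slope $p/q$ lies in $\Salt(K)$ iff some $L$ on the finite list succeeds.

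The hard part is finiteness in step (i): the inequality $|p/q|\leq 3g(K)+4$ does not, on its own, bound $q$, since one could in principle have slopes with $q\to\infty$ and $p/q$ near $2g(K)-1$. Closing this gap requires extra constraints tailored to alternating surgeries (as opposed to general L-space surgeries), most naturally via the sharpness and changemaker restrictions on the Heegaard Floer correction terms of $\Sigma(L)$ established by Greene, together with the hyperbolic exceptional surgery theorem to dispose of non-hyperbolic fillings with large $|q|$. Once both $|p|$ and $|q|$ are bounded in terms of $g(K)$, the search in step (ii) is finite and the whole procedure terminates, producing $\Salt(K)$ as a finite list (together with the closed-form description in the torus/satellite cases handled separately).
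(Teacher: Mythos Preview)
Your proposal has a genuine gap, and you identify it yourself: you do not bound $q$. The sentence ``Closing this gap requires extra constraints \dots\ most naturally via the sharpness and changemaker restrictions \dots\ together with the hyperbolic exceptional surgery theorem'' is a hope, not an argument. More seriously, the underlying strategy of reducing to a finite candidate list cannot work in general, because $\Salt(K)$ is \emph{not} always finite. You assume that only torus knots and satellite L-space knots can have infinitely many alternating slopes, but this is false: the class $\D$ (knots arising from unknotting crossings in alternating diagrams) contains many hyperbolic knots, for instance the $(-2,3,7)$-pretzel, and for every hyperbolic $K\in\D$ one has $\Salt(K)=[N-1,N]\cap\Q$, an infinite set. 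No amount of bounding $|p/q|$ or invoking exceptional-surgery finiteness will produce a finite list here, because these alternating surgeries are mostly hyperbolic and the slopes genuinely accumulate.

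The paper's proof resolves this by first proving a structure theorem (Theorem~\ref{thm:salt_trichotomy}) that isolates exactly when $\Salt(K)$ is infinite: this happens iff $K\in\D$, and in that case $\Salt(K)$ is one of two explicit intervals determined by the stable coefficients. Membership in $\D$ is decidable (Lemma~\ref{lem:exists_knot_in_D}), so one outputs the interval directly. If $K\notin\D$ but shares its Alexander polynomial with some $K'\in\D$, then any $p/q\in\Salt(K)$ satisfies $S^3_{p/q}(K)\cong S^3_{p/q}(K')$ by Corollary~\ref{cor:Alexpolydetermines}, and the paper invokes a separate effective result (Theorem~\ref{thm:computable_q}, proved via quantitative hyperbolic Dehn filling \`a la Futer--Purcell--Schleimer together with a JSJ analysis) to obtain a computable bound on $q$. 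Only after this structural dichotomy is established does the finite enumeration you describe in your step~(ii) become applicable. Your step~(ii) itself is essentially correct and matches Lemma~\ref{lem:pq_in_salt}, but it is the easy half; the content of the theorem lies in the reduction you left open.
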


There is a natural family of knots $\D$, associated to unknotting crossings in alternating diagrams of knots (see \S\ref{sec:D_definition}), which plays a crucial role in the classification of alternating slopes. In particular, $\D$ contains all knots for which $\Salt(K)$ is infinite. The theoretical computability of $\Salt(K)$ rests on the following structure theorem for $\Salt(K)$. If $K$ is a non-trivial knot admitting alternating surgeries, then there is an associated non-empty tuple of integers, $\underline{\rho}(K)=(\rho_1, \dots, \rho_\ell)$ ordered so that $\rho_1\leq \dots \leq \rho_\ell$ which we call the stable coefficients of $K$ (see \S\ref{sec:changemaker}).

\newcommand{\salttrichotomy}{Let $K$ be a non-trivial knot admitting positive alternating surgeries with stable coefficients $\underline{\rho}(K)=(\rho_1, \dots, \rho_\ell)$. If $N$ is the integer
\begin{equation*}
    N=\sum_{i=1}^\ell \rho_i^2 + \max_{1\leq k \leq \ell} \left(\rho_k - \sum_{i=1}^{k-1} \rho_i \right),
\end{equation*}
then precisely one of the following holds.
\begin{enumerate}
    \item\label{it:KinDcable} If $K\in \D$ is a cable knot or a torus knot with reducible surgery slope $N$, then
    \[\Salt(K)= [N-1, N+1]\cap \Q.\]
    \item\label{it:KinDnoncable} If $K\in \D$ is not a cable knot or a torus knot with reducible surgery slope $N$, then
    \[\Salt(K)= [N-1, N]\cap \Q.\] 
    \item\label{it:KnotinDgeneral} If $K\notin \D$ and $\Delta_K(x)\neq \Delta_{K'}(x)$ for every $K'\in \D$, then 
    \[\Salt(K)\subseteq\{N-1,N\}.\]
    \item\label{it:KnotinDAlexpolyinD} If $K\notin \D$ and there exists a knot $K'\in \D$ with $\Delta_K(x)=\Delta_{K'}(x)$, then $\Salt(K)$ is a finite subset of $\Salt(K')$ and there exists a computable constant $Q(K)$ such that any $p/q\in \Salt(K)$ satisfies $|q|\leq Q(K)$.
\end{enumerate}}

\begin{thm}\label{thm:salt_trichotomy}
\salttrichotomy
\end{thm}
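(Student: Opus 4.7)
The plan is to combine two ingredients: the changemaker lattice obstruction for L-space surgeries, which confines every alternating surgery slope to the window $[N-1,N+1]\cap\Q$; and the Montesinos-trick construction of continuous families of alternating surgeries attached to an unknotting crossing in an alternating diagram, which is the defining feature of the family $\D$.

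First, I would establish the upper bound $\Salt(K)\subseteq[N-1,N+1]\cap\Q$ uniformly in all four cases.  Each $p/q$-surgery on $K$ is the double branched cover of a non-split alternating link, hence an L-space, so $K$ is an L-space knot and the Alexander polynomial $\Delta_K(x)$ determines the stable coefficients $\underline{\rho}(K)=(\rho_1,\dots,\rho_\ell)$.  Greene's changemaker obstruction, in its rational-slope form, requires the $p/q$-surgery lattice of $K$ to embed into a standard diagonal lattice together with a changemaker vector prescribed by $\underline{\rho}(K)$.  The inequality $p/q\leq N+1$ drops out of this embedding directly from the defining formula for $N$; equality is possible only when an extra orthogonal vector corresponding to a cabling annulus is visible in the lattice, which forces $K$ to be a torus or cable knot whose cabling slope equals $N$.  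Otherwise $p/q\leq N$, and a complementary calculation yields the symmetric bound $p/q\geq N-1$.

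For $K\in\D$, the construction defining $\D$ provides a one-parameter family of alternating surgeries: the crossing arc of an unknotting crossing in the alternating diagram has an exterior whose Dehn fillings along rational slopes correspond, via the Montesinos trick, to $p/q$-surgeries on $K$ that remain double branched covers of alternating links.  A direct check that this family fills $[N-1,N]\cap\Q$ yields (2); in the cable or torus case of (1), the Seifert fibred surgery at slope $N+1$ is additionally identified as the double branched cover of an alternating Montesinos link, completing $\Salt(K)=[N-1,N+1]\cap\Q$.  For $K\notin\D$, no such continuous family is directly available, so if $p/q\in\Salt(K)$ is non-integer then the identification $S^3_{p/q}(K)\cong S^3_{p/q}(K')$ with $K'\in\D$ drawn from the corresponding continuous family forces $\Delta_K=\Delta_{K'}$; absent any such $K'$ we are in case (3) and $\Salt(K)\subseteq\{N-1,N\}$, while otherwise we are in case (4) and $\Salt(K)\subseteq\Salt(K')$.

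The main obstacle is establishing finiteness together with the computable bound $Q(K)$ in case (4).  Although $K$ and $K'$ share the Alexander polynomial, they are not isotopic, so any coincidence $S^3_{p/q}(K)\cong S^3_{p/q}(K')$ must fail once $|q|$ exceeds a threshold.  The obstruction can be extracted either from a sufficiently fine invariant of the surgery manifold (such as $\widehat{HFK}(K)$ versus $\widehat{HFK}(K')$, which necessarily differ since $K\not\cong K'$) or from a comparison of the JSJ decompositions of the two knot exteriors, combined with Thurston's hyperbolic Dehn filling theorem.  Turning this qualitative rigidity into a quantitative, computable $Q(K)$ expressed through explicit knot invariants of $K$ and $K'$ is the delicate step; once in hand, algorithmic enumeration of the finitely many $p/q\in\Salt(K')$ with $|q|\leq Q(K)$ and testing each against $\Salt(K)$ completes the proof.
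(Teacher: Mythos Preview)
Your outline has the right high-level architecture, but several of the load-bearing steps are either incorrect or substantially understated.

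\textbf{The upper bound.} The inequality $p/q\leq N+1$ does not ``drop out directly from the defining formula for $N$.'' The lower bound $p/q\geq N-1$ does follow from the mere existence of a changemaker vector with the prescribed stable coefficients, but the upper bound uses the extra hypothesis that the $\lceil p/q\rceil$-changemaker lattice admits an \emph{obtuse superbase} (equivalently, is a Goeritz lattice of an alternating diagram). The paper proves a combinatorial constraint (Proposition~\ref{prop:OSB_m_bound}): if such a lattice has an obtuse superbase then $\sigma_m\geq m-2$, and this is what translates into $\lceil p/q\rceil\leq N+1$. Without invoking the obtuse superbase there is no upper bound at all.

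\textbf{The cable dichotomy.} Your mechanism for separating cases (1) and (2) --- ``an extra orthogonal vector corresponding to a cabling annulus is visible in the lattice'' --- is not how this works, and you have not argued the exclusion direction: why, for $K\in\D$ not a cable, is $(N,N+1]$ \emph{forbidden}? The paper's route is: if some slope lies in $(N,N+1]$, then the $(N+1)$-changemaker lattice is very slack and admits a planar obtuse superbase; a delicate lattice argument (Theorem~\ref{thm:very_slack_technical}) then shows the $N$-changemaker lattice is \emph{decomposable}, hence $S^3_N(K')$ is reducible for the associated $K'\in\D$, hence by Matignon--Sayari $K'$ is a cable or torus knot with cabling slope $N$. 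One then shows $K=K'$ because they share infinitely many surgeries. None of this is visible in your sketch, and in particular you never justify why a non-cable $K\in\D$ cannot have $\Salt(K)$ spill past $N$.

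\textbf{The $\widehat{HFK}$ suggestion in case (4) is wrong.} For L-space knots, $\widehat{HFK}$ is completely determined by the Alexander polynomial (Ozsv\'ath--Szab\'o). In case (4) you have $\Delta_K=\Delta_{K'}$ by hypothesis, so $\widehat{HFK}(K)\cong\widehat{HFK}(K')$ and this invariant cannot separate them. The paper instead uses the JSJ/hyperbolic route you mention second, made effective via the quantitative Dehn filling estimates of Futer--Purcell--Schleimer; this is genuinely the hard part and is relegated to an appendix (Theorem~\ref{thm:computable_q}).
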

It is worth noting that in addition to calculating the set $\Salt(K)$, one can also determine an alternating branching set for any slope $r\in \Salt(K)$. If $K\not\in \D$, then $\Salt(K)$ is finite and this data is simply a finite set of diagrams. If $K\in\D$, then $\Salt(K)$ is infinite and the branching sets of the alternating surgeries are all obtained by an easily describable tangle replacement on a single alternating diagram.

\subsection{Calculating \texorpdfstring{$\Salt(K)$}{Salt(K)} for census knots}
A full implementation of the theoretical algorithm lying behind Theorem~\ref{thm:computable} is currently impractical, since it hinges on ingredients such as the decidability of the homeomorphism problem for compact 3-manifolds for which an algorithm has never been fully implemented. Practically speaking, however, we were able to determine $\Salt(K)$ in most examples that we tried. 

The SnapPy software package contains a census of all complements of hyperbolic knots in $S^3$ that can be triangulated with at most nine ideal tetrahedra \cite{CDGW}.  We refer to these knots as the \textbf{SnapPy Census Knots}. Since knots in $S^3$ are determined by their complements \cite{GL89}, we shall freely interchange a knot and its complement throughout this paper. 
We were able to calculate $\Salt(K)$ for every knot in the SnapPy census. The following theorem summarizes our findings.

\newcommand{\altsurgeryclassification}{
Among the 1267 SnapPy Census Knots, exactly 393 admit at least one alternating surgery. These knots are presented in Table~\ref{tab:knots_with_alternating_surgeries}.
\begin{itemize}
    \item There are 381 knots such that $K\in\D$ and $\Salt(K)=[N-1,N] \cap \Q$.
    \item There are 12 knots such that $K\notin \D$ and $\Delta_K(x)\neq \Delta_{K'}(x)$ for every $K'\in \D$. For these 12 knots, we have that $\Salt(K)=\{N-1\}$.
\end{itemize}
Here $N$ denotes the integer appearing in Theorem~\ref{thm:salt_trichotomy}.
}

\begin{thm}
\label{thm:altsurgclassification} 
\altsurgeryclassification
\end{thm}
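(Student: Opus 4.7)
The plan is to carry out a finite but exhaustive case analysis on the 1267 census knots, using Theorem~\ref{thm:salt_trichotomy} as the backbone. First, I would filter the census down to a manageable list of candidates by discarding all $K$ that cannot possibly be L-space knots, since alternating surgeries force $K$ to be an L-space knot. Concretely, I would compute the (symmetrized) Alexander polynomial $\Delta_K(x)$ for each census knot and keep only those whose coefficients are all $\pm1$, alternating in sign and nonzero in consecutive degrees, this being the known L-space-knot constraint. This should already eliminate the vast majority and leave a working list containing the 393 claimed plus some extras to rule out.

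Next, for each surviving knot I would compute its stable coefficients $\underline{\rho}(K)=(\rho_1,\dots,\rho_\ell)$ from the Alexander polynomial as defined in \S\ref{sec:changemaker} and from them the integer $N$ given by the formula in Theorem~\ref{thm:salt_trichotomy}. This immediately pins down the candidate slopes: $N-1$, $N$, and (if $K$ turns out to be a cable with cabling slope $N$) $N+1$, together with fractional slopes in these unit intervals in the $K\in\D$ cases. Membership $K\in\D$ can be decided by enumerating the unknotting-crossing construction from \S\ref{sec:D_definition} on alternating diagrams with a bounded number of crossings and identifying the resulting knot with a census knot by SnapPy using isometry signatures of the complements (recall that knots in $S^3$ are determined by their complements \cite{GL89}); alongside this I would also record an explicit alternating branching set realizing one surgery, so that once $K\in\D$ is detected the corresponding case (\ref{it:KinDcable}) or (\ref{it:KinDnoncable}) of Theorem~\ref{thm:salt_trichotomy} gives the full set $\Salt(K)$ directly. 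I expect this pass to account for exactly the 381 knots with $\Salt(K)=[N-1,N]\cap\Q$ and to certify that none of them are cables at cabling slope $N$ (a sanity check against the Berge-type examples in the census).

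For the knots not identified as lying in $\D$, I would then test whether $\Delta_K(x)=\Delta_{K'}(x)$ for any $K'\in\D$ already recorded; a knot that matches falls under case (\ref{it:KnotinDAlexpolyinD}), a knot that does not falls under case (\ref{it:KnotinDgeneral}). In the latter case only the two slopes $N-1$ and $N$ can appear. For each such slope I would attempt to construct an alternating branching set by searching among alternating links whose double branched covers have the correct order of first homology and the correct Heegaard Floer $d$-invariants, verifying a successful hit by computing an isometry of triangulations; absence of such a link over a sufficiently large search then combined with the sharpness obstruction from the changemaker framework rules out the slope. I anticipate this procedure isolates the 12 knots with $\Salt(K)=\{N-1\}$ and confirms $\Salt(K)=\emptyset$ for all other candidates.

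The main obstacle will be the knots falling under case (\ref{it:KnotinDAlexpolyinD}) and any residual case (\ref{it:KnotinDgeneral}) candidates where one must definitively rule out a slope rather than exhibit it: proving absence of an alternating branching set is not a priori finite, so one relies on the computable bound $Q(K)$ from Theorem~\ref{thm:salt_trichotomy} to reduce to finitely many $p/q$, and then on $d$-invariant/changemaker obstructions to eliminate each remaining candidate. In practice one must verify that $Q(K)$ is small enough for the search to terminate in reasonable time and that the obstructions apply cleanly; should the census happen to contain no knots in case (\ref{it:KnotinDAlexpolyinD}), as the statement of Theorem~\ref{thm:altsurgclassification} suggests, this obstacle never needs to be confronted, which is the outcome I would expect and document. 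The final tabulation is then just bookkeeping into Table~\ref{tab:knots_with_alternating_surgeries}.
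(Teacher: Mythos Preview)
Your outline follows the same high-level arc as the paper (restrict to L-space knots, compute $\underline{\rho}(K)$ and $N$, decide $\D$-membership, verify or exclude the finitely many candidate slopes), but it is vague precisely at the step where the paper's argument has teeth, and this vagueness is a genuine gap.

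The paper does \emph{not} rule out a candidate slope by searching for an alternating branching set and failing to find one, nor by invoking $d$-invariants directly. Instead it uses Theorem~\ref{thm:alt_surgery_obstructions} contrapositively: if the $r$-changemaker lattice $L_r$ with stable coefficients $\underline{\rho}(K)$ fails to admit an obtuse superbase, then $r\notin\Salt(K)$. This is decided by the finite search of Algorithm~\ref{alg:findOSB} (and Algorithm~\ref{alg:half_int_OSB} for half-integers). In the census run, $L_{N+1}$ never has an OSB (ruling out case~(\ref{it:KinDcable}) outright), and for exactly 12 knots $L_N$ has no OSB while $L_{N-1}$ does: this is what isolates those 12 and certifies $\Salt(K)\subseteq\{N-1\}$. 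Your phrase ``absence of such a link over a sufficiently large search then combined with the sharpness obstruction from the changemaker framework'' is not a proof unless that obstruction is made concrete as the OSB criterion; a negative link search alone, without an a priori crossing bound and exhaustive enumeration, proves nothing.

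Similarly, the paper does not detect $K\in\D$ by enumerating all alternating unknotting-number-one diagrams. It first checks whether $L_{N-\frac12}$ admits a (planar) OSB; if not, no $K'\in\D$ shares $\Delta_K$, so case~(\ref{it:KnotinDgeneral}) applies and $\Salt(K)\subseteq\{N-1,N\}$ immediately, with no need for the bound $Q(K)$ or any Alexander-polynomial comparison. If $L_{N-\frac12}$ does admit an OSB, that OSB produces via its Goeritz form a \emph{single} explicit alternating diagram $D$ with a marked unknotting crossing $c$, and one checks with SnapPy whether $X_K\cong X_{K_{(D,c)}}$ (Strategy~\ref{strat:knot_in_D}). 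Likewise, when a slope is to be \emph{confirmed} alternating, the OSB hands you the unique candidate branching set up to flype, so only one homeomorphism needs verifying (Strategy~\ref{strat:single_alt}); there is no open-ended search over alternating links of a given determinant. Your plan would work in principle but is both far more expensive and, as written, does not identify the decisive finite certificate (presence or absence of an OSB) that makes the exclusions rigorous.
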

For each of the knots admitting alternating surgeries, we also catalogued alternating branching sets for all integer and half-integer alternating surgeries. For the knots in $\D$ the branching set for the half-integer alternating surgery allows us to determine the branching set for all other alternating surgeries, since these branching sets are obtained by a rational tangle replacement on an unknotting crossing in this diagram. 

\subsection{Strongly invertible knots with a single alternating surgery}

The Snap\-Py Census knots admitting a single alternating surgery all turn out to be strongly-invertible, making these the first known examples of strongly-invertible knots admitting a single alternating surgery. Moreover each of these 12 examples arises from the following diagrammatic construction. For each such $K$ there is an alternating link $J$ with a flat unknotting band $b$ in an alternating diagram of $J$ that crosses only one edge so that $\Sigma_2(J-\nbhd(b))$, the double branched cover of the complement of the banding ball, is homeomorphic to the exterior of $K$. 
That is to say, $K$ is the double branched cover of a dealternation edge of an almost alternating unknot diagram, see for example~\cite[Figure 28]{BL17}.   

\begin{thm}\label{thm:bandings}
For each of the 12 knots in the census that admits a unique alternating surgery, there is an alternating diagram for the branching set of the unique alternating surgery which contains an arc such that
\begin{enumerate}
    \item banding along this arc yields an almost alternating diagram of the unknot
    \item the knot $K$ is obtained as the lift of the dual of this arc in the double branched cover of the unknot.
\end{enumerate}
\end{thm}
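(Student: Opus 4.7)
The plan is to proceed by explicit case-by-case verification, exploiting the fact that there are only $12$ knots in question and that their alternating branching sets have already been produced in the course of proving Theorem~\ref{thm:altsurgclassification}. The conceptual backbone is the standard correspondence between banding moves on a link in $S^3$ and integer Dehn surgery on its double branched cover: if $L\subset S^3$ is a link and $b$ is a flat band attached to $L$ crossing a single edge of a diagram, then the double branched covers $\Sigma_2(L)$ and $\Sigma_2(L')$, where $L'$ is the result of banding along $b$, differ by an integer surgery on the knot obtained by lifting the dual arc to $b$. When $L'$ is the unknot, $\Sigma_2(L')=S^3$, so this lifted knot sits in $S^3$, and the banding $L'\leadsto L$ corresponds to a single integer surgery producing $\Sigma_2(L)$.

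With this dictionary in hand, the theorem reduces to the following verification for each of the $12$ knots $K$. Starting from the alternating diagram $D$ of the branching set $L$ of the unique alternating surgery on $K$ supplied by Theorem~\ref{thm:altsurgclassification}, I would first search for an arc $b$ of $D$ that crosses exactly one edge and such that banding along $b$ yields an almost alternating diagram $D'$ of the unknot. Second, I would check that the dealternating crossing of $D'$ is precisely the crossing dual to $b$, so that the almost alternating construction cited in the introduction applies and produces a candidate knot $K'$ obtained as the lift of the dual arc in $\Sigma_2(\text{unknot})=S^3$. Third, I would confirm $K'\cong K$ by comparing the isometric signatures of the two complements in \texttt{SnapPy}, thereby certifying that $D$, $b$, and $D'$ have all the claimed properties.

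The hardest step will be the initial identification of the correct arc $b$ in each diagram: there is no \emph{a priori} reason that such an unknotting band should exist, and among arcs crossing a single edge the resulting banded link need not be the unknot. Fortunately, the alternating branching sets for the $12$ knots all have small crossing number, so an exhaustive combinatorial search over the finitely many candidate arcs is feasible, and for each candidate arc the recognition of the resulting link as the unknot can be handled by direct simplification or by standard software. Once an arc passing both tests has been located, the final homeomorphism check is a routine \texttt{SnapPy} computation. The data of the diagram $D$, the arc $b$, and the resulting almost alternating unknot diagram $D'$ can then be displayed in a figure for each of the $12$ knots, completing the proof.
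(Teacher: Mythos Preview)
Your proposal is correct and matches the paper's own proof essentially step for step: the paper also starts from the alternating branching sets produced in Theorem~\ref{thm:altsurgclassification}, finds the required arc for each of the 12 knots ``either by hand or brute force computer calculation,'' and then verifies with \texttt{SnapPy} that the lift of the dual arc in the double branched cover of the resulting unknot has complement homeomorphic to that of $K$. The only difference is presentational: the paper records the outcome of the search in Figure~\ref{fig:12bandings} rather than describing the search procedure in detail.
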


Figure~\ref{fig:12bandings} shows the 12 links $J$ along with arcs describing the bandings $b$; both the name of $J$ in the knot tables and the census name for $K$ are given.  Table~\ref{tab:uniqueinteger} gives the stable coefficients, the slope of the alternating surgery, the branching set $J$, and the geometry of the double branched cover of $J$. Among these 12 manifolds $S^3_{N-1}(K) = \Sigma_2(J)$, there are $3$ graph manifolds and the remaining $9$ are hyperbolic. 

 \begin{table}[htbp]
 	\caption{The $12$ knots in the SnapPy census with a single alternating surgery. For all such knots we have $\Salt(K)=\{N-1\}$. Moreover, in the SnapPy basis $N-1$ is the slope $(-1,1)$ for $t11556$ and the slope $(0,1)$ for all the other $11$ knots. 
 	}
	\label{tab:uniqueinteger}
	\ra{1.2}
	\begin{tabular}{@{}l|l|l|l|l|l@{}}
Knot & $\underline{\rho}(K)$ & $N$& $\Salt(K)$ &  $J$ & geometry of $\Sigma_2(J)$ \\ \hline
         t10188& (5, 4, 3, 2, 2)& 60 & 59& $K9a6$ &graph manifold\\
         t11556& (6, 4, 3, 2)&67& 66&  $L9a20$ & hyperbolic\\
         t12753 & (7, 5, 3, 3)&95& 94& $L10a85$& graph manifold\\
         o9\_32132& (7, 5, 3)&86& 85& $K10a45$& hyperbolic\\
         o9\_32588& (5, 5, 4, 3, 2, 2)&85& 84& $L10a106$& hyperbolic\\
         o9\_37754& (6, 6, 4, 3, 2)&103& 102& $L10a76$& hyperbolic \\
         o9\_39451& (7, 6, 3, 2, 2)& 104 & 103& $K10a99$& hyperbolic\\
         o9\_40179& (8, 7, 3, 2, 2)&132& 131& $K11a298$& graph manifold\\
         o9\_43001& (8, 5, 4, 2, 2)&115& 114& $L10a71$& hyperbolic\\
         o9\_43679& (7, 7, 5, 3, 3)&144& 143& $K11a227$& hyperbolic\\
         o9\_43953& (9, 4, 3, 3)&118& 117& $K11a304$ &hyperbolic\\
         o9\_44054& (9, 5, 3, 3)&127& 126& $L11a229$ &hyperbolic \\ 
	\end{tabular}
\end{table}

\begin{figure}
    \centering
    \begin{tabular}{ccc}
    \includegraphics[height=3.5cm]{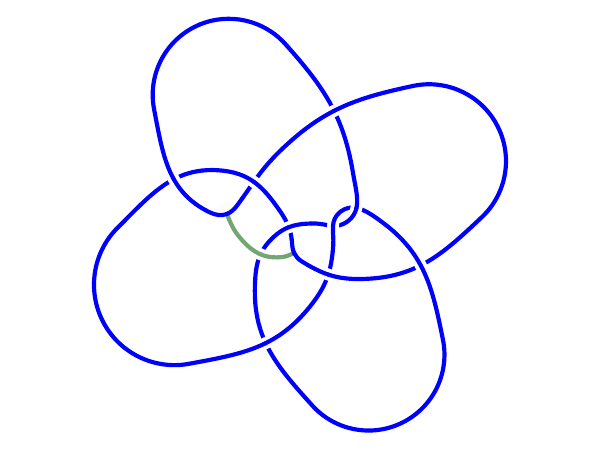} &
    \includegraphics[height=3.5cm]{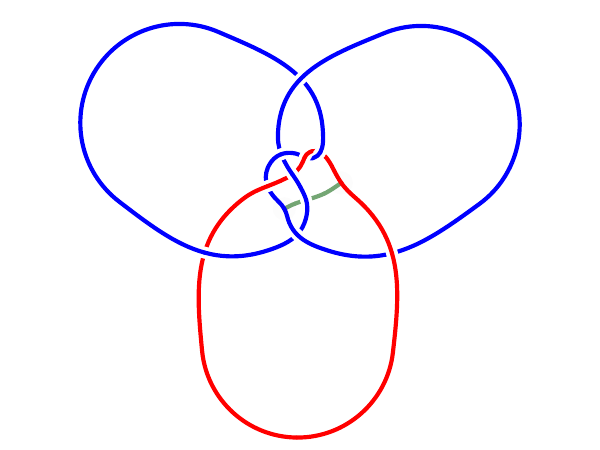} &
    \includegraphics[height=3.5cm]{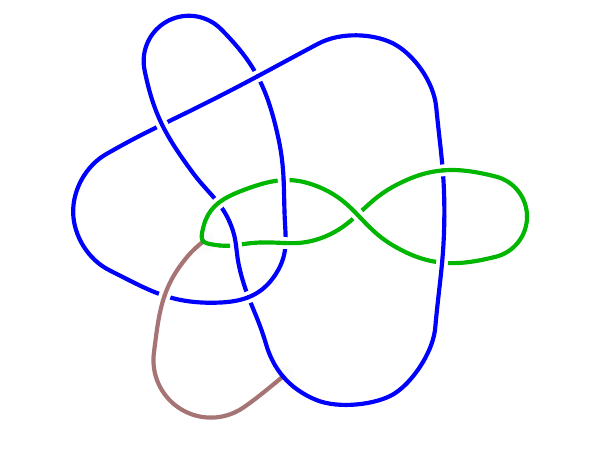}\\
    {t10188--K9a6} &
    {t11556--L9a20} &
    {t12753--L10a85}\\ \\. 
    \includegraphics[height=3.5cm]{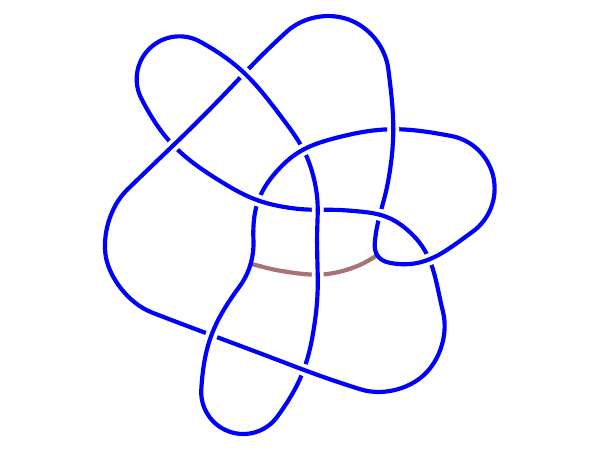} &
    \includegraphics[height=3.5cm]{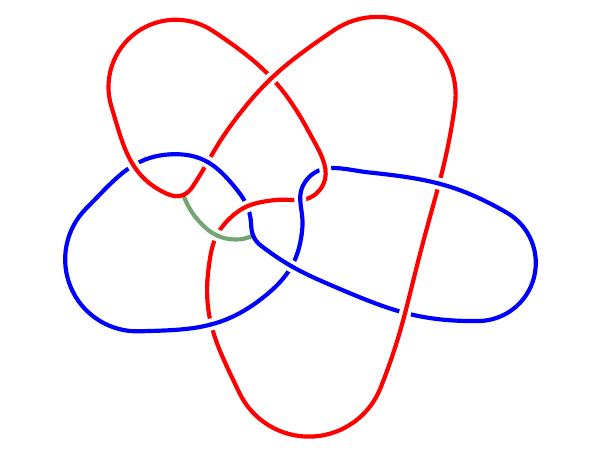} & \includegraphics[height=3.5cm]{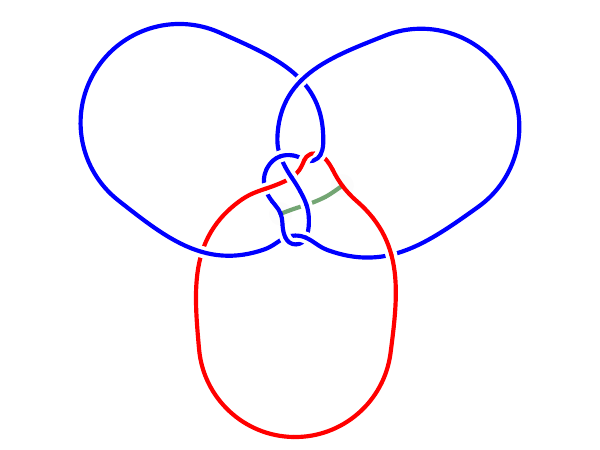} \\
    {o9\_32132--K10a45} &
    {o9\_32588--L10a106} & 
    {o9\_37754--L10a76} \\   \\
    \includegraphics[height=3.5cm]{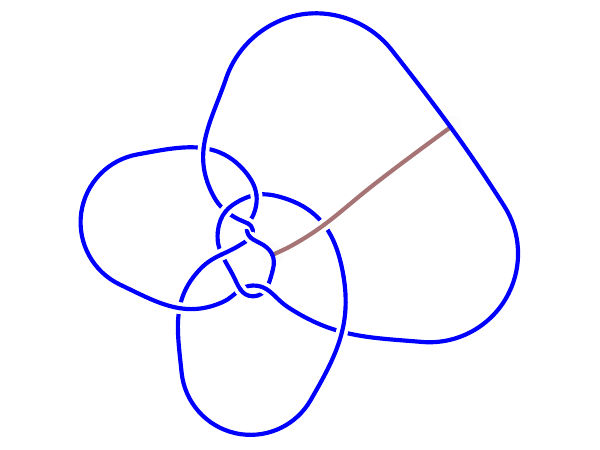} &
    \includegraphics[height=3.5cm]{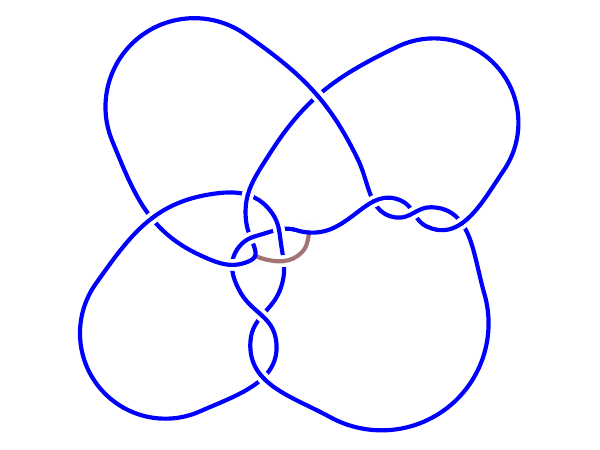} &
    \includegraphics[height=3.5cm]{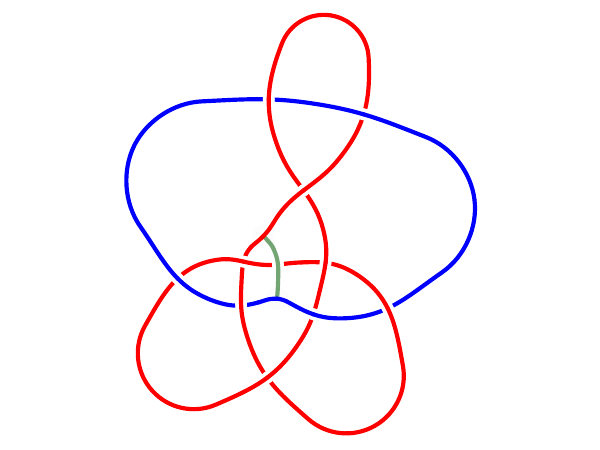}\\ 
    {o9\_39451--K10a99} & 
    {o9\_40179--K11a298} &
    {o9\_43001--L10a71}\\ \\
    \includegraphics[height=3.5cm]{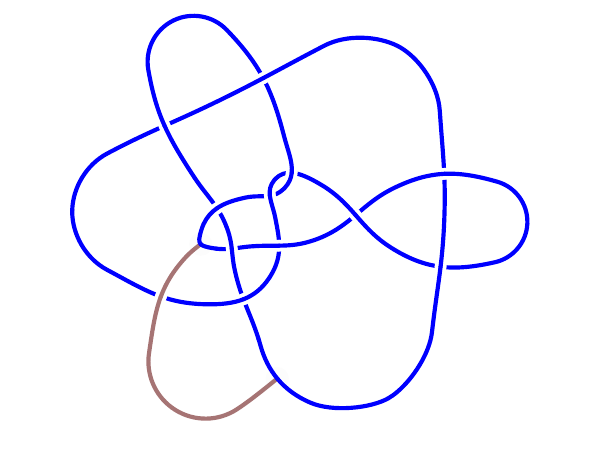}&
    \includegraphics[height=3.5cm]{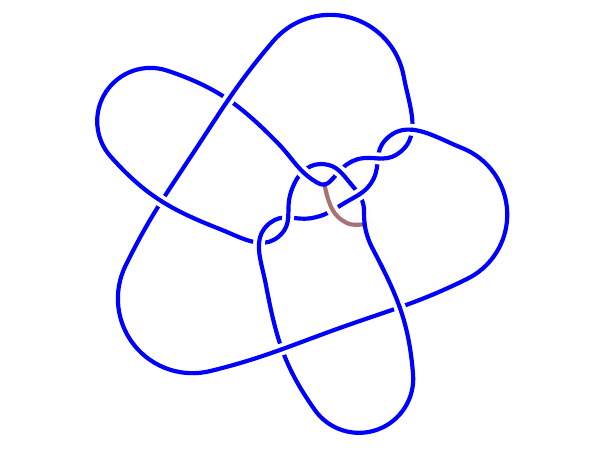} &
    \includegraphics[height=3.5cm]{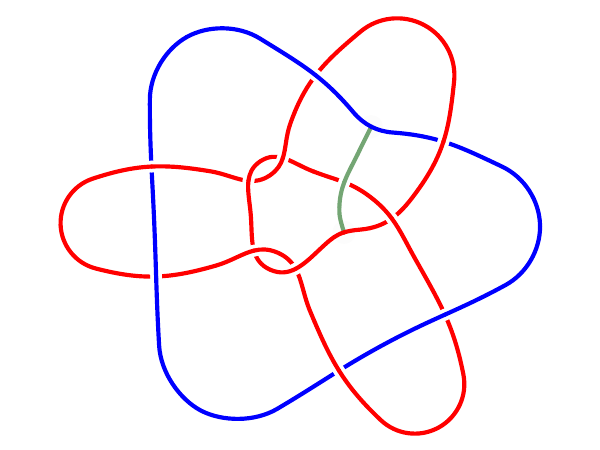} \\
    {o9\_43679--K11a277}&
    {o9\_43953--K11a304} &
    {o9\_44054--L11a229} \\
    \end{tabular}
    \caption{Alternating diagrams of twelve alternating links are shown.  Each has an arc along which a flat banding produces the unknot.  The double branched covers of the exteriors of these arcs are the 12 census knots with a single alternating surgery. Each diagram is labeled with the SnapPy manifold of the census knot and the Hoste-Thistlethwaite name of the alternating link.}
    \label{fig:12bandings}
\end{figure}

\subsection{Asymmetric L-space knots without alternating surgeries}
There are exactly nine asymmetric L-space knots amongst the SnapPy census:
\begin{align*}
\calA=\{t12533,t12681,o9\_38928,o9\_39162,o9\_40363,o9\_40487,\\o9\_40504,o9\_40582,o9\_42675\}.
\end{align*}
Theorem~\ref{thm:altsurgclassification} implies that none of the asymmetric L-space knots in the census admit an alternating surgery, thereby answering \cite[Question 10]{ABG+19}.
\begin{cor}\label{cor:asymetric}
None of the asymmetric L-space knots of the census have an alternating surgery.
\end{cor}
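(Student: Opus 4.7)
The plan is to reduce this to a finite inspection using the classification provided by Theorem~\ref{thm:altsurgclassification}. That theorem enumerates every census knot admitting an alternating surgery: the $381$ knots in $\D$ whose alternating slopes form an interval $[N-1,N]\cap\Q$, together with the $12$ exceptional knots appearing in Table~\ref{tab:uniqueinteger}. Concretely, these $393$ knots are tabulated in Table~\ref{tab:knots_with_alternating_surgeries}. The corollary then reduces to the purely combinatorial observation that none of the nine SnapPy identifiers listed in $\calA$ appears in that table.

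A more conceptual way to package the same check is to remark that every knot in Table~\ref{tab:knots_with_alternating_surgeries} is strongly invertible. For the $381$ knots in $\D$, the strong involution comes from the definition of $\D$ via unknotting crossings in alternating diagrams (see \S\ref{sec:D_definition}): the hyperelliptic involution of the double branched cover descends on the knot exterior to a strong involution. For the remaining $12$ exceptional knots, strong invertibility is exactly the content of Theorem~\ref{thm:bandings} together with the construction illustrated in Figure~\ref{fig:12bandings}, as the lift of a tangle replacement arc in a double branched cover is strongly invertible. Since the members of $\calA$ are by definition asymmetric and therefore not strongly invertible, none of them can occur in Table~\ref{tab:knots_with_alternating_surgeries}, hence none admit an alternating surgery.

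There is no real obstacle here beyond trusting the census computation behind Theorem~\ref{thm:altsurgclassification}; the argument is a one-line consequence of that theorem once the asymmetry of the knots in $\calA$ is recalled. The only thing worth double-checking is the census identifiers: the nine names in $\calA$ must be compared directly against the $393$ entries of Table~\ref{tab:knots_with_alternating_surgeries}, and this comparison—easily automated—confirms disjointness, completing the proof.
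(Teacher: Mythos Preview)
Your proposal is correct and matches the paper's approach: the corollary is stated immediately after Theorem~\ref{thm:altsurgclassification} as a direct consequence, and the paper's implicit argument is precisely the table lookup you describe. Your additional conceptual packaging via strong invertibility is also sound (the paper explicitly notes that the 12 exceptional knots are strongly invertible, and membership in $\D$ guarantees it via the Montesinos construction), though the paper does not spell this out as an alternative proof.
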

This provides the first known examples of asymmetric L-space knots with no alternating surgeries. The existence of such knots is of interest, since the first asymmetric L-space knots, the Baker-Luecke knots, all possess alternating surgeries \cite{BL17}. In fact, the Baker-Luecke knots were shown to be L-space knots by exhibiting an alternating surgery on each one. 

\subsection{L-space knots with exactly two alternating surgeries}
We also applied our methods to calculate $\Salt(K)$ for several Baker-Luecke knots. Surprisingly, all of these knots that we checked turned out to have a second alternating surgery in addition to the one originally constructed by Baker and Luecke. In fact these knots turned out to provide the first examples of knots for which $\Salt(K)$ takes the form $\Salt(K)=\{N-1, N\}$.

\begin{thm} 
For at least $14$ of the asymmetric hyperbolic L-space knots constructed in~\cite{BL17}, the set of alternating surgeries is of the form $\Salt(K)=\{N-1, N\}$. 
\end{thm}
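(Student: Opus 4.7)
The plan is to verify the claim for each of the 14 Baker--Luecke knots $K$ individually by applying Theorem~\ref{thm:salt_trichotomy}. Since each such $K$ was constructed by Baker and Luecke with an explicit alternating surgery, the stable coefficients $\underline{\rho}(K)=(\rho_1,\ldots,\rho_\ell)$ can be read off from the changemaker lattice of the known surgery (equivalently, from $\Delta_K(x)$), and the integer $N$ is then computed directly from the formula in Theorem~\ref{thm:salt_trichotomy}. The Baker--Luecke construction provides one of $N-1$ or $N$ as an alternating surgery slope; the bulk of the work is to show that the other element of $\{N-1,N\}$ is also alternating, and that no further slopes are.

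To exhibit the second alternating surgery I would produce an explicit alternating link diagram whose double branched cover is homeomorphic to the appropriate integer Dehn surgery on $K$. Since $K$ is an L-space knot, the changemaker formalism constrains the Goeritz lattice of any alternating branching set for an integer surgery on $K$ to be a changemaker lattice whose coefficients are controlled by $\underline{\rho}(K)$, which drastically narrows the search space. One then generates candidate alternating diagrams realizing the predicted lattice and verifies the homeomorphism of each candidate's double branched cover against the target surgery on $K$ using a SnapPy/Regina computation.

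To conclude $\Salt(K)=\{N-1,N\}$ one must rule out all other rational slopes via the trichotomy. Since the Baker--Luecke knots are hyperbolic, they are neither cables nor torus knots, so case~\eqref{it:KinDcable} of Theorem~\ref{thm:salt_trichotomy} is eliminated. It remains to show $K\notin\D$ in order to rule out case~\eqref{it:KinDnoncable}; this reduces to a finite search over alternating link diagrams whose complexity is controlled by $\underline{\rho}(K)$, which is carried out for each knot. After that, either $\Delta_K(x)\neq\Delta_{K'}(x)$ for every $K'\in\D$ and case~\eqref{it:KnotinDgeneral} yields $\Salt(K)\subseteq\{N-1,N\}$ immediately, or case~\eqref{it:KnotinDAlexpolyinD} applies and the computable bound $Q(K)$ reduces the residual slope check to finitely many candidates, each of which is individually verified.

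The principal obstacle is the discovery of the second alternating diagram. There is no a priori structural reason for an asymmetric L-space knot to admit two alternating surgeries, so one cannot appeal to any symmetry of $K$ to transport one alternating branching set to another; the second diagram must be found by direct search. The changemaker formalism cuts down the candidates but a genuine search-and-verify is still required. A secondary subtlety is controlling the subset of $\D$ with a prescribed Alexander polynomial tightly enough to decide between cases~\eqref{it:KnotinDgeneral} and~\eqref{it:KnotinDAlexpolyinD} and to complete the finite check in the latter case.
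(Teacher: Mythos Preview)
Your overall strategy is sound but you take an unnecessarily circuitous route to establishing $\Salt(K)\subseteq\{N-1,N\}$, and your fallback to case~\eqref{it:KnotinDAlexpolyinD} would be practically problematic. The paper's proof shows directly that case~\eqref{it:KnotinDgeneral} of Theorem~\ref{thm:salt_trichotomy} applies, by verifying that the $(N-\tfrac12)$-changemaker lattice with stable coefficients $\underline{\rho}(K)$ admits \emph{no} obtuse superbase (via Algorithm~\ref{alg:half_int_OSB}). This single lattice computation rules out the existence of any $K'\in\D$ with $\Delta_{K'}=\Delta_K$: such a $K'$ would have $N-\tfrac12\in\Salt(K')$, forcing a planar obtuse superbase on that lattice by Theorem~\ref{thm:alt_surgery_obstructions}. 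With case~\eqref{it:KnotinDgeneral} established, cases~\eqref{it:KinDcable}, \eqref{it:KinDnoncable} and~\eqref{it:KnotinDAlexpolyinD} are automatically excluded, and you never need to separately argue $K\notin\D$ or invoke $Q(K)$.

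By contrast, your plan first shows $K\notin\D$ and then still faces a bifurcation into cases~\eqref{it:KnotinDgeneral} and~\eqref{it:KnotinDAlexpolyinD}. In the latter, the ``computable bound $Q(K)$'' is the constant of Theorem~\ref{thm:computable_q}, whose definition involves the systoles of the hyperbolic JSJ pieces of both $X_K$ and $X_{K'}$; while theoretically computable, this is not a practical route, and for these examples it is entirely avoidable. Your description of finding the second alternating branching set via the changemaker-constrained Goeritz lattice is exactly what the paper does (with some branching sets already supplied by \cite{BL17} and two constructed by hand from the obtuse superbase), so that part of the proposal matches well.
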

Although we were able to find second alternating surgeries in these small examples, we have yet to find a theoretical explanation for the existence of these second alternating surgeries. It would be interesting to know if the phenomenon persists. 

\begin{ques}\label{ques:otherslopeBLknots}
Do all of the asymmetric L-space knots of Baker-Luecke have two alternating surgeries?
\end{ques}

\subsection{Conjectural structure of \texorpdfstring{$\Salt(K)$}{Salt(K)}}
Note that Theorem~\ref{thm:salt_trichotomy} gives a complete description of $\Salt(K)$ for knots in $\D$, so it remains only to understand $\Salt(K)$ for knots which are not in $\D$. Our results point towards the following conjecture.
\begin{con}\label{conj:atmosttwoslopes}
Let $K\notin \D$ be a knot admitting positive alternating surgeries, then
\[\Salt(K)\subseteq \{N-1,N\}.\]
In particular the only knots with non-integral alternating surgeries are in $\D$.
\end{con}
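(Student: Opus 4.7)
Theorem~\ref{thm:salt_trichotomy} already proves the inclusion $\Salt(K)\subseteq\{N-1,N\}$ in cases~(\ref{it:KinDnoncable}) and~(\ref{it:KnotinDgeneral}), so the only outstanding situation is case~(\ref{it:KnotinDAlexpolyinD}): the knot $K\notin\D$ shares its Alexander polynomial with some $K'\in\D$, and we know a priori only that $\Salt(K)$ is a finite subset of $\Salt(K')$ with $|q|\leq Q(K)$. The task therefore reduces to excluding non-integer slopes in $\Salt(K)$ in this case. Suppose, for contradiction, that $p/q\in\Salt(K)$ with $q\geq 2$. Since $\Salt(K)\subseteq\Salt(K')$, this same $p/q$ is an alternating slope on $K'$, and by items~(\ref{it:KinDcable}) and~(\ref{it:KinDnoncable}) this forces $K'$ into case~(\ref{it:KinDcable}): $K'$ is a cable or torus knot with cabling slope $N$, and $p/q\in[N-1,N+1]\cap\Q$.

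\textbf{Main strategy.} The plan is to use the changemaker lattice machinery that underpins Theorem~\ref{thm:salt_trichotomy}. Because $S^3_{p/q}(K)$ is the double branched cover of an alternating link, it bounds a sharp negative-definite 4-manifold, so the $p/q$-surgery changemaker lattice $\Lambda(K,p/q)$ must embed as a full sublattice (of appropriate corank) in $-\Lambda_J$ for some alternating link $J$. The structure of such embeddings has, in the integer case, been shown to pin down the stable coefficients $\underline{\rho}(K)$ and thereby $\Delta_K$, giving the hypothesis $\Delta_K=\Delta_{K'}$ its role. For non-integer slopes, the lattice $\Lambda(K,p/q)$ carries an extra generator of norm determined by $q$, and one expects its embedding to be vastly more constrained. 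Concretely, the plan is to show that any such non-integer embedding must be built from a \emph{cabling pattern} on the changemaker of $K'$, and then to promote this lattice-theoretic cabling structure to a genuine cable decomposition $K=C_{a,b}(\widehat{K})$ with pattern matching that of $K'$. Combined with the Alexander-polynomial agreement $\Delta_K=\Delta_{K'}$, this would force $K=K'\in\D$, contradicting $K\notin\D$.

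\textbf{Geometric complement.} A parallel, more topological, line of attack comes from the observation that for $K'\in\D$ with $p/q$ a non-integer slope in $[N-1,N+1]$, the branching set is produced from the half-integer diagram by a rational tangle replacement on an unknotting crossing; consequently the double branched cover $S^3_{p/q}(K')$ contains an essential torus coming directly from the cabling annulus of $K'$. One would like to argue that $S^3_{p/q}(K)$ inherits an analogous JSJ piece: since $K$ is a hyperbolic knot (the only interesting case), an essential torus in $S^3_{p/q}(K)$ together with $q\geq 2$ would, by standard cabling-type results for exceptional surgeries, force $K$ itself to be a cable knot, after which the argument of the previous paragraph closes the loop.

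\textbf{Main obstacle.} The hardest step, in either formulation, is promoting the abstract obstruction (a changemaker embedding, or the existence of a Heegaard Floer L-space surgery with matching correction terms) into a geometric cabling statement about $K$. The subtlety is that $\Salt(K)\subseteq\Salt(K')$ only asserts that the two surgery manifolds are each double branched covers of some alternating link; it does not a priori identify $S^3_{p/q}(K)$ with $S^3_{p/q}(K')$, and since $\Delta_K=\Delta_{K'}$ all invariants derived from $\Delta$ (including $d$-invariants of surgeries on L-space knots) fail to separate them. Overcoming this will almost certainly require a new ``rigidity'' statement, to the effect that at a non-integer slope the sharp-bounding condition is only realizable when the knot genuinely admits a cabling decomposition compatible with $K'$; we view this as the principal content of the conjecture and the point at which purely lattice-theoretic input must be blended with essential-torus arguments in the surgered manifold.
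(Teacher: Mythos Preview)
The statement you are attempting to prove is stated in the paper as a \emph{conjecture} (Conjecture~\ref{conj:atmosttwoslopes}), not a theorem; the paper explicitly does not prove it and remarks that case~(\ref{it:KnotinDAlexpolyinD}) remains open. Your proposal is, accordingly, not being compared against a proof in the paper but against an open problem, and indeed your own final paragraph concedes the central gap: you identify that a ``rigidity'' statement is needed to pass from lattice or surgery data to a genuine cabling of $K$, and you do not supply one. So this is a strategy outline, not a proof.

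Two specific points in your outline are also incorrect as written. First, your reduction step claims that a non-integer slope $p/q\in\Salt(K')$ forces $K'$ into case~(\ref{it:KinDcable}) (cable or torus knot). This is false: in case~(\ref{it:KinDnoncable}) we have $\Salt(K')=[N-1,N]\cap\Q$, which already contains infinitely many non-integer slopes, so a non-integer $p/q$ in no way pins $K'$ to the cable case. Your subsequent ``geometric complement'' paragraph, which relies on an essential torus coming from a cabling annulus in $K'$, therefore does not apply in general.

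Second, you write that $\Salt(K)\subseteq\Salt(K')$ ``does not a priori identify $S^3_{p/q}(K)$ with $S^3_{p/q}(K')$''. In fact it does: this is exactly Corollary~\ref{cor:Alexpolydetermines} in the paper, which shows that when $\Delta_K=\Delta_{K'}$ and $p/q$ is an alternating slope for both, the two surgered manifolds are homeomorphic. So you actually have the stronger input $S^3_{p/q}(K)\cong S^3_{p/q}(K')$ available. The genuine obstacle is that, even with this homeomorphism in hand, one cannot currently conclude $K=K'$ (or that $K$ is a cable) from a single non-integer surgery coincidence; this is precisely why the paper only obtains the finiteness bound $|q|\leq Q(K)$ via Theorem~\ref{thm:computable_q} rather than the full conjecture.
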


Theorem~\ref{thm:salt_trichotomy} shows that this conjecture holds for knots whose Alexander polynomials obstruct them from lying in $\D$. Thus it remains to verify the conjecture for knots $K\notin \D$ such that $\Delta_K=\Delta_{K'}$ for some $K'\in \D$. However, there are currently no known examples of such knots which admit alternating surgeries.

\begin{con}\label{conj:strong_conj}
Let $K\notin \D$ be a knot such that $\Delta_K(x)=\Delta_{K'}(x)$ for some $K'\in \D$. Then $\Salt(K)=\emptyset$.
\end{con}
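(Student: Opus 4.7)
The plan is to combine the finiteness statement in part~(\ref{it:KnotinDAlexpolyinD}) of Theorem~\ref{thm:salt_trichotomy} with a rigidity argument promoting a coincidence of surgery data into a structural constraint. Under the hypothesis, there exists $K'\in\D$ with $\Delta_K=\Delta_{K'}$, so by part~(\ref{it:KnotinDAlexpolyinD}) the set $\Salt(K)$ is a finite subset of $\Salt(K')$, with every slope $p/q\in\Salt(K)$ satisfying $|q|\leq Q(K)$. Since $K'\in\D$, parts~(\ref{it:KinDcable})--(\ref{it:KinDnoncable}) determine $\Salt(K')$ explicitly as $[N-1,N+1]\cap\Q$ or $[N-1,N]\cap\Q$, reducing the conjecture to excluding at most three specific candidate slopes for $K$.

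The first substantive step is to exploit the changemaker machinery underlying Theorem~\ref{thm:salt_trichotomy}. If $p/q\in\Salt(K)$, then the Goeritz lattice of the alternating branching set admits a changemaker-type embedding whose stable coefficients are computed from the Alexander polynomial, hence equal $\underline{\rho}(K')$. Consequently $K$ and $K'$ are L-space knots with identical staircase knot Floer complexes, and the $p/q$-surgeries on $K$ and on $K'$ have the same Heegaard Floer correction terms, graded root, and torsion order. Every standard Floer-theoretic obstruction known to distinguish L-space knots is therefore blind to the difference between $K$ and $K'$, so a proof must draw on more than the Floer package of the surgery manifold.

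The next step would be to extract genuinely three-dimensional information from the alternating branching set itself. For each candidate slope $p/q$, the branching set of the corresponding surgery on $K'$ is a specific alternating link $L'$ obtained by a rational tangle replacement at a fixed unknotting crossing; in particular $S^3_{p/q}(K')\cong\Sigma_2(L')$ carries a canonical involution whose quotient recovers $K'$ as a lift of the dual arc, as in the construction underlying $\D$. If $K$ also admitted $p/q$-surgery to some $\Sigma_2(L)$ for alternating $L$, one would want to show that, for the numerical invariants forced by the shared Alexander polynomial, the link $L$ must share the same rigid diagrammatic structure, and that the resulting involution on $S^3_{p/q}(K)$ identifies $K$ with a lift placing $K\in\D$ and contradicting the hypothesis. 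In effect one needs a surgery characterization for $\D$: a proof that the diagrammatic construction defining $\D$ is detected by the pair $(S^3_{p/q}(K),\underline{\rho}(K))$.

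The main obstacle is precisely the absence of such a characterization. Because the Alexander polynomial determines every invariant of an L-space knot coming from the full knot Floer complex, no argument resting on the homeomorphism type of the surgery manifold alone can succeed; one needs equivariant or tangle-theoretic rigidity that recovers the knot from the involution together with the alternating branching structure. Ruling out candidate slopes one at a time is feasible in examples, since $|q|\leq Q(K)$ makes the candidate set explicit and a computer search through alternating links with the correct determinant $|p|$ and Goeritz form frequently terminates, but turning such case analyses into a uniform proof valid for all $K$ with $\Delta_K=\Delta_{K'}$ and $K'\in\D$ appears to require new input beyond the techniques of this paper.
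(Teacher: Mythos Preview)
The statement you are trying to prove is Conjecture~\ref{conj:strong_conj}, which the paper explicitly leaves open: it is stated as a conjecture, not a theorem, and no proof is given anywhere in the paper. So there is no ``paper's own proof'' to compare your attempt against.

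Your proposal is not actually a proof but a strategic outline that ultimately concedes defeat: you correctly observe that the changemaker and Heegaard Floer machinery cannot distinguish $K$ from $K'$ once $\Delta_K=\Delta_{K'}$, and you then sketch a hypothetical ``surgery characterization for $\D$'' that would do the job, before admitting in your final paragraph that no such characterization is available and that a uniform proof ``appears to require new input beyond the techniques of this paper.'' That assessment is accurate and is precisely why the authors state this as a conjecture. Indeed, the paper notes in the remark following Conjecture~\ref{conj:strong_conj} that there exist L-space knots (such as the $(3,2)$-cable of $T_{3,2}$) sharing the Alexander polynomial of a knot in $\D$ yet admitting no alternating surgery, so the conjecture is consistent with known examples but remains unproven.

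In short: there is no gap to identify in your argument because you have not claimed a proof; you have correctly diagnosed why the conjecture is open.
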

Note that Conjecture~\ref{conj:strong_conj} implies Conjecture~\ref{conj:atmosttwoslopes}. 
\begin{rem}
    There are examples of L-space knots which have the Alexander polynomial of a knot in $\D$ but do not admit any alternating surgeries. The simplest example is the $(3,2)$-cable of $T_{3,2}$ which has the same Alexander polynomial as $T_{3,4}$. 
\end{rem}

It also seems natural to wonder which subsets of slopes allowed by Conjecture~\ref{conj:atmosttwoslopes} can actually occur. The knots in Theorem~\ref{thm:altsurgclassification} which admitted a single alternating surgery provide example of knots for which $\Salt(K)=\{N-1\}$. There are Baker-Luecke knots providing examples where $\Salt(K)=\{N-1,N\}$. Thus there is only one remaining possibility that has not been realized.
\begin{ques}\label{ques:N}
    Does there exist a knot $K$ such that $\Salt(K)=\{N\}$?
\end{ques}

Computations suggest that for many Baker-Luecke knots the slope $N$ is an alternating surgery slope. Thus, if any of these Baker-Luecke knots admits only a single alternating surgery, we would have an affirmative answer to Question~\ref{ques:N}.

\subsection{Genus bounds on alternating surgeries}
We obtain further restrictions on alternating surgery slopes. In \cite{Mc17} it was shown if $p/q$ is an alternating surgery slope for a non-trivial knot $K$, then $|p/q|\leq 4g(K)+3$ \cite[Theorem~1.1]{Mc17}. This bound is sharp with equality being attained by torus knots of the form $T_{2,2g+1}$. It turns out that these are the only such examples and that for all other knots a stronger bound holds.
\newcommand{\genusbound}{Let $K$ be a non-trivial knot which is not a two-stranded torus knot. Then for any alternating slope $p/q$ we have $|p/q|\leq 3g(K)+4$.}
\begin{thm}\label{thm:genusbound}
\genusbound
\end{thm}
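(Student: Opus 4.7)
The plan is to combine the structure of $\Salt(K)$ from Theorem~\ref{thm:salt_trichotomy} with a genus bound on the integer $N$ and a separate treatment of cables and torus knots. In all four cases of Theorem~\ref{thm:salt_trichotomy}, one has $\Salt(K)\subseteq[N-1, N+1]$, so the maximum alternating slope is $N$ or $N+1$. The value $N+1$ arises only in case~\ref{it:KinDcable}, i.e., for cables and torus knots with cabling slope $N$; in the remaining cases the maximum is $N$. Hence it suffices to show $N+1\leq 3g(K)+4$ in the cable/torus case and $N\leq 3g(K)+4$ otherwise.

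For a torus knot $T_{p,q}$ with $3\leq p<q$ (i.e.\ not two-stranded), one has cabling slope $N=pq$ and genus $g(K)=(p-1)(q-1)/2$. A direct calculation reduces $N+1\leq 3g(K)+4$ to $(p-3)(q-3)\geq 0$, which always holds; indeed the bound is sharp at $T_{3,4}$. The extension to arbitrary cable knots in $\D$ should follow by induction on the cabling level using the standard formulae for cabling slopes and genus together with the characterization of $\D$.

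For the remaining cases the maximum slope is $N=\sum_{i=1}^\ell \rho_i^2 + \max_k\bigl(\rho_k-\sum_{i<k}\rho_i\bigr)$, and the heart of the proof is the genus inequality $g(K)\geq \tfrac{1}{3}(N-4)$. Since alternating surgery makes $K$ an L-space knot, the stable coefficients determine the Heegaard Floer $d$-invariants of $N$-surgery via the changemaker embedding into the Goeritz form of the alternating branching set, and these in turn recover the Alexander polynomial of $K$ and hence $g(K)$. A combinatorial analysis of the allowable changemaker configurations is then used to derive the desired inequality.

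The main obstacle is precisely this passage from the stable-coefficient multiset $\{\rho_i\}$ to $g(K)$. The extremal all-ones configuration $(1,\dots,1)$ of length $4g+1$ realizes $N=4g+2$ and corresponds to $T_{2,2g+1}$, which attains the maximum slope $N+1=4g+3$ and is excluded by hypothesis. One must show that any deviation from this pattern --- either a shorter length or some $\rho_i\geq 2$ --- forces enough additional structure in the Alexander polynomial to yield $g(K)\geq\tfrac{1}{3}(N-4)$. Isolating the contribution of each $\rho_i$ to the Alexander polynomial and carefully handling small values of $\ell$, together with characterizing which changemaker sequences arise from two-stranded torus knots, will be the technical core of the argument.
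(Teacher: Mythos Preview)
Your proposal has the right overall shape but contains a genuine gap and a factual error that reveals a misunderstanding of the setup.

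First, the error: stable coefficients are by definition $\geq 2$ (they are the changemaker coefficients $\sigma_i$ with $\sigma_i>1$). There is no ``all-ones configuration $(1,\dots,1)$''. The torus knot $T_{2,2g+1}$ has stable coefficients $(2,\dots,2)$ with $g$ twos, giving $N=4g+2$. So your proposed dichotomy ``either a shorter length or some $\rho_i\geq 2$'' is vacuous.

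Second, and more seriously: the inequality $N\leq 3g(K)+4$ is \emph{false} as a combinatorial statement about stable coefficients alone. For instance, stable coefficients $(2,2,2,3)$ give $N=23$ but $g=6$, so $3g+4=22<N$. What rescues the theorem is that no knot with these stable coefficients admits an alternating surgery: the relevant changemaker lattices admit no obtuse superbase (Lemma~\ref{lem:small_genus_bound_case}). Your plan to derive the bound by ``combinatorial analysis of the allowable changemaker configurations'' must therefore incorporate the obtuse-superbase constraint, and this is exactly the hard part --- it is the content of Theorem~\ref{thm:2s_to_3_or_bigger}, whose proof occupies an entire section and uses Proposition~\ref{prop:4_string} and Proposition~\ref{prop:long_string_obs} to rule out long runs of equal coefficients.

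The paper's route is different from yours and avoids the case split into torus/cable versus other. It uses the explicit genus formula $g(K)=\tfrac12\sum_i\rho_i(\rho_i-1)$ (Lemma~\ref{lem:Ti_sigma_relationship}), then shows (Lemma~\ref{lem:T1=2}, using characterizing-slope results of Greene, Rasmussen, and Ni) that any knot with alternating surgeries and all stable coefficients equal to $2$ is $T_{2,2g+1}$. Hence if $K\neq T_{2,2g+1}$ then the largest stable coefficient is $\geq 3$, and Theorem~\ref{thm:2s_to_3_or_bigger} applied to the $\lceil p/q\rceil$-changemaker lattice gives $\lceil p/q\rceil\leq 4+\tfrac32\sum_i\rho_i(\rho_i-1)=3g(K)+4$ directly, bypassing $N$ entirely. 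Your torus-knot calculation is correct, but the cable ``induction'' is unspecified and unnecessary under the paper's approach.
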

This allows us to refine the upper bound on lens space surgeries established by Rasmussen \cite{Rasmussen2004Goda}.
\begin{cor}
If $K$ is a non-trivial knot which is not a two-stranded torus knot such that $S_p^3(K)$ is a lens space, then $p\leq 3g(K)+4.$
\end{cor}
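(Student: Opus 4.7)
The plan is to observe that every lens space is, more or less by definition, the double branched cover of an alternating link, so a lens space surgery slope on $K$ is automatically an alternating surgery slope. The result will then follow from Theorem~\ref{thm:genusbound} with essentially no further work.

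First I would recall the classical identification of lens spaces with double branched covers of two-bridge links: for every pair of coprime integers $p,q$ (with $p>0$) the two-bridge link $b(p,q)\subset S^3$ has double branched cover homeomorphic to the lens space $L(p,q)$, and two-bridge links admit alternating diagrams (one reads off an alternating plat diagram from the continued fraction expansion of $p/q$). Hence every lens space is realized as $\Sigma_2(L)$ for some alternating link $L$.

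Given this, suppose $S^3_p(K)$ is a lens space. Then by the preceding paragraph there is an alternating link $L$ with $\Sigma_2(L)\cong S^3_p(K)$, so by definition $p\in \Salt(K)$. Theorem~\ref{thm:genusbound}, applied to the integer alternating slope $p$ (with $q=1$), yields $|p|\leq 3g(K)+4$, and in particular $p\leq 3g(K)+4$, as claimed.

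There is really no obstacle here: the entire content of the corollary is the combination of the classical correspondence between lens spaces and two-bridge links with the genus bound of Theorem~\ref{thm:genusbound}. The only point worth double-checking is that a non-trivial knot $K$ which is not a two-stranded torus knot still satisfies the hypotheses of Theorem~\ref{thm:genusbound} when we merely assume it has a lens space surgery, but this is immediate from how that theorem is stated.
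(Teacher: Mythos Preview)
Your proposal is correct and matches the paper's (implicit) argument: the corollary is stated immediately after Theorem~\ref{thm:genusbound} with no separate proof, the intended reasoning being exactly that lens spaces are double branched covers of two-bridge (hence alternating) links, so a lens space surgery slope lies in $\Salt(K)$ and Theorem~\ref{thm:genusbound} applies directly.
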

The bound in Theorem~\ref{thm:genusbound} is sharp with equality being realised by the $T_{3,n}$ torus knots and a family of hyperbolic knots in $\D$ which begins with the $(-2,3,7)$-pretzel knot (see \S\ref{sec:genus_bounds}).

Using these ideas we also generalize a result of Ni \cite[Corollary~1.7]{Ni2020Seifert}.
\newcommand{\thmpolyform}{Let $K$ be a knot in $S^3$ which admits a positive alternating surgery. If the symmetrized Alexander polynomial of $K$ takes the form
    \[
    \Delta_K(x)=x^n-x^{n-1}+x^{n-2} + \text{lower order terms},
    \]
    then $K=T_{2,2n+1}$.}
\begin{thm}\label{thm:Alexander_poly_form}
    \thmpolyform
\end{thm}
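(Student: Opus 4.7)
The plan is to combine the L--space knot constraints on $K$, the genus bound (Theorem~\ref{thm:genusbound}), and the changemaker analysis underlying Theorem~\ref{thm:salt_trichotomy} to force the Alexander polynomial of $K$ to coincide with that of $T_{2,2n+1}$, and then to identify $K$ itself.

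Since a positive alternating surgery on $K$ yields a double branched cover of a non--split alternating link, it is a Heegaard Floer L--space, and hence $K$ is an L--space knot. By Ozsv\'ath--Szab\'o, the symmetrized Alexander polynomial of an L--space knot has the form
\[
\Delta_K(x)=\sum_{i=0}^{2k}(-1)^i x^{e_i},\qquad n=e_0>e_1>\dots>e_{2k}=-n,
\]
so the hypothesis is equivalent to $e_1=n-1$ and $e_2=n-2$. Using the standard L--space torsion--coefficient formula $t_j(K)=\sum_{\ell\geq 1}\ell\,a_{j+\ell}$ (with $a_i$ the coefficient of $x^i$ in $\Delta_K$) this immediately gives $t_{n-1}(K)=1$ and $t_{n-2}(K)=1$.

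Assume toward a contradiction that $K\neq T_{2,2n+1}$. Among the two--stranded torus knots $T_{2,2m+1}$ only the one with $m=n$ has an Alexander polynomial whose top three exponents are $n,n-1,n-2$, so $K$ is not a two--stranded torus knot and Theorem~\ref{thm:genusbound} applies, bounding every positive alternating surgery slope by $|p/q|\leq 3g(K)+4=3n+4$. For such a slope the changemaker machinery underlying Theorem~\ref{thm:salt_trichotomy} produces a non--decreasing changemaker vector $\sigma=(\sigma_0,\dots,\sigma_s)$ satisfying $\|\sigma\|^2=p$ and $\sigma_i\leq 1+\sum_{j<i}\sigma_j$, from which the stable coefficients $\underline{\rho}(K)$ are read off.

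The heart of the argument is the dictionary between torsion coefficients of $K$ and the gap structure of $\sigma$: the equalities $t_{n-1}(K)=t_{n-2}(K)=1$ force the two largest entries of $\sigma$ both to equal $1$. Since $\sigma$ is non--decreasing, all of its entries are then at most $1$, so (after discarding any zero coordinates) $\sigma=(1,\dots,1)$. Its stable coefficients and the integer $N$ of Theorem~\ref{thm:salt_trichotomy} then coincide with those of $T_{2,2n+1}$; in particular $\Delta_K=\Delta_{T_{2,2n+1}}$ and $g(K)=n$. Moreover the alternating surgery on $K$ and the corresponding surgery on $T_{2,2n+1}$ share the same branching set, an alternating two--bridge link, so that the Berge--style classification of knots with such extremal alternating/lens space surgery data identifies $K$ with $T_{2,2n+1}$, contradicting our assumption. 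The main obstacle I anticipate is this changemaker step: translating the two torsion--coefficient equalities into ``the top two entries of $\sigma$ are $1$'' and ruling out non--integer surgery slopes $q\geq 2$ requires careful use of the genus bound $p\leq q(3n+4)$ together with the explicit torsion--coefficient--to--gap dictionary, while the final step of upgrading the identification of $\Delta_K$ to the identification of $K$ should be routine given the tools already developed in the paper.
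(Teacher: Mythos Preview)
Your central changemaker claim is incorrect. From $t_{n-1}=t_{n-2}=1$ you cannot conclude that the two largest entries of $\sigma$ equal $1$. Computing one step further from the hypothesis gives $t_{n-3}=1\cdot a_{n-2}+2\cdot a_{n-1}+3\cdot a_n=1-2+3=2$, so (since torsion coefficients of an L--space knot are nonincreasing) there are exactly two torsion coefficients equal to $1$, i.e.\ $T_1=2$. By Remark~\ref{rem:stable_hand_calc} this means the \emph{largest} stable coefficient is $\rho_1=T_1=2$, so the top entry of $\sigma$ is $2$, not $1$. Indeed, if $\sigma=(1,\dots,1)$ the stable coefficients would be empty and the genus formula $g=\tfrac12\sum_i\rho_i(\rho_i-1)$ would give $g=0$, contradicting $g=n\geq 1$. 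The correct conclusion from the torsion--coefficient dictionary is $\underline{\rho}(K)=(2,\dots,2)$ with $n$ twos; this is the content of Lemma~\ref{lem:2strandpoly} in the paper, and it does not require the genus bound at all.

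Once $\underline{\rho}(K)=(2,\dots,2)$ is established, your final ``Berge--style classification'' sentence is exactly where the substantive work lies, and your proposal does not supply it. The paper carries this out in Lemma~\ref{lem:T1=2} by a genuine case analysis: for integer alternating slopes one invokes the characterizing--slope theorems of Greene ($4g+2$), Rasmussen ($4g+3$), and Ni ($4g+1$); for non--integer slopes one uses that $S^3_{p/q}(T_{2,2g+1})$ is atoroidal Seifert fibered and then rules out hyperbolic $K$ (via Ni), satellite $K$ (via a cabling/genus inequality), and other torus knots (via the Alexander polynomial). None of this follows from simply observing that the branching set is two--bridge. Finally, note that invoking Theorem~\ref{thm:genusbound} here is circular: in the paper the proof of Theorem~\ref{thm:genusbound} itself uses Lemma~\ref{lem:T1=2} to get $\rho_1\geq 3$ for knots that are not two--stranded torus knots.
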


\subsection{Knots with common surgeries}
It is a folklore theorem that for any two knots, $K$ and $K'$ in $S^3$, the set of slopes for which $S_{p/q}^3(K)$ and $S_{p/q}^3(K')$ are homeomorphic is finite. In order to prove Theorem~\ref{thm:computable} and Theorem~\ref{thm:salt_trichotomy} we need a version of this folklore result with effective bounds. Since the techniques required for such a result are peripheral to the rest of this paper we reserve its proof to the appendix. We note that the fact that one can obtain computable bounds depends on the quantitative bounds on hyperbolic Dehn fillings obtained by Futer-Purcell-Schleimer \cite{FPS19}.

\newcommand{\computableq}{Let $K$ and $K'$ be a pair of distinct knots in $S^3$. Then there is a computable integer $Q(K,K')$ such that if $S_{p/q}^3(K)\cong  S_{p/q}^3(K')$ for some $p/q\in \Q$, then $|q|\leq Q(K,K')$.} 

\begin{repthm}{thm:computable_q}
    \computableq
\end{repthm}

The constant $Q(K,K')$ appearing in this theorem is described in more detail in Definition~\ref{def:Q}. We remark also that Theorem~\ref{thm:computable_q} is implied by the results of Sorya and Wakelin \cite{Sorya2024effective}, which appeared shortly after the first version of this paper. Their work establishes the stronger result that for every knot $K$, there is a computable constant $Q(K)$ such that every slope $p/q$ with $|q|\geq Q(K)$ is a characterizing slope for $K$.

\subsection{Comparison of Theorem~\ref{thm:salt_trichotomy} with previous work}

The main structural result on $\Salt(K)$, Theorem~\ref{thm:salt_trichotomy}, is a siginificant improvement on several results appearing in \cite{Mc17}. In Theorem~1.2 of \cite{Mc17} it is shown that there exists an integer\footnote{The quantity in \cite{Mc17} is also denoted $N$, but we will reserve the notation $N$ to indicate the quantity appearing in Theorem~\ref{thm:salt_trichotomy} of the present article.} $M$ such that $\Salt(K)\subseteq [M-1,M+1]\cap \Q$. If $\underline{\rho}=(\rho_1, \dots, \rho_\ell)$ are the stable coefficients associated to a knot admitting positive alternating surgeries, then $M$ is defined by the formula
\[
M=\rho_1+\sum_{i=1}^\ell \rho_i^2.
\]
It is not hard to see that this satisfies $M\leq N$ and, since $\Salt(K)\subseteq [N-1,M+1]\cap \Q$, we also have $N\leq M+2$.

Although the formula for $M$ is simpler in comparison to the more intricate expression defining $N$, the quantity $N$ has superior theoretical properties. For example, for hyperbolic knots in $\D$ we always have $\Salt(K)=[N-1,N]\cap \Q$, whereas in terms of $M$ we only know that $\Salt(K)=[M-1,M]\cap \Q$ or that $\Salt(K)=[M,M+1]\cap \Q$.

There are examples where $N=M+1$. The manifold $m071$ is the complement of a knot $K$ in $\D$ whose stable coefficients are $(5, 2)$. From such stable coefficients, we obtain $N=25+4+3=32$, $M=31$ and alternating surgeries are given by $\Salt(K) =[31,32]\cap \Q$. For reference, the knot $K$ corresponds to an unknotting crossing in $K10a83$.

\subsection*{Code and data}
All the supporting code together with additional data can be found at~\cite{BKM}.

\subsection*{Acknowledgments}
We would like to thank Steve Boyer, David Futer, Brendan Owens, Patricia Sorya, and Claudius Zibrowius. We are also grateful to the anonymous referee for their detailed reading of the paper and their many constructive comments.

KLB was partially supported by the Simons Foundation grant \#523883 and gift \#962034.

MK was supported by the SFB/TRR 191 \textit{Symplectic Structures in Geometry, Algebra and Dynamics}, funded by the DFG (Projektnummer 281071066 - TRR 191) and is now supported by a Ram\'on y Cajal grant (RYC2023-043251-I) and by project PID2024-157173NB-I00 funded by MCIN/AEI/10.13039/501100011033, ESF+ and FEDER, EU; and by a VII Plan Propio de Investigación y Transferencia (SOL2025-36103) of the University of Sevilla.

DM is supported by an NSERC Discory grant (RGPIN-2020-05491) and a Canada Research Chair.

\part{Changemaker lattices and obtuse superbases}\label{part:lattices}
A key ingredient in the study of alternating surgeries is the theory of changemaker lattices. This part of the paper is dedicated to the lattice theory necessary for our work. We remind the reader that an integral lattice is a pair $(L,\cdot)$, where $L$ is a finitely generated free abelian group equipped with a symmetric, positive definite, bilinear pairing
\begin{align*}
    L\times L &\rightarrow \Z\\
    (x,y)&\mapsto x\cdot y.
\end{align*}
We will frequently omit the bilinear pairing from our notation and simply denote an integral lattice $(L,\cdot)$ by $L$. Given $x\in L$ we will use the notation $\norm{x}=x\cdot x$.

Throughout the paper we will refer to $\Z^r$ equipped with a symmetric bilinear pairing that admits an orthonormal basis as a diagonal lattice (of rank $r$) and we will denote the diagonal lattice by $\Z^r$. 

\subsection*{Notational convention} Given a fixed orthonormal basis $\{\e_1, \dots, \e_r\}$ for $\Z^r$, we will write vectors in the form
\[
\bv=v_1 \e_1 + \dots +v_r\e_r\in \Z^r.
\]
That is, a bold script symbol indicates an element of $\Z^r$ and the corresponding non-bold symbol with a subscript a coefficient with respect to the fixed basis. 

\section{Obtuse superbases and graph lattices}\label{sec:OSB}
The notion of an obtuse superbase will play a key role in understanding alternating surgeries.
\begin{defi}[Obtuse superbase]
Let $L$ be an integral lattice of rank $r$. We say that a set $B=\{v_0, \dots, v_r\}\subseteq L$ is an \textbf{obtuse superbase} for $L$ if it satisfies the following conditions:
\begin{enumerate}[label = (\alph*)]
    \item the $v_i$ span $L$;
    \item for $0\leq i <j \leq r$ we have $v_i \cdot v_j\leq 0$; and
    \item $v_0+ \dots + v_r=0$.
\end{enumerate}
The graph associated to $B$, denoted $G_B$, has vertex set $B=\{v_0, \dots, v_r\}$ and $|v_i \cdot v_j|$ edges between $v_i$ and $v_j$ for $i\neq j$. The obtuse superbase $B$ is \textbf{planar} if $G_B$ is a planar graph.
\end{defi}

Obtuse superbases are closely related to the notion of graph lattices.
\begin{defi}[Graph lattice]
    Let $G=(V,E)$ be a connected, finite graph with no self-loops. Define $\Lambda(G)$ to be the abelian group defined by
    \[
    \Lambda(G)= \frac{\Z\langle v_0, \dots , v_r \rangle}{\Z \langle v_0 + \dots + v_r \rangle},
    \]
    where $V=\{v_0, \dots, v_r\}$ are the vertices of $G$.
    We denote by $d(v_i)$ the degree of $v_i$ and by $e(v_i,v_j)$ the number of edges between $v_i$ and $v_j$.
    The group $\Lambda(G)$ is a free abelian group of rank $r$ and the bilinear pairing defined on the vertices by 
    \begin{equation}\label{eq:graph_lattice_pairing}
v_i\cdot v_j = \begin{cases}
    d(v_i) &\text{if $i=j$}\\
    -e(v_i,v_j) &\text{if $i\neq j$},
\end{cases}
\end{equation}
descends to a bilinear pairing on $\Lambda(G)$, which can be further shown to be positive definite.\footnote{The hypothesis that $G$ be connected is required to ensure that the pairing is positive definite and not just positive semi-definite.} The group $\Lambda(G)$ endowed with this pairing is the \textbf{graph lattice} of $G$.  
\end{defi}
\begin{rem}\label{rem:edge_count}
    Let $H$ be a subset of the vertices of $G$ and $z\in \Lambda(G)$ an element of the form $z=\sum_{v\in H} v$. Then \eqref{eq:graph_lattice_pairing} implies that the self-pairing $\norm{z}$ is equal to the number of edges between $H$ and $G\setminus H$. In particular, this is why $G$ must be connected to ensure that $\Lambda(G)$ is positive definite.
\end{rem}

By construction (the images of) the vertices of $G$ form an obtuse superbase in $\Lambda(G)$ and the associated graph for this obtuse superbase is a copy of $G$.

Thus we see that the relationship between obtuse superbases and graph lattices can be summarized as follows.
\begin{prop}\label{prop:ObtSupBase=graphlattice}
    An integral lattice $L$ admits an obtuse superbase $B$ with associated graph $G_B$ if and only if $L$ is isomorphic to the graph lattice $\Lambda(G_B)$.\qed
\end{prop}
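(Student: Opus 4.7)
The plan is to establish both directions by explicitly constructing the isomorphism, with the bulk of the work being a direct verification from the definitions.

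The backward direction is essentially the content of the paragraph preceding the proposition: if $L\cong \Lambda(G_B)$, then the images of the vertices of $G_B$ form, by construction, an obtuse superbase of $L$ whose associated graph recovers $G_B$. So the substantive content is the forward direction.

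For the forward direction, given an obtuse superbase $B=\{v_0,\dots,v_r\}$ of $L$, I would define a group homomorphism $\phi\colon \Lambda(G_B)\to L$ by sending each vertex $v_i$ (viewed as a generator of $\Lambda(G_B)$) to the same-named element $v_i\in L$. This is well-defined since the defining relation $v_0+\dots+v_r=0$ of $\Lambda(G_B)$ is exactly condition (c) of an obtuse superbase. Surjectivity is immediate from condition (a). The key step is to check that $\phi$ preserves the bilinear pairing. For off-diagonal entries, condition (b) lets us remove absolute values: $v_i\cdot v_j = -|v_i\cdot v_j| = -e(v_i,v_j)$, which matches the graph lattice pairing. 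For the diagonal entries, I would pair both sides of $v_0+\dots+v_r=0$ with $v_i$ to obtain
\[
0 = \norm{v_i} + \sum_{j\neq i} v_i\cdot v_j = \norm{v_i} - \sum_{j\neq i} e(v_i,v_j),
\]
so $\norm{v_i}=d(v_i)$, again matching \eqref{eq:graph_lattice_pairing}. Finally, injectivity follows for free from the fact that $\phi$ preserves pairings and $\Lambda(G_B)$ is positive definite: any $w$ with $\phi(w)=0$ satisfies $\norm{w}_{\Lambda(G_B)}=\norm{\phi(w)}_L=0$, forcing $w=0$.

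There is no real obstacle here; the proposition is a direct unpacking of the two definitions. The only mildly non-obvious point, which I would flag explicitly, is that the self-pairing relation $\norm{v_i}=d(v_i)$ required in the graph lattice is not a separate hypothesis in the definition of an obtuse superbase but is automatically forced by the sum relation $\sum v_i=0$ together with the non-positivity of the off-diagonal pairings.
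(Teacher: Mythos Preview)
Your proposal is correct and is exactly the natural unpacking of the definitions; the paper itself treats the proposition as immediate and supplies no proof beyond the remark preceding it that the vertices of $G$ form an obtuse superbase in $\Lambda(G)$ with associated graph $G$. Your explicit verification of the diagonal pairing via the sum relation is the only point requiring a line of computation, and you handle it correctly.
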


In general, a lattice $L$ can admit multiple obtuse superbases. Recall that two connected graphs $G$ and $G'$ are \textbf{2-isomorphic} if there is a cycle-preserving bijection between their edges sets. 
\begin{lem}\label{lem:2-iso}
Let $L$ be a lattice with an obtuse superbase $B$ and let $G'$ be a graph. Then there is an obtuse superbase $B'$ of $L$ with $G_{B'}$ isomorphic to $G'$ if and only if $G'$ is 2-isomorphic to $G_B$.
\end{lem}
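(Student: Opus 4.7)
The plan is to reduce the statement to Whitney's classical 2-isomorphism theorem, which says that two connected graphs are 2-isomorphic if and only if there is a cycle-preserving bijection between their edge sets, or equivalently if and only if their cycle matroids are isomorphic.

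The bridge will be the standard identification of the graph lattice $\Lambda(G)$ of a connected graph $G$ with the cut lattice of $G$ sitting inside $\Z^{E(G)}$. After choosing an orientation of the edges, the boundary map $\partial\colon \Z^{V(G)} \to \Z^{E(G)}$ sending each vertex to its signed star has kernel generated by $v_0 + \cdots + v_r$ (this uses connectedness of $G$) and descends to an isometric embedding of $\Lambda(G)$ onto the cut lattice, where the pairing is the restriction of the standard inner product on $\Z^{E(G)}$. Under this identification, the vertex obtuse superbase of $\Lambda(G)$ becomes the collection of signed stars. The orthogonal complement of the cut lattice inside $\Z^{E(G)}$ is the cycle lattice $Z_1(G;\Z)$, spanned by characteristic vectors of oriented cycles, so cut lattice data and cycle lattice data determine one another.

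For the ``if'' direction, given a 2-isomorphism between $G_B$ and $G'$, I would use the induced edge bijection (after a compatible adjustment of orientations) to produce an isometry $\Z^{E(G_B)} \to \Z^{E(G')}$ carrying $Z_1(G_B)$ to $Z_1(G')$, hence by orthogonality the cut lattice to the cut lattice. Composing with the cut lattice identifications yields an isomorphism $L \cong \Lambda(G_B) \to \Lambda(G')$, and pulling back the vertex superbase of $\Lambda(G')$ along this isomorphism produces the required obtuse superbase $B'$ of $L$ with $G_{B'} \cong G'$. The verification that orientation and sign choices can be made consistently reduces to checking Whitney's three elementary generating moves (vertex identification, vertex cleaving at a cut vertex, and Whitney twist along a $2$-separation), for each of which the check is local and direct.

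For the ``only if'' direction, the obtuse superbases $B$ and $B'$ exhibit $L$ as the graph lattice of both $G_B$ and $G'$, producing via Proposition~\ref{prop:ObtSupBase=graphlattice} a lattice isomorphism $\Lambda(G_B) \cong \Lambda(G')$ matching the two distinguished vertex superbases. I would then promote this to an isometry between the ambient edge lattices: the matching of obtuse superbases forces all edge multiplicities $|v_i \cdot v_j|$ to correspond, and the remaining rigidity comes from the fact that the cut lattices span their ambient spaces rationally. The resulting isometry permutes the standard basis vectors up to sign, equivalently gives an edge bijection $E(G_B) \to E(G')$ which, because it sends cycle lattice to cycle lattice, preserves cycles; Whitney's theorem then delivers the 2-isomorphism. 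The main obstacle is precisely this promotion step, since a bare lattice isomorphism of cut lattices need not a priori come from an edge bijection, and it may well be cleaner to replace it by a direct argument that any two obtuse superbases of $L$ are related by a sequence of elementary local changes which correspond on the graph side to Whitney's three moves.
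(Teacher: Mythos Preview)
The paper's proof is a two-line citation: by Proposition~\ref{prop:ObtSupBase=graphlattice}, the existence of an obtuse superbase $B'$ with $G_{B'}\cong G'$ is equivalent to $L\cong\Lambda(G')$, and since already $L\cong\Lambda(G_B)$, the question reduces to whether $\Lambda(G_B)\cong\Lambda(G')$; Greene \cite[Theorem~3.8]{Gr13} proves this holds precisely when $G_B$ and $G'$ are 2-isomorphic. All the work is outsourced to Greene's result.

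Your ``if'' direction is fine and is one standard way to see the easy half of Greene's theorem. The ``only if'' direction, however, has a genuine gap. Your assertion that the lattice isomorphism $\Lambda(G_B)\cong\Lambda(G')$ obtained from Proposition~\ref{prop:ObtSupBase=graphlattice} ``matches the two distinguished vertex superbases'' is simply false: the isomorphism $L\to\Lambda(G_B)$ carries $B$ to the vertex superbase, and the isomorphism $L\to\Lambda(G')$ carries $B'$ to the vertex superbase, so the composite $\Lambda(G_B)\to\Lambda(G')$ carries the vertex superbase of $G_B$ to the image of $B$, not to the vertex superbase of $G'$. Without this matching there is no reason the induced rational extension to $\Z^{E}\otimes\Q$ should permute the coordinate lines, so your promotion step does not go through. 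You correctly flag this as ``the main obstacle,'' but neither your proposed fix (promoting via rational spanning) nor your alternative (relating any two obtuse superbases by Whitney-type moves) is carried out; the latter is essentially the content of Greene's theorem and is not a routine verification. In short, you have reproved the easy direction and identified, but not closed, the hard one.
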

\begin{proof}
If $L$ possesses an obtuse superbase $B$, then $L$ is isomorphic to the graph lattice of $G_B$ by Proposition~\ref{prop:ObtSupBase=graphlattice}. However Greene has shown that two graphs have isomorphic graph lattices if and only if they are 2-isomorphic \cite[Theorem~3.8]{Gr13}.
\end{proof}

\begin{rem}
    Since planarity of a graph is preserved by 2-isomorphism, Lemma~\ref{lem:2-iso} implies that there exists a planar obtuse superbase for a lattice $L$ if and only if every obtuse superbase is planar. That is, planarity of an obtuse superbase is intrinsic to the lattice $L$ rather than the particular choice of obtuse superbase.
\end{rem}

A non-zero vector $v\in L$ is \textbf{irreducible} if for any non-zero $x,y \in L$ with $x+y=v$ we have that $x\cdot y \leq -1$. In the presence of an obtuse superbase $B$, the irreducible elements of $L$ have a nice interpretation in terms of the graph $G_B$: they correspond to connected subgraphs of $G_B$ with connected complements.

\begin{lem}[{\cite[Lemma~2.2]{McCoyAltUnknotting}}]\label{lem:irreducible}
Let $L$ be a lattice with an obtuse superbase $B$. A vector $z \in L\setminus \{0\}$ is irreducible if and only if there exist $R\subseteq B$ such that $z=\sum_{v\in R} v$ and both $R$ and $B\setminus R$ induce connected subgraphs of $G_B$. \qed
\end{lem}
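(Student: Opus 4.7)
The approach is to use the graph-Laplacian identity: for elements $x = \sum_i b_i v_i$ and $y = \sum_i c_i v_i$ written with respect to the obtuse superbase,
\[
x \cdot y = \sum_{i<j} e(v_i,v_j)\,(b_i-b_j)(c_i-c_j),
\]
obtained by expanding using $v_i \cdot v_i = d(v_i)$ and $v_i \cdot v_j = -e(v_i,v_j)$ for $i \neq j$. The entire proof reduces to an edge-by-edge sign analysis of this identity, combined with the fact that $G_B$ is connected since $L$ is positive definite.

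For the ``if'' direction, suppose $z = \sum_{v \in R} v$ with $R$ and $B \setminus R$ each inducing a connected subgraph of $G_B$, and consider any decomposition $z = x+y$ with $x, y \neq 0$. Choose integer representations so that $b_i + c_i = a_i$, where $a_i = 1$ if $v_i \in R$ and $0$ otherwise. For each edge $\{v_i, v_j\}$, a direct substitution shows: if both endpoints lie in $R$ or both lie in $B \setminus R$ the contribution equals $-(b_i-b_j)^2 \le 0$; if one endpoint is in $R$ and the other in $B \setminus R$ the contribution equals $t(1-t)$ with $t = b_i - b_j \in \mathbb{Z}$, which is also non-positive. Hence $x \cdot y \le 0$. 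If equality held, every edge term would vanish, so $b$ would be constant on each of $R$ and $B \setminus R$ by connectedness; then connectedness of $G_B$ gives at least one mixed edge, and the constraint $t \in \{0,1\}$ forces either $x = 0$ or $y = 0$. Both options contradict the hypothesis, so $x \cdot y \le -1$ and $z$ is irreducible.

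For the ``only if'' direction, fix a representation $z = \sum a_i v_i$ with $\min_i a_i = 0$ and set $M = \max_i a_i$. Consider the splitting $x = \sum_i \mathbf{1}_{a_i \ge 1}\,v_i$, $y = \sum_i (a_i - 1)^+ v_i$. The same edge analysis shows every contribution to $x \cdot y$ is non-negative: edges inside $\{a_i = 0\}$ or $\{a_i \ge 1\}$ contribute $0$, while mixed edges contribute $(a_j - 1) \ge 0$ for the endpoint with $a_j \ge 1$. Thus $x \cdot y \ge 0$. Since $z \neq 0$ and $\min a_i = 0$ force $x \neq 0$, irreducibility forces $y = 0$, which in turn forces $M \le 1$. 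Hence $z = \sum_{v \in R} v$ for $R = \{v_i : a_i = 1\}$. If $R$ decomposed as $R_1 \sqcup R_2$ with no $G_B$-edge between them, then $z = \sum_{v \in R_1} v + \sum_{v \in R_2} v$ would be a nonzero decomposition with $x \cdot y = 0$, contradicting irreducibility; thus $R$ is connected. The identical argument applied to the alternative representation $z = -\sum_{v \in B \setminus R} v$ (using $\sum_{v \in B} v = 0$) rules out disconnectedness of $B \setminus R$.

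The main obstacle is selecting the right decompositions in the only-if direction. The split $b_i = \mathbf{1}_{a_i \ge 1}$, $c_i = (a_i-1)^+$ is engineered precisely so that the Laplacian identity gives every edge a non-negative contribution, and this is what forces the coefficients of $z$ into $\{0,1\}$; the partition-based decompositions then convert disconnectedness into a non-negative pairing. Once these candidates are located, the rest is careful edge-by-edge bookkeeping.
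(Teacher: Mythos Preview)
Your proof is correct. The paper itself does not supply a proof of this lemma; it simply cites it from \cite{McCoyAltUnknotting} and closes with a \qed, so there is no in-paper argument to compare against. Your approach via the Laplacian identity $x\cdot y=\sum_{i<j}e(v_i,v_j)(b_i-b_j)(c_i-c_j)$ is clean and standard for graph lattices, and both directions are handled carefully: the equality analysis in the ``if'' direction correctly uses connectedness of $G_B$ (which follows from positive definiteness of $L$) together with $R\neq\emptyset,B$, and in the ``only if'' direction your threshold decomposition $b_i=\mathbf{1}_{a_i\ge 1}$, $c_i=(a_i-1)^+$ is exactly the right device to force $a_i\in\{0,1\}$. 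The only point worth making explicit is that $x=\sum_{a_i\ge1}v_i\neq 0$ because the unique linear relation among the $v_i$ is $\sum_{v\in B}v=0$, so a subset-sum vanishes only for $\emptyset$ or $B$; you use this implicitly in several places and it would strengthen the write-up to state it once.
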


A lattice $L$ is said to be \textbf{indecomposable} if $L$ cannot be decomposed as an orthogonal direct sum $L=L_1 \oplus L_2$ with both $L_1$ and $L_2$ non-zero. 

\begin{lem}[{\cite[Lemma~2.3]{McCoyAltUnknotting}}]\label{lem:2-connected}
Let $L$ be a lattice with an obtuse superbase $B$. The following are equivalent:
\begin{enumerate}
    \item $G_B$ is 2-connected;
    \item every element of $B$ is irreducible;
    \item $L$ is indecomposable. \qed
\end{enumerate}
\end{lem}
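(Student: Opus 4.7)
My plan is to establish the chain $(1)\Leftrightarrow (2)$, $(2)\Rightarrow (3)$, and $(\text{not }1)\Rightarrow(\text{not }3)$. Before any of this, I would record that $G_B$ is connected: since $L$ admits an obtuse superbase it is positive definite, but the graph lattice of a disconnected graph is only positive semi-definite (the sum of the vertices in one component is a non-zero vector with zero self-pairing), so $G_B$ must be connected. Thus ``$G_B$ is 2-connected'' is equivalent to ``$G_B$ has no cut vertex''.

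For $(1)\Leftrightarrow (2)$, I would apply Lemma~\ref{lem:irreducible} to a vertex $v\in B$. The key observation is that, up to the single relation $\sum_{u\in B}u=0$, the elements of $B$ are linearly independent in $L$, so the only $R\subseteq B$ with $\sum_{u\in R}u=v$ (for a non-zero $v\in B$) is $R=\{v\}$. Since $\{v\}$ is trivially connected, Lemma~\ref{lem:irreducible} says $v$ is irreducible exactly when $B\setminus\{v\}$ induces a connected subgraph, i.e.\ exactly when $v$ is not a cut vertex of $G_B$. Ranging over $v\in B$, every element of $B$ is irreducible if and only if $G_B$ has no cut vertex.

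For $(2)\Rightarrow (3)$, I would argue by contrapositive. Suppose $L=L_1\oplus L_2$ with both summands non-zero, and decompose each $v\in B$ orthogonally as $v=v^{(1)}+v^{(2)}$. If some $v$ has both components non-zero, then $v^{(1)}\cdot v^{(2)}=0$, which witnesses that $v$ is not irreducible. Otherwise each $v\in B$ lies entirely in one summand, giving a partition $B=B_1\sqcup B_2$ with $B_i\subseteq L_i$. But then $v\cdot u=0$ for all $v\in B_1$, $u\in B_2$, so $G_B$ has no edges between $B_1$ and $B_2$; connectedness of $G_B$ forces one of $B_i$ to be empty, contradicting $L_i\neq 0$.

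For $(\text{not }1)\Rightarrow(\text{not }3)$, suppose $v_0\in B$ is a cut vertex and let $V_1,\ldots,V_k$ ($k\geq 2$) be the vertex sets of the components of $G_B\setminus v_0$. Set $L_i$ to be the $\mathbb{Z}$-span of $V_i$ inside $L$. No edges run between $V_i$ and $V_j$ for $i\neq j$, so $L_i\cdot L_j=0$. Using $v_0=-\sum_{i=1}^{k}\sum_{u\in V_i}u$, we see $v_0\in L_1+\cdots+L_k$, so $L=L_1+\cdots+L_k$. The rank count $\sum_i|V_i|=|B|-1=\operatorname{rank}(L)$ then forces the sum to be direct, yielding a non-trivial orthogonal decomposition of $L$. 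The one point that requires a little care is the uniqueness of $R$ in the application of Lemma~\ref{lem:irreducible}, but this is immediate from the presentation of $L$ as a graph lattice.
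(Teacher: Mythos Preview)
Your proof is correct. The paper does not actually prove this lemma; it is imported from \cite{McCoyAltUnknotting} with only a citation and a \qed, so there is no ``paper's proof'' to compare against. Your argument is the standard one: the equivalence $(1)\Leftrightarrow(2)$ via Lemma~\ref{lem:irreducible} hinges on the fact that the only subset $R\subseteq B$ with $\sum_{u\in R}u=v$ is $R=\{v\}$, which follows because the kernel of $\mathbb{Z}^{B}\to L$ is generated by the all-ones vector (equivalently, any $r$ of the $r{+}1$ elements of $B$ form a basis). Your treatments of $(2)\Rightarrow(3)$ and $(\text{not }1)\Rightarrow(\text{not }3)$ are also fine; in the latter, orthogonality of the $L_i$ ensures the sum is direct over~$\mathbb{Z}$ (not just over~$\mathbb{Q}$), and nonemptiness of each $V_i$ together with linear independence of $B\setminus\{v_0\}$ guarantees each summand is non-zero.
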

We will be focusing on lattices with no elements of length one. 
\begin{lem}[{\cite[Lemma~2.4]{McCoyAltUnknotting}}]
Let $L$ be a lattice with an obtuse superbase $B$. The graph $G_B$ is 2-edge-connected if and only if $\norm{z}\geq 2$ for all $z\in L\setminus\{0\}$. \qed
\end{lem}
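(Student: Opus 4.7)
The plan is to translate the lattice inequality into a statement about edge cuts in $G_B$, using the observation recorded in the preceding remark: for any subset $H\subseteq B$ the ``cut element'' $z_H := \sum_{v\in H} v$ satisfies $\norm{z_H} = |E(H, B\setminus H)|$, the number of edges of $G_B$ crossing the cut.

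For the backward direction, first note that $G_B$ must be connected: otherwise, taking $C$ to be the vertex set of one component, the element $z_C$ would be non-zero (the kernel of the map $\Z^{B}\to L$ has rank one and is generated by $v_0+\dots+v_r$) yet would have norm $0$, contradicting positive definiteness. Now suppose $G_B$ is connected but has a bridge. Removing this edge partitions $B$ into non-empty proper subsets $H$ and $B\setminus H$ with exactly one edge between them, so $z_H$ is a non-zero lattice element with $\norm{z_H}=1$, contradicting the hypothesis.

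The forward direction is the content of the proof. Write $z=\sum_i a_i v_i$ with $a_i\in\Z$. Using $v_i\cdot v_j=-e(v_i,v_j)$ for $i\neq j$ together with $v_i\cdot v_i=d(v_i)$ (which follows from the superbase relation $\sum v_i=0$), a direct expansion gives the key identity
\[
\norm{z}\;=\;\sum_{i<j}(a_i-a_j)^2\, e(v_i,v_j).
\]
Using the relation $\sum v_i=0$, shift all $a_i$ by a common constant so that $\min_i a_i=0$. Because $z\neq 0$ and the only relation among the $v_i$ is $\sum v_i=0$, this forces $M:=\max_i a_i\geq 1$. For each integer $t\in\{1,\dots,M\}$, set $H_t:=\{v_i : a_i\geq t\}$, which is a non-empty proper subset of $B$. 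Since differences of integers satisfy $(a_i-a_j)^2\geq |a_i-a_j|$, and $|a_i-a_j|=\#\{t : \min(a_i,a_j)<t\leq \max(a_i,a_j)\}$ counts precisely the $t$ for which the edge $\{v_i,v_j\}$ is in the cut $(H_t, B\setminus H_t)$, we obtain
\[
\norm{z}\;\geq\;\sum_{i<j}|a_i-a_j|\, e(v_i,v_j)\;=\;\sum_{t=1}^{M}|E(H_t, B\setminus H_t)|.
\]
By 2-edge-connectivity of $G_B$, each term on the right is at least $2$, yielding $\norm{z}\geq 2M\geq 2$.

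The only real obstacle is the forward direction; once the expansion $\norm{z}=\sum_{i<j}(a_i-a_j)^2 e(v_i,v_j)$ is in hand, the level-set decomposition reduces the minimum-norm problem to a weighted edge-cut problem, after which 2-edge-connectivity applies directly.
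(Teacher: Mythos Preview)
Your proof is correct. Note that the paper does not supply its own argument for this lemma: it is quoted from another source and closed with a \qed, so there is nothing in the present paper to compare against. Your approach via the identity $\norm{z}=\sum_{i<j}(a_i-a_j)^2\, e(v_i,v_j)$ followed by the level-set decomposition into edge cuts is clean and self-contained, and in fact yields the stronger bound $\norm{z}\geq 2M$ where $M$ is the spread of the coefficients.
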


Integral lattices have a general bound on the length of vectors appearing in an obtuse superbase in terms of the lattice discriminant.
\begin{lem}\label{lem:universal_bound}
Let $L$ be a lattice without any vectors of norm one. If $v\in L$ is contained in an obtuse superbase for $L$, then $\norm{v}\leq \mathrm{disc}(L)$.
\end{lem}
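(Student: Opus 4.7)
The plan is to translate the claim into graph theory via the dictionary between obtuse superbases and graph lattices, and then prove a clean combinatorial inequality. Writing the obtuse superbase containing $v$ as $B=\{v_0,\dots,v_r\}$, Proposition~\ref{prop:ObtSupBase=graphlattice} identifies $L$ with the graph lattice $\Lambda(G_B)$, under which $\norm{v}=d_{G_B}(v)$, the degree of the corresponding vertex. Taking $\{v_1,\dots,v_r\}$ as a basis for $L$, the associated Gram matrix is precisely the reduced Laplacian of $G_B$, so Kirchhoff's matrix-tree theorem gives $\mathrm{disc}(L)=\tau(G_B)$, the number of spanning trees. The hypothesis that $L$ contains no vector of norm one is equivalent, by the lemma immediately preceding, to $G_B$ being 2-edge-connected. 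The statement therefore reduces to the purely graph-theoretic inequality: for any 2-edge-connected graph $G$ (with no loops) and any vertex $v\in V(G)$, $d(v)\leq \tau(G)$.

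I would prove this by induction on $|V(G)|$. The base case $|V(G)|=2$ is immediate, since $G$ is then a bouquet of $n:=d(v)$ parallel edges and $\tau(G)=n$ as well. For the inductive step, I split on whether $v$ is a cut vertex of $G$. If $v$ is not a cut vertex, then $G-v$ is connected; fixing any spanning tree $T$ of $G-v$, the trees obtained by adjoining to $T$ any one edge incident to $v$ are $d(v)$ distinct spanning trees of $G$, so $\tau(G)\geq d(v)$.

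If $v$ is a cut vertex, write $G-v=C_1\sqcup\dots\sqcup C_k$ with $k\geq 2$ and let $G_i=G[C_i\cup\{v\}]$. A direct check shows that each $G_i$ is again 2-edge-connected (any bridge of $G_i$ would remain a bridge of $G$) and has strictly fewer vertices than $G$, and that $d_{G_i}(v)\geq 2$ for each $i$ (otherwise the unique edge from $C_i$ to $v$ would be a bridge of $G$). Because spanning trees of $G$ factor through the cut vertex $v$, we have $\tau(G)=\prod_i \tau(G_i)$ while $d(v)=\sum_i d_{G_i}(v)$. Applying the induction hypothesis to each $G_i$ and invoking the elementary inequality $\prod_{i=1}^k d_i \geq \sum_{i=1}^k d_i$ for integers $d_i\geq 2$ with $k\geq 2$ finishes the argument. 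The cut-vertex case is the only real subtlety; everything else is bookkeeping.
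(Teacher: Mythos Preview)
Your proof is correct and follows essentially the same approach as the paper: both translate the problem to graph theory via the matrix-tree theorem, use 2-edge-connectedness to get $d_i\geq 2$ for the edge counts from $v$ into each component of $G_B\setminus\{v\}$, and finish with the inequality $\prod d_i\geq\sum d_i$. The only cosmetic difference is that you organize the argument as an induction splitting on whether $v$ is a cut vertex, whereas the paper handles all $k\geq 1$ in one stroke by fixing a spanning tree of each component $H_i$ of $G_B\setminus\{v\}$ and directly counting the $d_1\cdots d_k$ spanning trees obtained by choosing one connecting edge to each $H_i$.
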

\begin{proof}
Let $B$ be an obtuse superbase for $L$ with corresponding graph $G_B$. Since $L$ contains no vectors of norm one, the graph $G_B$ is 2-edge-connected. Fix a vertex $v\in B$ and let $H_1, \dots, H_k$ be the connected components of $G_B\setminus \{v\}$. Let $d_i$ denote the number of edges between $H_i$ and $v$. Since $G_B$ is 2-edge-connected, we have $d_i\geq 2$ for each $i$. 
If we fix a choice of spanning tree on each of the $H_i$, then we obtain a spanning tree on all of $G_B$ by adding an edge from $v$ to $H_i$ for each of the $H_i$. Since there are $d_1 \cdots d_k$ possible choices of such a collection of edges, there are at least $d_1 \cdots d_k$ distinct spanning trees on $G_B$. Since each of the $d_i$ satisfies $d_i\geq 2$, there are at least $d_1 + \dots + d_k$ spanning trees on $G_B$. Since the determinant of the graph Laplacian counts the number of spanning trees in $G_B$ and is equal to the discriminant $\mathrm{disc}(L)$, this implies $d_1 + \dots + d_k \leq \mathrm{disc}(L)$. On the other hand, $d_1 + \dots + d_k$ is the degree of $v$ which is equal to $\norm{v}$
\end{proof}

\section{Changemaker lattices}\label{sec:changemaker}
Let $\Z^r$ be a diagonal lattice. We say that a non-zero vector $\bsigma\in \Z^r$ is a \textbf{changemaker vector} if there exists an orthonormal basis $\e_1, \dots, \e_r$ for $\Z^r$ such that $\bsigma$ takes the form
\[
\bsigma = \sigma_1 \e_1 + \dots + \sigma_r \e_r
\]
where the coefficients satisfy $\sigma_1=1$ and
\[\sigma_{i-1}\leq \sigma_i\leq 1+ \sigma_1 + \dots + \sigma_{i-1}\]
for $i=2, \dots, r$. Let $m\leq r$ be the minimal index such that $\sigma_m\geq 2$. We refer to the tuple $(\sigma_m, \dots, \sigma_r)$ as the \textbf{stable coefficients} of $\bsigma$.\footnote{In Part~2 it will be convenient to use some alternative conventions to express the stable coefficients. In that section, the stable coefficients will be written in the form $(\rho_1, \dots, \rho_\ell)$ where the coefficients $\rho_i = \sigma_{r+1-i}$ are a non-increasing sequence and $\ell = r+1-m$.} If no such index exists, that is, $\sigma_r=1$, then the tuple of stable coefficients is defined to be empty.

If there is an index $k\geq 2$ such that $\sigma_k=1+\sigma_1+\dots + \sigma_{k-1}$, then the coefficient $\sigma_k$ is \textbf{tight} and correspondingly the vector $\bsigma$ is \textbf{tight}.

The key combinatorial property of changemaker vectors is contained in the following proposition, whose proof is left as a simple exercise.
\begin{prop}\label{prop:CM_condition}
Let $\bsigma = \sigma_1 \e_1 + \dots + \sigma_r \e_r$ be a changemaker vector.
Then for any $k\leq r$ and any integer $T\leq \sigma_1 + \dots + \sigma_k$, there is a subset $A\subseteq \{1,\dots, k\}$ such that 
\[
\pushQED{\qed} 
T=\sum_{i\in A}\sigma_i.\qedhere
\popQED
\]
\end{prop}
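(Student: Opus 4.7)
The plan is to prove this by induction on $k$, exploiting the defining inequality $\sigma_k \leq 1 + \sigma_1 + \dots + \sigma_{k-1}$ exactly as in the classical ``coin-changing'' argument (the same idea that gives the proposition its name). Note first that the hypothesis $T \leq \sigma_1 + \dots + \sigma_k$ must implicitly be paired with $T \geq 0$, since any subset sum of the nonnegative $\sigma_i$ is nonnegative; the empty subset realizes $T = 0$.

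For the base case $k = 1$, the condition $\sigma_1 = 1$ forces $T \in \{0,1\}$, represented by $A = \emptyset$ or $A = \{1\}$ respectively. For the inductive step, assume the statement for $k-1$, and fix $T$ with $0 \leq T \leq \sigma_1 + \dots + \sigma_k$. I would split into two cases. If $T \leq \sigma_1 + \dots + \sigma_{k-1}$, the inductive hypothesis immediately supplies a subset $A \subseteq \{1, \dots, k-1\}$ summing to $T$. Otherwise, $T \geq 1 + \sigma_1 + \dots + \sigma_{k-1} \geq \sigma_k$ by the changemaker inequality, so $T - \sigma_k$ is a nonnegative integer; moreover $T - \sigma_k \leq \sigma_1 + \dots + \sigma_{k-1}$. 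The inductive hypothesis then produces $A' \subseteq \{1, \dots, k-1\}$ with $\sum_{i \in A'} \sigma_i = T - \sigma_k$, and $A := A' \cup \{k\}$ works.

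There is no real obstacle: the whole argument hinges on the single estimate $\sigma_k \leq 1 + \sigma_1 + \dots + \sigma_{k-1}$ used in the second case to guarantee that $T - \sigma_k \geq 0$ whenever $T$ exceeds the $(k-1)$-th partial sum. The ordering condition $\sigma_{i-1} \leq \sigma_i$ plays no role in this particular statement and is not needed for the proof.
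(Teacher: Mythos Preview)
Your proof is correct and is precisely the standard changemaker induction the paper has in mind; the paper itself leaves this proposition as a simple exercise with no written argument, so there is nothing to compare against beyond noting that your approach is the intended one. Your remark that the hypothesis implicitly requires $T\geq 0$ is well taken.
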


We also make the following observation about changemaker vectors with a given set of stable coefficients.
\begin{prop}\label{prop:CM_existence_bound}
Let $\underline{\rho}=(\rho_1, \dots, \rho_\ell)$ be a tuple of non-decreasing integers with $\rho_\ell \geq 2$ and let $n$ be an integer. 
Then there exists a changemaker vector $\bsigma$ with stable coefficients $\underline{\rho}$ and $\norm{\bsigma}=n$ if and only if $n\geq N-1$ where
\[
N=\sum_{i=1}^\ell \rho_i^2 + \max_{1\leq k \leq \ell} \left(\rho_k - \sum_{i=1}^{k-1} \rho_i \right).
\]
\end{prop}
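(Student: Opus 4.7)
The plan is to reduce the statement to a bookkeeping exercise by pinning down the structure of a changemaker vector with prescribed stable coefficients.

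First I would observe that, by definition of the stable coefficients, the coordinates of $\bsigma$ that are not among the $\rho_i$ must all equal $1$ (since $\sigma_1 = 1$ and the $\sigma_i$ are non-decreasing). Hence any changemaker with stable coefficients $\underline{\rho}$ takes the form
\[
\bsigma = \e_1 + \dots + \e_t + \rho_1\e_{t+1} + \dots + \rho_\ell\e_{t+\ell}
\]
for some integer $t\geq 1$, and $\norm{\bsigma} = t + \sum_{i=1}^{\ell}\rho_i^2$. So the problem reduces to determining which values of $t$ yield a valid changemaker vector.

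Next I would translate the changemaker inequalities into explicit constraints on $t$. The monotonicity condition $\sigma_{i-1}\leq \sigma_i$ holds automatically because the $\rho_i$ are non-decreasing and at least $2$. The upper-bound condition $\sigma_i \leq 1 + \sum_{j<i}\sigma_j$ is vacuous whenever $\sigma_i = 1$, and for $i = t+k$ with $1\leq k\leq \ell$ it rearranges to
\[
t \;\geq\; \rho_k - 1 - \sum_{j=1}^{k-1}\rho_j.
\]
Maximizing the right-hand side over $k$ produces a single constraint $t\geq t_{\min}$, where
\[
t_{\min} \;=\; \max_{1\leq k\leq \ell}\Bigl(\rho_k - \sum_{i=1}^{k-1}\rho_i\Bigr) - 1.
\]
Conversely, any integer $t\geq t_{\min}$ gives a valid changemaker with stable coefficients exactly $\underline{\rho}$; in particular the case $k=1$ shows $t_{\min}\geq \rho_1 - 1\geq 1$, so $\bsigma$ really does have a first coordinate equal to $1$ and the block $\rho_1\geq 2$ really is stable.

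Combining these observations, the achievable values of $\norm{\bsigma}$ are exactly the integers of the form $t + \sum_i \rho_i^2$ with $t\geq t_{\min}$, i.e., the integers $n\geq t_{\min} + \sum_i \rho_i^2 = N-1$, which is precisely the proposition's claim. I do not expect any genuine obstacle here: the entire argument is a direct unpacking of the changemaker inequalities, with the only substantive step being the identification of the optimal $t$.
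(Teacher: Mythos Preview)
Your proof is correct and essentially identical to the paper's: both write $\bsigma$ as a block of $1$'s followed by the $\rho_i$, reduce the changemaker condition to a lower bound on the number of $1$'s, and read off the minimal norm. The only cosmetic difference is that the paper uses the parameter $m$ (with $m-1$ ones) where you use $t$.
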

\begin{proof}
A changemaker vector with these stable coefficients takes the form
\[
\bsigma = \e_{1} + \dots + \e_{m-1} + \rho_1 \e_{m} + \dots + \rho_\ell \e_{m+\ell-1}\in \Z^{m+\ell-1}.
\]
If we write $\bsigma$ in the form
\[
\bsigma = \sigma_1 \e_1 + \dots + \sigma_{m+\ell-1} \e_{m+\ell-1},
\]
then the coefficients satisfy
\[
\sigma_i=
\begin{cases}
1 &\text{if $1\leq i <m$}\\
\rho_{i-m+1} &\text{if $m\leq i \leq m+\ell-1$.}
\end{cases}
\]
Since these coefficients are non-decreasing, we see that $\bsigma$ is changemaker vector if and only if the inequality
\[
\sigma_j\leq 1+ \sigma_1 + \dots+\sigma_{j-1}
\]
holds for all $j$ in the range $m\leq j \leq m+\ell-1$. If we write such a $j$ in the form $j=m-1+k$ for $k$ in the range $1\leq k\leq \ell$ this condition becomes
\begin{align*}
    \rho_{k}&=\sigma_j\\
    &\leq 1+ \sigma_1 + \dots+\sigma_{j-1}\\
    &=m+\rho_{1}+\dots + \rho_{k-1}.
\end{align*}
Thus $\bsigma$ is a changemaker vector if and only if 
\[
m\geq \max_{1\leq k \leq \ell} \left(\rho_k - \sum_{i=1}^{k-1} \rho_i \right).
\]
However, $\norm{\bsigma}= m-1 + \sum_{i=1}^\ell \rho_i^2$, so we see that $\bsigma$ is a changemaker vector if and only if
$\norm{\bsigma}=n\geq N-1$, where $N$ is as in the statement of the proposition.
\end{proof}

For any rational number $p/q>0$, we can define the notion of a $p/q$-changemaker lattice. For technical reasons it is convenient to give the definition of integer and non-integer changemaker lattices separately. We mention also that the vast majority of our work will be with integer changemaker lattices, so the following definition is the most important of the two.
\begin{defi}[Integer changemaker lattice]\label{def:int_CM_lattice}
For an integer $n\geq 1$, an \textbf{$n$-change\-maker lattice} $L$ is an orthogonal complement:
\[
L=\langle\bsigma\rangle^\bot \subseteq \Z^r,
\]
where $\bsigma\in \Z^r$ is a changemaker vector satisfying $\norm{\bsigma}=n$. We will refer to the stable coefficients of $\bsigma$ as the stable coefficients of $L$. We say also that $L$ is tight if the changemaker $\bsigma$ is tight.
\end{defi}
The corresponding definition for non-integer changemaker lattices is the following.
\begin{defi}[Non-integer changemaker lattice]\label{def:non_int_CM_lattice}
Let $p/q>0$ be a rational number which is not an integer. This admits a unique continued fraction of the form
\[p/q=[a_0, \dots, a_l]^-=
a_0 -
    \cfrac{1}{a_1
        - \cfrac{1}{\ddots
            - \cfrac{1}{a_l} } },\]
where $a_0\geq 1$ and $a_i\geq 2$ for $i\geq 1$. Let $\bm{w}_0, \dots, \bm{w}_l\in \Z^{r+s+1}$ be vectors satisfying,
\[
\bm{w}_i\cdot \bm{w}_j =
\begin{cases}
a_i &\text{if $i=j$,}\\
-1 &\text{if $|i-j|=1$,}\\
0 &\text{if $|i-j|>1$,}
\end{cases}
\]
such that for some orthonormal basis $\{\e_{-s}, \dots, \e_r\}$ of $\Z^{r+s+1}$ following hold:
\begin{enumerate}[label=(\Roman*)]
\item $\bm{w}_0$ takes the form $\bm{w}_0=\e_0+\bsigma$ where
\[
\bsigma\in \Z^r=\langle \e_1, \dots, \e_r\rangle
\]
is a changemaker vector.

\item $|\bm{w}_k\cdot \e_i|\leq 1$ for all $1\leq k\leq l$ and all $\e_i$;
\item for any $0\leq i < j\leq l$,
\[\bm{w}_i\cdot \bm{w}_j = -|I_j \cap I_i|=
\begin{cases}
-1 &\text{if $j=i+1$}\\
0 &\text{if $j>i+1$};
\end{cases}\]
where $I_k$ is the set $I_k=\{\e_j\, | \, \bm{w}_k\cdot \e_j \ne 0\}$; and
\item for any $\e_i$ there is $\bm{w}_k$ such that $\e_i\cdot \bm{w}_k\ne 0$.
\end{enumerate}
Then the orthogonal complement
\[L=\langle \bm{w}_0, \dots, \bm{w}_l \rangle^\bot \subseteq \Z^{r+s+1}\]
is a {\em $p/q$-changemaker lattice}. We call the stable coefficients of $\bsigma$ the {\em stable coefficients} of $L$.
\end{defi}
The only examples of non-integer changemaker lattices that we will ever need to consider in detail will be half-integer changemaker lattices, which will make an appearance in \S\ref{sec:slack_lattices} and \S\ref{sec:half_int_basis_search}.
\begin{ex}\label{exam:half_integer_CM}
Definition~\ref{def:non_int_CM_lattice} implies that an $(n-\frac{1}{2})$-changemaker lattice takes the form
\[
L=\langle \e_{-1} -\e_0 ,\e_0 + \sigma_1 \e_1 + \dots + \sigma_r \e_r \rangle^\bot \subseteq \Z^{r+2},
\]
where $\bsigma=\sigma_1 \e_1 + \dots + \sigma_r \e_r$ is a changemaker vector with $\norm{\bsigma}=n-1$.
\end{ex}
The following summarizes the majority of what we need to know about arbitrary changemaker lattices. 
\begin{rem}\label{rem:CM_facts}\hfill
\begin{enumerate}
    \item\label{it:len1fact} A changemaker lattice $L$ never contains any elements of length one. That is, for all $\bv \in L\setminus\{\bm{0}\}$ we have $\norm{\bv}\geq 2$. In the non-integer case this is consequence of Condition IV.
    \item A $p/q$-changemaker lattice is determined up to isomorphism by its stable coefficients. In both the integer and non-integer cases, all remaining non-zero coefficients of $\bsigma$ are equal to one and so the number of such coefficients is determined by the requirement that $\norm{\bsigma}=\lfloor p/q \rfloor$. In the non-integer case, the remaining $\bm{w}_i$ are determined up to automorphism of the ambient diagonal lattice by Conditions II and III.
    \item\label{it:rational_existence} Moreover given a non-decreasing tuple of integers $(\rho_1, \dots, \rho_\ell)$, there is a $p/q$-changemaker lattice $L$ with these stable coefficients whenever $p/q$ is sufficiently large. Explicitly there is a $p/q$-changemaker changemaker lattice with these stable coefficients if and only if $p/q \geq N-1$, where $N$ is the quantity defined in Proposition~\ref{prop:CM_existence_bound}.
    \item\label{it:disc}The discriminant of a $p/q$-changemaker lattice $L$ is $\operatorname{disc}(L)=p$. See \cite[Lemma~3.10]{GreeneLRP} for the integer case and \cite[Remark~2.19]{Mc16} for the general case.
\end{enumerate}
\end{rem}

\section{Integer changemaker lattices}
We now turn our attention to understanding when an integer changemaker lattice can admit an obtuse superbase. Throughout this section we will take
\[
\bsigma=\sigma_1 \e_1 + \dots +\sigma_r\e_r\in \Z^r
\]
to be a changemaker vector with $\sigma_r\geq 2$ and
\[
L=\langle \bsigma\rangle^\bot\subseteq \Z^r
\]
to be the corresponding integer changemaker lattice. The condition that $\sigma_r\geq 2$ implies that the stable coefficients of $L$ are non-empty and that there exists a minimal $1<m\leq r$ such that $\sigma_m>1$.

\begin{rem}\label{rem:empty_coefs_examples}
A changemaker lattice with empty stable coefficients takes the form
\[
L=\langle \e_1 + \dots + \e_r \rangle^\bot\subseteq \Z^r.
\]
This always has an obtuse superbase. For example, one can take
\[
B=\{\e_1-\e_2, \dots, \e_{r-1}-\e_r, \e_r-\e_1\},
\]
as an obtuse superbase. The associated graph $G_B$ is a cycle with $r$ vertices.
\end{rem}

Using Proposition~\ref{prop:CM_condition} one can construct the standard basis for $L$ \cite{GreeneLRP}. The standard basis for $L$ is a collection of vectors $\bv^{(2)}, \dots, \bv^{(r)}\in L$ of the form
\[
\bv^{(k)}=-\e_k + \e_{k-1}+\dots + \e_2 + 2\e_1
\]
if $\sigma_k$ is tight and
\[
\bv^{(k)}=-\e_k + \sum_{i\in A_k} \e_i
\]
where $A_k\subseteq \{1,\dots, k-1\}$ is a subset such that
\begin{enumerate}
    \item[(i)] $\sigma_k=\sum_{i\in A_k} \sigma_i$ and
    \item[(ii)] $A_k$ is maximal with respect to the lexicographical ordering on subsets of $\{1,\dots, k-1\}$
\end{enumerate}
 if $\sigma_k$ is not tight. It can be shown that the standard basis is, in fact, a basis for $L$ and that all elements of a standard basis are irreducible \cite[Lemma~3.13]{GreeneLRP}. 
\begin{ex}
    If $\sigma_k=\sigma_{k-1}$, then the lexicographical maximality condition in the definition of the standard basis implies that $\bv^{(k)}$ takes the form $-\e_k+\e_{k-1}$.
\end{ex}

The following lemma is useful for finding irreducible vectors in a changemaker lattice. For example, it implies, among other things, that the standard basis vector $\bv^{(k)}$ is irreducible if $\sigma_k$ is not tight.
\begin{lem}[{\cite[Proposition 4.11]{Mc16}}]
\label{lem:irred_examples}
Let $\bv \in L$ be a non-zero vector of the form $\bv=\sum_{i\in A} \e_i -\sum_{j\in B} \e_j$ for disjoint $A,B \subseteq \{1, \dots, r\}$. Then $\bv$ is reducible if and only if there are proper non-empty subsets $A'\subseteq A$, $B'\subseteq B$ such that
\[
\pushQED{\qed} 
\sum_{i\in A'}\sigma_i =\sum_{j\in B'} \sigma_j.\qedhere
\popQED
\]
\end{lem}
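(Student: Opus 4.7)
The plan is to prove the two directions separately. The reverse implication is a direct construction; the forward implication is a coordinate-wise sign analysis of the pairing $\bx\cdot\by$.

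For the reverse direction, given proper non-empty subsets $A'\subset A$ and $B'\subset B$ with $\sum_{i\in A'}\sigma_i=\sum_{j\in B'}\sigma_j$, I would set
\[
\bx=\sum_{i\in A'}\e_i-\sum_{j\in B'}\e_j,\qquad \by=\bv-\bx=\sum_{i\in A\setminus A'}\e_i-\sum_{j\in B\setminus B'}\e_j.
\]
The hypothesis on the sums gives $\bx\cdot\bsigma=0$, hence $\bx,\by\in L$. Properness and non-emptiness of $A',B'$ together with disjointness of $A$ and $B$ ensure that both $\bx$ and $\by$ are non-zero. The supports of $\bx$ and $\by$ are disjoint, so $\bx\cdot\by=0$, which witnesses reducibility.

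For the forward direction, suppose $\bv=\bx+\by$ with $\bx,\by\in L\setminus\{\bm 0\}$ and $\bx\cdot\by\geq 0$. Writing $\bx=\sum_k x_k\e_k$ and $\by=\sum_k y_k\e_k$, the constraint $\bx+\by=\bv$ translates into $x_k+y_k=0$ for $k\notin A\cup B$, $x_k+y_k=1$ for $k\in A$, and $x_k+y_k=-1$ for $k\in B$. The key step is to bound each coordinate contribution to the pairing: for $k\notin A\cup B$ we get $x_ky_k=-x_k^2\leq 0$; for $k\in A$ we get $x_ky_k=x_k(1-x_k)\leq 0$ for integer $x_k$, with equality iff $x_k\in\{0,1\}$; and for $k\in B$ we get $x_ky_k=-x_k(1+x_k)\leq 0$, with equality iff $x_k\in\{0,-1\}$. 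Summing and using $\bx\cdot\by\geq 0$ forces equality in every coordinate. Consequently $\bx=\sum_{i\in A'}\e_i-\sum_{j\in B'}\e_j$ for some $A'\subseteq A$ and $B'\subseteq B$, and the orthogonality $\bx\cdot\bsigma=0$ gives $\sum_{i\in A'}\sigma_i=\sum_{j\in B'}\sigma_j$.

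It remains to verify that $A'$ and $B'$ are proper and non-empty. Since every $\sigma_i$ is strictly positive, the equality $\sum_{A'}\sigma_i=\sum_{B'}\sigma_j$ forces $A'=\emptyset$ and $B'=\emptyset$ to occur simultaneously; but $\bx\neq\bm 0$ rules this out, so both are non-empty. Finally, since $\bv\in L$ we have $\sum_{A}\sigma_i=\sum_{B}\sigma_j$, and subtracting yields $\sum_{A\setminus A'}\sigma_i=\sum_{B\setminus B'}\sigma_j$; the same positivity argument applied to $\by\neq\bm 0$ shows that $A\setminus A'$ and $B\setminus B'$ are non-empty, so $A'$ and $B'$ are proper.

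The only subtle point is the coordinate-wise sign analysis; once the structure of $\bx$ is pinned down, the rest is bookkeeping using $\sigma_i>0$ and $\bv\cdot\bsigma=0$.
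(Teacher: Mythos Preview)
Your proof is correct. The paper does not supply its own proof of this lemma; it is quoted from \cite[Proposition~4.11]{Mc16} and marked with a \qed, so there is nothing in the present paper to compare your argument against. Your coordinate-wise sign analysis, showing that each term $x_ky_k$ in the pairing is non-positive and then forcing equality, is the natural and standard argument for this kind of statement.
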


Next we recall restrictions on decomposable changemaker lattices.
\begin{lem}[{\cite[Lemma~5.1]{GreeneLRP}}]\label{lem:decomposable_properties}
Suppose that the changemaker lattice $L$ is decomposable. Then $L$ has two indecomposable summands. Moreover, if $m$ is the minimal index such that $\sigma_m>1$, then $\sigma_m = m-1$. \qed
\end{lem}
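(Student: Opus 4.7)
The plan is to understand decomposability of $L$ via the intersection graph $\Gamma$ of the standard basis $\bv^{(2)}, \ldots, \bv^{(r)}$: vertices are the basis vectors and edges record the pairs with nonzero pairing. Since each $\bv^{(j)}$ is irreducible \cite[Lemma~3.13]{GreeneLRP}, it lies in a unique indecomposable summand, and non-orthogonal basis vectors must share a summand. Conversely, each summand is $\Z$-spanned by the standard basis vectors it contains, so it cannot split into two components of $\Gamma$ without contradicting indecomposability. Therefore the indecomposable summands of $L$ are in bijection with the connected components of $\Gamma$, and the task reduces to showing that $\Gamma$ has at most two components, with two components occurring only when $\sigma_m=m-1$.

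Since $\sigma_k=1$ for $k<m$, the vectors $\bv^{(k)}=-\e_k+\e_{k-1}$ for $2\le k\le m-1$ pair to $-1$ on adjacent indices and $0$ otherwise, producing a connected chain $C_1$ in $\Gamma$. For a non-tight $\bv^{(j)}=-\e_j+\sum_{i\in A_j}\e_i$ with $j\ge m$, a direct computation gives $\bv^{(j)}\cdot\bv^{(k)}=[k-1\in A_j]-[k\in A_j]$ for $2\le k\le m-1$, so orthogonality to $C_1$ forces $A_j\cap\{1,\ldots,m-1\}$ to be either empty or the whole set; if instead $\bv^{(j)}$ is tight then $\bv^{(j)}\cdot\bv^{(2)}=2-1=1\ne 0$, so $\bv^{(j)}$ meets $C_1$ automatically. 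Applied to $\bv^{(m)}$ itself, this shows $\bv^{(m)}$ is orthogonal to $C_1$ exactly when $A_m=\{1,\ldots,m-1\}$, that is, exactly when $\sigma_m=m-1$; precisely in that case $\bv^{(m)}$ starts a new component $C_2$.

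The heart of the argument is to show that every $\bv^{(j)}$ with $j>m$ is non-orthogonal to some earlier basis vector, preventing the appearance of a third component. Assume $\bv^{(j)}$ is non-tight and orthogonal to all prior $\bv^{(k)}$. In the first case, $A_j\cap\{1,\ldots,m-1\}=\emptyset$; setting $k_1=\min A_j$, every element of $A_j$ is $\ge k_1$ while $A_{k_1}\subseteq\{1,\ldots,k_1-1\}$, so $A_j\cap A_{k_1}=\emptyset$, contradicting the orthogonality requirement $|A_j\cap A_{k_1}|=[k_1\in A_j]=1$. In the second case, $A_j\supseteq\{1,\ldots,m-1\}$, so $A_m\subseteq A_j$, and orthogonality with $\bv^{(m)}$ forces $[m\in A_j]=|A_j\cap A_m|=\sigma_m\ge 2$ in the non-tight case for $\bv^{(m)}$, or the analogous impossibility $[m\in A_j]=m$ when $\bv^{(m)}$ is tight. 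The tight subcase of $\bv^{(j)}$ is dispatched by $\bv^{(j)}\cdot\bv^{(2)}\ne 0$ noted above. Hence $\Gamma$ has at most two components; if $L$ is decomposable it must have exactly two, which simultaneously forces $\sigma_m=m-1$ and gives precisely two indecomposable summands. The main obstacle is the careful case analysis in this last step, in particular the bookkeeping needed to rule out the two possible intersection patterns of $A_j$ with $\{1,\ldots,m-1\}$ uniformly across the tight/non-tight subcases of $\bv^{(m)}$ and $\bv^{(j)}$.
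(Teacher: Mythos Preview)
Your proof is correct and follows the same pairing-graph argument as Greene's original in \cite[Lemma~5.1]{GreeneLRP}, which the paper cites without reproducing: the connected components of the intersection graph of the standard basis correspond to the indecomposable summands, and the case analysis shows there are at most two, with two forcing $\sigma_m=m-1$.

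One small point worth tightening: in your first subcase you invoke the pairing formula $\bv^{(j)}\cdot\bv^{(k_1)}=|A_j\cap A_{k_1}|-[k_1\in A_j]$, which presumes $\bv^{(k_1)}$ is non-tight. If $\bv^{(k_1)}$ happens to be tight, the conclusion $\bv^{(j)}\cdot\bv^{(k_1)}=-1$ still holds, since $\min A_j=k_1$ already forces $A_j$ to miss the entire support $\{1,\dots,k_1-1\}$ of the positive part of $\bv^{(k_1)}$; but this deserves a word.
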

This allows us to show that an obtuse superbase for $L$ contains a basis consisting of irreducible vectors.
To do this we will make use of the following construction of 2-isomorphic graphs. Let $G$, $G'$ denote a pair of disjoint graphs with distinguished vertices $w_1,w_2\in G$ and $w'\in G'$. For $i=1,2$, let $G_i$ be the graph obtained by identifying the vertices $w_i$ and $w'$ into a vertex $v_i$. The graphs $G_1$ and $G_2$ are 2-isomorphic via the natural bijection between their edge sets and we say that $G_2$ is obtained from $G_1$ by a \textbf{special switch} \cite[\S3.4]{Gr13}.
\begin{lem}\label{lem:one_reducible}
Any obtuse superbase for an integer changemaker lattice $L$ contains at most one reducible vector. In particular, any obtuse superbase for $L$ contains a basis of irreducible vectors.
\end{lem}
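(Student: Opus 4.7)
The plan is to split according to whether the changemaker lattice $L$ is indecomposable. If $L$ is indecomposable, then Lemma~\ref{lem:2-connected} immediately implies that every element of $B$ is irreducible, so the statement holds vacuously. So assume $L$ is decomposable; by Lemma~\ref{lem:decomposable_properties}, $L$ has exactly two indecomposable summands. Moreover, by Lemma~\ref{lem:irreducible}, an element $v\in B$ is reducible precisely when $v$ is a cut vertex of $G_B$, so it suffices to show that $G_B$ has at most one cut vertex.

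The key intermediate step is a block-decomposition lemma for graph lattices: if $v$ is a cut vertex of a connected graph $G$ and $C_1,\dots,C_k$ are the components of $G\setminus v$, letting $G_i$ denote the subgraph induced on $C_i\cup\{v\}$, then $\Lambda(G)\cong\bigoplus_{i=1}^{k}\Lambda(G_i)$. This should follow by taking $V(G)\setminus\{v\}=\bigsqcup_i C_i$ as a basis for $\Lambda(G)$: within each $C_i$ the vertex-degrees and edge-multiplicities coincide with those in $G_i$, while between vertices in distinct $C_i$ and $C_j$ there are no edges (since $v$ separates them), so the resulting Gram matrix is block diagonal and identifies with the Gram matrices of the $\Lambda(G_i)$.

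Granted the block lemma, suppose $v\in B$ is a cut vertex of $G_B$. The resulting decomposition $L\cong\bigoplus_{i=1}^{k}\Lambda(G_i)$ has $k\geq 2$ factors, and comparison with Lemma~\ref{lem:decomposable_properties} forces $k=2$ with each $\Lambda(G_i)$ indecomposable. Then Proposition~\ref{prop:ObtSupBase=graphlattice} together with Lemma~\ref{lem:2-connected} implies that each $G_i$ is $2$-connected. If there were a second reducible $v'\in B$, then $v'\neq v$ would lie in some $G_i$, say $v'\in C_1\subset G_1$; but $2$-connectedness of $G_1$ ensures $G_1\setminus\{v'\}$ is connected, and it remains joined to $G_2$ through $v$, so $G_B\setminus\{v'\}$ stays connected, contradicting that $v'$ is a cut vertex. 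Thus $B$ has at most one reducible vector. For the ``in particular'' statement, $B$ consists of $r+1$ vectors in a rank-$r$ lattice satisfying $v_0+\dots+v_r=0$, so removing any single vector leaves a basis; dropping the (at most one) reducible vector yields the desired basis of irreducibles. The main difficulty is setting up the block-decomposition lemma cleanly; once this graph-theoretic input is in place, the remaining argument is bookkeeping with the lemmas above.
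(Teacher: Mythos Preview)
Your proof is correct and takes a genuinely different route from the paper's. The paper argues by contradiction: if $B$ contained two reducible vectors $v,w$, both would be cut vertices of $G_B$; invoking Lemma~\ref{lem:2-iso} (which rests on Greene's theorem that graph lattices determine graphs up to $2$-isomorphism), one passes to a $2$-isomorphic graph $G'$ in which a single vertex $u$ has $G'\setminus\{u\}$ with at least three components, forcing $L$ to have at least three indecomposable summands and contradicting Lemma~\ref{lem:decomposable_properties}.

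Your argument bypasses the $2$-isomorphism machinery entirely. The block-decomposition lemma you prove (that a cut vertex $v$ yields $\Lambda(G)\cong\bigoplus_i\Lambda(G_i)$) is exactly the elementary orthogonality observation underlying the paper's last step, and you then use the $2$-connectedness of the two blocks $G_1,G_2$ directly to rule out any second cut vertex. This is more self-contained: you need only Lemmas~\ref{lem:irreducible}, \ref{lem:2-connected}, and \ref{lem:decomposable_properties}, not Lemma~\ref{lem:2-iso}. The paper's proof is shorter once Lemma~\ref{lem:2-iso} is in hand, but yours makes clearer that nothing as deep as Greene's $2$-isomorphism theorem is actually needed for this statement. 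One small point worth making explicit: the step ``$k=2$ with each $\Lambda(G_i)$ indecomposable'' uses that the number of indecomposable summands of a positive-definite lattice is well defined (Krull--Schmidt for such lattices), which the paper also takes for granted in stating Lemma~\ref{lem:decomposable_properties}.
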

\begin{proof}
Let $B$ be an obtuse superbase for $L$ containing two distinct reducible vectors $\bm{v}$ and $\bm{w}$. By Lemma~\ref{lem:2-connected}, both $\bm{v}$ and $\bm{w}$ are cut-vertices in $G_B$. We see that there is a graph $G'$, obtained by a special switch on $G_B$, such that $G'$ contains a vertex $u$ such that $G'\setminus\{u\}$ has at least three connected components. Since $G'$ and $G_B$ are 2-isomorphic, Lemma~\ref{lem:2-iso} shows there is an obtuse superbase $B'$ for $L$ such that $G_{B'}\cong G'$. However, the connected components of $G'\setminus\{u\}$ correspond to pairwise orthogonal subspaces of $L$ and so we see that $L$ must split into at least three indecomposable summands. However, this contradicts Lemma~\ref{lem:decomposable_properties} and so we conclude that $B$ can contain at most one reducible vector.
\end{proof}

We also obtain bounds on the coefficients of irreducible vectors.
\begin{lem}\label{lem:irreducible_bounds}
Let $\bm{z}\in L$ be an irreducible vector. If $\sigma_k=1$, then 
    \[|z_k|\leq 
    \begin{cases}
    2 &\text{if $L$ is tight,}\\
    1 &\text{otherwise.}
    \end{cases}
    \]
\end{lem}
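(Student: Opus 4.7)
The plan is to argue by contradiction. Assume $\bm{z}\in L$ is irreducible with $z_k\ge 2$ (after possibly replacing $\bm{z}$ by $-\bm{z}$) at some index $k$ with $\sigma_k=1$, and construct a decomposition $\bm{z}=\bm{x}+\bm{y}$ with $\bm{x},\bm{y}\in L\setminus\{0\}$ and $\bm{x}\cdot\bm{y}\ge 0$, contradicting irreducibility.

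First I would probe with the elementary candidates $\bm{y}=\e_k-\e_j$ for each $j\neq k$ with $\sigma_j=1$; these lie in $L$ and a direct computation yields $(\bm{z}-\bm{y})\cdot\bm{y}=z_k-z_j-2$. Irreducibility therefore forces $z_j\ge z_k-1\ge 1$ for every such $j$, so every coordinate of $\bm{z}$ on the unit block $\{1,\dots,m-1\}$ is at least $1$. Since $\bm{z}\cdot\bsigma=0$ but the unit-block contribution $\sum_{j<m}z_j$ is strictly positive, there must exist some $j_0\ge m$ with $z_{j_0}\le -1$, and in particular $\sigma_{j_0}\ge 2$.

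The main construction uses Proposition~\ref{prop:CM_condition}. The changemaker bound $\sigma_{j_0}\le 1+\sigma_1+\dots+\sigma_{j_0-1}$ furnishes a subset $A\subseteq\{1,\dots,j_0-1\}$ with $\sum_{i\in A}\sigma_i=\sigma_{j_0}-1$; setting
\[
\bm{y}=\e_k+\sum_{i\in A}\e_i-\e_{j_0}\in L,
\]
when $k\notin A$ a short calculation gives $(\bm{z}-\bm{y})\cdot\bm{y}=(z_k-1)+\sum_{i\in A}(z_i-1)-(z_{j_0}+1)$, while when $k\in A$ (so the $\e_k$-coefficient of $\bm{y}$ is $2$) the analogous formula has leading term $2(z_k-2)$. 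Provided $A$ can be placed inside $\{1,\dots,m-1\}$---which requires $\sigma_{j_0}\le m$---every summand on the right-hand side is non-negative by the first step, and the total is strictly positive whenever $z_k\ge 3$, giving the required reducibility contradiction and forcing $|z_k|\le 2$ in this regime.

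The main obstacle is the remaining case $\sigma_{j_0}>m$, where $A$ is forced to contain indices in $\{m,\dots,j_0-1\}$ whose $z$-coordinates are not controlled by the first step. Here I would choose $A$ lexicographically maximal, mimicking the standard basis construction of $\bv^{(j_0)}$ from~\cite{GreeneLRP}, and handle the extra contributions from the large-$\sigma$ indices by induction on $j_0$ using the irreducibility of the standard basis vectors as the base case. The degenerate configurations where the cross-term vanishes and the construction produces only the trivial split $\bm{x}=0$ are precisely those in which $\bm{z}$ coincides (up to permutation of the unit indices) with the tight standard basis vector $\bv^{(j_0)}=-\e_{j_0}+\e_{j_0-1}+\dots+\e_2+2\e_1$; when $\bm{z}$ genuinely is irreducible this forces $\sigma_{j_0}$ to be tight, so $L$ is tight and $z_k=2$, yielding the claimed dichotomy. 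I expect controlling the large-$\sigma$ contributions in the inductive step to be the main technical difficulty.
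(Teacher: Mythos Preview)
Your approach—build a test vector $\bm{y}\in L$ and use the inequality $(\bm{z}-\bm{y})\cdot\bm{y}\le -1$ from irreducibility—is exactly the paper's strategy. The difference is a single choice that makes your ``main obstacle'' disappear entirely.

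You pick $j_0\ge m$ to be \emph{some} index with $z_{j_0}\le -1$, and then worry that $A\subseteq\{1,\dots,j_0-1\}$ may contain indices $i\ge m$ where you have no control over $z_i$. The paper instead takes $g>1$ to be the \emph{minimal} index with $z_g\le 0$. Minimality immediately gives $z_i\ge 1$ for every $i<g$, with no restriction on $\sigma_i$, so every summand $(z_i-1)$ for $i\in A\subseteq\{1,\dots,g-1\}$ is non-negative for free. Your preliminary probing with $\e_k-\e_j$ and the proposed induction on $j_0$ both become unnecessary; the whole argument collapses to three lines once $g$ is chosen correctly.

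The tight/non-tight dichotomy is also cleaner than your ``degenerate configuration'' discussion. With $k=1$ (after permuting the unit indices), one writes $\sigma_g-1=\sum_{i\in A}\sigma_i$; when $\sigma_g$ is not tight this sum is at most $\sigma_1+\dots+\sigma_{g-1}-1$, so $A$ can be chosen with $1\notin A$, giving $x_1=1$ and hence $z_1\le 1$. When $\sigma_g$ is tight one may be forced to take $1\in A$, so $x_1=2$ and the bound is $z_1\le 2$. There is no need to identify $\bm{z}$ with a standard basis vector.

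So the proposal is not wrong in spirit, but as written it is incomplete: the inductive step you flag as ``the main technical difficulty'' is never carried out, and in fact is an artefact of a suboptimal choice of $j_0$.
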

\begin{proof}
We can assume without loss of generality that $k=1$ and that $z_1\geq 1$. Let $g>1$ be minimal such that $z_g\leq 0$. By Proposition~\ref{prop:CM_condition}, there is a subset $A\subseteq \{1,\dots, g-1\}$ such that $\sigma_g -1 =\sum_{i\in A}\sigma_i$. If $\bsigma$ is not tight, then the vector
\[
\bsigma'=\sigma_2\e_2 + \dots+ \sigma_r\e_r,
\]
is also a changemaker vector. Thus if $\bsigma$ is not tight, we may apply Proposition~\ref{prop:CM_condition} to $\bsigma'$ instead of $\bsigma$ in order to assume that $1\not\in A$. Thus $\bm{x}=-\e_g +\e_1+ \sum_{i\in A}\e_i$ is a vector in $L$ satisfying $1\leq x_1\leq 2$ with $x_1=2$ only if $\sigma_g$ is tight. If $\bm{z}=\bm{x}$, then $\bm{z}$ satisfies the required bounds by construction. Thus we assume that $\bm{x}\neq \bm{z}$. The irreducibility of $\bm{z}$ gives
\begin{align*}
    -1&\geq \bm{x}\cdot(\bm{z}-\bm{x})\\
    &=-(z_g+1)+ \sum_{i \in A\setminus\{1\}}(z_i -1) + x_1(z_1-x_1)\\
    &\geq -1 + x_1(z_1-x_1).
\end{align*}
Using the last line we obtain the inequality $z_1\leq x_1$. Since $x_1\leq 2$ with equality only if $L$ is tight, this establishes the result.
\end{proof}
We can also control the elements satisfying $\norm{\bv}=2$ in an obtuse superbase. Note that the following lemma is also true in the case of empty stable coefficients as shown by the obtuse superbase exhibited in Remark~\ref{rem:empty_coefs_examples}.
\begin{lem}\label{lem:standard_norm_twos} Let $L$ be a changemaker lattice with non-trivial stable coefficients.
If $L$ admits an obtuse superbase, then it admits an obtuse superbase $B$ such that $-\e_k+\e_{k-1}\in B$ for all $k$ such that $\sigma_k=\sigma_{k-1}$.
\end{lem}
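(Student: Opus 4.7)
The plan is to start with any obtuse superbase $B$ for $L$, which exists by hypothesis, and successively modify it so that every $\bv^{(k)} := -\e_k + \e_{k-1}$ with $\sigma_k = \sigma_{k-1}$ ends up as a member.

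First I would check that each such $\bv^{(k)}$ is irreducible. Since $\norm{\bv^{(k)}}=2$ and by Remark~\ref{rem:CM_facts}(1) every nonzero element of $L$ has norm at least two, any decomposition $\bv^{(k)} = x+y$ with $x,y\ne 0$ would give $2 = \norm{x} + 2x\cdot y + \norm{y} \geq 4 + 2x\cdot y$, forcing $x\cdot y \leq -1$ and hence irreducibility. A short calculation also shows that the family $S = \{\bv^{(k)} : \sigma_k = \sigma_{k-1}\}$ is pairwise obtuse, with $\bv^{(k)}\cdot\bv^{(k')} = -1$ when $k' = k\pm 1$ and $0$ otherwise, and is clearly linearly independent.

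Next, I would argue by induction on $|S\setminus B|$. For the inductive step, pick some $\bv^{(k)} \in S\setminus B$. By Lemma~\ref{lem:irreducible} there is a subset $R\subseteq B$ with $\bv^{(k)} = \sum_{v\in R} v$ such that both $R$ and $B\setminus R$ induce connected subgraphs of $G_B$; as $\norm{\bv^{(k)}} = 2$, the two edges joining $R$ and $B\setminus R$ form a 2-edge-cut in $G_B$. The plan is to perform a Whitney-type operation (splitting the two sides of this cut across a single new degree-two vertex, which represents $\bv^{(k)}$, while re-routing one side of the complement accordingly) to produce a graph $G'$ that is 2-isomorphic to $G_B$ and in which the contracted image of $R$ appears as a single vertex. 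By Lemma~\ref{lem:2-iso} and Proposition~\ref{prop:ObtSupBase=graphlattice}, $G'$ is realized as $G_{B'}$ for some obtuse superbase $B'$ of $L$, and by construction the vertex in question is forced to be $\bv^{(k)}$ (up to sign, which we fix so that the superbase still sums to zero).

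The main obstacle, and most delicate part of the argument, is ensuring that this modification does not destroy previously-installed vectors $\bv^{(k')}\in S \cap B$. The key observation I would exploit is that $\bv^{(k)}$ and $\bv^{(k')}$ have disjoint support as soon as $k$ and $k'$ lie in different maximal runs of equal $\sigma$-coefficients. Consequently, if I process the indices run-by-run and within each maximal run $\{i, i+1, \dots, i+n-1\}$ in increasing order of $k$, the 2-edge-cut associated with $\bv^{(k)}$ can be chosen inside the portion of $G_B$ corresponding to that run, where the relevant subspace of $L$ is a root lattice of type $A_{n-1}$. Within each such block the local structure is rigid enough that the Whitney move can be performed without disturbing the norm-two vectors already installed in earlier steps of the induction. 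Once the induction terminates, the resulting obtuse superbase contains all of $S$, establishing the lemma.
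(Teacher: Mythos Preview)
Your inductive scheme has a genuine gap at the ``Whitney-type operation'' step. Contracting the subgraph $R$ to a single vertex is \emph{not} a 2-isomorphism: a 2-isomorphism is an edge-bijection preserving cycles, so in particular it preserves the number of edges and the rank of the graph lattice. Replacing $|R|$ vertices of $G_B$ by one vertex produces a graph with $r+2-|R|$ vertices, hence a graph lattice of rank $r-|R|$, which cannot be isomorphic to $L$ unless $|R|=1$. Even if you instead perform a legitimate Whitney twist along the 2-edge-cut, Lemma~\ref{lem:2-iso} only tells you that \emph{some} obtuse superbase $B'$ of $L$ has $G_{B'}\cong G'$; it does not let you prescribe which lattice element sits at a given vertex of $G'$, so you cannot conclude that the new degree-two vertex is literally $\bv^{(k)}$. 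Your final paragraph acknowledges that controlling the previously installed $\bv^{(k')}$ is ``the most delicate part'' but gives no mechanism beyond the assertion that the local $A_{n-1}$ structure is ``rigid enough''; this is exactly where the real work lies and it is not done.

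The paper's argument avoids both of these difficulties by a different route. It first chooses $B$ \emph{extremally}, maximizing the number of degree-two vertices in $G_B$; this forces every norm-two vector to be (up to sign) a sum along a path of degree-two vertices of $B$. A combinatorial count then shows that, within each maximal run $\sigma_a=\dots=\sigma_b$, the vertices of $B$ of the form $\e_i-\e_j$ with $i,j\in\{a,\dots,b\}$ already form a path of length $b-a$. The final move---and the key point you are missing---is that one does not insist on the specific vectors $-\e_k+\e_{k-1}$ appearing; instead one \emph{permutes} the orthonormal basis vectors $\e_a,\dots,\e_b$ (an automorphism of $L$ since their $\sigma$-coefficients coincide) so that the existing path takes the standard form.
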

\begin{proof}
Let $B$ be the obtuse superbase for $L$ such that $G_B$ contains the maximal possible number of vertices of degree two.
\begin{claim}
    Let $\bm{x}\in L$ be a vector with $\norm{\bm{x}}=2$. Then there is a path of degree two vertices $\bm{w}_0, \dots, \bm{w}_k$ in $G_B$ such that
    \[
    \bm{x}=\pm(\bm{w}_0+ \dots+ \bm{w}_k)
    \]
\end{claim}
\begin{proof}[Proof of Claim]
As discussed in Remark~\ref{rem:CM_facts}\eqref{it:len1fact}, $L$ does not contain any vectors of length one. Thus, $\bm{x}$ is irreducible. By Lemma~\ref{lem:irreducible}, this implies that there is a connected subgraph $R$ of $G_B$ such that $\bm{x}=\sum_{\bv\in R} \bm{v}$ and the complement of $R$ is also connected. Let $S$ denote the complement of $R$. Since $\norm{\bm{x}}=2$, Remark~\ref{rem:edge_count} shows that there are precisely two edges between $R$ and $S$.

We will see that if $R$ and $S$ both contain vertices of degree at least three, then $G_B$ is 2-isomorphic to a graph with more vertices of degree two. By Lemma~\ref{lem:2-iso}, this would contradict the maximality assumption on $G_B$.

Suppose that both $R$ and $S$ contain a vertex of degree at least three. Let $e$ and $e'$ be the two edges between $R$ and $S$. The edge $e$ is contained in a path $\bm{u}^{(0)}, \dots, \bm{u}^{(k)}$, where $\bm{u}^{(0)}\in R$ and $\bm{u}^{(k)}\in S$ both have degree at least three. We perform a 2-isomorphism by contracting the path containing $e$ to a single vertex and subdividing the edge $e'$ into $k$ edges. This increases the number of vertices of degree two by one, since the contraction eliminates $k-1$ vertices of degree two, but the subdivision creates $k$ such vertices.
\end{proof}
Suppose that we have some sequence $\sigma_a, \dots, \sigma_b$ of changemaker coefficients satisfying 
\[
\sigma_a= \dots = \sigma_b,
\]
with $a<b$. Moreover suppose that this is maximal in the sense that $a=1$ or $\sigma_{a-1}<\sigma_a$ and $b=r$ or $\sigma_{b+1}>\sigma_b$. We will show that after permuting the unit vectors $\e_a, \dots, \e_b$ we can assume that $-\e_{a+1}+\e_a, \dots, -\e_{b}+\e_{b-1}$ are in $B$.

Let $H$ be the subgraph of $G_B$ induced by the vectors of $B$ taking the form $-\e_i + \e_j$ for some $i,j\in \{a,\dots, b\}$. Let $V$ denote the number of vertices of $H$ and $E$ the number of edges. The above claim shows that the $b-a$ linearly independent vectors $-\e_{a+1}+\e_{a}, \dots, -\e_{b}+\e_{b-1}$ are all in the span of the vertices $H$. This implies that $V\geq b-a$. Moreover for each index $a\leq i \leq b$, there exists at least one $\bm{u}\in H$ with $u_i\neq 0$.

Next, note that $H$ contains no cycles. If $H$ contained a cycle, then the connectivity of $G_B$ would imply that $G_B$ is itself a cycle. In this case, every irreducible vector in $L$ would satisfy $\norm{\bm{x}}=2$, which cannot happen unless the stable coefficients of $L$ are trivial. Thus, we have $E\leq V-1$.

Next observe that if $\bm{u},\bv\in H$ are distinct vertices, then $\bm{u}$ and $\bv$ are adjacent in $H$ if and only if there is a unique index $a\leq i\leq b$ such that $u_i$ and $v_i$ are both non-zero\footnote{This is easily seen by writing $\bm{u}$ and $\bv$ in the forms $\bm{u}=-\e_\alpha + \e_\beta$ and $\bv=-\e_\gamma +\e_\delta$ and calculating the various possibilities}. Moreover, when $\bm{u}$ and $\bv$ are adjacent, we necessarily have $u_i=-v_i$ for this index. Thus for each index $a\leq i\leq b$, there are at most two vertices in $H$ with $v_i\neq 0$ and these are necessarily adjacent. Thus we obtain a bijection between edges of $H$ and indices $i$ such that $\sum_{\bv \in H} |v_i|=2$. Moreover for all other indices we have $\sum_{\bv \in H} |v_i|=1$.

In particular, we see that we may compute $E$ via the following sum
\[
E=\sum_{i=a}^b \left(-1+\sum_{\bv\in H} |v_i|  \right)=\sum_{i=a}^b \sum_{\bv\in H} |v_i| -(b-a+1).
\]
However, we have $\sum_{i=a}^b \sum_{\bv\in H} |v_i|=2V$ and so we obtain the equality
\[
E=2V -(b-a+1).
\]
When combined with the earlier observations that $V\geq b-a$ and $E\leq V -1$, this quickly yields the fact that $V=b-a$ and $E=V-1$. Thus $H$ is a path of length $b-a$.

Let $\bv^{(1)}, \dots, \bv^{(b-a)}$ be the vertices of the path $H$ ordered so that $\bv^{(j)}$ and $\bv^{(j+1)}$ are adjacent for all $j$. For each pair $\bv^{(j)}$ and $\bv^{(j+1)}$, there is a unique index $a\leq i \leq b$ such that $\bv^{(j)}_i=-\bv^{(j+1)}_i\in \{\pm 1\}$ and $\bv^{(k)}_i=0$ for all other $k$. Given this, it is not hard see that by permuting the basis vectors $\e_a, \dots, \e_b$ and potentially reversing the order of labelling on the path $\bv^{(1)}, \dots, \bv^{(b-a)}$ we may assume that these vertices take the form   
\[
\bv^{(1)}=-\e_{a+1}+\e_a, \dots, \bv^{(b-a)}=-\e_b+\e_{b-1},
\]
as required.
\end{proof}

The following is useful for ruling out certain sets of stable coefficients from possessing obtuse superbases.
\begin{prop}\label{prop:4_string}
Let $L$ be a changemaker lattice with non-trivial stable coefficients that admits an obtuse superbase. Assume the changemaker coefficients satisfy \[\sigma_k = \dots = \sigma_{k+3}\] for some $k$. If $\bm{z}\in L$ is an irreducible vector with $z_k=z_{k+1}$ and $z_{k+2}=z_{k+3}$, then $z_{k+2}=z_{k+1}$.
\end{prop}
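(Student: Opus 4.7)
The plan is to invoke Lemma~\ref{lem:standard_norm_twos} to place three distinguished norm-two vectors into a single obtuse superbase of $L$, translate the three relevant coordinate equalities into inner-product conditions against these vectors, and then reduce to a short combinatorial analysis of membership in a certain subset $R$ of the superbase.

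Applying Lemma~\ref{lem:standard_norm_twos} to the three pairs of adjacent equal coefficients $(k,k+1)$, $(k+1,k+2)$, $(k+2,k+3)$ produces an obtuse superbase $B$ of $L$ containing the vectors $\bm{w}_1=-\e_{k+1}+\e_k$, $\bm{w}_2=-\e_{k+2}+\e_{k+1}$, $\bm{w}_3=-\e_{k+3}+\e_{k+2}$. A direct computation gives $\bm{w}_i\cdot\bm{w}_{i+1}=-1$, $\bm{w}_1\cdot\bm{w}_3=0$, and $\norm{\bm{w}_i}=2$, so each $\bm{w}_i$ has degree exactly $2$ in $G_B$. Pairing the superbase relation $\sum_{\bv\in B}\bv=0$ against each $\bm{w}_i$ then shows that the only neighbors of $\bm{w}_2$ in $G_B$ are $\bm{w}_1$ and $\bm{w}_3$, while $\bm{w}_1$ and $\bm{w}_3$ each admit a single additional neighbor in $B$, which I denote $\bu$ and $\bu'$ respectively, each joined by a single edge. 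In this language the hypotheses become $\bm{z}\cdot\bm{w}_1=z_k-z_{k+1}=0$ and $\bm{z}\cdot\bm{w}_3=z_{k+2}-z_{k+3}=0$, and the desired conclusion is $\bm{z}\cdot\bm{w}_2=z_{k+1}-z_{k+2}=0$.

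To finish, irreducibility of $\bm{z}$ combined with Lemma~\ref{lem:irreducible} provides a subset $R\subseteq B$ with $\bm{z}=\sum_{\bv\in R}\bv$. Writing $\bv\sim\bm{w}_i$ to mean that $\bv\in B$ is a neighbor of $\bm{w}_i$ in $G_B$, and using that $\bv\cdot\bm{w}_i=0$ for all other $\bv$, I obtain
\[
\bm{z}\cdot\bm{w}_i \;=\; 2\,[\bm{w}_i\in R]\;-\;\sum_{\bv\sim\bm{w}_i}[\bv\in R],
\]
where $[\,\cdot\,]$ denotes the Iverson bracket. The hypothesis $\bm{z}\cdot\bm{w}_1=0$ now reads $2[\bm{w}_1\in R]=[\bm{w}_2\in R]+[\bu\in R]$, forcing $\bm{w}_1,\bm{w}_2,\bu$ either all to lie in $R$ or all to lie outside it; the analogous equation from $\bm{w}_3$ ties $\bm{w}_2,\bm{w}_3,\bu'$ together. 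Combining these, $\bm{w}_1,\bm{w}_2,\bm{w}_3$ are either all in $R$ or all outside $R$, and in both cases evaluating the displayed formula at $i=2$ yields $\bm{z}\cdot\bm{w}_2=0$, which is the desired conclusion. The main subtlety is the degree-counting argument in the setup, ensuring that $\bm{w}_1$ and $\bm{w}_3$ each possess a unique additional neighbor of unit weight; once this is secured, the remaining argument is essentially a one-line calculation with indicator functions.
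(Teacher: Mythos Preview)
Your proof is correct and follows essentially the same approach as the paper: both invoke Lemma~\ref{lem:standard_norm_twos} to place the three norm-two vectors in a single obtuse superbase, use Lemma~\ref{lem:irreducible} to write $\bm{z}=\sum_{\bv\in R}\bv$, and then argue that the vanishing of $\bm{z}\cdot\bm{w}_1$ and $\bm{z}\cdot\bm{w}_3$ forces $\bm{w}_1,\bm{w}_2,\bm{w}_3$ to lie on the same side of the partition $R\,/\,B\setminus R$, whence $\bm{z}\cdot\bm{w}_2=0$. The only cosmetic difference is that the paper phrases the membership analysis in terms of ``leaves'' of the connected subgraphs $R$ and $B\setminus R$, whereas you carry it out directly via the Iverson-bracket identity; your version in fact avoids any appeal to the connectivity part of Lemma~\ref{lem:irreducible}.
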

\begin{proof}
By Lemma~\ref{lem:standard_norm_twos} we can assume that $L$ has an obtuse superbase $B$ containing vectors $\bm{w}_0=\e_k-\e_{k+1}, \bm{w}_1=\e_{k+1}-\e_{k+2}, \bm{w}_2=\e_{k+2}-\e_{k+3}$. If $z_k=z_{k+1}$ and $z_{k+2}=z_{k+3}$, then we have $\bm{z}\cdot \bm{w}_0=\bm{z}\cdot \bm{w}_2=0$. Since $\bm{z}$ is irreducible, Lemma~\ref{lem:irreducible} gives a connected subgraph $R$ of $G_B$ such that $\bm{z}=\sum_{\bv\in R}\bv$.

Up to sign, the pairing $\bm{z}\cdot \bm{w}_0$ counts the number of edges incident to $\bm{w}_0$ with precisely one end point in $R$. Therefore, if precisely one of $\bm{w}_0$ and $\bm{w}_1$ were contained in $R$, then we would have $|\bm{z}\cdot \bm{w}_0|=1$. Since $\bm{z}\cdot \bm{w}_0=0$, it follows that either both $\bm{w}_0$ and $\bm{w}_1$ are contained in $R$ or both are contained in the complement of $R$. Since $\bm{z}\cdot \bm{w}_2=0$, a similar argument applies to show that we cannot have precisely one of $\bm{w}_1$ and $\bm{w}_2$ contained in $R$.
Thus all three of $\bm{w}_0, \bm{w}_1, \bm{w}_2$ are contained in $R$ or all three are contained in the complement of $R$. In either case, neither of the edges incident to $\bm{w}_1$ has precisely one end point in $R$. This implies that $\bm{w}_1\cdot \bm{z} =0$ or, equivalently, that $z_{k+2}=z_{k+1}$.
\end{proof}

When applied to the changemaker coefficients satisfying $\sigma_k=1$, Proposition~\ref{prop:4_string} yields the following restriction. This formed a key step in the proof of \cite[Theorem~1.2]{Mc17}.
\begin{prop}\label{prop:OSB_m_bound}
Let $L$ be an integral changemaker lattice of the form
\[
L=\langle \sigma_1 \e_1+ \dots+ \sigma_r \e_r\rangle^\bot\subseteq \Z^r
\]
with non-trivial stable coefficients. Let $m$ be minimal such that $\sigma_m\geq 2$. If $L$ admits an obtuse superbase, then
\[
\sigma_m\geq m-2
\]
and
\[\norm{\bsigma}\leq 1+ \sigma_m + \sum_{i=m}^{r}\sigma_i^2.\]
\end{prop}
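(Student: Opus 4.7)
My first observation is that the two inequalities in the conclusion are essentially equivalent. Since $\sigma_1 = \cdots = \sigma_{m-1} = 1$, we have $\norm{\bsigma} = (m-1) + \sum_{i=m}^r \sigma_i^2$, so the bound $\norm{\bsigma} \leq 1 + \sigma_m + \sum_{i=m}^r \sigma_i^2$ reduces algebraically to $\sigma_m \geq m - 2$. Thus it suffices to prove the first inequality.

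The plan is to proceed by contradiction, assuming $\sigma_m \leq m - 3$. Since $\sigma_m \geq 2$, this immediately forces $m \geq 5$, so the run $\sigma_1, \ldots, \sigma_{m-1}$ of ones has length at least four, and Proposition~\ref{prop:4_string} becomes applicable to any block of four consecutive indices in $\{1,\dots,m-1\}$. The strategy is to exhibit an irreducible vector in $L$ whose coefficients violate the conclusion of that proposition for one such block.

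The natural candidate is
\[
\bm{z} = -\e_m + \e_1 + \e_2 + \cdots + \e_{\sigma_m} \in \Z^r,
\]
which lies in $L$ because $\bm{z}\cdot \bsigma = -\sigma_m + \sigma_m \cdot 1 = 0$. Lemma~\ref{lem:irred_examples} applied with negative support $B = \{m\}$ shows at once that $\bm{z}$ is irreducible, since $B$ has no proper non-empty subset. Now set $k = \sigma_m - 1$. The assumptions $\sigma_m \geq 2$ and $\sigma_m \leq m - 3$ guarantee $1 \leq k$ and $k + 3 = \sigma_m + 2 \leq m - 1$, so in particular $\sigma_k = \sigma_{k+1} = \sigma_{k+2} = \sigma_{k+3} = 1$. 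The coefficients of $\bm{z}$ in these four positions read $(z_k,z_{k+1},z_{k+2},z_{k+3}) = (1,1,0,0)$, so $z_k = z_{k+1}$ and $z_{k+2} = z_{k+3}$ but $z_{k+1} = 1 \neq 0 = z_{k+2}$, contradicting Proposition~\ref{prop:4_string}.

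The main conceptual content has already been absorbed into Proposition~\ref{prop:4_string}; once that is available the argument amounts to spotting the correct irreducible test vector, so I do not anticipate any genuine obstacle. The only mildly delicate point is verifying that the two indexing inequalities $k \geq 1$ and $k+3 \leq m - 1$ line up exactly with the hypotheses $\sigma_m \geq 2$ and $\sigma_m \leq m - 3$, which is what forces the choice $k = \sigma_m - 1$.
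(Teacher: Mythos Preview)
Your proof is correct and essentially identical to the paper's own argument: the same test vector $\bm{z}=-\e_m+\e_1+\cdots+\e_{\sigma_m}$, the same appeal to Lemma~\ref{lem:irred_examples} for irreducibility, and the same application of Proposition~\ref{prop:4_string} at the index block $\sigma_m-1,\sigma_m,\sigma_m+1,\sigma_m+2$. The paper also derives the norm bound from $\norm{\bsigma}=m-1+\sum_{i=m}^r\sigma_i^2$, exactly as you do.
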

\begin{proof}

Suppose that $m\geq 3+\sigma_m$. We will show that $L$ does not admit an obtuse superbase. Consider the vector $\bv=-\e_m+\e_1+\dots + \e_{\sigma_m}\in L$. By Lemma~\ref{lem:irred_examples}, $\bv$ is irreducible. However we have 
\[\sigma_{\sigma_m-1}=\sigma_{\sigma_m}=\sigma_{\sigma_m+1}=\sigma_{\sigma_m+2}=1\]
and
\[
1=v_{\sigma_m-1}=v_{\sigma_m}\neq v_{\sigma_m+1}=v_{\sigma_m+2}=0
\]
so Proposition~\ref{prop:4_string} implies that $L$ does not admit an obtuse superbase. The bound on $\norm{\bsigma}$ follows from the fact that
\[
\norm{\bsigma}=m-1+\sum_{i=m}^r \sigma_i^2.\qedhere
\]
\end{proof}

\section{Very slack changemaker lattices}\label{sec:slack_lattices}
The aim of this section is to understand when a very slack changemaker lattice can admit an obtuse superbase. The conclusion to this work is Theorem~\ref{thm:very_slack_technical}, which is the only result in this section that is used elsewhere in the paper. The bulk of the section is devoted to proving a technical lemma, namely Lemma~\ref{lem:modified_basis}.

Let $\bsigma= \sigma_1 \e_1 + \dots + \sigma_r \e_r$ be a changemaker vector with $\sigma_1=1$ and $\sigma_r>1$. We take $m>1$ to be minimal such that $\sigma_m>1$. The vector $\bsigma$ is \textbf{very slack} if it satisfies
\[
\sigma_k \leq \sigma_1 + \dots + \sigma_{k-1} -1
\]
for all $\sigma_k>1$. Unsurprisingly, we say that a changemaker lattice
\[
L=\langle \bsigma \rangle^\bot \subseteq \Z^r
\]
is very slack if $\bsigma$ is a very slack changemaker vector.
\begin{rem}\label{rem:very_slack}
An argument similar to that of Proposition~\ref{prop:CM_existence_bound} shows that if $L$ is a $n$-changemaker lattice with stable coefficients $\underline{\rho}=(\rho_1,\dots, \rho_\ell)$ (ordered as an non-decreasing tuple) and
    \[
N=\sum_{i=1}^\ell \rho_i^2 + \max_{1\leq k \leq \ell} \left(\rho_k - \sum_{i=1}^{k-1} \rho_i \right),
\]
then $L$ is very slack if and only if $n\geq N+1$.
\end{rem}

The very slack condition leads to the following refinement of Proposition~\ref{prop:CM_condition}.
\begin{lem}\label{lem:veryslack_CM_condition}
Let $\bsigma= \sigma_1 \e_1 + \dots + \sigma_r \e_r$ be a very slack changemaker vector and let $m>1$ be minimal such that $\sigma_m >1$. Then for every $\sigma_k>1$, there is $A\subseteq \{1,\dots, k-1\}$ such that
\[
\text{$\sigma_k = \sum_{i\in A} \sigma_i$ and $1\leq |A \cap \{1, \dots, m-1\}|\leq m-2$.}
\]
\end{lem}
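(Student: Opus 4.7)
The plan is to argue by strong induction on $k \geq m$. First, observe that $\sigma_m \geq 2$ together with the very slack hypothesis $\sigma_m \leq \sigma_1 + \dots + \sigma_{m-1} - 1 = m-2$ forces $m \geq 4$ and $\sigma_m \in [2, m-2]$. This handles the base case $k=m$: take $A = \{m-\sigma_m, \dots, m-1\} \subseteq \{1,\dots,m-1\}$, so $\sum_{i \in A}\sigma_i = \sigma_m$ and $|A \cap \{1,\dots,m-1\}| = \sigma_m \in [2,m-2]$.

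For the inductive step $k > m$, I would run the standard greedy algorithm to express $\sigma_k$ as a subset sum: process indices $i = k-1, k-2, \dots, 1$ in turn, starting with $T = \sigma_k$, and at each index include $i$ in $A_{\rm gr}$ and replace $T$ by $T - \sigma_i$ whenever $T \geq \sigma_i$. The changemaker condition (via Proposition~\ref{prop:CM_condition}) guarantees that this produces a valid representation $\sigma_k = \sum_{i \in A_{\rm gr}} \sigma_i$, and the number of $1$'s used equals the value $u$ of $T$ just before the index-$(m-1)$ step. The central claim is the refined greedy invariant
\[
T \leq \sigma_1 + \dots + \sigma_j - 1 \quad \text{just before step } j, \quad \text{for all } m-1 \leq j \leq k-1.
\]
This is verified by decreasing side-induction on $j$ starting from $j = k-1$, where very slack gives $\sigma_k \leq \sigma_1 + \dots + \sigma_{k-1} - 1$. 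In the step, if $j$ is included the bound transfers immediately upon subtracting $\sigma_j$; if $j$ is skipped then $T < \sigma_j$ and very slack applied to $\sigma_j \geq 2$ (using $j \geq m$) yields $T \leq \sigma_j - 1 \leq \sigma_1 + \dots + \sigma_{j-1} - 2$. Evaluating at $j = m-1$ gives $u \leq m-2$.

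It remains to treat two cases. If $u \geq 1$ then $A := A_{\rm gr}$ already satisfies the conclusion with $|A \cap \{1,\dots,m-1\}| = u \in [1, m-2]$. If $u = 0$ then $A_{\rm gr} \subseteq \{m, \dots, k-1\}$; set $j := \min A_{\rm gr}$ and apply the induction hypothesis to $\sigma_j$ (permissible since $m \leq j < k$ and $\sigma_j \geq 2$) to obtain $A'_j \subseteq \{1,\dots,j-1\}$ with $\sum_{i\in A'_j} \sigma_i = \sigma_j$ and $|A'_j \cap \{1,\dots,m-1\}| \in [1, m-2]$. Then $A := A'_j \cup (A_{\rm gr} \setminus \{j\})$ is a disjoint union (since $A'_j \subseteq \{1,\dots,j-1\}$ while $A_{\rm gr} \setminus \{j\} \subseteq \{j+1,\dots,k-1\}$ by minimality of $j$), its total sum is $\sigma_j + (\sigma_k - \sigma_j) = \sigma_k$, and $|A \cap \{1,\dots,m-1\}| = |A'_j \cap \{1,\dots,m-1\}| \in [1, m-2]$.

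The main obstacle is isolating the correct refined greedy invariant. The standard changemaker greedy only yields $T \leq \sigma_1 + \dots + \sigma_j$, which would give $u \leq m-1$ and leave open the bad case $u = m-1$; it is precisely the very slack hypothesis, applied at each skip step with $j \geq m$, that contributes the extra $-1$ needed to exclude $u = m-1$ and reduce the entire argument to a clean base case plus a single recursive substitution.
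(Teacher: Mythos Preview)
Your proof is correct and takes a genuinely different route from the paper's. The paper argues by induction on $k$ but starts from an \emph{arbitrary} subset $B\subseteq\{1,\dots,k-1\}$ with $\sigma_k=\sum_{i\in B}\sigma_i$ (as supplied by Proposition~\ref{prop:CM_condition}) and then repairs it in three cases according to whether $|B\cap\{1,\dots,m-1\}|$ equals $0$, lies in $[1,m-2]$, or equals $m-1$; in the last case it swaps in the smallest missing index $\ell\geq m$ and removes the inductively-obtained representation of $\sigma_\ell$. Your approach instead fixes the \emph{greedy} representation and proves the sharpened invariant $T\leq\sigma_1+\dots+\sigma_j-1$ (for $j\geq m-1$) by using very slackness at each skip step; this automatically rules out the $u=m-1$ case, so only the $u=0$ case needs an inductive substitution. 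The payoff of your route is that it eliminates one of the paper's three cases and makes transparent exactly where the extra ``$-1$'' from very slackness is consumed; the paper's route is slightly more elementary in that it never needs to track a running invariant, only the final count $|B\cap\{1,\dots,m-1\}|$.
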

\begin{proof}
We prove the statement by induction on $k\geq m$. Firstly the statement is true for $\sigma_m$ since the condition that $\bsigma$ is very slack implies that $\sigma_m \leq m-2$ and hence that
\[\sigma_m =\sigma_1 + \dots + \sigma_{\sigma_m}.\]
Now suppose that $k>m$. By Proposition~\ref{prop:CM_condition}, there exists $B\subseteq\{1, \dots, k-1\}$ such that $\sigma_k = \sum_{i\in B} \sigma_i$. There are three possibilities for the size of $|B \cap \{1, \dots, m-1\}|$ to consider.
\begin{itemize}
    \item If $1\leq |B \cap \{1, \dots, m-1\}|\leq m-2$, then we may simply take $A=B$.
    \item If $|B \cap \{1, \dots, m-1\}|=0$, then take $\ell=\min B\geq m$. By induction there exists $A'\subseteq \{1, \dots, \ell -1\}$ such that $\sigma_\ell = \sum_{i\in A'} \sigma_i$ and $1\leq |A' \cap \{1, \dots, m-1\}|\leq m-2$. We may take $A=(B\setminus \{\ell\}) \cup A'$ in this case.
    \item If $|B \cap \{1, \dots, m-1\}|=m-1$, then we take $\ell\geq m$ to be the minimal index not contained in $B$. Since $\sigma_k<\sigma_1 + \dots + \sigma_{k-1}$, we have $\ell<k$. By induction, there exists $A'\subseteq \{1, \dots, \ell -1\}$ such that $\sigma_\ell = \sum_{i\in A'} \sigma_i$ and $1\leq |A' \cap \{1, \dots, m-1\}|\leq m-2$. We may take $A=(B\setminus A') \cup \{\ell\}$ in this case.\qedhere
\end{itemize}
\end{proof}
We will also using the following two general results on obtuse superbases.
\begin{lem}[{\cite[Lemma~2.1]{McCoyAltUnknotting}}]\label{lem:vertex_sums}
Let $L$ be a lattice with an obtuse superbase $B$. If $\bm{z}=\sum_{\bv\in R} \bv$ for $R\subset B$, then for any $\bm{x}\in L$, we have
\[
\pushQED{\qed} 
(\bm{z}-\bm{x})\cdot \bm{x}\leq 0.\qedhere
\popQED
\]  
\end{lem}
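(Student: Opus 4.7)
The inequality $(\bm{z}-\bm{x})\cdot\bm{x}\leq 0$ is equivalent to $\bm{z}\cdot\bm{x}\leq\norm{\bm{x}}$, which rearranges to $\norm{\bm{x}-\bm{z}/2}\geq\norm{\bm{z}/2}$. Geometrically this says that $\bm{z}/2$ lies in the closed Voronoi cell of the origin in $L\otimes\R$, so the lemma amounts to the classical fact that for a lattice equipped with an obtuse superbase every vector of the form $\sum_{\bv\in R}\bv$ is a Voronoi vector of the origin.

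For a self-contained proof I would enumerate $B=\{\bv_0,\dots,\bv_r\}$ and write $\bm{x}=\sum_{i=0}^r b_i\bv_i$ with $b_i\in\Z$ (uniquely determined modulo an overall shift via the relation $\sum_i\bv_i=0$) and $\bm{z}=\sum_{i=0}^r\chi_i\bv_i$ with $\chi_i\in\{0,1\}$ the indicator of $\bv_i\in R$. Obtuseness together with $\sum_i\bv_i=0$ forces $\bv_i\cdot\bv_i=\sum_{j\neq i}|\bv_i\cdot\bv_j|$, so the Gram matrix of $B$ is the Laplacian of $G_B$. The standard identity for Laplacian quadratic forms then yields
\[
\norm{\bm{x}}-\bm{z}\cdot\bm{x}=\sum_{0\leq i<j\leq r}|\bv_i\cdot\bv_j|\,(b_i-b_j)\bigl((b_i-b_j)-(\chi_i-\chi_j)\bigr).
\]
Each summand is non-negative: the weight $|\bv_i\cdot\bv_j|$ is non-negative by obtuseness, while the remaining factor has the form $k(k-\varepsilon)$ with $k=b_i-b_j\in\Z$ and $\varepsilon=\chi_i-\chi_j\in\{-1,0,1\}$; for integer $k$ and such $\varepsilon$, $k$ and $k-\varepsilon$ are equal or consecutive integers, so their product is non-negative. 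Summing produces $\norm{\bm{x}}\geq\bm{z}\cdot\bm{x}$, which is the desired inequality.

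The only step demanding real care is the derivation of the Laplacian identity; the remainder is elementary. It is worth flagging that integrality of the $b_i$ is essential---the bound $k(k-\varepsilon)\geq 0$ fails for real $k$ (e.g.\ $k=\tfrac12$, $\varepsilon=1$), so the argument genuinely uses that $\bm{x}$ lies in $L$ rather than merely in $L\otimes\R$. This is also why the conclusion can fail if one replaces $B$ by an arbitrary obtuse spanning set that is not an integral superbase of $L$.
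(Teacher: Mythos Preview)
Your proof is correct. The paper itself does not prove this lemma---it is quoted from \cite{McCoyAltUnknotting} with a \qed\ and no argument---so there is no in-paper proof to compare against. Your approach via the Laplacian quadratic-form identity and the observation that $k(k-\varepsilon)\geq 0$ for $k\in\Z$, $\varepsilon\in\{-1,0,1\}$ is the standard route to this result (it is the classical Selling/Voronoi argument that the vectors $\sum_{\bv\in R}\bv$ are Voronoi-relevant); one would expect the proof in the cited reference to proceed along the same lines. The one point worth tightening in exposition is the justification that the coefficients $b_i$ can be taken integral: since $B$ spans $L$ over $\Z$ and the unique relation is $\sum_i\bv_i=0$, any $r$ of the $\bv_i$ form a $\Z$-basis of $L$, so integrality of the $b_i$ is immediate. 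You already flag that this integrality is essential, which is good.
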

\begin{lem}[{\cite[Lemma~3.5]{Mc17}}]\label{lem:basis_modification}
 Let $L$ be an indecomposable lattice with an obtuse superbase $B$. Suppose that $\bv\in B$ can be decomposed as $\bv=\bm{x}+\bm{y}$ with $\bm{x}\cdot \bm{y} =-1$. Then there exists a unique pair $\bu^{(1)},\bu^{(2)}\in B\setminus\{\bv\}$ such that $\bu^{(1)}\cdot \bm{x}=1$, $\bu^{(2)}\cdot \bm{y} =1$ and $\bu^{(1)}\cdot \bu^{(2)}=-1$.
Moreover,
\[
B'=B\setminus\{\bv,\bu^{(1)},\bu^{(2)}\}\cup \{\bm{x},\bm{y}, \bu^{(1)}+\bu^{(2)}\}
\]
is also an obtuse superbase for $L$.\qed
 \end{lem}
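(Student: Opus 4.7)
My plan is to interpret the lemma via the graph--lattice dictionary of Proposition~\ref{prop:ObtSupBase=graphlattice}. Writing $L\cong\Lambda(G_B)$, I read the decomposition $v=x+y$ with $x\cdot y=-1$ as a \emph{vertex split} of $v$ into two new vertices $x,y$ joined by a single edge, with the edges of $G_B$ incident to $v$ partitioned between $x$ and $y$. Such a split raises the rank of the associated graph lattice by one, so to produce a new obtuse superbase for $L$ one must simultaneously \emph{contract} a single edge; on the lattice side this contraction replaces two adjacent elements $u_1,u_2$ (with $u_1\cdot u_2=-1$) by their sum $u_1+u_2$. The heart of the lemma is that the correct edge to contract is uniquely determined by the pair $(x,y)$.

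To locate $u_1$ and $u_2$, the starting point is the identity $v\cdot x=(x+y)\cdot x=\norm{x}-1\geq 0$, which combined with $\sum_{w\in B}w=0$ yields
\[
\sum_{w\in B\setminus\{v\}}w\cdot x=1-\norm{x}.
\]
Using Lemma~\ref{lem:vertex_sums} to bound the positive pairings $w\cdot x$, together with the irreducibility of $v$ (a consequence of indecomposability of $L$ via Lemma~\ref{lem:2-connected}), I would show that exactly one $u_1\in B\setminus\{v\}$ satisfies $u_1\cdot x\geq 1$, and in fact $u_1\cdot x=1$. The symmetric argument applied to $y$ produces $u_2$ with $u_2\cdot y=1$. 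Obtuseness of $B$ then forces $u_1\cdot y=u_1\cdot v-u_1\cdot x\leq -1$ and $u_2\cdot x\leq -1$, so in particular $u_1\neq u_2$, and the equality $u_1\cdot u_2=-1$ follows by comparing the edge count around the split vertex to the original degrees at $v$ in $G_B$: any more negative value would fail to account for the single edge between $x$ and $y$.

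Verifying that $B'$ is an obtuse superbase is then largely bookkeeping. The vanishing $\sum_{b\in B'}b=\sum_{b\in B}b-v+(x+y)=0$ is immediate. Spanning of $L$ is cleanest from the graph side: the vertex split followed by contraction of the edge between $u_1$ and $u_2$ produces a graph $G'$ with vertex set $B'$ satisfying $\Lambda(G')\cong L$. For the obtuseness condition, the only nontrivial new pairings are $x\cdot(u_1+u_2)=1+x\cdot u_2\leq 0$ and its $y$-counterpart, which follow from the inequalities in the previous paragraph, together with $x\cdot b$, $y\cdot b$ for $b\in B\setminus\{v,u_1,u_2\}$, which refine $v\cdot b=x\cdot b+y\cdot b\leq 0$ into separate nonpositive inequalities using Lemma~\ref{lem:vertex_sums} applied to $R=\{x\}$ and $R=\{y\}$ in the new basis.

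The principal obstacle is the localization step: demonstrating that \emph{exactly one} element of $B\setminus\{v\}$ pairs positively with $x$ (and similarly for $y$), with the pairing value equal to $+1$. This is where the hypotheses on $L$ (indecomposability) and on the decomposition ($x\cdot y=-1$, so $v$ is split along the minimum-cost edge) must work in concert. Once $u_1$ and $u_2$ have been isolated, the remaining verifications for $B'$ reduce to direct checks.
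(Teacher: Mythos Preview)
The paper does not supply its own proof of this lemma: it is quoted verbatim from \cite[Lemma~3.5]{Mc17} and closed with a \qed. So there is no in-paper argument to compare against; I can only assess your sketch on its merits and against what the cited proof must accomplish.

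Your overall architecture---the sum identity $\sum_{w\in B\setminus\{v\}} w\cdot x = 1-\norm{x}$, isolation of $u_1,u_2$, then bookkeeping for $B'$---is the right shape and is almost certainly how the original proof proceeds. But the step you yourself flag as the ``principal obstacle'' is not actually addressed. You invoke Lemma~\ref{lem:vertex_sums} and irreducibility of $v$, yet neither tool as you describe it yields $w\cdot x\leq 1$. Applying Lemma~\ref{lem:vertex_sums} with $R=\{w\}$ only gives $w\cdot x\leq \norm{w}$. Using irreducibility of $v$ via the decomposition $v=(x-w)+(y+w)$ gives
\[
2(w\cdot x)\;\leq\;\norm{w}+w\cdot v\;\leq\;\norm{w},
\]
which forces $w\cdot x\leq 1$ only when $\norm{w}\leq 3$. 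For vertices of higher degree in $G_B$ you need a further argument (e.g.\ exploiting irreducibility of $w$ itself, or a more careful layer-cake analysis of $x$ in the graph lattice), and your sketch gives no indication of what that argument is. The same applies to the claim $u_1\cdot u_2=-1$: your ``edge-count'' explanation is a heuristic, not a proof, and this equality is needed both for the lemma's statement and for $B'$ to genuinely span $L$ (note that $\operatorname{span}(B')=\operatorname{span}\bigl(\{x\}\cup B\setminus\{u_1,u_2\}\bigr)$, so spanning is not automatic).

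Finally, your last paragraph invokes Lemma~\ref{lem:vertex_sums} ``in the new basis'' to get $x\cdot b\leq 0$ for $b\in B\setminus\{v,u_1,u_2\}$. That is circular: you are using obtuseness of $B'$ to prove obtuseness of $B'$. The correct (and easier) justification is that once $u_1$ is shown to be the \emph{unique} $w\in B\setminus\{v\}$ with $w\cdot x>0$, every other $b$ automatically satisfies $b\cdot x\leq 0$; similarly for $y$.
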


\subsection{Proving Lemma~\ref{lem:modified_basis}}
This entire subsection is devoted to the proof of the following lemma.
\begin{lem}\label{lem:modified_basis}
Let $L$ be a very slack changemaker lattice. If $L$ admits an obtuse superbase, then $L$ an admits an obtuse superbase $B$, such that $\bv^{(2)}=-\e_2 +\e_1$ is in $B$ and there is a vertex $\bm{w}$ with $w_1=w_2=-1$. Moreover for any other vertex $\bv\in B$ distinct from $\bv^{(2)}$ and $\bm{w}$ we have $v_1=0$.
\end{lem}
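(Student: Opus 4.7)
\smallskip

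\emph{Approach.} My plan is to start from an arbitrary obtuse superbase, use Lemma~\ref{lem:standard_norm_twos} to place the entire norm-two chain $\bv^{(j)}=-\e_j+\e_{j-1}$ ($j=2,\dots,m-1$) inside it, and then iteratively apply the swap of Lemma~\ref{lem:basis_modification} to collapse the set of vertices with nonzero first coordinate down to exactly two.

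\emph{Preliminary structure.} First I would observe that, since $L$ is very slack and admits an obtuse superbase, Proposition~\ref{prop:OSB_m_bound} gives $\sigma_m\geq m-2$ while the very slack condition $\sigma_m\leq \sigma_1+\dots+\sigma_{m-1}-1=m-2$ gives the reverse inequality. So $\sigma_m=m-2$, and in particular $m\geq 4$. Because $\sigma_m\neq m-1$, Lemma~\ref{lem:decomposable_properties} forces $L$ to be indecomposable; Lemma~\ref{lem:2-connected} then implies that every vector in every obtuse superbase of $L$ is irreducible. Moreover a very slack changemaker vector is never tight, so Lemma~\ref{lem:irreducible_bounds} yields $|v_i|\leq 1$ for every such irreducible $\bv$ whenever $\sigma_i=1$, i.e.\ for every $i<m$.

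\emph{Setting up the basis and the monotonicity constraint.} By Lemma~\ref{lem:standard_norm_twos} I may pick an obtuse superbase $B$ containing the full chain $\{\bv^{(2)},\dots,\bv^{(m-1)}\}$. For any other $\bu\in B$, the obtuseness condition $\bu\cdot\bv^{(j)}\leq 0$ translates to $u_{j-1}\leq u_j$, so together with $|u_i|\leq 1$ for $i<m$ the prefix $(u_1,\dots,u_{m-1})$ is a non-decreasing sequence in $\{-1,0,1\}$. Any such $\bu$ with $u_1=1$ must therefore satisfy $u_1=\dots=u_{m-1}=1$, and in particular is orthogonal to every $\bv^{(j)}$ in the chain. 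Write $B_\pm=\{\bv\in B\setminus\{\bv^{(2)},\dots,\bv^{(m-1)}\}:v_1=\pm1\}$; since $\sum_{\bv\in B}v_1=0$ and $\bv^{(2)}$ contributes $+1$ while the other chain vertices contribute $0$, one has $|B_+|+1=|B_-|$ after accounting for $\bv^{(2)}$; equivalently, the numbers of $+$-type and $-$-type vertices differ by exactly one.

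\emph{Cancellation step.} The main reduction produces, whenever $|B_+|\geq 1$, an obtuse superbase in which $|B_+|$ has dropped by one. To do this I would exploit Lemma~\ref{lem:veryslack_CM_condition} applied to $\sigma_m$: it produces an irreducible vector of the form $\bv^{(m)}=-\e_m+\sum_{i\in A}\e_i$ with $A\subseteq\{1,\dots,m-1\}$, $|A|=m-2$. Given a $+$-type vertex $\bu^+\neq\bv^{(2)}$ and an adjacent or chain-reachable $-$-type vertex $\bu^-$, I would write $\bu^++\bu^-$ (which has first coordinate $0$) as $\bm{x}+\bm{y}$ with $\bm{x}\cdot\bm{y}=-1$ where $\bm{x},\bm{y}$ are chosen to be orthogonal to the chain and to have all first coordinates zero; Lemma~\ref{lem:basis_modification}, applied to a suitable decomposable vertex (possibly a sum of $\bu^+$ and part of the chain-complement), then swaps in a replacement vertex with $v_1=0$. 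Iterating this move reduces to the case $|B_+|=0$, i.e.\ $\bv^{(2)}$ is the unique vertex with $v_1=1$ and there is a unique $\bm{w}\in B$ with $w_1=-1$.

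\emph{Arranging $w_2=-1$ and main obstacle.} At this stage $\bm{w}$ lies outside the chain, so $(w_1,w_2)$ is one of $(-1,-1),(-1,0),(-1,1)$. If $w_2\neq -1$ I would run a final modification analogous to the cancellation step above: using $\bv^{(m)}$ and the adjacency with $\bv^{(2)}$ and $\bv^{(3)}$, replace $\bm{w}$ by $\bm{w}+(\bv^{(2)}+\bv^{(3)}+\dots)$ for an appropriate partial chain sum. This preserves $w_1=-1$ while forcing $w_2=-1$, at the cost only of coordinates $v_i$ with $i\geq 3$, and crucially does not create new $\pm$-type vertices. The hard part is this cancellation step: I must show that the sum $\bu^++\bu^-$ (or $\bm{w}+$chain sum) actually splits as $\bm{x}+\bm{y}$ with $\bm{x}\cdot\bm{y}=-1$ and matches the hypotheses of Lemma~\ref{lem:basis_modification} in such a way that the new superbase still contains the full chain and has strictly fewer $+$-type vertices. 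This will require delicate bookkeeping of which elements of $B$ have positive pairing with the chosen $\bm{x}$ and $\bm{y}$, and the very slack hypothesis enters here precisely through Lemma~\ref{lem:veryslack_CM_condition}, which guarantees enough flexibility in representing each changemaker coefficient to construct the required decompositions.
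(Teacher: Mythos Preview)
Your preliminary structure is correct and matches the paper's setup: very slack plus OSB forces $\sigma_m = m-2$, hence $L$ is indecomposable, all superbase vertices are irreducible with $|v_i| \leq 1$ for $i < m$, and the count $|B_-| = |B_+| + 1$ is right.

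The genuine gap is the cancellation step. You propose to write $\bu^+ + \bu^-$ as $\bm{x} + \bm{y}$ with $\bm{x} \cdot \bm{y} = -1$ and invoke Lemma~\ref{lem:basis_modification}, but that lemma requires decomposing a \emph{single} element $v \in B$, not a sum of two of them; its output $B' = (B \setminus \{v, u_1, u_2\}) \cup \{x, y, u_1+u_2\}$ cannot be reshaped into a move that trades $\bu^+,\bu^-$ for vertices with vanishing first coordinate. You flag this yourself as the unresolved hard part, so the proposal is a plan without its engine. Your final adjustment is also inconsistent: adding $\bv^{(2)} + \bv^{(3)} + \cdots$ to $\bm{w}$ changes $w_1$ to $0$, while dropping $\bv^{(2)}$ from that sum moves $w_2$ in the wrong direction. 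What the paper supplies are exactly the two missing ingredients. First, Lemma~\ref{lem:total_e1_bound} shows that any subset-sum $\bm{z}$ of $B$ has $|z_1| \leq 2$; combined with the degree-two chain this gives $|B_+| \leq 1$ a priori and a trichotomy (Lemma~\ref{lem:veryslack_trichotomy}) in which either the conclusion already holds or there is a unique $\bv \in B$ with prefix $(1,\dots,1)$. Second, a sharpening of Lemma~\ref{lem:veryslack_CM_condition} (namely Lemma~\ref{lem:veryslack_condition}: every $\sigma_k>1$ can be represented using exactly $\{1\}$ or exactly $\{2,\dots,m-1\}$ from the initial block) is used to split \emph{that single vertex} as $\bv = \bm{x}+\bm{y}$ with $\bm{x}\cdot\bm{y}=-1$, $x_1=1$, $x_2=\cdots=x_{m-1}=0$ and $y_1=0$, $y_2=\cdots=y_{m-1}=1$ (Lemma~\ref{lem:typeC2_splitting}). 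Two legitimate applications of Lemma~\ref{lem:basis_modification} to this splitting then finish the proof; no iteration is needed and the condition $w_2=-1$ falls out automatically from the classification.
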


\noindent
\textbf{Working Assumption:}  Throughout this section we take
\[
L=\langle \sigma_1 \e_1 + \dots + \sigma_r \e_r \rangle^\bot\subseteq \Z^r,
\]
to be a very slack changemaker lattice admitting an obtuse superbase $B$. We take $m> 2$ to be minimal such that $\sigma_m\geq 2$. The very slack condition implies that $\sigma_m \leq m-2$ and hence that $m\geq 4$.

This has a number of consequences:
\begin{itemize}
    \item Proposition~\ref{prop:OSB_m_bound} implies that $\sigma_m=m-2$.
    \item Lemma~\ref{lem:decomposable_properties} shows that $L$ is indecomposable.
    \item Lemma~\ref{lem:2-connected} implies that every element of $B$ is therefore irreducible and the associated graph $G_B$ is 2-connected.
\end{itemize}

Given the existence of the obtuse superbase, we may further refine Lemma~\ref{lem:veryslack_CM_condition}
\begin{lem}\label{lem:veryslack_condition}
If $L$ is very slack and admits an obtuse superbase, then for all $\sigma_k>1$, there exists a subset $A\subseteq\{1, \dots, k-1\}$ such that
\[
\sigma_k = \sum_{i\in A} \sigma_i
\]
and 
\[
\text{$A \cap \{1, \dots, m-1\}=\{1\}$ or $A \cap \{1, \dots, m-1\}=\{2, \dots, m-1\}$.}
\]
\end{lem}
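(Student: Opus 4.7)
The plan is to argue by strong induction on $k$ among the indices with $\sigma_k > 1$.

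For the base case $k = m$, I would combine the very slack condition $\sigma_m \leq \sigma_1 + \dots + \sigma_{m-1} - 1 = m-2$ with the lower bound $\sigma_m \geq m-2$ from Proposition~\ref{prop:OSB_m_bound} (which uses the obtuse superbase hypothesis) to conclude $\sigma_m = m-2$. Since $\sigma_j \geq \sigma_m \geq 2$ for every $j \geq m$, any decomposition $\sigma_m = \sum_{i \in A}\sigma_i$ must have $A \subseteq \{1, \dots, m-1\}$, forcing $|A| = m-2$; taking $A = \{2, \dots, m-1\}$ then satisfies the conclusion.

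For the inductive step $k > m$, I would first apply Lemma~\ref{lem:veryslack_CM_condition} to obtain an initial decomposition $A_0 \subseteq \{1, \dots, k-1\}$ with $1 \leq s_0 := |A_0 \cap \{1, \dots, m-1\}| \leq m-2$. If $s_0 \in \{1, m-2\}$, then freely relabeling the low-index elements (which is allowed since $\sigma_i = 1$ for every $i < m$) places $A_0 \cap \{1, \dots, m-1\}$ equal to $\{1\}$ or $\{2, \dots, m-1\}$, and we are done. Otherwise $1 < s_0 < m-2$, and since the low-index contribution $s_0$ is strictly less than $\sigma_m \leq \sigma_k$, there must exist some $k' \in A_0 \cap \{m, \dots, k-1\}$. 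Applying the inductive hypothesis to $\sigma_{k'}$ yields a decomposition $A'$ with $|A' \cap \{1, \dots, m-1\}| \in \{1, m-2\}$, which I would then substitute for $\{k'\}$ inside $A_0$. Collisions between $A'$ and $A_0 \setminus \{k'\}$ are resolved using the base-case identity $\sigma_m = \sigma_2 + \dots + \sigma_{m-1}$ to trade $\{m\}$ against $\{2, \dots, m-1\}$, and the process is iterated.

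The main obstacle is ensuring that the iterative substitution drives $|A \cap \{1, \dots, m-1\}|$ all the way to one of the extremal values $1$ or $m-2$, rather than becoming trapped at an intermediate size. I expect to handle stuck configurations by invoking the irreducibility of the vector $\bm{u} = -\e_k + \sum_{i \in A} \e_i \in L$ (irreducible by Lemma~\ref{lem:irred_examples}, since the singleton $\{k\}$ has no proper nonempty subset) combined with Proposition~\ref{prop:4_string}, which forbids the pattern $u_j = u_{j+1} \neq u_{j+2} = u_{j+3}$ among the $\{0,1\}$-valued low-index coordinates of $\bm{u}$ and thereby eliminates the remaining intermediate configurations.
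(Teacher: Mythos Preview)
Your final paragraph is the entire proof, and the induction preceding it is an unnecessary detour. The paper argues exactly as you do at the end: take any $A$ furnished by Lemma~\ref{lem:veryslack_CM_condition} with $\ell := |A \cap \{1,\dots,m-1\}|$ satisfying $1 \le \ell \le m-2$; if $2 \le \ell \le m-3$, relabel so that $A \cap \{1,\dots,m-1\} = \{1,\dots,\ell\}$, observe that $\bm{u} = -\e_k + \sum_{i \in A}\e_i$ is irreducible by Lemma~\ref{lem:irred_examples}, and note that $u_{\ell-1}=u_\ell=1$, $u_{\ell+1}=u_{\ell+2}=0$ with $\sigma_{\ell-1}=\cdots=\sigma_{\ell+2}=1$, contradicting Proposition~\ref{prop:4_string}. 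Done---no recursion on $k$, no substitution.

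Your inductive substitution step, by contrast, has real gaps. When you replace $\{k'\}$ by the inductively obtained $A'$ inside $A_0$, collisions can occur not only among the low indices but also among indices in $\{m,\dots,k'-1\}$; the trading rule $\{m\} \leftrightarrow \{2,\dots,m-1\}$ does nothing for those. You also have not argued that the iteration makes progress toward $\ell \in \{1,m-2\}$ rather than oscillating. Since the direct Proposition~\ref{prop:4_string} argument already disposes of every intermediate $\ell$ in one stroke, you should simply drop the induction and promote your last paragraph to the body of the proof.
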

\begin{proof}
Lemma~\ref{lem:veryslack_CM_condition} shows that there exists a set $A\subseteq\{1, \dots, k-1\}$ such that
\[
\sigma_k = \sum_{i\in A} \sigma_i
\]
and
\[1\leq |A \cap \{1, \dots, m-1\}|\leq m-2.\]
To conclude the proof it suffices to show that we have $A$ with 
\[
\text{$|A \cap \{1, \dots, m-1\}|=1$ or $m-2$}.
\]
Thus suppose that 
\[|A \cap \{1, \dots, m-1\}|=\ell\] for $2\leq \ell \leq m-3$. 
Up to reordering the indices, we may assume that
\[
A \cap \{1, \dots, m-1\}=\{1, \dots, \ell\}.
\]
The vector
\[
\bv=-\e_k +\sum_{i\in A} \e_i \in L
\]
is irreducible by Lemma~\ref{lem:irred_examples}. But by construction $\bv$ satisfies 
\[1=v_{\ell-1}=v_{\ell}\neq v_{\ell+1}=v_{\ell+2}=0,\] which Proposition~\ref{prop:4_string} shows to be incompatible with the existence of an obtuse superbase for $L$.
\end{proof}

Now we turn our attention to understanding the possibilities for the obtuse superbase $B$. By Lemma~\ref{lem:standard_norm_twos} we can assume that the the standard basis vectors 
\[\bv^{(2)}=\e_1-\e_2, \dots, \bv^{(m-1)}=\e_{m-2}-\e_{m-1}\]
are all in $B$. There are vertices $\bu^{(1)},\bu^{(2)}\not\in\{\bv^{(2)}, \dots, \bv^{(m-1)}\}$ such that $\bu^{(1)}$ pairs non-trivially with $\bv^{(2)}$ and $\bu^{(2)}$ pairs non-trivially with $\bv^{(m-1)}$. Since $G_B$ is 2-connected, we see that $\bu^{(1)}$ and $\bu^{(2)}$ must be distinct. Now we study the possibilities for $\bu^{(1)}$ and $\bu^{(2)}$.
\begin{lem}
The vertex $\bu^{(1)}$ must take one of the two forms:
\begin{enumerate}
    \item [(A1)] $u^{(1)}_1=-1$ and $u^{(1)}_2=\dots =u^{(1)}_{m-1}= 0$
    \item [(A2)] $u^{(1)}_1=0$ and $u^{(1)}_2=\dots =u^{(1)}_{m-1}= 1$
\end{enumerate}
The vertex $\bu^{(2)}$ must take one of the following two forms:
\begin{enumerate}
    \item [(B1)] $u^{(2)}_{m-1}=1$ and $u^{(2)}_1=\dots =u^{(2)}_{m-2}= 0$
    \item [(B2)] $u^{(2)}_{m-1}=0$ and $u^{(2)}_1=\dots =u^{(2)}_{m-2}= -1$
\end{enumerate}
Any other vertex $\bv\not\in\{\bv^{(2)},\dots, \bv^{(m-1)}, \bu^{(1)},\bu^{(2)}\}$ must satisfy one of the following conditions:
\begin{enumerate}
    \item [(C1)] $v_1=\dots =v_{m-1}= 0$
    \item [(C2)] $v_1=\dots =v_{m-1}= 1$
    \item [(C3)] $v_1=\dots =v_{m-1}= -1$ 
\end{enumerate}
\end{lem}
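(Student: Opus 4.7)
The strategy is to extract all constraints by dot-pairing each $\bv \in B$ with the standard basis vectors $\bv^{(2)}, \ldots, \bv^{(m-1)}$, which by Lemma~\ref{lem:standard_norm_twos} may be assumed to lie in $B$. Since each $\bv^{(k)} = \e_{k-1} - \e_k$ has $\norm{\bv^{(k)}} = 2$, the vertex $\bv^{(k)}$ has degree exactly $2$ in $G_B$. A direct computation gives $\bv^{(k)} \cdot \bv^{(k+1)} = -1$, so consecutive $\bv^{(k)}$ are adjacent and form an induced path. For $3 \leq k \leq m-2$ the two neighbors $\bv^{(k-1)}$ and $\bv^{(k+1)}$ already saturate the degree of $\bv^{(k)}$; the only remaining capacity for adjacencies outside the path sits at the endpoints, where $\bu^{(1)} \cdot \bv^{(2)} = -1$ and $\bu^{(2)} \cdot \bv^{(m-1)} = -1$. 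In particular $\bu^{(1)}$ is orthogonal to every $\bv^{(k)}$ with $k \geq 3$, and $\bu^{(2)}$ is orthogonal to every $\bv^{(k)}$ with $k \leq m-2$.

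Given these adjacencies, the identity $\bv \cdot \bv^{(k)} = v_{k-1} - v_k$ pins down the first $m-1$ coordinates of every vertex of $B$. For a $\bv \in B$ outside $\{\bv^{(2)}, \ldots, \bv^{(m-1)}, \bu^{(1)}, \bu^{(2)}\}$, orthogonality $\bv \cdot \bv^{(k)} = 0$ for every $k$ in the range forces $v_1 = v_2 = \cdots = v_{m-1}$. To bound the common value I will invoke Lemma~\ref{lem:irreducible_bounds} applied to $\bv$, which is irreducible since $L$ is indecomposable. The hypothesis requires that $L$ not be tight; this holds because the tightness relation $\sigma_k = 1 + \sigma_1 + \cdots + \sigma_{k-1}$ contradicts the very slack inequality $\sigma_k \leq \sigma_1 + \cdots + \sigma_{k-1} - 1$ whenever $\sigma_k \geq 2$, and is impossible when $\sigma_k = 1$ with $k \geq 2$. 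Consequently $|v_i| \leq 1$ for $i \leq m-1$, yielding exactly the three options (C1), (C2), (C3).

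For $\bu^{(1)}$ the same dictionary gives $u^{(1)}_2 = u^{(1)}_3 = \cdots = u^{(1)}_{m-1}$ and $u^{(1)}_1 = u^{(1)}_2 - 1$. Combined with the bound $|u^{(1)}_i| \leq 1$ from Lemma~\ref{lem:irreducible_bounds}, the only admissible pairs are $(u^{(1)}_1, u^{(1)}_2) = (-1, 0)$ and $(0, 1)$, which propagate to (A1) and (A2) respectively. The analysis of $\bu^{(2)}$ is the mirror computation using its orthogonality to $\bv^{(2)}, \ldots, \bv^{(m-2)}$ and $\bu^{(2)} \cdot \bv^{(m-1)} = -1$.

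The only step where something nontrivial has to be checked is the initial graph-theoretic observation identifying the full set of neighbors of each $\bv^{(k)}$ in $G_B$; once that is in place the remainder is routine coordinate bookkeeping. A minor care point is verifying that very slack lattices are never tight, which is immediate from the defining inequalities as sketched above, so Lemma~\ref{lem:irreducible_bounds} always delivers the sharper bound $|v_i| \leq 1$.
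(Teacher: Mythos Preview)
Your proof is correct and follows essentially the same approach as the paper: both use Lemma~\ref{lem:irreducible_bounds} (with the observation that very slack lattices are not tight) to bound $|v_i|\leq 1$ for $i<m$, and then derive the coordinate relations from the pairings with $\bv^{(2)},\dots,\bv^{(m-1)}$. You spell out more explicitly the graph-theoretic reason why $\bu^{(1)}\cdot\bv^{(k)}=0$ for $k\geq 3$ and why the other vertices pair trivially with all the $\bv^{(k)}$ (namely, that each $\bv^{(k)}$ has degree two in $G_B$ and these are accounted for by the path and the two endpoints $\bu^{(1)},\bu^{(2)}$), whereas the paper simply asserts these vanishings.
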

\begin{proof}
Since $L$ is not tight, Lemma~\ref{lem:irreducible_bounds}, shows that for any $\bv\in B$, we have $|v_i|\leq 1$ for $i=1, \dots, m-1$. The remaining restrictions on coefficients come from considering the pairings with $\bv^{(2)}, \dots, \bv^{(m-1)}$. In particular, $\bu^{(1)}\cdot \bv^{(k)}=0$ for $k>2$ and $\bu^{(2)}\cdot \bv^{(k)}=0$ for $k<m-1$.
\end{proof}

The following lemma allows us to restrict the number of $\bv\in B$ with $v_1\neq 0$.
\begin{lem}\label{lem:total_e1_bound}
Let $\bm{z}=\sum_{\bv\in R} \bv$ for $R\subseteq B$. Then
\[
|z_k| \leq 2
\]
for any $k$ with $\sigma_k=1$.
\end{lem}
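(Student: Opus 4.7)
The key tool is Lemma~\ref{lem:vertex_sums}. Applied both to the sum $\bm{z} = \sum_{\bv \in R} \bv$ and to the complementary sum $-\bm{z} = \sum_{\bv \in B \setminus R} \bv$ (using $\sum_{\bv \in B} \bv = 0$), it yields
\[
|\bm{x} \cdot \bm{z}| \leq \norm{\bm{x}} \quad \text{for every } \bm{x} \in L.
\]
Since any $k$ with $\sigma_k = 1$ satisfies $k < m$, taking $\bm{x} = \e_i - \e_j$ with $i \neq j$ both less than $m$ (which lies in $L$ because $\sigma_i = \sigma_j = 1$, and has $\norm{\bm{x}} = 2$) gives $|z_i - z_j| \leq 2$. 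Thus all the coordinates $z_1, \dots, z_{m-1}$ lie in an interval of length at most $2$, and it is enough to establish that some single one of them has absolute value at most $1$.

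To pin down one such coordinate, I would expand $\bm{z}$ using the vertex classification of the preceding lemma: every $\bv \in R$ has its first $m-1$ coordinates in one of the eight patterns $\bv^{(j)}$ (for $2 \leq j \leq m-1$), A1 or A2 for the $\bu^{(1)}$ slot, B1 or B2 for the $\bu^{(2)}$ slot, and C1, C2, or C3 otherwise. Summing coordinate-wise expresses each $z_k$ ($k < m$) as a bounded signed combination of at most three indicator variables (an indicator for $\bv^{(j)}\in R$, and the types realized by $\bu^{(1)},\bu^{(2)}$ when they lie in $R$) plus a single global contribution $\gamma = n_{C2}(R) - n_{C3}(R)$, where $n_{C2}(R)$ and $n_{C3}(R)$ count the type-C2 and type-C3 vertices in $R$. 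The bounded part contributes at most $2$ to $|z_k|$, so the whole problem reduces to showing $|\gamma| \leq 1$.

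The main obstacle is controlling $\gamma$, which I would do by applying Lemma~\ref{lem:vertex_sums} to a second, more carefully chosen test vector in $L$. Here the very slack hypothesis is essential: by Lemma~\ref{lem:veryslack_condition} and the fact that $\sigma_m = m-2$, one can write $\sigma_m = \sigma_1 + \dots + \sigma_{m-2}$, which allows the construction of a vector $\bm{x} \in L$ whose first $m-1$ coordinates are all $+1$, together with a controlled tail in positions $\geq m$ producing total norm bounded linearly in $m$. Computing $\bm{x} \cdot \bm{z}$ against the expression for $z_1 + \dots + z_{m-1}$ derived above isolates $\gamma$ up to bounded error, and the resulting inequality forces $|\gamma| \leq 1$. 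Combining with the bounded combinatorial part gives $|z_k| \leq 2$ for every $k$ with $\sigma_k = 1$, as desired.
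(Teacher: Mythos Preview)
Your outline has several genuine gaps, and the paper's argument is both simpler and avoids all of them.

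First, the reduction in your second paragraph is arithmetically off. Knowing that all of $z_1,\dots,z_{m-1}$ lie in an interval of length $2$ and that one of them has absolute value at most $1$ only gives $|z_j|\leq 3$: if $z_k=1$ is the minimum, the others can be as large as $3$. You would need some $z_k\leq 0$ \emph{and} some $z_j\geq 0$ to conclude. The same off-by-one problem recurs at the end: even granting $|\gamma|\leq 1$, your ``bounded combinatorial part'' can honestly be $\pm 2$ (e.g.\ from $\bv^{(k)},\bv^{(k+1)},\bu^{(1)},\bu^{(2)}$ all contributing with the same sign), so you only get $|z_k|\leq 3$.

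Second, the test vector you propose need not exist, and even if it does you cannot use it as described. A vector $\bm{x}\in L$ with $x_1=\dots=x_{m-1}=1$ must satisfy $\sum_{i\geq m} x_i\sigma_i = -(m-1)$; since $\sigma_m=m-2$ and all $\sigma_i\geq 2$ for $i\geq m$, the residue $-(m-1)+\sigma_m=-1$ is not representable by the remaining $\sigma_i$, so a short tail is generally impossible. More seriously, $\bm{x}\cdot\bm{z}$ involves $\sum_{i\geq m} x_i z_i$, and you have no a priori bound on the coordinates $z_i$ for $i\geq m$, so you cannot ``isolate $\gamma$ up to bounded error''.

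The paper's proof is a single direct application of Lemma~\ref{lem:vertex_sums} with a well-chosen $\bm{x}$, and uses neither the vertex classification nor the very slack hypothesis. Assume $z_1>0$ and take $g>1$ minimal with $z_g\leq 0$. By the changemaker condition there is $A\subseteq\{1,\dots,g-1\}$ with $\sigma_g-1=\sum_{i\in A}\sigma_i$, and one sets $\bm{x}=-\e_g+\e_1+\sum_{i\in A}\e_i\in L$, so $x_1\in\{1,2\}$. The point of choosing $g$ this way is that $z_i\geq 1$ for $i<g$ and $z_g\leq 0$, so every term in $\bm{x}\cdot(\bm{z}-\bm{x})$ except the $\e_1$-contribution is bounded below by its minimum; the inequality $0\geq\bm{x}\cdot(\bm{z}-\bm{x})\geq -1+x_1(z_1-x_1)$ then forces $z_1\leq 5/2$, hence $z_1\leq 2$. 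The key idea you are missing is to let the sign pattern of $\bm{z}$ itself dictate the test vector, rather than trying to control $\bm{z}$ through the structure of $B$.
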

\begin{proof}
It suffices to show that $|z_1| \leq 2$. Without loss of generality, we can consider the case that $z_1 >0$. Let $g>1$ be minimal such that $z_g \leq 0$. By Proposition~\ref{prop:CM_condition}, there is a subset $A \subseteq \{1, \dots, g-1\}$ such that $\sigma_g -1 = \sum_{i \in A}\sigma_i$. This allows us to define $\bm{x}\in L$ by
\[
\bm{x}=-\e_g + \e_1 + \sum_{i\in A} \e_i.
\]
By construction we have $x_1 \in \{1,2\}$.
By Lemma~\ref{lem:vertex_sums}, we have that
\begin{align*}
    0&\geq \bm{x}\cdot(\bm{z}-\bm{x}) = -(z_g +1) + (x_1)(z_1 -x_1)+ \sum_{i\in A\setminus\{1\}} (z_i -1)\\
    &\geq -1 + x_1(z_1 -x_1).
\end{align*}
Since $x_1 \in \{1,2\}$, this implies that $z_1\leq \frac52$. Since $z_1$ is an integer, this implies $z_1\leq 2$, as required.
\end{proof}
This implies several possible combinations of vertices in $B$.
\begin{lem}\label{lem:veryslack_trichotomy} We can assume one of the following situation holds:
\begin{enumerate}[label=(\Roman*)]
    \item\label{it:vslackI} $\bu^{(1)}$ is of type $A1$, $\bu^{(2)}$ is of type $B1$, there is one vertex of type $C2$, one vertex of type $C3$, and all other vertices are of type $C1$.
    \item\label{it:vslackII} $\bu^{(1)}$ is of type $A2$, $\bu^{(2)}$ is of type $B1$, and there is one vertex of type $C3$ and all other vertices are of type $C1$.
    \item\label{it:vslackIII} $\bu^{(1)}$ is of type $A2$, $\bu^{(2)}$ is of type $B2$, and all other vertices are of type $C1$.
\end{enumerate}
\end{lem}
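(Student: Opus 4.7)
The plan is to use the obtuse superbase identity $\sum_{\bv\in B}\bv = 0$ together with Lemma~\ref{lem:total_e1_bound} to produce a finite list of candidate configurations (type of $\bu^{(1)}$, type of $\bu^{(2)}$, $c_2$, $c_3$), and then to use Lemma~\ref{lem:basis_modification} to convert any stray configuration into one of the three listed ones.

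Projecting $\sum_{\bv\in B}\bv = 0$ onto the first coordinate and setting $\alpha = [\bu^{(1)}\text{ is A1}]$ and $\beta = [\bu^{(2)}\text{ is B1}]$, the contributions from $\bv^{(2)}$, $\bu^{(1)}$, $\bu^{(2)}$, and the C2/C3 vertices yield the single linear relation $c_2 - c_3 = \alpha - \beta$. Projection onto any other coordinate $i$ with $1 < i \leq m-1$ recovers the same identity. Next, applying Lemma~\ref{lem:total_e1_bound} with $k=1$ to the subsets $\{\bv\in B : v_1 = +1\}$ and $\{\bv\in B : v_1 = -1\}$ gives $1 + c_2 \leq 2$ and $\alpha + (1-\beta) + c_3 \leq 2$. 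Together with the linear relation, this leaves only seven candidate configurations for $(\text{type of }\bu^{(1)},\text{type of }\bu^{(2)},c_2,c_3)$: A1-B1$(0,0)$ or $(1,1)$, A1-B2$(1,0)$, A2-B1$(0,1)$ or $(1,2)$, and A2-B2$(0,0)$ or $(1,1)$. Three of these are precisely cases~\ref{it:vslackI}, \ref{it:vslackII}, and \ref{it:vslackIII}.

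The bulk of the proof is therefore to reduce each of the four leftover configurations (A1-B1$(0,0)$, A1-B2$(1,0)$, A2-B1$(1,2)$, and A2-B2$(1,1)$) to one of the three listed cases. For each, I would decompose either $\bu^{(1)}$, $\bu^{(2)}$, or a type C2/C3 vertex as $v = x + y$ with $x\cdot y = -1$, where $x$ is chosen to carry most of the $\{1,\ldots,m-1\}$-portion of $v$ together with a compensating component of the form $\pm(\e_i - \e_j)$ with $\sigma_i = \sigma_j = 1$. The resulting obtuse superbase produced by Lemma~\ref{lem:basis_modification} either flips the type of $\bu^{(1)}$ from A1 to A2 (or the type of $\bu^{(2)}$ from B1 to B2) or reduces the C2/C3 count by one; after the modification, Lemma~\ref{lem:standard_norm_twos} can be reapplied to restore the standard path $\bv^{(2)}, \ldots, \bv^{(m-1)}$ inside the new superbase.

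The principal obstacle is the case-by-case bookkeeping. Verifying the hypotheses of Lemma~\ref{lem:basis_modification} in each residual configuration, identifying the correct triple $(v, u_1, u_2)$, and confirming that the modified superbase lands squarely in one of the three listed types rather than producing a new deviation requires a short but delicate calculation in each case. The main tools at each step are Lemma~\ref{lem:irreducible_bounds} and Lemma~\ref{lem:irreducible}, which constrain the possible coordinate patterns of irreducible vectors in $B$ and thereby pin down the candidate choices for $u_1, u_2$.
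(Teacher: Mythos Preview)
Your enumeration of the seven candidate configurations via the coordinate-1 sum and Lemma~\ref{lem:total_e1_bound} is correct, but the plan to eliminate the four leftover configurations via Lemma~\ref{lem:basis_modification} is both unnecessary and not actually carried out. The paper's proof is much shorter and uses no basis modifications at all.

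The key observations you are missing are these. First, Lemma~\ref{lem:total_e1_bound} applies to \emph{every} coordinate $k$ with $\sigma_k=1$, not just $k=1$. Applying it to a coordinate $2\le k\le m-1$ on the positive side (where $\bv^{(k+1)}$, a type-A2 $\bu^{(1)}$, and any C2 vertex each contribute $+1$) immediately gives $c_2=0$ whenever $\bu^{(1)}$ is of type A2; symmetrically, $c_3=0$ whenever $\bu^{(2)}$ is of type B2. This kills the configurations A2--B1$(1,2)$ and A2--B2$(1,1)$ outright. Second, the configuration A1--B1$(0,0)$ is ruled out by a direct irreducibility argument: the vector $\bv^{(m)}=-\e_m+\e_{m-1}+\dots+\e_2$ is irreducible, hence expressible as $\sum_{\bv\in R}\bv$ for some $R\subseteq B$; but in A1--B1$(0,0)$ no subset of $B$ can produce the pattern $(0,1,\dots,1)$ on coordinates $1,\dots,m-1$, so a C2 vertex is forced. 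Third, A1--B2$(1,0)$ is handled by symmetry: reversing the indices $\e_1,\dots,\e_{m-1}$ and negating all of $B$ swaps the roles of (A1,B2) and (A2,B1), reducing to case~\ref{it:vslackII}.

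So the paper avoids the ``case-by-case bookkeeping'' with Lemma~\ref{lem:basis_modification} entirely. Your proposed route might be made to work, but you have not verified the hypotheses of Lemma~\ref{lem:basis_modification} in any of the four cases, and in particular for A1--B1$(0,0)$ it is unclear which vertex you would decompose or why the required $x\cdot y=-1$ splitting exists.
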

\begin{proof}
Lemma~\ref{lem:total_e1_bound} puts some immediate restrictions on $B$. We see immediately that $B$ contains at most one vertex of type $C2$ and at most one vertex of type $C3$. Furthermore,  if $\bu^{(1)}$ is of type $A2$, then $B$ contains no vertices of type $C2$. Likewise, if $\bu^{(2)}$ is of type $B2$, then $B$ contains no vertices of type $C3$.

Suppose that $\bu^{(1)}$ is of type $A1$ and $\bu^{(2)}$ is of type $B1$. By Lemma~\ref{lem:irred_examples}, the vector
\[
\bv^{(m)}=-\e_m+\e_{m-1}+\dots + \e_2
\]
is irreducible and hence, by Lemma~\ref{lem:irreducible}, expressible as a sum of distinct elements of $B$. This is only possible if $B$ contains a vertex of type $C2$. Since the elements of $B$ sum to zero it follows that there must also be a vertex of type $C3$. This puts us in case~\ref{it:vslackI}.

Suppose that $\bu^{(1)}$ is of type $A2$ and $\bu^{(2)}$ is of type $B1$. Since the vertices of $B$ sum to zero it follows that there exists a vertex of type $C2$. This puts us in case~\ref{it:vslackII}.

Suppose that $\bu^{(1)}$ is of type $A1$ and $\bu^{(2)}$ is of type $B2$. If one reverses the indexing on the $\e_1, \dots, \e_{m-1}$ and multiplies all elements of $B$ by minus one, then one obtains an obtuse superbase in which the element $\bu^{(1)}$ is of type $A2$ and the new $\bu^{(2)}$ is of type $B1$. Thus we can again assume that we are in case~\ref{it:vslackII}.

Finally, if $\bu^{(1)}$ is of type $A2$ and $\bu^{(2)}$ is of type $B2$, then there are no vertices of type $C2$ or $C3$ and we are in case~\ref{it:vslackIII}.
\end{proof}
We notice that if $B$ is of the form given by cases~\ref{it:vslackII} or~\ref{it:vslackIII}, then the conclusion of Lemma~\ref{lem:modified_basis} is already satisfied. Thus we need to address case~\ref{it:vslackI}. The following lemma will allow us to apply Lemma~\ref{lem:basis_modification}.
\begin{lem}\label{lem:typeC2_splitting}
Let $\bv$ be a vertex of type $C2$. Then there are vectors $\bm{x},\bm{y}\in L$ satisfying $\bv=\bm{x}+\bm{y}$ and $\bm{x}\cdot \bm{y} =-1$ such that
\[x_1 = 1 \quad \text{and}\quad x_2 = \dots = x_{m-1}=0,
\]
and
\[y_1 = 0 \quad \text{and}\quad y_2 = \dots = y_{m-1}=1.
\]
\end{lem}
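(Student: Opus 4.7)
The plan is to take $\bm{y}=\bv^{(m)}=-\e_m+\e_2+\cdots+\e_{m-1}$, the standard basis element of $L$ at the minimal stable index, and set $\bm{x}=\bv-\bv^{(m)}$. Under the very slackness assumption together with Proposition~\ref{prop:OSB_m_bound}, we have $\sigma_m=m-2$, so $\bv^{(m)}\cdot\bsigma=-(m-2)+(m-2)=0$ and therefore $\bm{y}\in L$. Consequently $\bm{x}\in L$ as well, and a direct inspection of coordinates shows $y_1=0$, $y_i=1$ for $1<i<m$, and $x_1=1$, $x_i=0$ for $1<i<m$, matching the required initial data.

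Next comes the pairing computation
\[
\bm{x}\cdot\bm{y}=\bv\cdot\bv^{(m)}-\norm{\bv^{(m)}}=\bigl((m-2)-v_m\bigr)-(m-1)=-1-v_m,
\]
which reduces the lemma to establishing $v_m=0$.

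The lower bound $v_m\geq 0$ is immediate from irreducibility of $\bv$: applying the defining inequality of irreducibility to the decomposition $\bv=\bv^{(m)}+(\bv-\bv^{(m)})$, in which both summands are nonzero elements of $L$, yields $\bv^{(m)}\cdot(\bv-\bv^{(m)})\leq-1$, i.e.\ $-1-v_m\leq-1$.

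To obtain the matching upper bound $v_m\leq 0$, I would apply Lemma~\ref{lem:irreducible} to the irreducible vector $\bv^{(m)}$, expressing it as $\sum_{\bu\in R}\bu$ for a connected $R\subseteq B$ with connected complement. Analyzing the constraints $\sum_{\bu\in R}u_1=0$ and $\sum_{\bu\in R}u_i=1$ for $1<i<m$, and using the Case (I) structure from Lemma~\ref{lem:veryslack_trichotomy}, one is forced into $R\supseteq\{\bu^{(1)},\bv\}$ together with either all or none of $\bv^{(2)},\dots,\bv^{(m-1)},\bu^{(2)}$. Examining the coordinate-$m$ relation in each of these two sub-cases and combining it with the obtuse pairings $\bv\cdot\bu\leq 0$ for $\bu\in R\setminus\{\bv\}$ should give $v_m\leq 0$. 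The main obstacle is executing this final case analysis cleanly; in particular, one must rule out configurations of type C1 vertices inside $R$ that could a priori allow $v_m\geq 1$ while keeping both $R$ and $B\setminus R$ connected.
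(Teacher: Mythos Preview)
Your reduction to showing $v_m=0$ is correct, and the lower bound $v_m\ge 0$ via irreducibility of $\bv$ is fine. The gap is the upper bound $v_m\le 0$, and your sketch does not close it. The claim that expressing $\bv^{(m)}=\sum_{\bu\in R}\bu$ forces $R\supseteq\{\bu^{(1)},\bv\}$ is not correct: working through the coordinate constraints in Case~(I) of Lemma~\ref{lem:veryslack_trichotomy}, one finds several admissible configurations for $R\cap\{\bv^{(2)},\dots,\bv^{(m-1)},\bu^{(1)},\bu^{(2)},\bv,\bm w\}$, including some with $\bv\notin R$ entirely (for instance $\bv,\bm w\notin R$ but $\bu^{(2)}\in R$ is consistent with the first $m-1$ coordinates). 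Even in the sub-cases where $\bv\in R$, the coordinate-$m$ relation reads $v_m=-1-\sum_{\bu\in R\setminus\{\bv\}}u_m$, and the obtuse pairings $\bv\cdot\bu\le 0$ control dot products, not the individual $m$th coordinates $u_m$; there is no evident way to extract $v_m\le 0$ from this. In short, it is not established---and not obviously true---that $v_m=0$ always holds for a type-C2 vertex.

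The paper's argument sidesteps the question of $v_m$ altogether. Instead of the fixed vector $\bv^{(m)}$, it takes $g\ge m$ minimal with $v_g\le 0$ and sets $\bm z=-\e_g+\sum_{i\in A}\e_i$, using Lemma~\ref{lem:veryslack_condition} to arrange $A\cap\{1,\dots,m-1\}\in\{\{1\},\{2,\dots,m-1\}\}$. Because $v_i\ge 1$ for every $i\in A$ and $v_g\le 0$, the irreducibility inequality $\bm z\cdot(\bv-\bm z)\le -1$ is squeezed against the termwise lower bound to give equality $\bm z\cdot(\bv-\bm z)=-1$ directly. Your decomposition agrees with the paper's only when $g=m$, i.e.\ when $v_m\le 0$; if $v_m>0$ your $\bm x\cdot\bm y=-1-v_m<-1$ and the decomposition fails, whereas the paper's adaptive choice of $\bm z$ still works.
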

\begin{proof}
Let $g\geq m$, be minimal such that $v_g \leq 0$. By Lemma~\ref{lem:veryslack_condition}, there is a subset $A\subset \{1, \dots, g-1\}$ such that
\[
\sigma_g = \sum_{i\in A} \sigma_i
\]
and
\[
A \cap \{1, \dots, m-1\}=\{1\} \,\text{or}\, \{2, \dots, m-1\}.
\]
Thus we can set $\bm{z}=-\e_g + \sum_{i\in A} \e_i \in L$. 

Since $\bm{z}\neq \bv$, irreducibility of $\bv$ implies that $\bm{z}\cdot (\bv-\bm{z})\leq -1$. However, we also have the lower bound
\[
    \bm{z}\cdot(\bv-\bm{z}) = -(v_g +1) +\sum_{i\in A} (v_i -1)\geq -1.
\]
Thus we have that $\bm{z}\cdot (\bv-\bm{z})= -1$.
If $A \cap \{1, \dots, m-1\}=\{1\}$, then we take $\bm{x}=\bm{z}$ and $\bm{y}=\bv-\bm{z}$. Otherwise we take $\bm{y}=\bm{z}$ and $\bm{x}=\bv-\bm{z}$.
\end{proof}

With this in place we prove Lemma~\ref{lem:modified_basis}.
\begin{proof}[Proof of Lemma~\ref{lem:modified_basis}]
We may assume that $L$ admits an obtuse superbase $B$ satisfying one of the three cases described by Lemma~\ref{lem:veryslack_trichotomy}.
In cases~\ref{it:vslackII} or~\ref{it:vslackIII}, $B$ already satisfies the necessary conditions so there is nothing further to verify. In case~\eqref{it:vslackII}, $\bm{w}$ is the unique vertex of type $C3$. In case~\ref{it:vslackIII}, $\bm{w}$ is the unique vertex of type $B2$.

Thus we assume that we are in case~\ref{it:vslackI}. We will take $\bm{w}$ to be the unique vertex of type $C3$, however we must modify the obtuse superbase to arrange that $\bv^{(2)}$ and $\bm{w}$ are the only vertices with $v_1 \neq 0$. Let $\bv$ be the unique vertex of type $C2$ in $B$. By Lemma~\ref{lem:typeC2_splitting} we can decompose this as $\bv=\bm{x}+\bm{y}$, where $\bm{x},\bm{y}\in L$ satisfy $\bm{x}\cdot \bm{y} =-1$, 
\[x_1 = 1 \quad \text{and}\quad x_2 = \dots = x_{m-1}=0,
\]
and
\[y_1 = 0 \quad \text{and}\quad y_2 = \dots = y_{m-1}=1.
\]

Now we apply Lemma~\ref{lem:basis_modification} to produce a new obtuse superbase containing $\bm{x}$ and $\bm{y}$.

We have $\bm{x}\cdot \bv^{(2)} =1$ and $\bm{y}\cdot \bv^{(3)}=0$. Since $\bm{y}$ must have positive pairing with a vertex of $B$ which is adjacent to $\bv^{(2)}$, we must have $\bm{y}\cdot \bu^{(1)} =1$. Thus, by Lemma~\ref{lem:basis_modification}, we have that
\[
B'= (B\setminus\{\bv^{(2)}, \bu^{(1)}, \bv \})\cup \{\bm{x}, \bm{y} ,\bu^{(1)}+\bv^{(2)} \}
\]
is also an obtuse superbase for $L$. This is an obtuse superbase in which $\bm{x}$ and $\bm{w}$ are the only vertices with $v_1 \neq 0$ (note that $u_1^{(1)}+v_1^{(2)}=0$). However $B'$ no longer contains $\bv^{(2)}$. We apply Lemma~\ref{lem:basis_modification} again. Since $x_1=1$ and $x_2=0$. We have $(\bm{x}-\bv^{(2)})\cdot \bv^{(2)}=-1$. We have $\bv^{(2)}\cdot (\bu^{(1)}+\bv^{(2)})=1$ and $(\bm{x}-\bv^{(2)})\cdot \bv^{(3)}=1$. Thus we obtain yet another obtuse superbase
\[
B''= (B' \setminus\{\bm{x},\bu^{(1)}+\bv^{(2)},\bv^{(3)} \})\cup \{\bv^{(2)}, \bm{x}-\bv^{(2)}, \bu^{(1)}+\bv^{(2)}+\bv^{(3)}\},
\]
which contains $\bv^{(2)}$ and $\bm{w}$ as the unique vertices with non-zero pairing with $\e_1$. $B''$ can also be written directly as 
\[
B''= (B \setminus\{\bv,\bu^{(1)},\bv^{(3)} \})\cup \{\bm{y}, \bm{x}-\bv^{(2)}, \bu^{(1)}+\bv^{(2)}+\bv^{(3)}\}.\qedhere
\]
\end{proof}

\subsection{The main result}
Armed with Lemma~\ref{lem:modified_basis}, we are ready to proof the key technical result of this section. 
\begin{thm}\label{thm:very_slack_technical}
Let $L_n$ be a very slack $n$-changemaker lattice which admits an obtuse superbase $B$. Let $L_{n-\frac12}$ and $L_{n-1}$ be the $(n-\frac12)$- and $(n-1)$-changemaker lattices with the same stable coefficients as $L_n$. Then
\begin{enumerate}
    \item $L_{n-\frac12}$ admits an obtuse superbases which is planar if $B$ is planar and
    \item $L_{n-1}$ is decomposable.
\end{enumerate}
\end{thm}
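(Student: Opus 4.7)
The plan is to use Lemma~\ref{lem:modified_basis} to impose structure on $B$ and then produce explicit obtuse superbases for $L_{n-\frac12}$ and $L_{n-1}$ by modifying $B$ in a small, controlled way. Throughout, write $B = \{\bv^{(2)},\bm{w}\}\cup B^0$, where $\bv^{(2)} = -\e_2+\e_1$, $w_1=w_2=-1$, and every $\bv\in B^0$ satisfies $v_1=0$.

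For part (2), identify $\{\bv\in L_n:v_1=0\}$ with $L_{n-1}$ via the coordinate-forgetting isomorphism. Under this identification $B^0$ sits inside $L_{n-1}$, and so does $\bv^{(2)}+\bm{w} = -2\e_2+\sum_{i\geq 3}w_i\e_i$. I claim that $B_0 := B^0 \cup \{\bv^{(2)}+\bm{w}\}$ is an obtuse superbase for $L_{n-1}$: the sum-to-zero and obtuseness conditions are inherited from $B$, and spanning follows from the observation that in any $B$-expansion of a vector with zero first coordinate, $\bv^{(2)}$ and $\bm{w}$ must appear with equal coefficients (forced by the $\e_1$-coordinate). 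Because $L_n$ is very slack, no changemaker coefficient of $L_{n-1}$ is tight, so Lemma~\ref{lem:irreducible_bounds} forces every irreducible vector in $L_{n-1}$ to have first coordinate of absolute value at most $1$. Since $\bv^{(2)}+\bm{w}$ has first coordinate $-2$ in the new ambient basis, it is reducible. Lemma~\ref{lem:2-connected} applied to $B_0$ then gives decomposability of $L_{n-1}$.

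For part (1), lift $B$ to the ambient space $\Z^{r+1}=\langle\e_{-1},\e_0,\e_1',\dots,\e_{r-1}'\rangle$ of $L_{n-\frac12}$ (as in Example~\ref{exam:half_integer_CM}) via the substitution $\e_1\mapsto \e_{-1}+\e_0$ and $\e_i\mapsto\e_{i-1}'$ for $i\geq 2$. Concretely, define
\begin{align*}
    \bv'^{(2)} &:= \e_{-1}+\e_0-\e_1', \\
    \bm{w}'   &:= -\e_{-1}-\e_0-\e_1'+\sum_{i\geq 3} w_i\e_{i-1}', \\
    \psi(\bv) &:= \sum_{i\geq 2} v_i\e_{i-1}' \quad (\bv\in B^0),
\end{align*}
and set $B' := \{\bv'^{(2)},\bm{w}'\}\cup\psi(B^0)$. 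Using $\bv\cdot\bsigma=0$, a routine verification shows each element of $B'$ is orthogonal to both $\e_{-1}-\e_0$ and $\e_0+\tilde\bsigma$, hence lies in $L_{n-\frac12}$. The identity $\bv'^{(2)}+\bm{w}' = \psi(\bv^{(2)}+\bm{w})$ gives $\sum_{B'}=\psi(\sum_B)=0$, and all pairings among elements of $B'$ agree with those in $B$ except that $\bv'^{(2)}\cdot\bm{w}' = -1$ while $\bv^{(2)}\cdot\bm{w}=0$. Thus $G_{B'}$ is obtained from $G_B$ by adjoining a single edge between $\bv^{(2)}$ and $\bm{w}$, and the pairings remain non-positive.

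To confirm that $B'$ genuinely spans $L_{n-\frac12}$, compare discriminants. A direct computation of the Gram matrix of $\{\e_{-1}-\e_0,\e_0+\tilde\bsigma\}$ yields $\mathrm{disc}(L_{n-\frac12})=2n-1$. On the other hand, contracting the new edge in $G_{B'}$ returns exactly the graph $G_{B_0}$ from part (2), so deletion-contraction combined with the Matrix-Tree theorem gives
\[t(G_{B'})=t(G_B)+t(G_{B_0})=\mathrm{disc}(L_n)+\mathrm{disc}(L_{n-1})=n+(n-1)=2n-1.\]
Equality of discriminants, together with full rank, forces $B'$ to span all of $L_{n-\frac12}$, so it is indeed an obtuse superbase. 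The planarity assertion then reduces to showing that whenever $G_B$ is planar the new edge can be drawn without crossings, i.e.\ that $\bv^{(2)}$ and $\bm{w}$ share a face of some planar embedding of $G_B$. I expect this planarity verification to be the main obstacle, requiring a finer case-by-case analysis of the structure of $B$ guaranteed by Lemma~\ref{lem:veryslack_trichotomy} and the modifications in the proof of Lemma~\ref{lem:modified_basis}.
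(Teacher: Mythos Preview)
Your approach coincides with the paper's: both pass to $L_{n-\frac12}$ by replacing $\bv^{(2)},\bm{w}$ with $\bv^{(2)}+\e_0,\,\bm{w}-\e_0$ (your substitution $\e_1\mapsto\e_{-1}+\e_0$ is exactly this, in relabelled coordinates), and both pass to $L_{n-1}$ by merging them into $\bv^{(2)}+\bm{w}$. Your discriminant computation via deletion--contraction is a pleasant alternative to the paper's direct check that $\bv\mapsto\bv+v_1\e_0$ gives a bijection $L_n\to L_{n-\frac12}$.

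The gap you flag---planarity of $G_{B'}$---requires no case analysis; you have already done the work in part~(2). You showed that $\bv^{(2)}+\bm{w}$ is reducible in $L_{n-1}$. By Lemma~\ref{lem:irreducible} applied to the singleton $\{\bv^{(2)}+\bm{w}\}\subseteq B_0$, reducibility means its complement $B^0$ induces a disconnected subgraph of $G_{B_0}$, i.e.\ $\bv^{(2)}+\bm{w}$ is a cut vertex of $G_{B_0}$. Since $\bv^{(2)}\cdot\bm{w}=0$, the graph $G_{B_0}$ is obtained from $G_B$ by identifying the two non-adjacent vertices $\bv^{(2)}$ and $\bm{w}$ (you already observed this when computing $t(G_{B'}/e)=t(G_{B_0})$), so the cut-vertex statement translates to $\{\bv^{(2)},\bm{w}\}$ being a cut set of $G_B$. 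Now adding an edge across a $2$-vertex cut of a planar graph always preserves planarity: writing $G_B=G_1\cup_{\{\bv^{(2)},\bm{w}\}}G_2$, in any planar embedding of $G_B$ the region occupied by $G_2\setminus\{\bv^{(2)},\bm{w}\}$ is bounded by a walk through both $\bv^{(2)}$ and $\bm{w}$, so after removing $G_2$ one may draw the edge $\bv^{(2)}\bm{w}$ in that face and then re-insert $G_2$ on the other side of it. This is precisely how the paper concludes, in one line, from the decomposability of $L_{n-1}$.
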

\begin{proof}
    Let
    \[
        L_n=\langle \sigma_1 \e_1+ \dots+ \sigma_r \e_r\rangle^\bot\subseteq \Z^r
    \]
    be the description of $L_n$ as a changemaker lattice. The lattices $L_{n-\frac12}$ and $L_{n-1}$ can be identified (via a relabelling of basis vectors) with
    \[
    L_{n-\frac{1}{2}}=\langle \e_0-\e_1, \sigma_1 \e_1+ \dots+ \sigma_r \e_r\rangle^\bot\subseteq \langle \e_0, \dots, \e_r \rangle \subseteq \Z^{r+1}.
    \]
    and
    \[
    L_{n-1}=\langle \sigma_2 \e_2+ \dots+ \sigma_r \e_r\rangle^\bot\subseteq \langle \e_2, \dots, \e_r \rangle \subseteq \Z^{r-1},
    \]
    respectively.
    We start with an obtuse superbase $B$ for $L_n$ satisfying the conclusions of Lemma~\ref{lem:modified_basis}, that is, $B$ contains $\bv^{(2)}=-\e_2+\e_1$ and a vertex $\bm{w}$ with $w_1=w_2=-1$ and that for all other elements of $\bv\in B\setminus\{\bv^{(2)},\bm{w}\}$ we have $v_1=0$.
    
    Set $\widetilde{\bv}^{(2)}=\bv^{(2)}+\e_0= -\e_2+\e_1+\e_0$ and $\widetilde{\bm{w}}=\bm{w}-\e_0$. Let 
    \[B'=B\setminus\{\bv^{(2)},\bm{w}\}\cup \{\widetilde{\bv}^{(2)},\widetilde{\bm{w}}\}\]
    be the result of replacing $\bv^{(2)},\bm{w}$ with $\widetilde{\bv}^{(2)},\widetilde{\bm{w}}$. The elements of $B'$ span $L_{n-\frac{1}{2}}$, since $B$ spans $L_n$ and a vector $\bv$ is in $L_n$ if and only if $\bv+v_1 \e_0$ is in $L_{n-\frac{1}{2}}$.
    Furthermore, since 
    \[\widetilde{\bv}^{(2)}\cdot \widetilde{\bm{w}}=-1=\bv^{(2)}\cdot \bm{w}-1, \]
    and all other pairings are unchanged in the passage from $B$ to $B'$, we see that $B'$ is an obtuse suberbase for $L_{n-\frac12}$ such that graph $G_{B'}$ is obtained from $G_B$ by adding a single edge between the vertices corresponding to $\bv^{(2)}$ and $\bm{w}$.

    The proof of \cite[Proposition~4.2]{Mc17} shows that the set 
    \[
    B''=B\setminus\{\bv^{(2)},\bm{w}\} \cup \{\bv^{(2)}+\bm{w}\}=B'\setminus \{\widetilde{\bv}^{(2)},\widetilde{\bm{w}}\} \cup \{\widetilde{\bv}^{(2)}+\widetilde{\bm{w}}\}
    \]
    forms an obtuse superbase for the lattice $L_{n-1}$. The graph $G_{B''}$ is obtained by identifying the vertices $\bv^{(2)}$ and $\bm{w}$ in $G_B$. However notice that the vertex $\bv^{(2)}+\bm{w}\in B''$ satisfies  $v_2^{(2)}+w_2=-2$. Since $L_{n-1}$ is slack, Lemma~\ref{lem:irreducible_bounds} implies that $\bv^{(2)}+\bm{w}$ cannot be irreducible in $L_{n-1}$. Thus Lemma~\ref{lem:irreducible} implies that $\bv^{(2)}+\bm{w}$ is a cut vertex in $G_{B''}$. This implies that $\{\bv^{(2)},\bm{w}\}$ is a cut set of $G_B$ and that $L_{n-1}$ is decomposable.

     Finally suppose that $B$ is a planar obtuse superbase. This means that the graph $G_B$ can be embedded in the plane. Since $G_B$ is 2-connected and the vertices $\bv^{(2)}$ and $\bm{w}$ form a cut set in $G_B$, there is a simple closed curve encircling one of the connected components of $G_B\setminus\{\bv^{(2)}, \bm{w}\}$ in the plane whose intersection with $G_B$ consists of the vertices $\bv^{(2)}$ and $\bm{w}$. Thus by using an arc from this simple closed curve as an edge, one sees that the graph $G_{B'}$, which is obtained from $G_B$ by adding an edge between $\bv^{(2)}$ and $\bm{w}$, also admits an embedding in the plane. That is, $B'$ is also a planar obtuse superbase.
\end{proof}

\section{Changemaker foundations for genus bounds on alternating surgeries}
In this section we develop the necessary changemaker material to prove the genus bounds on alternating surgeries comprising Theorem~\ref{thm:genusbound}. The key technical result on changemaker lattices will be the following.
\begin{thm}\label{thm:2s_to_3_or_bigger}
Let $L$ be the $n$-changemaker lattice of the form
\[
L=\langle \sigma_1 \e_1+ \dots+ \sigma_r \e_r\rangle^\bot\subseteq \Z^r
\]
where $\sigma_r\geq 3$. If $L$ admits an obtuse superbase, then
\begin{equation}\label{eq:n_genus_bound}
n\leq 4 + \frac32\sum_{i=1}^r \sigma_i(\sigma_i-1).
\end{equation}
\end{thm}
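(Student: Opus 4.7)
First, I would rewrite the inequality $n\leq 4+\tfrac{3}{2}\sum_i \sigma_i(\sigma_i-1)$ in the equivalent form
\[
\sum_{i=1}^r \sigma_i(3-\sigma_i)\leq 8.
\]
Each summand equals $2$ when $\sigma_i\in\{1,2\}$, vanishes when $\sigma_i=3$, and is $\leq -4$ when $\sigma_i\geq 4$. Setting $a=|\{i:\sigma_i=1\}|=m-1$ (where $m$ is minimal with $\sigma_m\geq 2$) and $b=|\{i:\sigma_i=2\}|$, the target becomes $2(a+b)\leq 8+\sum_{\sigma_i\geq 4}\sigma_i(\sigma_i-3)$.

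Next, I would apply Proposition~\ref{prop:OSB_m_bound} (which gives $\sigma_m\geq m-2$) and split on the value of $\sigma_m$. If $\sigma_m\geq 3$, then $b=0$ and $a\leq\sigma_m+1$; the bound reduces to $(\sigma_m-2)(\sigma_m-3)\geq 0$, which holds. This leaves the main case $\sigma_m=2$, where Proposition~\ref{prop:OSB_m_bound} only yields $a\leq 3$.

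For $\sigma_m=2$, I would bound $b$ using Proposition~\ref{prop:4_string}. Assume $b\geq 4$, so $\sigma_m=\sigma_{m+1}=\sigma_{m+2}=\sigma_{m+3}=2$, and let $p$ be minimal with $\sigma_p\geq 3$. Using Proposition~\ref{prop:CM_condition}, I would produce an irreducible vector of the form
\[
\bz = \e_m + \e_{m+1} + \sum_{i\in A^+}\e_i - \e_p - \sum_{j\in A^-}\e_j,
\]
where $A^\pm$ are disjoint subsets of indices outside $\{m,m+1,m+2,m+3\}$ chosen so that $\bz\cdot\bsigma=0$; taking $A^-$ of size $\leq 1$ (empty when $\sigma_p\geq 4$, a single 1-index when $\sigma_p=3$) keeps the negative part small enough that irreducibility follows at once from Lemma~\ref{lem:irred_examples}. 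Since $z_m=z_{m+1}=1$ and $z_{m+2}=z_{m+3}=0$, Proposition~\ref{prop:4_string} forces $z_{m+2}=z_{m+1}$, a contradiction. This construction succeeds whenever $\sigma_p\leq a+2b-4$. For larger $\sigma_p$ the bound holds directly: setting $s=a+2b$ and $t=a+b$, the slack $\sigma_p(\sigma_p-3)\geq (s-3)(s-6)\geq (t+1)(t-2)$, and $(t+1)(t-2)=t^2-t-2\geq 2t-8$ is an easy quadratic check. Either way, one may assume $b\leq 3$.

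Finally, with $a\leq 3$ and $b\leq 3$, the only configurations that can violate the bound have $a+b\geq 5$ and $\sigma_r=3$, namely $(a,b)\in\{(3,3),(3,2),(2,3)\}$ (with possibly additional $3$'s). For $(a,b)=(3,3)$ and $(3,2)$, one checks that $\bsigma$ is very slack in the sense of \S\ref{sec:slack_lattices}, so Theorem~\ref{thm:very_slack_technical} reduces the problem to showing the companion $(n-1)$-changemaker lattice $L_{n-1}$ with the same stable coefficients is indecomposable; I would verify this by writing out the standard basis and checking both that each basis element is irreducible and that the associated pairing graph is connected. The remaining case $(a,b)=(2,3)$, i.e., $\bsigma=(1,1,2,2,2,3)$, is not very slack and requires a direct argument: starting from the three vectors $-\e_2+\e_1,\,-\e_4+\e_3,\,-\e_5+\e_4$ forced in any obtuse superbase by Lemma~\ref{lem:standard_norm_twos}, I would use Lemmas~\ref{lem:irreducible_bounds} and~\ref{lem:irred_examples} to constrain the remaining three vertices and rule out any completion to an obtuse superbase. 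The principal obstacle is precisely this last hand-analysis, since Proposition~\ref{prop:4_string} does not apply to blocks of 2's of length $\leq 3$ and the universal bounds are not sharp enough to exclude these boundary configurations without a case-by-case combinatorial argument.
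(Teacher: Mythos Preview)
Your overall strategy matches the paper's closely: both rewrite the target as $\sum_i\sigma_i(3-\sigma_i)\leq 8$, use Proposition~\ref{prop:OSB_m_bound} to bound the number of $1$'s, and use Proposition~\ref{prop:4_string} to control the number of $2$'s, leaving only residual configurations with stable coefficients consisting of $2$'s and $3$'s. Your treatment of $(a,b)=(3,2)$ via Theorem~\ref{thm:very_slack_technical} is exactly the paper's argument for $\ell=2$, $m=4$; your treatment of $(a,b)=(3,3)$ via the same theorem is a pleasant alternative to the paper's computer check for those lattices, though it still requires verifying indecomposability of the infinite family $(1,1,2,2,2,3^k)$, which you only sketch.

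There is, however, a genuine gap in the case $(a,b)=(2,3)$. You write $\bsigma=(1,1,2,2,2,3)$, but nothing in your argument bounds the number of $3$'s, so you must in fact handle $\bsigma=(1,1,2,2,2,3,\dots,3)$ for every $k\geq 1$; since $\sigma_3=2>1=\sigma_1+\sigma_2-1$, none of these lattices are very slack, and your reduction does not apply. The paper closes this gap by applying Proposition~\ref{prop:4_string} to the block of $3$'s: when there are at least four of them, the vector $\bz=\e_{m+3}+\e_{m+4}-\e_m-\e_{m+1}-\e_{m+2}$ is irreducible by Lemma~\ref{lem:irred_examples} and has $z_{m+3}=z_{m+4}=1$, $z_{m+5}=z_{m+6}=0$, contradicting the existence of an obtuse superbase. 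This forces $k\leq 3$, after which the paper dispatches the nine remaining lattices (stable coefficients $(2,2,2,3)$, $(2,2,2,3,3)$, $(2,2,2,3,3,3)$ with $m\in\{2,3,4\}$) by the computer search of Algorithm~\ref{alg:findOSB} rather than by hand. Your proposed hand analysis would need to be extended to $k=2,3$ as well, or replaced by such a check.
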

As an application of Proposition~\ref{prop:4_string} we get the following restriction on changemaker lattices admitting an obtuse superbase.

\begin{prop}\label{prop:long_string_obs}
Suppose that $L$ is an integral changemaker lattice of the form
\[
L=\langle \sigma_1 \e_1+ \dots+ \sigma_r \e_r\rangle^\bot\subseteq \Z^r.
\]
Suppose that for some $\ell\geq 4$ and some $b>1$ there the changemaker coefficients satisfy
\[
\sigma_{b-1}<\sigma_{b}= \dots = \sigma_{b+\ell-1}.
\]
If $L$ admits an obtuse superbase, then for any $\sigma_k>\sigma_{b}$, we have $\sigma_{k}\geq (\ell-1) \sigma_{b}$.
\end{prop}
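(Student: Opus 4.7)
The plan is to prove the contrapositive. Suppose for contradiction that some $\sigma_k > \sigma_b$ satisfies $\sigma_k < (\ell-1)\sigma_b$, and write $\sigma_k = q\sigma_b + r$ with $0 \leq r < \sigma_b$, so that $1 \leq q \leq \ell-2$. Since $\sigma_k > \sigma_b = \sigma_{b+\ell-1}$ we have $k \geq b+\ell$, so the block $\{b, b+1, b+2, b+3\}$ consists of four consecutive indices, all distinct from $k$, with $\sigma_b = \sigma_{b+1} = \sigma_{b+2} = \sigma_{b+3}$. The goal is to produce an irreducible vector $\bm{z} \in L$ satisfying $z_b = z_{b+1} = 1$ and $z_{b+2} = z_{b+3} = 0$, which will contradict Proposition~\ref{prop:4_string}.

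For the easier case $q \geq 2$, I would take
\[
\bm{z} = -\e_k + \e_b + \e_{b+1} + \sum_{j=4}^{q+1}\e_{b+j} + \sum_{i\in A'}\e_i,
\]
where $A' \subseteq \{1, \dots, b-1\}$ is chosen via Proposition~\ref{prop:CM_condition} so that $\sum_{i\in A'}\sigma_i = r$; this is possible because $r \leq \sigma_b - 1 \leq \sigma_1 + \dots + \sigma_{b-1}$. The $q$ positive entries placed in the block fit because $q+1 \leq \ell-1$, and by construction the second and third entries of the block are zero. Since the only negative entry of $\bm{z}$ is at $\e_k$, Lemma~\ref{lem:irred_examples} gives irreducibility automatically: the negative support is a singleton, so no proper non-empty sub-sum equality can arise.

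The case $q=1$ is where the main difficulty lies, since a decomposition using a single copy of $\sigma_b$ cannot give $z_b = z_{b+1} = 1$ simultaneously. To force the pattern, I plan to include \emph{two} copies of $\sigma_b$ at indices $b, b+1$ and cancel the excess $\sigma_b - r$ using smaller indices. Since $0 < \sigma_b - r \leq \sigma_b - 1 \leq \sigma_1 + \dots + \sigma_{b-1}$, Proposition~\ref{prop:CM_condition} supplies a set $B' \subseteq \{1, \dots, b-1\}$ with $\sum_{j\in B'}\sigma_j = \sigma_b - r$, and I set
\[
\bm{z} = -\e_k + \e_b + \e_{b+1} - \sum_{j\in B'}\e_j.
\]

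To apply Lemma~\ref{lem:irred_examples}, I must rule out every sub-sum equality between the positive part $A = \{b, b+1\}$ and the negative part $B = \{k\} \cup B'$. Any proper non-empty $A'' \subset A$ is a singleton and sums to $\sigma_b$, so the question reduces to whether some proper non-empty $B'' \subset B$ sums to $\sigma_b$. If $k \in B''$, the sum is at least $\sigma_k > \sigma_b$; if $B'' \subseteq B'$, the sum is at most $\sigma_b - r < \sigma_b$. Hence $\bm{z}$ is irreducible, and again $(z_b, z_{b+1}, z_{b+2}, z_{b+3}) = (1,1,0,0)$ contradicts Proposition~\ref{prop:4_string}. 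The main obstacle throughout is identifying the right decomposition to produce the forbidden pattern while preserving irreducibility, and the subtraction trick in the $q=1$ case is the key device that makes this possible.
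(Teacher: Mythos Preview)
Your proof is correct and follows essentially the same approach as the paper: construct an irreducible vector exhibiting the forbidden $(1,1,0,0)$ pattern on four consecutive equal-coefficient indices and invoke Proposition~\ref{prop:4_string}. The paper's version differs only cosmetically---in the $q\geq 2$ case it places the $q$ block entries consecutively at $b,\dots,b+q-1$ and reads the pattern at the trailing edge $(b+q-2,\dots,b+q+1)$, whereas you skip $b+2,b+3$ to force the pattern at the fixed window $(b,b+1,b+2,b+3)$; and in the $q=1$ case the paper's vector is simply the negative of yours.
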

\begin{proof}
We will show that if there is $\sigma_k$ satisfying $\sigma_{b}<\sigma_{k}<(\ell-1) \sigma_{b}$, then $L$ does not admit an obtuse superbase. First suppose that $\sigma_b <\sigma_{k}\leq 2\sigma_b$. This implies that $\sigma_b>2\sigma_b - \sigma_{k}\geq 0$. By Proposition~\ref{prop:CM_condition}, we can find a subset $A\subseteq \{1, \dots, b-1\}$ such that
\[
\sum_{i \in A} \sigma_i = 2\sigma_b - \sigma_{k}.
\]
This allows us to take $\bm{z}=\e_{k}-\e_b -\e_{b+1} + \sum_{i\in A} \e_i\in L$. By Lemma~\ref{lem:irred_examples}, $\bm{z}$ is irreducible. However $\bm{z}$ satisfies $z_{b}=z_{b+1}=-1$, $z_{b+2}=z_{b+3}=0$ so $L$ cannot admit an obtuse superbase by Proposition~\ref{prop:4_string}.

Now suppose that $2\sigma_b \leq \sigma_{k}<(\ell-1)\sigma_b$. In this case we can write $\sigma_{k}= s\sigma_b + r$, where $0\leq r < \sigma_b$ and $2\leq s\leq \ell-2$. By Proposition~\ref{prop:CM_condition}, there is $A\subseteq \{1, \dots, b-1\}$ such that
\[
\sum_{i \in A} \sigma_i = r.
\]
This allows us to take $\bm{z}=-\e_{k}+ \e_b+ \dots + \e_{b+s-1}+\sum_{i\in A} \e_i\in L$, which is irreducible by Lemma~\ref{lem:irred_examples}. By Proposition~\ref{prop:4_string}, there can be no obtuse superbase since $z_{b+s-2}=z_{b-s-1}=1$ and $z_{b+s}=z_{b+s+1}=0$ (by hypothesis $b+s+1\leq b+\ell-1$, so we have $\sigma_{b+s-2}=\sigma_{b-s-1}=\sigma_{b+s}=\sigma_{b+s+1}$).
\end{proof}

The proof of Theorem~\ref{thm:2s_to_3_or_bigger} is broken down into several cases based on the stable coefficients of the lattice $L$. We will make frequent use of the following identity
\begin{equation}\label{eq:genus_bound_identity}
  n= \sum_{i=1}^r\sigma_i^2= \sum_{i=1}^r\frac32 \sigma_i(\sigma_i-1) + \sum_{i=1}^r\frac12 \sigma_i(3-\sigma_i).
\end{equation}
Firstly, we treat the case where all the stable coefficients are at least three.
\begin{lem}\label{lem:genus_bound_no_twos}
Let $L$ be an $n$-changemaker lattice of the form
\[
L=\langle \sigma_1 \e_1+ \dots+ \sigma_r \e_r\rangle^\bot\subseteq \Z^r
\]
where $\sigma_r\geq 3$ and $\sigma_i\neq 2$ for all $i$. If $L$ admits an obtuse superbase, then \eqref{eq:n_genus_bound} holds.
\end{lem}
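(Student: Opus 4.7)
The plan is to translate the desired inequality into a manageable counting problem by exploiting the identity \eqref{eq:genus_bound_identity} that the excerpt has already highlighted. Since $n = \|\bsigma\|^2 = \sum_{i=1}^r \sigma_i^2$, summing \eqref{eq:genus_bound_identity} gives
\[
n = \frac{3}{2}\sum_{i=1}^r \sigma_i(\sigma_i - 1) + \frac{1}{2}\sum_{i=1}^r \sigma_i(3 - \sigma_i),
\]
so the bound \eqref{eq:n_genus_bound} reduces to showing $\sum_{i=1}^r \sigma_i(3 - \sigma_i) \leq 8$. This is where the assumption $\sigma_i \neq 2$ becomes crucial: the quantity $\sigma_i(3-\sigma_i)$ equals $2$ when $\sigma_i=1$, equals $0$ when $\sigma_i=3$, and is strictly negative (equal to $-k(k-3)$) when $\sigma_i = k \geq 4$. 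Writing $m_k$ for the number of indices with $\sigma_i = k$, the target inequality becomes
\[
2 m_1 - \sum_{k\geq 4} k(k-3)\, m_k \leq 8.
\]

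Next, I would split into cases based on the value of $m_1$. If $m_1 \leq 4$, then $2 m_1 \leq 8$ and the remaining terms are non-positive, so the inequality holds immediately and no appeal to the obtuse superbase hypothesis is needed. The interesting case is $m_1 \geq 5$. Here the minimal index $m$ with $\sigma_m \geq 2$ is $m = m_1 + 1 \geq 6$, and since there are no $2$'s we in fact have $\sigma_m \geq 3$. The obtuse superbase hypothesis now enters through Proposition~\ref{prop:OSB_m_bound}, which forces $\sigma_m \geq m - 2 = m_1 - 1 \geq 4$. Thus the value $c := \sigma_m$ contributes a term at least $(m_1-1)(m_1-4)$ to $\sum_{k\geq 4} k(k-3) m_k$.

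Putting this together gives
\[
2 m_1 - \sum_{k\geq 4} k(k-3)\, m_k \leq 2 m_1 - (m_1 - 1)(m_1 - 4) = -m_1^2 + 7 m_1 - 4,
\]
and one checks this last expression is bounded above by $8$ for all $m_1 \geq 5$, since the inequality rearranges to $(m_1-3)(m_1-4) \geq 0$. This closes the case $m_1 \geq 5$ and establishes \eqref{eq:n_genus_bound}.

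I do not anticipate a serious technical obstacle; the identity \eqref{eq:genus_bound_identity} has been set up precisely so that the problem collapses to bounding the number of $1$-coefficients, and Proposition~\ref{prop:OSB_m_bound} is tailor-made to do exactly that in the presence of an obtuse superbase. The only subtlety is ensuring that the lower bound $\sigma_m \geq m_1 - 1$ provided by Proposition~\ref{prop:OSB_m_bound} really does produce a coefficient of size at least $4$ (so that it sits in the $k \geq 4$ regime where it contributes a strictly negative term), which is automatic because $m_1 \geq 5$ in the non-trivial case.
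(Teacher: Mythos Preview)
Your proof is correct and takes essentially the same approach as the paper: both reduce \eqref{eq:n_genus_bound} via the identity \eqref{eq:genus_bound_identity} to bounding $\sum_i \sigma_i(3-\sigma_i)$, and both use Proposition~\ref{prop:OSB_m_bound} as the single nontrivial input controlling $m_1$ against $\sigma_m$. The paper's version is marginally slicker in that it avoids your case split on $m_1$: it writes $m-1 + \tfrac12\sum_{i\geq m}\sigma_i(3-\sigma_i) \leq (\sigma_m+1) + \tfrac12\sigma_m(3-\sigma_m) = \tfrac12\sigma_m(5-\sigma_m)+1 \leq 4$ in one chain, using $m-1\leq \sigma_m+1$ unconditionally and then the integer bound $\sigma_m(5-\sigma_m)\leq 6$.
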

\begin{proof}
    Let $m$ be minimal such that $\sigma_m\geq 2$. By hypothesis, we have in fact that $\sigma_k\geq 3$ for all $k\geq m$. We thus have the bound

\begin{align*}
n - \frac32\sum_{i=1}^r   \sigma_i(\sigma_i-1) &= \frac12\sum_{i=1}^r \sigma_i(3-\sigma_i)\\
& = m-1+\frac12\sum_{i=m}^r \sigma_i(3-\sigma_i)\\
&\leq \frac12 \sigma_m(3-\sigma_m) +\sigma_m+1\\
&=  \frac12 \sigma_m(5-\sigma_m)+1\\
&\leq 4,
\end{align*}

where we used that: $\sigma_i(3-\sigma_i)\leq 0$ for all $\sigma_i\geq 3$; that $m\leq \sigma_m+2$ by Proposition~\ref{prop:OSB_m_bound}; and that $\sigma_m(5-\sigma_m)\leq 6$ since $\sigma_m\geq 2$ is an integer.
\end{proof}

Next we address the case that the largest stable coefficient is at least four.
\begin{lem}\label{lem:genus_bound_large_coef}
Let $L$ be an $n$-changemaker lattice of the form
\[
L=\langle \sigma_1 \e_1+ \dots+ \sigma_r \e_r\rangle^\bot\subseteq \Z^r
\]
where $\sigma_r\geq 4$. If $L$ admits an obtuse superbase, then \eqref{eq:n_genus_bound} holds.
\end{lem}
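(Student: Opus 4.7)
The plan is to start from identity \eqref{eq:genus_bound_identity} to rewrite the slack between $n = \norm{\bsigma}$ and $\frac{3}{2}\sum_i \sigma_i(\sigma_i-1)$ as
\[
n - \frac{3}{2}\sum_{i=1}^r \sigma_i(\sigma_i-1) = \frac{1}{2}\sum_{i=1}^r \sigma_i(3-\sigma_i),
\]
so that the lemma reduces to showing $\sum_{i=1}^r \sigma_i(3-\sigma_i)\leq 8$. The function $k \mapsto k(3-k)$ contributes $+2$ for $k\in\{1,2\}$, $0$ for $k=3$, and is at most $-4$ for $k\geq 4$; in particular the hypothesis $\sigma_r\geq 4$ already gives one term of size at most $-4$ in the sum.

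If no $\sigma_i$ equals $2$, Lemma~\ref{lem:genus_bound_no_twos} applies directly, so I will assume at least one coefficient equals $2$ and let $a_2$ denote the number of such coefficients. Then the minimal index $m$ with $\sigma_m\geq 2$ satisfies $\sigma_m=2$, and Proposition~\ref{prop:OSB_m_bound} forces $m\leq \sigma_m+2 = 4$; in particular the number of $1$-coefficients satisfies $m-1\leq 3$. Combining these observations with the $-4$ coming from $\sigma_r\geq 4$ yields
\[
\sum_{i=1}^r \sigma_i(3-\sigma_i) \leq 2(m-1) + 2a_2 - 4 \leq 2 + 2a_2,
\]
which is at most $8$ provided $a_2\leq 3$, handling that sub-case directly.

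For the remaining sub-case $a_2\geq 4$, I will invoke Proposition~\ref{prop:long_string_obs} with $b=m$ and $\ell=a_2$: the $a_2$ consecutive $2$'s form a constant run of length $\geq 4$, so every $\sigma_k>2$ satisfies $\sigma_k\geq 2(a_2-1)\geq 6$. In particular $\sigma_r\geq 2(a_2-1)$, which upgrades the last-term contribution to
\[
\sigma_r(3-\sigma_r)\leq 2(a_2-1)\bigl(5-2a_2\bigr),
\]
and substituting back gives an overall bound of the form $6+2a_2 - 2(a_2-1)(2a_2-5)$, a quadratic in $a_2$ that is already negative at $a_2=4$ and strictly decreasing thereafter, hence well below $8$.

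The main obstacle is simply ensuring that all the slack is used in the tight case $a_2=3$, where equality is approached: this requires simultaneously exploiting $m\leq 4$ from Proposition~\ref{prop:OSB_m_bound}, the non-positivity of $\sigma_i(3-\sigma_i)$ for $\sigma_i\geq 3$, and the guaranteed $-4$ from $\sigma_r\geq 4$. The transition to the long-run regime via Proposition~\ref{prop:long_string_obs} at $a_2=4$ is then a pure calculation and no further ideas are needed.
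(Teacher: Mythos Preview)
Your proof is correct and follows essentially the same approach as the paper: both reduce via identity~\eqref{eq:genus_bound_identity} to bounding $\frac{1}{2}\sum_i \sigma_i(3-\sigma_i)$, dispose of the case $\sigma_m\geq 3$ via Lemma~\ref{lem:genus_bound_no_twos}, bound $m\leq 4$ via Proposition~\ref{prop:OSB_m_bound}, and control the number of $2$'s via Proposition~\ref{prop:long_string_obs}. The only organizational difference is that the paper merges your two sub-cases into the single inequality $\ell \leq 1+\sigma_r/2$ (valid for all $\ell$ since $\sigma_r\geq 4$) and then absorbs the excess using only the $\sigma_r(3-\sigma_r)$ term, whereas you treat $a_2\leq 3$ and $a_2\geq 4$ separately; both routes are equally valid.
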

\begin{proof}
    Let $m$ be minimal such that $\sigma_m\geq 2$. If $\sigma_m>2$, then Lemma~\ref{lem:genus_bound_no_twos} yields the desired conclusion. Thus we can assume that $\sigma_m=2$. By Proposition~\ref{prop:OSB_m_bound}, this implies that $m\leq 4$. Let $\ell\geq 1$ be the number of $\sigma_i$ which equal 2, that is, we have that $\sigma_m=\dots = \sigma_{m+\ell-1}=2$ and $\sigma_{m+\ell}\geq 3$.
     By Proposition~\ref{prop:long_string_obs}, we have either $\ell \leq 3$ or $\sigma_{m+\ell}\geq 2(\ell-1)$. In either case we have that
     \[\ell \leq \max\left\{3, 1+\frac{\sigma_{m+\ell}}{2}\right\}.\]
Since $\sigma_r\geq 4$ and $\sigma_r\geq \sigma_{m+\ell}$, this yields 
\[\ell\leq 1+ \frac{\sigma_r}{2}.\]
Thus we obtain the bound
    \begin{align*}
n - \frac32\sum_{i=1}^r   \sigma_i(\sigma_i-1) &= \frac12\sum_{i=1}^r \sigma_i(3-\sigma_i)\\
& = \ell + m-1 +\frac12\sum_{i=m+\ell}^r \sigma_i(3-\sigma_i)\\
&\leq 4+\frac{\sigma_r}{2}+ \frac12 \sigma_r(3-\sigma_r)\\
&\leq 4,
\end{align*}
where the final inequality comes from the observation that
\[\frac12 \sigma_r(3-\sigma_r) +\frac{\sigma_r}{2}=\frac{1}{2}\sigma_r(4-\sigma_r)\leq 0,\] since $\sigma_r\geq 4$. 
\end{proof}
By far the most delicate case is the one where the stable coefficients consist solely of twos and threes. 

\begin{lem}\label{lem:genus_bound_twos_and_threes}
Let $L$ be an $n$-changemaker lattice of the form
\[
L=\langle \e_1+ \dots+\e_{m-1} +2\e_m + \dots + 2\e_{m+\ell-1}+3\e_{m+\ell}+\dots + 3 \e_r\rangle^\bot\subseteq \Z^r
\]
for integers $m\geq 2$ and $\ell \geq 1$. If $L$ admits an obtuse superbase, then \eqref{eq:n_genus_bound} holds or the stable coefficients take the form $(2,2,2,3)$, $(2,2,2,3,3)$ or $(2,2,2,3,3,3)$. 
\end{lem}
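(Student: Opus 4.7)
The plan is to reduce the inequality \eqref{eq:n_genus_bound} to the arithmetic condition $m + \ell \leq 5$ and then bound $m$ and $\ell$ using the obtuse superbase machinery. Using identity \eqref{eq:genus_bound_identity}, one computes
\[
n - \frac{3}{2}\sum_{i=1}^r \sigma_i(\sigma_i - 1) = \frac{1}{2}\sum_{i=1}^r \sigma_i(3-\sigma_i) = (m-1) + \ell,
\]
so \eqref{eq:n_genus_bound} is equivalent to $m + \ell \leq 5$. Proposition~\ref{prop:OSB_m_bound} yields $m \leq \sigma_m + 2 = 4$, and applying Proposition~\ref{prop:long_string_obs} to the run of $\ell$ consecutive 2's (taking $b = m$) forces $\ell \leq 3$, since $\sigma_r = 3$ cannot satisfy $3 \geq 2(\ell-1)$ when $\ell \geq 4$. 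Thus the only pairs $(m, \ell)$ for which \eqref{eq:n_genus_bound} could possibly fail are $(3, 3)$, $(4, 2)$, and $(4, 3)$.

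For $(m, \ell) \in \{(3, 3), (4, 3)\}$ the stable coefficients take the form $(2, 2, 2, 3, \ldots, 3)$ with $s$ threes. When $s \leq 3$ these are precisely the tuples listed as exceptions in the statement, so nothing needs to be proved. When $s \geq 4$ there are four consecutive 3's at indices $m+\ell, \ldots, m+\ell+3$, and I would consider
\[
\bm{z} = \e_{m+\ell} + \e_{m+\ell+1} - \e_m - \e_{m+1} - \e_{m+2} \in L.
\]
By Lemma~\ref{lem:irred_examples} the vector $\bm{z}$ is irreducible, because the proper non-empty subsets of $\{\sigma_m, \sigma_{m+1}, \sigma_{m+2}\} = \{2, 2, 2\}$ have $\sigma$-sums $2$ or $4$, never matching the sum $3$ that comes from a single element of $\{m+\ell, m+\ell+1\}$. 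Since $z_{m+\ell} = z_{m+\ell+1} = 1$ and $z_{m+\ell+2} = z_{m+\ell+3} = 0$, Proposition~\ref{prop:4_string} is violated, ruling out an obtuse superbase.

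The main obstacle is the final case $(m, \ell) = (4, 2)$, whose stable coefficients $(2, 2, 3, \ldots, 3)$ are not exceptional but where the analogue of $\bm{z}$ above becomes reducible under Lemma~\ref{lem:irred_examples} (because $\sigma_4 + \sigma_1 = 3$ now provides a matching subsum), so Proposition~\ref{prop:4_string} is unavailable. To resolve this case I would invoke the very slack theory: since $\sigma_m = 2 = m - 2$, the lattice $L$ is very slack, and Theorem~\ref{thm:very_slack_technical} then states that the existence of an obtuse superbase for $L$ forces the $(n-1)$-changemaker lattice $L_{n-1}$, whose changemaker is $(1, 1, 2, 2, 3, \ldots, 3)$, to be decomposable. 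A direct verification contradicts this: the relations $\e_1 - \e_2$, $\e_3 - \e_4$, $\e_1 + \e_2 - \e_3$, and $\e_1 + \e_3 - \e_5$ all lie in $L_{n-1}$, and together with $\e_i - \e_{i+1} \in L_{n-1}$ for $i \geq 5$, they force every coordinate into a single summand of any potential orthogonal decomposition, so $L_{n-1}$ is in fact indecomposable. This contradicts Theorem~\ref{thm:very_slack_technical} and completes the case.
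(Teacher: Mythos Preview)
Your proof is correct and follows essentially the same route as the paper: reduce to $m+\ell\le 5$, bound $m\le 4$ and $\ell\le 3$ via Propositions~\ref{prop:OSB_m_bound} and~\ref{prop:long_string_obs}, handle the $\ell=3$ overruns with the irreducible vector $\bm{z}$ and Proposition~\ref{prop:4_string}, and dispatch $(m,\ell)=(4,2)$ with Theorem~\ref{thm:very_slack_technical} plus indecomposability of the $(n-1)$-lattice. The only difference from the paper is organizational (you index by the pair $(m,\ell)$ rather than by $\ell$), and your indecomposability check is phrased a bit loosely: the sentence ``force every coordinate into a single summand'' really means that your four vectors together with the $\e_i-\e_{i+1}$ are all irreducible, have a connected pairing graph (e.g.\ $(\e_1+\e_3-\e_5)$ pairs nontrivially with each of the others), and span $L_{n-1}$, which is exactly the standard-basis argument the paper cites from \cite[\S3.1]{GreeneLRP}.
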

\begin{proof}
For such a changemaker lattice, we have
\[n - \frac32\sum_{i=1}^r   \sigma_i(\sigma_i-1) = \frac12\sum_{i=1}^r \sigma_i(3-\sigma_i)=m+\ell -1.\]
Thus, we are required to show that if $L$ admits an obtuse superbase, then $m+\ell\leq 5$.

If $\ell \geq 4$, then Proposition~\ref{prop:long_string_obs} shows that $L$ does not admit an obtuse superbase. Thus we consider the possibilities $\ell=1,2,3$.

If $L$ admits an obtuse superbase, then Proposition~\ref{prop:OSB_m_bound} shows that $m\leq 4$. This automatically yields the required bound when $\ell=1$.

Next suppose that $\ell=2$. If $m=4$, then the lattice $L$ is very slack. Thus if $L$ admitted an obtuse superbase, then Theorem~\ref{thm:very_slack_technical} would imply that the changemaker lattice
\[L''=\langle \e_1+\e_{2} +2\e_3 + 2\e_{4}+3\e_{5}+\dots + 3 \e_{r-1}\rangle^\bot\subseteq \Z^{r-1}\]
is decomposable. However, one can easily show that such a lattice is indecomposable using the techniques of \cite[\S3.1]{GreeneLRP}.\footnote{If one considers the standard basis for $L''$, then the corresponding pairing graph is connected. Since the standard basis consists of irreducible vectors, this implies that $L''$ is indecomposable.} Therefore, if $\ell=2$ and $L$ admits an obtuse superbase, then $m\leq 3$ and the desired bound holds.

Finally, suppose that $\ell =3$, in this case $L$ takes the form
\[
L=\langle \e_1+ \dots+\e_{m-1} +2\e_m + 2\e_{m+1} + 2\e_{m+2}+3\e_{m+3}+\dots + 3 \e_r\rangle^\bot\subseteq \Z^r.
\]
The stable coefficients of this lattice take the form
\[
(2,2,2, \underbrace{3,\dots , 3}_{r-m-2})
\]
Suppose that there are at least four changemaker coefficients equal to three. This allows us to apply Proposition~\ref{prop:4_string} to the vector
\[
\bm{z}= \e_{m+3}+\e_{m+4}-\e_{m+2}-\e_{m+1}-\e_{m} \in L,
\]
which is irreducible by Lemma~\ref{lem:irred_examples}. Thus $L$ does not admit an obtuse superbase if there are at least four changemaker coefficients equal to three ($r\geq m+6$). 

Thus we have established \eqref{eq:n_genus_bound} except when the stable coefficients of $L$ take the form $(2,2,2,3)$, $(2,2,2,3,3)$ or $(2,2,2,3,3,3)$.
\end{proof}

It remains to consider the cases where $L$ has stable coefficients of the form $(2,2,2,3)$, $(2,2,2,3,3)$ or $(2,2,2,3,3,3)$. This is handled with the aid of a computer calculation.
\begin{lem}\label{lem:small_genus_bound_case}
    Let $L$ be an integer changemaker lattice with stable coefficients of the form $(2,2,2,3)$, $(2,2,2,3,3)$ or $(2,2,2,3,3,3)$, then $L$ does not admit an obtuse superbase.
\end{lem}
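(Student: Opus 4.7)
The plan is to reduce this claim to a finite computer verification. First, Proposition~\ref{prop:OSB_m_bound} forces $m \leq \sigma_m + 2 = 4$ whenever $L$ admits an obtuse superbase, while the changemaker inequality $\sigma_m \leq 1 + \sigma_1 + \dots + \sigma_{m-1} = m$ combined with $\sigma_m = 2$ forces $m \geq 2$. Hence for each of the three stable-coefficient tuples in the statement there are at most three possible values of $m$, yielding nine candidate changemaker lattices in total. For each of these, the rank $r = m + \ell - 1$ is at most $9$ and the discriminant $\mathrm{disc}(L) = \norm{\bsigma}$ is bounded above by $42$.

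Next, by Lemma~\ref{lem:universal_bound}, any vector in an obtuse superbase of $L$ has norm at most $\mathrm{disc}(L)$, so the candidate superbase elements form a finite, explicitly enumerable set. To make the enumeration tractable, we would exploit the structural reductions already in hand: Lemma~\ref{lem:standard_norm_twos} lets us fix the standard norm-two vectors $-\e_k + \e_{k-1}$ for each $k$ with $\sigma_k = \sigma_{k-1}$, Lemma~\ref{lem:one_reducible} bounds the number of reducible elements in $B$ by one, and Lemma~\ref{lem:irred_examples} gives a direct combinatorial test for irreducibility of a candidate vector. Lemma~\ref{lem:irreducible_bounds} further constrains the coefficients $v_i$ at indices where $\sigma_i = 1$.

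With these constraints fixed, a backtracking search completes the partial superbase by enumerating irreducible vectors of bounded norm, then verifies the obtuse superbase axioms: pairwise non-positive pairings, summing to zero, and spanning $L$. The main obstacle is computational rather than conceptual: the largest case, stable coefficients $(2,2,2,3,3,3)$ with $m = 4$ (giving rank $r = 9$), has the biggest search space. However, the several forced norm-two vectors together with the coefficient bounds above prune the branches aggressively. A computer implementation, included in the paper's accompanying code~\cite{BKM}, carries out this search for each of the nine lattices and confirms that none admits an obtuse superbase.
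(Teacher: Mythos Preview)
Your proposal is correct and follows essentially the same approach as the paper: use Proposition~\ref{prop:OSB_m_bound} to bound $m\leq 4$, enumerate the resulting nine candidate changemaker lattices, and verify by computer search that none admits an obtuse superbase. The paper's own proof is terser, simply invoking the algorithms of \S\ref{sec:OSB_search} (which package the pruning lemmas you list, together with a few sharper coordinate bounds such as Lemmas~\ref{lem:Cauchy_Schwarz_bound} and~\ref{lem:sigma_m_bound}), but the content is the same.
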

\begin{proof}
Let $L$ be a changemaker lattice of the form
\[
L=\langle \e_1+ \dots+\e_{m-1} +2\e_m + 2\e_{m+1} + 2\e_{m+2}+3\e_{m+3}+\dots + 3 \e_r\rangle^\bot\subseteq \Z^r,
\]
where $r=m+3, m+4$ or $m+5$. If $L$ admits an obtuse superbase, then $m\leq 4$ by Proposition~\ref{prop:OSB_m_bound}. This gives nine changemaker lattices that could potentially admit obtuse superbases. However, the algorithms described in \S\ref{sec:OSB_search} quickly show that none of these lattices actually admit an obtuse superbase. 
\end{proof}

Putting all these results together we get the main theorem of this section.
\begin{proof}[Proof of Theorem~\ref{thm:2s_to_3_or_bigger}]
Let $(\sigma_m,\dots, \sigma_r)$ be the stable coefficients for $L$, where we assume that $\sigma_r\geq 3$.
Lemma~\ref{lem:genus_bound_no_twos} yields the bound when $\sigma_m\geq 3$. Lemma~\ref{lem:genus_bound_large_coef} yields the bound when $\sigma_r\geq 4$. The remaining case is that $\sigma_m=2$ and $\sigma_r=3$, which puts the stable coefficients in the form
\[(\sigma_m,\dots, \sigma_r)=(2,\dots,2,3,\dots,3).\]
This is handled by Lemma~\ref{lem:genus_bound_twos_and_threes} and Lemma~\ref{lem:small_genus_bound_case}.
\end{proof}

\section{Algorithmically searching for obtuse superbases.}\label{sec:OSB_search}
Although Lemma~\ref{lem:universal_bound} is sufficient on its own to provide a naive algorithm that will find an obtuse superbase for a given changemaker lattice $L$ whenever it exists (list the finitely many vectors $\bv\in L$ with $\norm{\bv}\leq \operatorname{disc}(L)$ and then check each subset of appropriate size in turn to see if it forms an obtuse superbase), this algorithm is far too slow in reality to be practical. This section collects various results that allow us to develop a reasonably efficient algorithm for finding an obtuse superbase or, more importantly, proving that no obtuse superbase exists. The results of this section will be used purely for computational purposes and will not be used in any of the theoretical results in this paper. We concentrate on the case of integer and half-integer changemaker lattices.

\subsection{Integer lattices}
Throughout this subsection we will take $L$ to be an $n$-change\-ma\-ker lattice in the form
\[
L=\langle \sigma_1 \e_1 + \dots + \sigma_r \e_r \rangle^\bot \subseteq \Z^r,
\]
where $\sigma_r>1$ and $m>1$ is minimal such that $\sigma_m>1$.
We can exploit Lemma~\ref{lem:universal_bound} and the Cauchy-Schwarz inequality to get a coordinate-wise bound for obtuse superbase elements in a changemaker lattice. 
\begin{lem}\label{lem:Cauchy_Schwarz_bound}
Let $L$ be an $n$-changemaker lattice which admits an obtuse superbase $B$. Then for any element $\bm{z}\in B$ and any index $1\leq k \leq r$ we have
\[
|z_k| \leq \sqrt{n-\sigma_k^2}.
\]
\end{lem}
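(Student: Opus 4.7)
The plan is to combine Lemma~\ref{lem:universal_bound} (which bounds $\norm{\bm{z}}$ in terms of the discriminant of $L$) with the Cauchy--Schwarz inequality applied to the orthogonality relation $\bm{z}\cdot \bsigma = 0$.

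First, I would observe that an $n$-changemaker lattice has discriminant exactly $\operatorname{disc}(L) = n$. This is a standard calculation: since $\sigma_1 = 1$, the changemaker vector $\bsigma$ is primitive in $\Z^r$, so the map $\Z^r \to \Z$ sending $\bv \mapsto \bv \cdot \bsigma$ is surjective with kernel $L$. A short argument then shows $[\Z^r : L \oplus \Z\bsigma] = \norm{\bsigma} = n$, and comparing discriminants via $\operatorname{disc}(L) \cdot \norm{\bsigma} = n^2$ yields $\operatorname{disc}(L) = n$. Combined with Lemma~\ref{lem:universal_bound}, this gives $\norm{\bm{z}} \leq n$ for every $\bm{z}$ in the obtuse superbase $B$.

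Next, since $\bm{z} \in L = \langle \bsigma \rangle^\bot$, the identity $\sum_i z_i \sigma_i = 0$ rearranges to
\[
z_k \sigma_k = -\sum_{i \neq k} z_i \sigma_i.
\]
Applying the Cauchy--Schwarz inequality to the right-hand side yields
\[
z_k^2 \sigma_k^2 \;\leq\; \left(\sum_{i \neq k} z_i^2\right)\left(\sum_{i \neq k} \sigma_i^2\right) \;=\; \bigl(\norm{\bm{z}} - z_k^2\bigr)\bigl(n - \sigma_k^2\bigr).
\]

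Finally, rearranging this inequality gives $z_k^2 \, n \leq \norm{\bm{z}}(n - \sigma_k^2)$, and substituting the bound $\norm{\bm{z}} \leq n$ from the first step produces $z_k^2 \leq n - \sigma_k^2$, as desired. There is no real obstacle here; the main conceptual point is simply to notice that the discriminant of a changemaker lattice coincides with the squared norm of its changemaker vector, so that the universal bound of Lemma~\ref{lem:universal_bound} meshes cleanly with the Cauchy--Schwarz estimate coming from orthogonality to $\bsigma$.
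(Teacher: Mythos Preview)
Your proof is correct and follows essentially the same approach as the paper: apply Lemma~\ref{lem:universal_bound} together with $\operatorname{disc}(L)=n$ to get $\norm{\bm{z}}\leq n$, then use Cauchy--Schwarz on the orthogonality relation $\bm{z}\cdot\bsigma=0$ and rearrange. The only cosmetic difference is that you supply a short justification for $\operatorname{disc}(L)=n$ (which the paper simply asserts) and you substitute the bound $\norm{\bm{z}}\leq n$ after expanding rather than before.
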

\begin{proof}
By Lemma~\ref{lem:universal_bound} and the fact that $n$ is the discriminant of $L$ \cite[Lemma~3.10]{GreeneLRP}, we have that $\norm{\bm{z}}\leq n$.
Now since $\bm{z}\in L$ we have that $\bsigma\cdot \bm{z} =0$, thus for any $k$ we have
\[
z_k \sigma_k =-\sum_{\substack{i=1\\ i\neq k}}^r \sigma_i z_i.
\]
However, by the Cauchy-Schwarz inequality we have that
\begin{align*}
\left(\sum_{\substack{i=1\\ i\neq k}}^r \sigma_i z_i\right)^2 &\leq \left(\sum_{\substack{i=1\\ i\neq k}}^r z_i^2\right) \left(\sum_{\substack{i=1\\ i\neq k}}^r \sigma_i^2\right)\\
&=(\norm{\bm{z}}-z_k^2)(n-\sigma_k^2)\\
&\leq (n-z_k^2)(n-\sigma_k^2).
\end{align*}
Thus we have that
\[
|z_k \sigma_k|^2\leq (n-z_k^2)(n-\sigma_k^2).
\]
Rearranging and taking square roots gives the desired bound.
\end{proof}

\begin{lem}\label{lem:irreducible_bounds2}
Let $\bm{z}\in L$ be an irreducible vector. Then the coordinates $z_k$ satisfy the following bounds.
\begin{enumerate}[label = (\roman*)]
    \item\label{it:sigk=1} If $\sigma_k=1$, then 
    \[|z_k|\leq 
    \begin{cases}
    2 &\text{if $L$ is tight}\\
    1 &\text{otherwise.}
    \end{cases}
    \]
    \item\label{it:sigknottight} If $\sigma_k>1$ is not tight and $\sigma_k=\sum_{i\in A} \sigma_i$ for $A \subseteq \{1, \dots, k-1\}$, then
    \[|z_k|\leq \sum_{i \in A}(|z_i| +1).
    \]
    \item\label{it:sigktight} If $\sigma_k>1$ is tight, then
    \[|z_k|\leq 5+ \sum_{i =1}^{k-1}(|z_i| +1).
    \]
\end{enumerate}
\end{lem}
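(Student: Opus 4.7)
The plan is to exploit the definition of irreducibility directly: if $\bm{z} \in L\setminus\{0\}$ is irreducible and $\bm{z} = \bm{x} + \bm{y}$ with $\bm{x},\bm{y} \in L\setminus\{0\}$, then $\bm{x}\cdot\bm{y} \leq -1$. For each of the three cases, I would produce an explicit vector $\bm{x} \in L$ whose coordinates are easy to control, and then either observe $\bm{z}=\pm\bm{x}$ (in which case the asserted bound is trivial) or apply the inequality $(\bm{z}-\bm{x})\cdot\bm{x} \leq -1$ and rearrange. Part~\ref{it:sigk=1} is exactly the content of Lemma~\ref{lem:irreducible_bounds}, so nothing further is needed there.

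For part~\ref{it:sigknottight}, the hypothesis $\sigma_k = \sum_{i \in A}\sigma_i$ guarantees that
\[
\bm{x} := \e_k - \sum_{i \in A}\e_i
\]
lies in $L$, with $\norm{\bm{x}} = 1+|A|$. Replacing $\bm{z}$ by $-\bm{z}$ if necessary, I may assume $z_k \geq 0$; the case $\bm{z} = \bm{x}$ gives $z_k=1$, so $|A|\geq 1$ makes the stated bound trivial. Otherwise, irreducibility yields
\[
-1 \;\geq\; (\bm{z}-\bm{x})\cdot\bm{x} \;=\; z_k - \sum_{i \in A}z_i - (1+|A|),
\]
which rearranges to $z_k \leq \sum_{i \in A}(z_i+1) \leq \sum_{i \in A}(|z_i|+1)$.

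For part~\ref{it:sigktight}, the crucial observation is that because $\sigma_k$ is tight, $\bsigma$ itself is a tight changemaker vector, so part~\ref{it:sigk=1} applied at the coordinate $\sigma_1 = 1$ already gives $|z_1|\leq 2$. The tightness relation $\sigma_k = 1 + \sigma_1 + \dots + \sigma_{k-1}$ together with $\sigma_1=1$ shows that
\[
\bm{x} := \e_k - 2\e_1 - \e_2 - \dots - \e_{k-1}
\]
lies in $L$, with $\norm{\bm{x}} = k+3$. Assuming $z_k \geq 0$ and $\bm{z} \neq \bm{x}$ as before, irreducibility gives
\[
-1 \;\geq\; (\bm{z}-\bm{x})\cdot\bm{x} \;=\; z_k - 2z_1 - \sum_{i=2}^{k-1} z_i - (k+3).
\]
Rearranging and then using the elementary estimate $2z_1 \leq 2|z_1| \leq 2 + |z_1|$ (valid because $|z_1|\leq 2$) yields $z_k \leq k+2 + |z_1| + 2 + \sum_{i=2}^{k-1}|z_i| = 4+k + \sum_{i=1}^{k-1}|z_i| = 5 + \sum_{i=1}^{k-1}(|z_i|+1)$.

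The main obstacle is the bookkeeping in part~\ref{it:sigktight}: without the bound $|z_1|\leq 2$, the coefficient $2z_1$ produced by $\bm{x}$'s doubled first coordinate would prevent absorption into the target expression. Recognizing that the presence of a single tight $\sigma_k$ forces the \emph{lattice} to be tight, and then feeding this back into part~\ref{it:sigk=1} to control $z_1$, is therefore the essential step that makes the constants work out.
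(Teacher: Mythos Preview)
Your proof is correct and follows essentially the same argument as the paper's: in each case one tests irreducibility against the natural vector $\bm{x}$ built from the relation $\sigma_k = \sum \sigma_i$ (or the tight relation) and rearranges the inequality $(\bm{z}-\bm{x})\cdot\bm{x}\leq -1$, with part~\ref{it:sigktight} using the bound $|z_1|\leq 2$ from part~\ref{it:sigk=1}. The only cosmetic difference is that the paper normalizes to $z_k\leq 0$ and uses $\bm{x}=-\e_k+\cdots$, whereas you normalize to $z_k\geq 0$ and use the sign-flipped test vector.
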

\begin{proof}
The bound \ref{it:sigk=1} is a restatement of Lemma~\ref{lem:irreducible_bounds}.

Suppose that $\sigma_k>1$ is not tight and $\sigma_k=\sum_{i\in A} \sigma_i$ for $A \subseteq \{1, \dots, k-1\}$. Without loss of generality, suppose that $z_k\leq 0$. Set $\bm{x}=-\e_k + \sum_{i\in A}\e_i \in L$. Since $\bm{x}$ satisfies the bound in \ref{it:sigknottight}, we can assume that $\bm{x}\neq \bm{z}$. Using irreducibility gives
\begin{align*}
    -1&\geq \bm{x}\cdot(\bm{z}-\bm{x})\\
    &=-(z_k+1)+ \sum_{i \in A}(z_i -1).
\end{align*}
Rearranging gives
\[
z_k\geq \sum_{i \in A}(z_i -1)\geq -\sum_{i \in A}(|z_i| +1),
\]
which is the bound \ref{it:sigknottight}.

Finally, suppose that $\sigma_k>1$ is tight. Without loss of generality assume that $z_k\leq 0$ In this case take $\bm{x}$ to be the standard basis element $\bm{x}= -\e_k +\e_{k-1}+ \dots + \e_2 + 2\e_1$. Again we may assume that $\bm{x}\neq \bm{z}$. So by irreducibility, we have 
\begin{align*}
    -1&\geq \bm{x}\cdot(\bm{z}-\bm{x})\\
    &=-(z_k+1)+ \sum_{i=2}^{k-1}(z_i -1) + 2(z_1-2).
\end{align*}
Rearranging this inequality gives
\[
z_k \geq \sum_{i=1}^{k-1}(z_i -1) + z_1-3\geq -\sum_{i=1}^{k-1}(|z_i| +1) -5,
\]
where we used the bound $z_1\geq -2$, derived from \ref{it:sigk=1} in the last inequality. This establishes the bound \ref{it:sigktight}.
\end{proof}

We will also make use of a more elaborate bound on irreducible elements.
\begin{lem}\label{lem:sigma_m_bound}
If $\bm{z}\in L$ is irreducible, then $|z_m|\leq \sigma_m$.
\end{lem}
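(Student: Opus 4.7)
The plan is a proof by contradiction. By symmetry, after possibly replacing $\bm{z}$ with $-\bm{z}$, it suffices to show $z_m \leq \sigma_m$. I would therefore assume $z_m \geq \sigma_m + 1$ and aim to exhibit a non-trivial decomposition $\bm{z} = \bm{x} + \bm{y}$ in $L$ with $\bm{x}\cdot\bm{y}\geq 0$, contradicting irreducibility.

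The natural candidate is $\bm{x}_A = \e_m - \sum_{i\in A}\e_i$, where $A\subseteq\{1,\dots,m-1\}$ has size $\sigma_m$; this lies in $L$ because $\sigma_i = 1$ for $i < m$. A direct calculation yields
\[
\bm{x}_A\cdot(\bm{z} - \bm{x}_A) = z_m - 1 - \sigma_m - \sum_{i \in A} z_i,
\]
so this decomposition witnesses reducibility precisely when $\sum_{i\in A}z_i \leq z_m - 1 - \sigma_m$. Given the assumption $z_m\geq \sigma_m+1$, it is enough to find $A$ with $\sum_{i\in A}z_i\leq 0$.

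By Lemma~\ref{lem:irreducible_bounds}, the coefficients $z_i$ for $i<m$ satisfy $|z_i|\leq 1$ (or $\leq 2$ in the tight case). In the easy case when at least $\sigma_m$ indices $i<m$ have $z_i\leq 0$, we simply take $A$ among them. The delicate case occurs when fewer than $\sigma_m$ non-positive coefficients exist, forcing at least $m-\sigma_m$ of the $z_i$'s to equal $1$. Combining this with $\bm{\sigma}\cdot\bm{z} = 0$ and $z_m\geq \sigma_m + 1$ forces $\sum_{i>m}\sigma_i z_i$ to be strictly negative, so a minimal index $g > m$ with $z_g < 0$ exists.

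In this hard case one uses Proposition~\ref{prop:CM_condition} to choose $B \subseteq \{1,\dots,g-1\}\setminus\{m\}$ with $\sum_{i\in B}\sigma_i = \sigma_g - \sigma_m$, and forms $\bm{x}' = \e_m - \e_g + \sum_{i\in B}\e_i \in L$. Using $z_i\leq 1$ for $i < m$, $z_i\geq 0$ for $m<i<g$ (by minimality of $g$), and $z_g\leq -1$, a calculation of $\bm{x}'\cdot(\bm{z} - \bm{x}')$ produces the bound $z_m \leq 1 + z_g\leq 0$, contradicting $z_m\geq \sigma_m + 1$. The main obstacle I anticipate is the book-keeping required to handle the tight sub-cases -- when $\sigma_m$ itself is tight (so the simple choice of $A$ is not available in $\{1,\dots,m-1\}$) or when $\sigma_g$ is tight (so $B$ cannot cleanly avoid the index $m$) -- which will require substituting the appropriate standard basis vector $\bv^{(m)}$ or $\bv^{(g)}$ into the construction and re-verifying the arithmetic using the sharper coefficient bounds from Lemma~\ref{lem:irreducible_bounds}.
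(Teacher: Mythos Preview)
Your overall strategy matches the paper's: split into the case where enough of the $z_i$ with $i<m$ are nonpositive (so the test vector $\e_m-\sum_{i\in A}\e_i$ works) versus the case where most are positive (so one must bring in an index $g>m$). But your hard-case computation has a genuine gap.

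From $\bm{x}'=\e_m-\e_g+\sum_{i\in B}\e_i$ one gets
\[
\bm{x}'\cdot(\bm z-\bm{x}') \;=\; z_m - z_g - 2 + \sum_{i\in B}(z_i-1),
\]
so irreducibility yields $z_m \leq 1+z_g - \sum_{i\in B}(z_i-1)$. To obtain $z_m\le 1+z_g$ you would need $\sum_{i\in B}(z_i-1)\geq 0$, and nothing you have stated gives this. Knowing only $z_i\le 1$ for $i<m$ bounds each summand \emph{above} by $0$, not below; and with $g$ chosen minimal for $z_g<0$ you only get $z_j\ge 0$ (hence $z_j-1\ge -1$) for $m<j<g$. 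So the displayed inequality gives no control on $z_m$ at all.

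Two ingredients are missing. First, one should observe that the sets $A_+=\{i<m:z_i>0\}$ and $A_-=\{i<m:z_i<0\}$ cannot both be nonempty, since otherwise $\e_i-\e_j\in L$ with $i\in A_+$, $j\in A_-$ already reduces $\bm z$. In the hard case this forces $A_-=\emptyset$, so every $z_i$ with $i<m$ is either $0$ or positive. Second, one must \emph{choose} the part of $B$ lying in $\{1,\dots,m-1\}$ to sit inside $A_+$ as far as possible; since $|A_+|\ge m-\sigma_m$ while that part of $B$ has size at most $m-1$, one gets at most $\sigma_m-1$ indices with $z_i=0$ contributing $-1$ each. (One should also take $g$ minimal with $z_g\le 0$, not $z_g<0$, so that $z_j\ge 1$ for $m<j<g$.) Carrying this through yields only $z_m\le \sigma_m$, which is exactly the desired bound --- not the much stronger $z_m\le 0$ you claimed. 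This is precisely the argument the paper runs in its case~(3).
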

\begin{proof}
Without loss of generality let us suppose that $z_m>0$. Consider also the sets
\[
A_+=\{i\,|\, z_i\geq 1, 1\leq i \leq m-1\}
\]
and
\[
A_-=\{i\,|\, z_i\leq -1, 1\leq i \leq m-1\}.
\]
First we observe that at least one of $A_+$ or $A_-$ must be empty. If we had $a\in A_+$ and $b\in A_-$, then $\bm{x}=\e_a-\e_b\in L$ would be a vector $\bm{x}\neq \bm{z}$ satisfying 
\[
\bm{x}\cdot (\bm{z}-\bm{x})=z_a-z_b-2\geq 0.
\]
Since this contradicts the irreducibility of $\bm{z}$, we see that at least one of $A_+$ or $A_-$ must be empty.

Note that if $\sigma_m$ is tight, then $\sigma_m=m$ and if $\sigma_m$ is not tight, then $\sigma_m\leq m-1$. We prove the lemma by considering three separate cases:
\begin{enumerate}
    \item\label{it:A-non-empty} $A_-$ is non-empty and $\sigma_m$ is tight.
    \item\label{it:A+small} $\sigma_m$ is not tight and $|A_+|+\sigma_m\leq m-1$
    \item\label{it:A+large} $A_-$ is empty and $|A_+|+\sigma_m\geq m$.
\end{enumerate}
We note that these three cases encompass all possibilities. Together \eqref{it:A-non-empty} and \eqref{it:A+small} cover the case that $A_-$ is non-empty. The case that $A_-$ is empty is covered by \eqref{it:A+small} and \eqref{it:A+large}. In particular, note that if $\sigma_m$ is tight, then the condition $|A_+|+\sigma_m\geq m$ automatically holds. 

First, suppose that \eqref{it:A-non-empty} holds: we have $A_-$ is non-empty and $\sigma_m=m$. Without loss of generality we can assume that $1\in A_-$, i.e we have $z_1\leq -1$ and $z_i\leq 0$ for $i=2,\dots, m-1$. Consider $\bm{x}=\e_m-\e_{m-1}-\dots - \e_2 -2\e_1\in L$. Note that if $\bm{z}=\bm{x}$, then $z_m=1$. Thus we can assume that $\bm{x}\neq \bm{z}$.
The irreducibility of $\bm{z}$ gives 
\begin{align*}
    -1&\geq \bm{x}\cdot (\bm{z}-\bm{x}) \\
    &= z_m-1 - \sum_{i=2}^{m-1} (1+z_i) -2(z_1+2)\\
    & \geq z_m -1 -(m-2) -2\\
    &= z_m -1 -m.
\end{align*}
Rearranging gives $z_m\leq m=\sigma_m$, which is the desired bound if $A_-$ is nonempty and $\sigma_m$ is tight.

Now suppose that \eqref{it:A+small} holds: we have $\sigma_m\leq m-1$, and that $|A_+|+\sigma_m\leq m-1$. Note that this encompasses the case that $A_+$ is empty. In this case we consider $\bm{x}=\e_m-\e_1-\dots - \e_{\sigma_m}\in L$. The hypothesis that $|A_+|+\sigma_m\leq m-1$ implies that after relabelling the $\e_i$ we can assume that $A_+ \cap \{1, \dots, \sigma_m\}=\emptyset$, that is that $z_i\leq 0$ for $i=1, \dots, \sigma_m$. Again we can assume that $\bm{z}\neq \bm{x}$ since otherwise we would have $z_m=1$. Thus we find that
\begin{align*}
    -1&\geq \bm{x}\cdot (\bm{z}-\bm{x}) \\
    &= z_m-1 - \sum_{i=1}^{\sigma_m} (z_i+1)\\
    &\geq z_m -1 -\sigma_m.
\end{align*}
Rearranging this gives $z_m\leq \sigma_m$. Thus we have established the required bound when $|A_+|+\sigma_m\leq m-1$.

Finally suppose that \eqref{it:A+large} holds: we have that $A_-$ is empty and $|A_+|+\sigma_m\geq m$. Under these conditions we have that $z_i\geq 0$ for $i=1,\dots, m-1$ and, by assumption, that $z_m>0$. Since every non-zero vector in a changemaker lattice must have non-zero coordinates of both signs, there is some $i$ with $z_i<0$. Moreover we see that such an $i$ satisfies $i>m$. We take $g>m$ to be minimal such that $z_g\leq 0$.

By Proposition~\ref{prop:CM_condition}, there is a subset
$B\subset \{1, \dots, g-1 \}$ such that
\[
\sigma_g - \sigma_m= \sum_{i\in B}\sigma_i.
\]
Thus we can define an element $\bm{x}\in L$ of the form
\[
\bm{x}=-\e_g + \varepsilon \e_m + \sum_{i\in C} \e_i +\sum_{j\in D} \e_j,
\]
where $C=\{1,\dots, m-1\}\cap B$, $D= \{m+1, \dots, g-1\}\cap B$ and
\[
\varepsilon=\begin{cases}
    1 &\text{if $m\not\in B$}\\
    2 &\text{if $m\in B$.}
\end{cases}
\]
Furthermore, we assume that we have chosen $B$ so that $A_+\cap C$ is as large as possible. I.e so that $|C\setminus A_+|=\max\{0, |C|-|A_+| \}$. If $\bm{z}=\bm{x}$, then $z_m=\varepsilon\leq 2$, which is the desired bound. Thus we assume that $\bm{z}\neq \bm{x}$.
Since $\bm{z}$ is assumed to be irreducible, we have 
\begin{align*}
    -1&\geq \bm{x}\cdot (\bm{z}-\bm{x}) \\
    &=-z_g-1 + \varepsilon(z_m -\varepsilon) + \sum_{i\in C} (z_i-1) +\sum_{j\in D} (z_j-1)\\
    &\geq -1 + \varepsilon(z_m -\varepsilon) + \sum_{i\in C} (z_i-1)\\
    &\geq -1 + \varepsilon(z_m -\varepsilon) -|C\setminus A_+|.
\end{align*}
Thus rearranging we get the bound
\[
z_m\leq \frac{|C\setminus A_+|}{\varepsilon} +\varepsilon.
\]
Firstly, note that if $|C\setminus A_+|=0$, then we get the bound $z_m\leq \varepsilon\leq 2$. Thus we can assume that
$|C\setminus A_+|>0$. Since $|C|\leq m-1$ and $|A_+|+\sigma_m\geq m$, we have 
\[
|C\setminus A_+|=|C|-|A_+|\leq m-1-(m-\sigma_m)\leq \sigma_m -1.
\]
Since $\varepsilon\in \{1,2\}$, this gives the bound
\[
z_m\leq \max\left\{\sigma_m, \frac{\sigma_m+3}{2}\right\}.
\]
Since $\sigma_m\geq 2$ and $z_m$ is an integer, this is sufficient to show $z_m \leq \sigma_m$. This concludes the final case of the lemma.
\end{proof}

\begin{rem}
We note that the bound in Lemma~\ref{lem:sigma_m_bound} is sharp. For example, one finds that
$\bv=-\e_{m+1}+ n \e_m$ is irreducible in a changemaker lattice of the form
\[
L=\langle \e_1 + \dots + \e_{m-1} + n\e_m + n^2 \e_{m+1} \rangle^\bot\subseteq \Z^{m+1},
\]
where $m$ is assumed sufficiently large to ensure this a changemaker lattice.
\end{rem}

Putting these bounds together we describe a practical algorithm to decide whether or not an integer changemaker lattice admits an obtuse superbase.

\begin{alg}\label{alg:findOSB}
Given an integral changemaker lattice
\[
L= \langle \sigma_1 \e_1 + \dots + \sigma_r \e_r  \rangle^\bot \subseteq \Z^r,
\]
with discriminant $n=\sum_{i=1}^r \sigma_i^2$, the following procedure either finds an obtuse superbase for $L$ or certifies that none exists.

\begin{enumerate}
    \item \label{step:bound} 
    Produce a list $V_{\rm bound}$ of non-zero elements of $L$ whose coordinates satisfy the bounds of Lemmas~\ref{lem:Cauchy_Schwarz_bound},~\ref{lem:irreducible_bounds2}, and~\ref{lem:sigma_m_bound}.
    \item \label{step:norm2} Let $V_2$ be the set of all vectors in $L$ of the form $\e_i-\e_{i+1}$.
    \item \label{step:irred} 
    Take $V_{\rm irred}$ to be the subset of $V_{\rm bound}$ consisting of $\bv$ such that
    \begin{equation}\label{eq:cutting_reducibles}
        \text{$(\bv-\bm{w})\cdot \bm{w} < 0$ for all $\bm{w} \in V_{\rm bound}\setminus \{\bv\}$.}
    \end{equation}
    and
    \begin{equation}\label{eq:norm2_pairing}
    \text{$\bv\in V_2$ or $\bv\cdot \bm{w} \in \{0,-1\}$ for all $\bm{w} \in V_2$.}
    \end{equation}
    \item \label{step:subset_iteration} For each subset $\{\bv^{(1)}, \dots, \bv^{(r-1)}\}\subseteq V_{\rm irred}$ containing $V_2$ set 
    \[\bv^{(r)}=-(\bv^{(1)}+ \dots + \bv^{(r-1)})\]
    and verify whether the following conditions are satisfied:
        \begin{enumerate}
            \item $\bv^{(1)},\dots, \bv^{(r-1)}$ span $L$
            \item $\bv^{(i)}\cdot \bv^{(j)}\leq 0$ for all $1\leq i<j\leq r$.
        \end{enumerate}
    If so, then we have found an obtuse superbase for $L$.
    \item If no obtuse superbase is found in the previous step, then $L$ does not admit an obtuse superbase.
\end{enumerate}
\end{alg}
\begin{proof}
It is clear that a collection of vectors satisfying the two conditions of Step~\ref{step:subset_iteration} form an obtuse superbase for $L$, so it suffices to justify why the set will find an obtuse superbase whenever one exists.

Suppose that $L$ admits an obtuse superbase $B$. Lemma~\ref{lem:one_reducible} shows that $B$ contains at most one reducible vector. Furthermore, Lemma~\ref{lem:standard_norm_twos} allows us to assume that $B$ contains $V_2$. Since $L$ contains no elements of norm one all the elements of $V_2$, having length two, are necessarily irreducible. Thus we can assume that $B$ takes the form
\[B=\{\bv^{(1)}, \dots, \bv^{(r-1)}, -(\bv^{(1)}+\dots + \bv^{(r-1)})\},\]
where $\{\bv^{(1)}, \dots, \bv^{(r-1)}\}$ is a set of irreducible vectors containing $V_2$. Thus each of the $\bv^{(i)}$ satisfies the coordinate bounds of Lemmas~\ref{lem:Cauchy_Schwarz_bound},~\ref{lem:irreducible_bounds2}, and~\ref{lem:sigma_m_bound} and hence is contained in $V_{\rm bound}$. Since each $\bv^{(i)}$ is irreducible, it satisfies $(\bv^{(i)}-\bm{w})\cdot \bm{w}<0$ for all non-zero $\bm{w}\in L$ such that $\bm{w}\neq \bv^{(i)}$. Consequently each $\bv^{(i)}$ satisfies \eqref{eq:cutting_reducibles}. Consider a vector $\bm{w}\in V_2$. Since $V_2$ is contained in $B$ we have for all $i$ either that $\bm{w}=\bv^{(i)}$ or that $\bm{w}\cdot \bv^{(i)}\leq 0$. On the other hand each $\bv^{(i)}$ is irreducible, so if $\bv^{(i)}\neq \bm{w}$ for some $i$ we have that
\[-1\geq-(\bv^{(i)}+\bm{w})\cdot \bm{w}=-2-\bv^{(i)}\cdot \bm{w}.\]
From this we rearrange to see that $\bv^{(i)}\cdot \bm{w}\geq -1$ if $\bv^{(i)}\neq \bm{w}$. Altogether we see that $\bv^{(i)}$ also satisfies \eqref{eq:norm2_pairing}. Thus we see that $\bv^{(i)}$ is also contained in $V_{\rm irred}$. Thus $\{\bv^{(1)}, \dots, \bv^{(r-1)}\}$ is a subset of $V_{\rm irred}$ containing $V_2$ and hence will be found at Step~\ref{step:subset_iteration} as required.
\end{proof}

\begin{rem}\label{rem:quick_OSB_search}
    In order to rigorously verify that an obtuse superbase does not exist one needs to run a full version of Algorithm~\ref{alg:findOSB} using coordinate bounds such as those in Lemmas~\ref{lem:Cauchy_Schwarz_bound},~\ref{lem:irreducible_bounds2} and~\ref{lem:sigma_m_bound}. However, when an obtuse superbase exists it can usually be found without resorting to such an exhaustive search. In all known examples, the coordinates of the superbase elements always satisfy $|v_i|\leq 2$. So, in practice, a truncated version of Algorithm~\ref{alg:findOSB} which begins with $V_{\rm bound}$ comprising non-zero elements of $L$ such that $|v_i|\leq 2$ for all $i$ will find an obtuse superbase if one exists.
\end{rem}

\subsection{Half-integer lattices}\label{sec:half_int_basis_search} We now describe an algorithm for deciding if a half-integer changemaker lattice has an obtuse superbase. Throughout this subsection we will take $L$ to be an $(n+\frac{1}{2})$-changemaker lattice of the form
\[
L=\langle \e_{-1}-\e_0 , \e_0 + \sigma_1 \e_1 + \dots + \sigma_r \e_r \rangle^\perp \subseteq \Z^r
\]
where $\bsigma=\sigma_1 \e_1 + \dots + \sigma_r \e_r$ is a changemaker vector satisfying $\norm{\bsigma}=n$. Let $L'$ be the $n$-changemaker lattice
\[
L'=\langle \sigma_1 \e_1 + \dots + \sigma_r \e_r \rangle^\bot \subseteq \Z^{r+2}. 
\]
Note that $L'$ is the rank $r-1$ sublattice of $L$ such that $\bv\in L$ is contained in $L'$ if and only if $v_0=v_{-1}=0$.

Obtuse superbases for half-integer changemaker lattices were studied extensively in \cite{McCoyAltUnknotting}. Our first result is that all but two elements of an obtuse superbase must be contained in the sublattice $L'$.
\begin{prop}
    Suppose that $L$ admits an obtuse superbase $B$. Then $B$ contains precisely two elements $\bv\in B$ with $v_0 \neq 0$.    
    \end{prop}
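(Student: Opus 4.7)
The plan is to analyse the linear functional $\phi : L \to \Z$ defined by $\phi(\bv) = v_0$. This is well-defined since any $\bv \in L$ satisfies $\bv \cdot (\e_{-1} - \e_0) = 0$, so $v_{-1} = v_0$, and $\bv \cdot (\e_0 + \bsigma) = 0$, so $v_0 = -\sum_{i \geq 1} \sigma_i v_i$. The element $\bm{h} := \e_{-1} + \e_0 - \e_1$ lies in $L$ (using $\sigma_1 = 1$) and satisfies $\phi(\bm{h}) = 1$, so $\phi$ is surjective and the sublattice $L' := \ker \phi$ has rank $r - 1$. Since $\sum_{\bv \in B} \bv = 0$, also $\sum_{\bv \in B} \phi(\bv) = 0$. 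Partition $B = B_+ \sqcup B_0 \sqcup B_-$ according to the sign of $\phi$.

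For the lower bound $|B_+| + |B_-| \geq 2$: if $B_+ \cup B_- = \emptyset$, then $B \subseteq L'$, contradicting that $B$ spans the rank-$r$ lattice $L$. If exactly one of $B_+, B_-$ is nonempty, then $\sum_{\bv \in B} \phi(\bv)$ is strictly signed and nonzero, contradicting the sum constraint. Hence $|B_+| \geq 1$ and $|B_-| \geq 1$, giving $|B_+ \cup B_-| \geq 2$.

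For the upper bound $|B_+| + |B_-| \leq 2$, I would argue by contradiction, supposing $|B_+ \cup B_-| \geq 3$. Two ingredients combine to give the required contradiction. The first is the obtuse condition: for distinct $\bv, \bw \in B$ with $v_0, w_0$ of the same sign, the inequality $\bv \cdot \bw \leq 0$ together with $\bv \cdot \bw = 2 v_0 w_0 + \sum_{i \geq 1} v_i w_i$ forces $\sum_{i \geq 1} v_i w_i \leq -2$; this is a strong constraint on pairs of same-sign elements. The second is the span condition: for $B$ to span $L$ itself (not just a finite-index sublattice), after removing any single $\bv \in B$ the remaining values $\{\phi(\bw) : \bw \in B \setminus \{\bv\}\}$ must have greatest common divisor $1$. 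A natural way to exploit these simultaneously is to consider the translated collection $\{\bv^* := \bv - \phi(\bv) \bm{h}\}_{\bv \in B} \subseteq L'$: these $r+1$ elements sum to $0$ in a rank $r-1$ lattice, so they satisfy at least two independent $\Z$-linear relations, and an excess relation beyond $\sum \bv^* = 0$ translates into an identity of the form $\sum_{\bv \in B} c_\bv \bv = (\sum_\bv c_\bv \phi(\bv))\, \bm{h}$ in $L$. This identity should force either a gcd obstruction to $\phi$ being surjective on $\langle B \rangle$, or the conclusion that $\bm{h} \notin \langle B \rangle$, contradicting $\langle B \rangle = L$.

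The main obstacle is to rule out the borderline configurations in which the $\phi$-values on $B_+ \cup B_-$ are pairwise coprime, such as $(1,1,-2)$ or $(1,2,-3)$, since these satisfy the spanning gcd condition on their own. The strategy there is to bring the same-sign obtuseness $\sum_{i \geq 1} v_i w_i \leq -2$ to bear jointly with the universal length bound $\norm{\bv} \leq \mathrm{disc}(L)$ from the half-integer analogue of Lemma~\ref{lem:universal_bound}: the sum $\bu_+ := \sum_{\bv \in B_+} \bv$ satisfies $\phi(\bu_+) \geq |B_+|$ and hence $\norm{\bu_+} \geq 2|B_+|^2$, while obtuseness also gives $\norm{\bu_+} \leq \sum_{\bv \in B_+} \norm{\bv} - 2\binom{|B_+|}{2}\cdot 2$, and a parallel inequality for $\bu_-$. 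Pushing these inequalities together with the translated-relation analysis should force $|B_+| = |B_-| = 1$, completing the proof.
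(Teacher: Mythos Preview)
Your lower-bound argument (at least two elements have $v_0 \neq 0$) is correct and matches the paper's. The upper bound, however, is not established: you outline two mechanisms (the translated-collection/relation idea and the norm-inequality idea) and end with ``should force $|B_+|=|B_-|=1$'', but neither mechanism is carried to a conclusion. In particular, your stated inequality
\[
\norm{\bu_+}\;\leq\;\sum_{\bv\in B_+}\norm{\bv}\;-\;2\binom{|B_+|}{2}\cdot 2
\]
is not what obtuseness gives: obtuseness only yields $\bv\cdot\bw\leq 0$, hence $\norm{\bu_+}\leq \sum_{\bv\in B_+}\norm{\bv}$. The bound $\sum_{i\geq 1} v_i w_i\leq -2$ you derived concerns the \emph{projected} pairing, not $\bv\cdot\bw$ itself, so the extra $-4\binom{|B_+|}{2}$ is unjustified in the ambient lattice. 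Even if one works instead with the projection to coordinates $1,\dots,r$, the resulting inequality combines with $\norm{\bu_+}\geq 2\phi(\bu_+)^2$ only to give $4\binom{|B_+|}{2}+2\sum v_0^2\leq \sum\norm{\bv}$, which imposes no useful constraint since the $\norm{\bv}$ can be large. The gcd/span argument likewise does not rule out the ``borderline'' $\phi$-value patterns you yourself identify.

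The paper's proof is much shorter and rests on a single fact you do not have: it cites \cite[Lemma~5.3(i)]{McCoyAltUnknotting}, which says that \emph{any} partial sum $\bm{z}=\sum_{\bv\in R}\bv$ with $R\subseteq B$ satisfies $|z_0|\leq 1$. Applied to singletons this gives $v_0\in\{-1,0,1\}$ for every $\bv\in B$, so your configurations like $(1,1,-2)$ or $(1,2,-3)$ simply do not occur; and if three or more elements had $v_0\neq 0$, two would share a sign and their sum would have $|z_0|=2$, a contradiction. The missing ingredient in your proposal is precisely this control on $|z_0|$ for partial sums; once you have it, no gcd or norm-counting is needed.
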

\begin{proof}
    Note that there is at least one element of $B$ with $v_0\neq 0$ since $B$ spans $L$. In fact, since the elements of $B$ sum to zero, this shows that there must be at least two elements of $B$ with $v_0\neq 0$.
    The fact that there are at most two elements follows immediately from \cite[Lemma~5.3(i)]{McCoyAltUnknotting}, which states that if $\bm{z}$ is a sum of elements of $B$, then $|z_0|\leq 1$.
\end{proof}
Furthermore, the following result shows that one of these basis elements in $L'$ can be assumed to be of the form $\bv=-\e_1+\e_0+\e_{-1}$.
\begin{lem}[{\cite[Lemma 5.7]{McCoyAltUnknotting}}]\label{lem:half_int_basis_form}
    Let $m>1$ be minimal such that $\sigma_m>1$. Then if $L$ admits an obtuse superbase, then it admits an obtuse superbase containing the vectors
    \[
\pushQED{\qed} 
    \text{$-\e_1+\e_0+\e_{-1}$ and $-\e_k + \e_{k-1}$ for $2\leq k <m$.}
\qedhere
\popQED
\] 
\end{lem}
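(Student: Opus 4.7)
The plan is to adapt Lemma~\ref{lem:standard_norm_twos} to the half-integer setting, using the preceding proposition to isolate the role of the $\e_0$-coordinate. By that proposition any obtuse superbase $B$ for $L$ contains exactly two vectors $\bv^+,\bv^-$ with nonzero $\e_0$-coordinate; since every $\bm{z}\in L$ satisfies $z_{-1}=z_0$ (from pairing against $\e_{-1}-\e_0$) and the sum-of-basis-elements argument already invoked forces $|z_0|\le 1$, we can write $\bv^{\pm}=\pm(\e_{-1}+\e_0)+\bm{u}^{\pm}$ with $\bm{u}^{\pm}\in\langle\e_1,\dots,\e_r\rangle$, while the remaining $r$ elements of $B$ lie in the integer sublattice $L'$.

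The next step installs the norm-$2$ part of the basis. Among all obtuse superbases for $L$, I would choose one maximizing the number of vectors in $B\setminus\{\bv^+,\bv^-\}$ of the form $\pm(\e_i-\e_j)$ with $1\le i,j\le m-1$. Running the path-of-degree-two-vertices argument from the proof of Lemma~\ref{lem:standard_norm_twos} on the subgraph induced by such vertices, together with a relabeling of $\e_1,\dots,\e_{m-1}$ (permissible since $\sigma_1=\dots=\sigma_{m-1}=1$), arranges that $-\e_k+\e_{k-1}\in B$ for every $2\le k<m$.

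Then I would pin down $\bv^+$, the case of $\bv^-$ being symmetric. Because $\bv^+\cdot\bsigma=0$ and $\sigma_i=1$ for $i<m$, the difference $\bm{w}:=\bv^+-(-\e_1+\e_0+\e_{-1})$ lies in $L'$ and has vanishing coordinate sum on $\langle\e_1,\dots,\e_{m-1}\rangle$. Using Lemma~\ref{lem:basis_modification} iteratively, with the norm-$2$ path $\{-\e_k+\e_{k-1}\}_{2\le k<m}$ installed in the previous step serving as exchange partners, one can cancel the $\e_2,\dots,\e_{m-1}$ coordinates of $\bm{w}$ pair by pair; irreducibility of each modified vector follows from Lemma~\ref{lem:irreducible}, since the connected-complement description of irreducibles is preserved under such edge-slide moves, and the coordinate bounds from Lemma~\ref{lem:irreducible_bounds} keep the $\e_1$-entry of $\bv^+$ down to $-1$.

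The main obstacle is making the two modifications compatible: the operations on $\bv^+$ must not destroy the carefully installed path nor force a third basis element to acquire a nonzero $\e_0$-coordinate. I would control this by a lexicographic choice of $B$ — first maximizing the norm-$2$ count above and then minimizing $\norm{\bm{u}^+}+\norm{\bm{u}^-}$ — so that any residual nonzero coordinate of $\bv^+$ in $\langle\e_2,\dots,\e_{m-1}\rangle$ would admit a swap via Lemma~\ref{lem:basis_modification} that strictly decreases the secondary quantity, contradicting minimality and forcing $\bv^+=-\e_1+\e_0+\e_{-1}$ as required.
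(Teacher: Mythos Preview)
The paper does not prove this lemma at all: it is stated with a \qed and attributed to \cite[Lemma~5.7]{McCoyAltUnknotting}. So there is no in-paper argument to compare your proposal against.

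As for the proposal itself, the overall architecture --- isolate the two basis elements with nonzero $\e_0$-coordinate, install the norm-two path among the remaining $L'$-elements by a maximization/relabelling argument \`a la Lemma~\ref{lem:standard_norm_twos}, then reduce $\bv^+$ to $-\e_1+\e_0+\e_{-1}$ --- is the natural one, and your first two steps are sound. The third step, however, is where the real content lies and your sketch does not supply it. To invoke Lemma~\ref{lem:basis_modification} you must exhibit an explicit decomposition $\bv^+=\bm{x}+\bm{y}$ with $\bm{x}\cdot\bm{y}=-1$, identify the unique pair $u_1,u_2$, and verify that the resulting swap (i) strictly decreases your secondary quantity, (ii) preserves the installed path $\{-\e_k+\e_{k-1}\}$, and (iii) keeps the number of basis elements with nonzero $\e_0$-coordinate at exactly two. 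None of these is automatic: in particular, the merged vertex $u_1+u_2$ could a priori be one of the installed $-\e_k+\e_{k-1}$ plus something else, destroying the path, and your appeal to Lemma~\ref{lem:irreducible_bounds} for the $\e_1$-coordinate is to an integer-lattice statement whose half-integer analogue you would have to establish separately. The lexicographic minimization idea is a reasonable way to organize the induction, but you would need to actually produce the splitting at each stage and check these side conditions rather than assert them.
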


This yields the following algorithm to find an obtuse superbase.
\begin{alg}\label{alg:half_int_OSB}
Given an $(n+\frac12)$-changemaker lattice
\[
L=\langle \e_{-1}-\e_0 , \e_0 + \sigma_1 \e_1 + \dots + \sigma_r \e_r \rangle^\perp \subseteq \Z^{r+2},
\]
the following procedure either finds an obtuse superbase for $L$ or certifies that none exists.
    \begin{enumerate}
        \item Produce a list $V_{\mathrm{irred}}$ of irreducible vectors in $L'$, where
        $L'$ is the $n$-changemaker lattice
\[
L'=\langle \sigma_1 \e_1 + \dots + \sigma_r \e_r \rangle^\bot \subseteq \Z^r. 
\]
        \item For each subset $\{\bv^{(1)},\dots, \bv^{(r-1)}\}$ of size $r-1$ in $V_{\mathrm{irred}}$, check whether 
        \[
        \{\bv^{(0)}, \dots, \bv^{(r-1)}, -(\bv^{(0)} + \dots + \bv^{(r-1)}) \}
        \]
        forms an obtuse superbase for $L$, where $\bv^{(0)}$ is the vector $\bv^{(0)}=-\e_1+\e_0 + \e_{-1}$.
        \item If no obtuse superbase is found in the previous step, then $L$ does not admit an obtuse superbase.
    \end{enumerate}
\end{alg}
\begin{proof}
Suppose that $L$ admits an obtuse superbase. By Lemma~\ref{lem:half_int_basis_form} there is an obtuse super base $B$ containing $\bv^{(0)}$. Moreover, precisely $r-1$ of the remaining elements of $B$ will be contained in $L'$. A half-integer changemaker lattice is automatically indecomposable \cite[Lemma~2.10]{McCoyAltUnknotting}. Thus every element of $B$ is irreducible in $L$ (Lemma~\ref{lem:2-connected}). Consequently every element of $B\cap L'$ must be irreducible as a vector in $L'$. That is, the $r-1$ elements of $B\cap L'$ are all contained in $V_{\mathrm{irred}}$.
\end{proof}

Although Lemma~\ref{lem:irreducible_bounds2} and Lemma~\ref{lem:sigma_m_bound} can be used to bound the irreducible vectors in $L'$, the bounds provided are too weak to guarantee that Algorithm~\ref{alg:half_int_OSB} will obtain a result in reasonable time for many of the examples we wish to consider, namely, the Baker-Luecke knots discussed in \S\ref{sec:Baker_Luecke}. However, in these examples we are able to derive much stronger bounds via the following result.
\begin{lem}\label{lem:half_int_coef_bound}
    Suppose that $L'$ admits an obtuse superbase $B$ such that for all $k\in \{1, \dots, r\}$ we have
    \[
    \sum_{\bv\in B} |v_k|\leq 4.
    \]
    Then for any irreducible $\bm{z}\in L'$ and any $k\in \{1, \dots, r\}$ we have
    \[
    |z_k|\leq 2.
    \]
\end{lem}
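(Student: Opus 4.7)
The plan is a direct application of Lemma~\ref{lem:irreducible}. Given an irreducible $\bm{z} \in L' \setminus \{0\}$, that lemma provides a subset $R \subseteq B$ with $\bm{z} = \sum_{\bv \in R} \bv$, where both $R$ and $B \setminus R$ induce connected subgraphs of $G_B$ (the connectivity is not needed for this lemma). Because $B$ is an obtuse superbase, its elements sum to zero, and so one simultaneously obtains the dual expression $-\bm{z} = \sum_{\bv \in B \setminus R} \bv$.

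Then I apply the triangle inequality to both expressions coordinate-wise. For each $k \in \{1, \dots, r\}$ this yields
\[
|z_k| \leq \sum_{\bv \in R} |v_k| \quad \text{and} \quad |z_k| \leq \sum_{\bv \in B \setminus R} |v_k|.
\]
Adding the two inequalities and invoking the hypothesis on $B$ gives $2|z_k| \leq \sum_{\bv \in B} |v_k| \leq 4$, from which $|z_k| \leq 2$ follows immediately.

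The argument uses only the defining identity $\sum_{\bv \in B} \bv = 0$ of an obtuse superbase together with the $R \leftrightarrow B \setminus R$ dichotomy for irreducible vectors supplied by Lemma~\ref{lem:irreducible}; no further structural information about the changemaker lattice $L'$ or about the specific obtuse superbase is needed. There is no real obstacle to the proof, only the observation that the coordinate-wise bound should be extracted from \emph{both} sides of the partition simultaneously rather than from $R$ alone.
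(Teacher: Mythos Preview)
Your proof is correct and, in fact, slightly slicker than the paper's. Both arguments begin identically by invoking Lemma~\ref{lem:irreducible} to write $\bm{z}=\sum_{\bv\in R}\bv$ for some $R\subseteq B$. The paper then proceeds by case analysis: since $\sum_{\bv\in B} v_k=0$ and $\sum_{\bv\in B}|v_k|\leq 4$, the nonzero $k$th coordinates of the elements of $B$ must form one of the tuples $(1,-1)$, $(1,1,-1,-1)$, $(1,1,-2)$, $(2,-1,-1)$, $(2,-2)$, and one checks that any partial sum of such a tuple has absolute value at most $2$. Your averaging trick, applying the triangle inequality to both $\bm{z}=\sum_{\bv\in R}\bv$ and $-\bm{z}=\sum_{\bv\in B\setminus R}\bv$ and adding, bypasses this enumeration entirely. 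The trade-off is that the paper's enumeration makes the structure of the coordinate tuples explicit (which could be useful elsewhere), whereas your argument is shorter and would generalise immediately to a hypothesis $\sum_{\bv\in B}|v_k|\leq 2c$ yielding $|z_k|\leq c$.
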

\begin{proof}
    Since the $v_k$ are integers and $\sum_{\bv \in B}v_k=0$, the bound on $\sum_{\bv\in B} |v_k|$ implies that, as an unordered tuple, the non-zero values of $v_k$ for $\bv\in B$ take one of the following forms
    \begin{equation}\label{eq:coef_tuples}
    \text{$(1,-1)$, $(1,1,-1,-1)$, $(1,1,-2)$, $(2,-1,-1)$, or $(2,-2)$.}
    \end{equation}
    If $\bm{z}\in L'$ is irreducible, then by Lemma~\ref{lem:irreducible} there is a subset $R$ of $B$ such that $\bm{z}=\sum_{\bv\in R}\bv$. Thus, if the $k$th coordinate $z_k$ is non-zero then it is equal to a sum of some subset of one of the tuples appearing in \eqref{eq:coef_tuples}. However, the absolute value of any such sum is at most two, giving $|z_k|\leq 2$ as required. 
\end{proof}

\part{Alternating surgeries}\label{part:alternating}
In this section we convert the lattice theoretic results of Part~\ref{part:lattices} into concrete results about alternating surgeries. Throughout the section we will take the convention that an L-space knot is one admitting \emph{positive} L-space surgeries.
\section{Obstructing alternating surgeries}\label{sec:sharp}

\subsection{The stable coefficients}
Let $K$ be a non-trivial L-space knot of genus $g>0$ and let
\begin{equation*}
\Delta_K(x)=a_0+\sum_{i=1}^g a_i(x^i+x^{-i})
\end{equation*}
be its symmetrized Alexander polynomial normalized so that $\Delta_K(1)=1$. The \textbf{torsion coefficients} of $K$ are defined by
\begin{equation*}
t_i(K)=\sum_{j\geq 1} j a_{i+j}.
\end{equation*}
These are known to satisfy
\[
t_{i-1}\geq t_i\geq t_{i-1}-1
\]
for all $i$ and that $t_i=0$ if and only if $i\geq g(K)$ \cite{OS05b}.

\begin{defi}\label{def:compatibility}
Let $K$ be an L-space knot. A changemaker vector $\bsigma\in \Z^r$ is said to be {\bf compatible} with $\Delta_K(x)$ if 
   \begin{equation}\label{eq:sigma_ti_relation}
    8t_i(K)=\min_{\substack{ |c\cdot \bsigma|= \norm{\bsigma}-2i \\ c \in \operatorname{Char}(\Z^r)}}   \norm{c} -r 
\end{equation}
    for all $i$ in the range $0\leq i \leq \norm{\bsigma}/2$, where $\operatorname{Char}(\Z^r)$ denotes the characteristic vectors of $\Z^r$.
\end{defi}
\begin{rem}
    We have defined compatibility only for L-space knots, since this is the level of generality required for this present work. For a knot which is not necessarily an L-space knot, the appropriate definition would be obtained by replacing the torsion coefficients $t_i$ in Definition~\ref{def:compatibility} by the integers $V_i$ derived from the knot Floer complex by Ni-Wu \cite{NiWu_Cosmetic}.
\end{rem}
The results of \cite[\S2]{Mc17} show that if two changemaker vectors $\bsigma\in \Z^r$ and $\bsigma'\in \Z^{r'}$ are both compatible with the Alexander polynomial of an L-space knot $K$, then $\bsigma$ and $\bsigma'$ necessarily have same stable coefficients.
\begin{defi}\label{def:stable_coefs_knot}
    Let $K$ be an L-space knot. If there exists a changemaker vector $\bsigma$ compatible with the Alexander polynomial $\Delta_K(x)$, then we define the stable coefficients
    \[
    \underline{\rho}(K)=(\rho_1,\dots, \rho_\ell)
    \]
    of $K$ to be the stable coefficients of $\bsigma$. We take the convention that these are ordered so that\footnote{Note that this reverses the ordering used for the stable coefficients in Part~1.}
    \[
    \rho_1\geq \dots \geq \rho_\ell\geq 2.
    \]
    If there is no changemaker vector compatible with $\Delta_K(x)$, then we say that the stable coefficients of $K$ do not exist.
\end{defi}

\subsection{The Goeritz form}
Firstly, we define the Goeritz form and the white graph of an alternating diagram. Our conventions are unashamedly chosen to simplify the discussion in this paper; the reader may well find other conventions in use elsewhere in the literature. 
Let $D$ be a reduced, non-split, alternating diagram for a link $J$. This diagram divides the plane into connected regions which we may colour black and white in a checkerboard fashion. A choice of checkerboard colouring allows one to assign an incidence number, $\mu(c)\in \{\pm 1\}$, to each crossing $c$ of $D$ according to the convention shown in Figure~\ref{fig:incidencenumber}.
  \begin{figure}[htbp] 
   \centering
   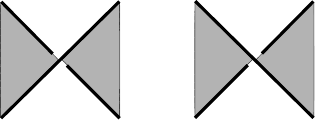
  \caption{The incidence number of a crossing.}
  \label{fig:incidencenumber}
 \end{figure}
 Since the diagram $D$ is alternating, there is a choice of checkerboard colouring which gives every crossing incidence number $\mu=1$. Fix this choice of colouring and let $R_0, \dots, R_{r}$ denote the white regions in the plane. The \textbf{white graph} $\Gamma_D$ is obtained by taking one vertex for each region $R_i$ and one edge between distinct regions $R_i$ and $R_j$ for every crossing between $R_i$ and $R_j$ in $D$. The \textbf{Goeritz lattice} of $D$ is define to be $\Lambda_D:=\Lambda(\Gamma_D)$, that is, the graph lattice associated to $\Gamma_D$.
 
 \begin{rem}\hfill\begin{enumerate}
    \item The assumption that $D$ be reduced implies that the graph $\Gamma_D$ contains no cut-edges. By Lemma~\ref{lem:2-connected}, this implies that the Goeritz lattice has no elements of norm one.
    \item Greene has shown that the homeomorphism type of the double branched cover $\Sigma_2(D)$ is determined by $\Lambda_D$. That is, two reduced alternating non-split diagrams $D$ and $D'$ have homeomorphic double branched covers if and only if $\Lambda_D$ and $\Lambda_{D'}$ are isomorphic \cite{Gr13}.
    \item The graph $\Gamma_D$ is always planar: one can construct a planar embedding by placing a vertex in each region $R_i$ and then connect them by edges passing through the crossings. Thus $\Lambda_D$ admits a planar obtuse superbase.
    \item Conversely, given a connected, 2-connected planar graph $G$ with no self-loops, there is always a reduced alternating diagram $D$ such that $\Lambda(G)\cong \Lambda_D$. One can construct such a diagram $D$ by embedding $G$ in the plane, placing a crossing of appropriate incidence on each edge and then connecting them cyclically around each vertex to obtain an alternating diagram.
 \end{enumerate}  
 \end{rem}
 
\subsection{The changemaker obstruction}
We now state our obstruction to alternating surgeries. This is a direct application of the changemaker theorem. The statement we use here is \cite[Theorem~2.7]{Mc17} which subsumes a number of earlier results \cite{GreeneLRP, Gr13, Gi15, Mc15}.
\begin{thm}\label{thm:alt_surgery_obstructions}
    Let $K$ be a non-trivial L-space knot for which $\Salt(K)$ is non-empty. Then there exist stable coefficients $\underline{\rho}(K)$ for $K$. Moreover, if $S_{p/q}^3(K)$ is an alternating surgery, then the $p/q$-changemaker lattice with stable coefficients $\underline{\rho}(K)$ is isomorphic to $\Lambda_D$ for any reduced alternating diagram such that $S_{p/q}^3(K)\cong \Sigma_2(D)$. In particular, the $p/q$-changemaker lattice with stable coefficients $\underline{\rho}(K)$ admits a planar obtuse superbase.
\end{thm}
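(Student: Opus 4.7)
The plan is to deduce the statement directly from the changemaker theorem for alternating surgeries, namely \cite[Theorem~2.7]{Mc17}, combined with the fact that Goeritz lattices of reduced alternating diagrams are planar graph lattices.

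First I would fix a reduced alternating diagram $D$ with $\Sigma_2(D)\cong S_{p/q}^3(K)$. Since $K$ admits a positive alternating surgery, $S_{p/q}^3(K)$ is an L-space, so \cite[Theorem~2.7]{Mc17} supplies an isomorphism between $\Lambda_D$ and some $p/q$-changemaker lattice $L=\langle\bsigma\rangle^\bot$; under this isomorphism the $\operatorname{spin}^c$-decomposition and the $d$-invariants of $S_{p/q}^3(K)$ match with data read off from characteristic vectors of the ambient diagonal lattice. Unwinding this match via the Ozsv\'ath--Szab\'o formula for $d$-invariants of large surgeries on L-space knots in terms of the torsion coefficients $t_i(K)$ is precisely the content of the compatibility condition \eqref{eq:sigma_ti_relation}. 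Hence $\bsigma$ is compatible with $\Delta_K(x)$ in the sense of Definition~\ref{def:compatibility}, and in particular the stable coefficients of $K$ exist.

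To ensure that these stable coefficients depend only on $K$ (so that they legitimately deserve the name $\underline{\rho}(K)$), I would invoke the uniqueness statement from \cite[\S 2]{Mc17}: any two changemaker vectors compatible with the same Alexander polynomial share the same stable coefficients. This also forces the changemaker lattice $L$ produced above to have stable coefficients equal to $\underline{\rho}(K)$ regardless of the specific slope or diagram that was chosen, and since $p/q$-changemaker lattices are determined up to isomorphism by their stable coefficients (Remark~\ref{rem:CM_facts}), $L$ is unambiguously \emph{the} $p/q$-changemaker lattice with stable coefficients $\underline{\rho}(K)$.

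The planar obtuse superbase is then immediate from the Goeritz side. Because $D$ is reduced, non-split and alternating, the white graph $\Gamma_D$ is a connected, 2-edge-connected planar graph with no self-loops, and by construction the vertices of $\Gamma_D$ form an obtuse superbase $B$ of $\Lambda(\Gamma_D)=\Lambda_D$ with $G_B=\Gamma_D$, as in Proposition~\ref{prop:ObtSupBase=graphlattice}. Transporting $B$ through the isomorphism $\Lambda_D\cong L$ yields the required planar obtuse superbase. I anticipate no genuine obstacle here: the main care needed is bookkeeping to arrange sign and orientation conventions so that $\Lambda_D$ appears as a \emph{positive} definite $p/q$-changemaker lattice with $\norm{\bsigma}$ matching the slope.
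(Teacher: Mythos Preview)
Your approach is essentially the paper's: invoke \cite[Theorem~2.7]{Mc17} to get the changemaker lattice, use the uniqueness of stable coefficients from \cite[\S2]{Mc17}, and read off the planar obtuse superbase from the white graph. Two small points you have elided are made explicit in the paper. First, \cite[Theorem~2.7]{Mc17} does not take $\Lambda_D$ as direct input; it applies to the intersection form of a \emph{sharp} $4$-manifold bounding $S_{p/q}^3(K)$, so one must first cite Ozsv\'ath--Szab\'o \cite[Theorem~3.4]{OS05} to produce such a manifold with intersection form $-\Lambda_D$. Second, the conclusion of that theorem is $Q_X\cong -L\oplus -\Z^n$ for some $n\geq 0$, and one needs the hypothesis that $D$ is reduced (hence $\Lambda_D$ has no norm-one vectors) to force $n=0$ and obtain $\Lambda_D\cong L$ on the nose.
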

\begin{proof}
    Consider $p/q\in \Salt(K)$. Since we are taking the convention that non-trivial L-space knots have positive L-space surgeries, this satisfies $p/q>0$. Let $D$ be a reduced alternating diagram for an alternating knot or link $J$ such that $S_{p/q}^3(K)\cong \Sigma_2(J)$. Ozsv\'ath and Szab\'o have shown there exists a sharp 4-manifold\footnote{Although we will not make any use of the definition here, we remind the reader, that a \textbf{sharp} 4-manifold is a smooth negative-definite compact manifold with connected boundary $Y$ such that for every $\spinct \in \spinc(Y)$ there exists $\spincs \in \spinc(X)$ restricting to $\spinct$ such that \[c_1(\spincs)^2+b_2(X)= 4d(Y,\spinct).\]} $X$ with $\partial X \cong \Sigma_2(J)$ and intersection form $Q_X\cong -\Gamma_D$ \cite[Theorem~3.4]{OS05}. On the other hand, the changemaker surgery obstruction \cite[Theorem~2.7]{Mc17} implies there is a changemaker vector $\bsigma$ compatible with $\Delta_K(x)$ and that $Q_X\cong -L\oplus -\Z^n$ for some $n\geq 0$ where $L$ is the $p/q$-changemaker lattice defined by $\bsigma$. By definition, the stable coefficients $\underline{\rho}(K)$ exist and are the stable coefficients $\bsigma$. Thus we have $\Gamma_D\cong L\oplus \Z^n$ for some $n$. However, the assumption that $D$ be reduced implies that $\Gamma_D$ has no elements of norm one and so we have $n=0$ in this case.
\end{proof}

This implies that the homeomorphism-type of alternating surgeries on a knot $K$ are determined by the Alexander polynomial.
\begin{cor}\label{cor:Alexpolydetermines}
Let $K$ and $K'$ be two L-space knots with $\Delta_K(x)=\Delta_{K'}(x)$. If $p/q$ is an alternating surgery slope for both $K$ and $K'$, then $S_{p/q}^3(K)\cong S_{p/q}^3(K')$.
\end{cor}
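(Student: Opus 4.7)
The plan is to string together three facts already established in the excerpt: (i) the stable coefficients of an L-space knot are determined by its Alexander polynomial; (ii) the $p/q$-changemaker lattice is determined up to isomorphism by its stable coefficients; and (iii) the Goeritz lattice of a reduced non-split alternating diagram determines the homeomorphism type of its double branched cover.

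First I would dispense with the trivial cases. If $K$ is the unknot then $\Delta_K(x)=1$, and any L-space knot $K'$ with $\Delta_{K'}(x)=1$ is itself the unknot (since for an L-space knot the Alexander polynomial determines the knot Floer complex, and $\Delta=1$ forces trivial Floer complex). In that case both surgeries are the same lens space. So I assume both $K$ and $K'$ are non-trivial.

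Next, apply Theorem~\ref{thm:alt_surgery_obstructions} to both knots. Since each admits an alternating surgery, each has stable coefficients $\underline{\rho}(K)$ and $\underline{\rho}(K')$ in the sense of Definition~\ref{def:stable_coefs_knot}. These are defined via changemaker vectors compatible with the Alexander polynomial by relation \eqref{eq:sigma_ti_relation}, which depends only on $\Delta_K(x)$ (through the torsion coefficients $t_i$). Consequently $\Delta_K(x)=\Delta_{K'}(x)$ forces
\[
\underline{\rho}(K)=\underline{\rho}(K').
\]
Now let $D$ and $D'$ be reduced alternating diagrams with $S^3_{p/q}(K)\cong \Sigma_2(D)$ and $S^3_{p/q}(K')\cong \Sigma_2(D')$. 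Theorem~\ref{thm:alt_surgery_obstructions} further asserts that $\Lambda_D$ and $\Lambda_{D'}$ are both isomorphic to the $p/q$-changemaker lattice associated to these (common) stable coefficients. By Remark~\ref{rem:CM_facts}(2), this lattice is determined up to isomorphism by the stable coefficients together with $p/q$, so $\Lambda_D\cong \Lambda_{D'}$.

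Finally, invoke Greene's result (stated in the remark following the definition of $\Lambda_D$) that the Goeritz lattice of a reduced non-split alternating diagram determines the homeomorphism type of its double branched cover. Combining with the previous step,
\[
S^3_{p/q}(K)\cong \Sigma_2(D)\cong \Sigma_2(D')\cong S^3_{p/q}(K'),
\]
as desired. There is no real obstacle here; the proof is an assembly of existing results. The only subtlety worth flagging is the unknot case, which must be ruled out (or trivially verified) before citing Theorem~\ref{thm:alt_surgery_obstructions}, since its hypothesis requires $K$ to be non-trivial.
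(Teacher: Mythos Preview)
Your proof is correct and follows essentially the same route as the paper: equal Alexander polynomials give equal stable coefficients, hence the same $p/q$-changemaker lattice, hence isomorphic Goeritz lattices $\Lambda_D\cong\Lambda_{D'}$, and then Greene's result yields $\Sigma_2(D)\cong\Sigma_2(D')$. The only difference is that you explicitly treat the unknot case before invoking Theorem~\ref{thm:alt_surgery_obstructions}, which the paper's proof leaves implicit.
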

\begin{proof}
Suppose that $S_{p/q}^3(K)\cong \Sigma_2(J)$ and $S_{p/q}^3(K')\cong \Sigma_2(J')$, where $J$ and $J'$ are alternating links. Since $\Delta_K(x)=\Delta_{K'}(x)$, $K$ and $K'$ have the same stable coefficients and so Theorem~\ref{thm:alt_surgery_obstructions} implies that for any pair of reduced alternating diagrams $D$ and $D'$ for $J$ and $J'$ respectively, we have that $\Lambda_D\cong \Lambda_{D'}$. However this implies that $\Sigma_2(J)\cong \Sigma_2(J')$ \cite{Gr13}.
\end{proof}

\subsection{Computing stable coefficients}
In this section, we explain an algorithm (Algorithm~\ref{alg:stablecoeff}) that will compute the stable coefficients of an L-space knot whenever they exist.
The algorithm presented here is essentially a synthesis of the results contained in \cite[\S2]{Mc17}.  
Since Algorithm~\ref{alg:stablecoeff} returns at most one output, this recovers the proof that the stable coefficients are well-defined.

Given a non-trivial L-space knot $K$ with torsion coefficients $t_0, t_1,\dots $, we define $T_k(K)$ as the following count of torsion coefficients:
\[
T_k(K) = \# \{0\leq i <g(K) \, |\, 0<t_i(K) \leq k \}.
\]
for every integer $k$ in the range $0< k\leq t_0$. For convenience, we also define $T_0(K)=0$. Since $t_i=0$ if and only if $i\geq g(K)$, we have that $T_{t_0}(K)=g$.
Since the torsion coefficients of an L-space knot form a non-increasing sequence, we see that the sequence $(t_i)_{i\geq 0}$ can be recovered from the tuple $(T_1, \dots , T_{t_0})$. In particular, this implies also that $\Delta_K(x)$ can be calculated from $(T_1, \dots , T_{t_0})$.

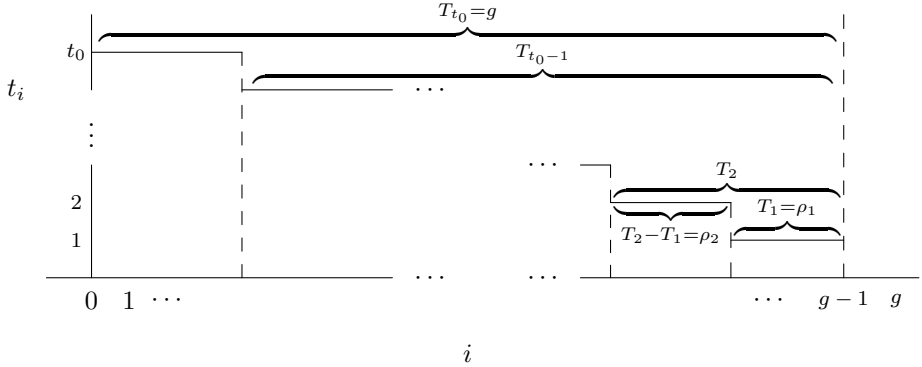
\begin{figure}[htbp] 
\begin{xy}
<1cm,1cm>*{};<1cm,2.5cm>*{}**@{-},  
<1cm,3.5cm>*{};<1cm,4.5cm>*{}**@{-},  
<0.4cm,1cm>*{};<5cm,1cm>*{}**@{-},
<5.5cm,1cm>*{\dotsb},             
<7cm,1cm>*{\dotsb},             
<7.5cm,1cm>*{};<12cm,1cm>*{}**@{-},  
<1cm,3cm>*{\vdots},             
<0cm,3.5cm>*{t_i},                
<6cm,0cm>*{i},                    
<1cm,4cm>*{};<3cm,4cm>*{}**@{-},    
<7.5cm,2.5cm>*{};<7.9cm,2.5cm>*{}**@{-},    
<3cm,1cm>*{};<3cm,4cm>*{}**@{--},   
<3cm,3.5cm>*{};<5cm,3.5cm>*{}**@{-},
<5.5cm,3.5cm>*{\dotsb},             
<7cm,2.5cm>*{\dotsb},             
<7.9cm,2cm>*{};<9.5cm,2cm>*{}**@{-},  
<7.9cm,2.5cm>*{};<7.9cm,1cm>*{}**@{--},   
<9.5cm,2cm>*{};<9.5cm,1cm>*{}**@{--},   
<9.5cm,1.5cm>*{};<11cm,1.5cm>*{}**@{-},  
<11cm,4.5cm>*{};<11cm,1cm>*{}**@{--},   
<1cm,0.7cm>*{0},        
<2cm,0.7cm>*{\dotsb},   
<10cm,0.7cm>*{\dotsb}, 
<1.5cm,0.7cm>*{1},
<11cm,0.7cm>*{\text{{\footnotesize $g-1$}}}, 
<11.7cm,0.7cm>*{\text{{\footnotesize $g$}}}, 
<0.8cm,4cm>*{\text{{\footnotesize $t_0$}}},        
<0.8cm,2cm>*{\text{{\footnotesize $2$}}},        
<0.8cm,1.5cm>*{\text{{\footnotesize $1$}}},        
<10.25cm,1.8cm>*{\overbrace{\text{\makebox[1.4cm]{}}}^{T_1=\rho_1}}, 
<9.45cm,2.3cm>*{\overbrace{\text{\makebox[3cm]{}}}^{T_2}}, 
<7cm,3.85cm>*{\overbrace{\text{\makebox[7.7cm]{}}}^{T_{t_0-1}}}, 
<6cm,4.4cm>*{\overbrace{\text{\makebox[9.8cm]{}}}^{T_{t_0}=g}}, 
<8.7cm,1.7cm>*{\underbrace{\text{\makebox[1.5cm]{}}}_{T_2-T_1=\rho_2}},
\end{xy}
\caption{A graph to show the relationship between the $t_i$ and the $T_k$. We have also shown how $\rho_0$ and $\rho_1$ occur as the number of $t_i$ equal to one and two, respectively (see Remark~\ref{rem:stable_hand_calc}).}
\label{fig:tvdiagram}
 \end{figure}

The $T_k$ are related to a changemaker vector via the following result.

\begin{lem}[{\cite[Lemma~2.10]{Mc17}}]
\label{lem:Ti_sigma_relationship}
Let $K$ be an L-space knot and let $\bsigma\in\Z^r$ be a changemaker vector compatible with $\Delta_K(x)$. For $1\leq k<t_0$ we have
\[
T_k = \max_{\bm{\alpha}\in S_{k,r}} \bsigma \cdot \bm{\alpha},
\]
where $S_{k,r}$ denotes the set
\[
S_{k,r} =\left\{ \bm{\alpha}= (\alpha_1, \dots, \alpha_r)\in \Z^r\, \middle| \, \alpha_i \geq 0, \, 2k= \sum_{i=1}^r \alpha_i(\alpha_i +1)\right\}.
\]
For $T_{t_0}$ we have that
\[
\text{$T_{t_0} = g(K) = \frac12\sum_{i=1}^{\ell} \rho_i(\rho_i -1)$ and $T_{t_0} \leq \max_{\bm{\alpha}\in S_{t_0,r}} \bsigma \cdot \bm{\alpha}$},
\]
where the stable coefficients of $\bsigma$ are $(\rho_1, \dots, \rho_\ell)$.
\qed
\end{lem}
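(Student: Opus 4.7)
The plan is to recast the characteristic-vector formulation of $t_i(K)$ as an optimisation problem over non-negative integer vectors $\bm{\alpha}\geq 0$ and then read off $T_k$ from the resulting combinatorics. First, every characteristic vector $c\in\operatorname{Char}(\Z^r)$ decomposes uniquely as $c_j=\epsilon_j(2\alpha_j+1)$ with $\epsilon_j\in\{\pm 1\}$ and $\alpha_j\geq 0$, yielding $\norm{c}-r=4\sum_j\alpha_j(\alpha_j+1)$, so the constraint $\norm{c}=r+8k$ is equivalent to $\bm{\alpha}\in S_{k,r}$. Using the changemaker inequality (Proposition~\ref{prop:CM_condition}), I would show that the minimum in the compatibility relation \eqref{eq:sigma_ti_relation} is always realised by characteristic vectors of the form $c=2\bm{\alpha}+\mathbf{1}$, reducing it to
\[
2t_i(K)=\min\bigl\{\textstyle\sum_j\alpha_j(\alpha_j+1)\,:\,\bm{\alpha}\in\Z_{\geq 0}^r,\ \bsigma\cdot\bm{\alpha}=C-i\bigr\},
\]
where $C:=\tfrac{1}{2}\sum_j\sigma_j(\sigma_j-1)$; since the unit coefficients contribute nothing, this simplifies to $C=\tfrac{1}{2}\sum_{i=1}^\ell\rho_i(\rho_i-1)$.

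With this reformulation in hand, the genus formula and the main identity follow almost immediately. The value $t_i$ vanishes exactly when $\bm{\alpha}=0$ realises the constraint, which happens iff $C-i=0$; hence $g(K)=C=\tfrac{1}{2}\sum_{i=1}^\ell\rho_i(\rho_i-1)$ and $T_{t_0}=g(K)$ by definition. For the identity at $1\leq k<t_0$, the condition $t_i\leq k$ becomes the existence of $\bm{\alpha}\in\bigcup_{j\leq k}S_{j,r}$ realising $\bsigma\cdot\bm{\alpha}=C-i$. Setting $M_k:=\max_{\bm{\alpha}\in S_{k,r}}\bsigma\cdot\bm{\alpha}$, an inductive argument using Proposition~\ref{prop:CM_condition} and greedy increments of the unit coefficients shows that every integer $T\in[0,M_k]$ is attained as $\bsigma\cdot\bm{\alpha}$ for some $\bm{\alpha}\in\bigcup_{j\leq k}S_{j,r}$. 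Therefore $T_k=\#\{T\in(0,C]\cap\Z\,:\,T\leq M_k\}=\min(M_k,C)=M_k$, where the last equality uses $M_k<C$ for $k<t_0$. For $k=t_0$ the same count gives $T_{t_0}=\min(M_{t_0},C)\leq M_{t_0}$, with strict inequality possible precisely when some $\bm{\alpha}\in S_{t_0,r}$ overshoots $C$.

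The main obstacle is the sign-reduction step: verifying that the minimum in \eqref{eq:sigma_ti_relation} is never improved by mixed-sign characteristic vectors. This requires a careful exchange argument, replacing each $\epsilon_j=-1$ coordinate by an equivalent all-positive configuration without increasing $\norm{c}$ or changing $|c\cdot\bsigma|$, and is exactly the combinatorial content encoded by Proposition~\ref{prop:CM_condition}. A secondary subtlety is the ``filling in'' of every integer in $[0,M_k]$ as a value of $\bsigma\cdot\bm{\alpha}$, which needs some care when the low-index unit coordinates are already saturated; this too can be handled by applying the changemaker inequality to the remaining coordinates and swapping increments of unit coefficients for equivalent decreases among larger coefficients.
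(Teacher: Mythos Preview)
The paper does not give its own proof; the lemma is quoted from \cite{Mc17} with a \qed, so there is nothing here to compare against directly. Your outline is the natural one and is essentially how the result is established in the cited reference: reparametrise characteristic vectors as $c_j=2a_j+1$ with $a_j\in\Z$, use the symmetry $c\leftrightarrow -c$ to reduce \eqref{eq:sigma_ti_relation} to $2t_i=\min\bigl\{\sum_j a_j(a_j+1):\bsigma\cdot a=C-i\bigr\}$, invoke the changemaker inequality to restrict to $a\geq 0$, and then count. The sign-reduction is indeed the crux, and you are right that it is where Proposition~\ref{prop:CM_condition} enters.

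There is one small gap in the counting step. Writing $R_k=\{\bsigma\cdot\bm{\alpha}:\bm{\alpha}\in\bigcup_{j\leq k}S_{j,r}\}$, what the reformulation actually gives is $T_k=\#(R_k\cap(0,C])$. Your filling-in argument yields $[0,M_k]\subseteq R_k$; combined with the fact that $C\in R_k$ would force $t_0\leq k$, this correctly gives $M_k<C$ for $k<t_0$, and hence $T_k\geq M_k$. But for the reverse inequality $T_k\leq M_k$ you also need $R_k\cap(0,C]\subseteq(0,M_k]$, i.e.\ that no $\bm{\alpha}\in S_{j,r}$ with $j<k$ has $\bsigma\cdot\bm{\alpha}>M_k$. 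This is the monotonicity $M_1\leq M_2\leq\cdots$, which you do not address; it is routine once noticed (from any $\bm{\alpha}\in S_{j,r}$ with a zero coordinate, increment that coordinate to land in $S_{j+1,r}$ with strictly larger pairing) but is a separate ingredient that your sketch should make explicit.
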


\begin{rem}\label{rem:stable_hand_calc}
The stable coefficients $\rho_1$ and $\rho_2$ can be expressed directly in terms of $T_1$ and $T_2$. Since we are assuming that the $\rho_i$ are non-increasing, the maximum $\max_{\bm{\alpha}\in S_{k,r}} \bsigma \cdot\bm{\alpha}$ will be attained by an $\bm{\alpha} \in S_{k,r}$ for which the coefficients of $\bm{\alpha}$ are non-increasing. In particular for $k=1,2$, the maximal value will be attained by $\bm{\alpha}$ of the form
\[
\bm{\alpha}=\begin{cases}
    \{(1,0, \dots, 0) \} & \text{if $k=1$,}\\
\{(1,1,0, \dots, 0) \}& \text{if $k=2$.} 
\end{cases}
\]
Consequently, Lemma~\ref{lem:Ti_sigma_relationship} implies that
\[
\text{$\rho_1=T_1$ if $t_0>1$ and $\rho_2= T_2-T_1$ if $t_0>2$.}
\]
\end{rem}
\begin{ex}
    Let $K$ be the $(-2,3,7)$-pretzel knot. This has normalized Alexander polynomial
    \[
    \Delta_K(x)=t^5-t^4+t^2-t+1-t^{-1}+t^{-2}-t^{-4}+t^{-5},
    \]
    from which calculates torsion coefficients of the form
    \[
    \text{$t_4=1$, $t_3=1$, $t_2=1$, $t_1=2$ and $t_0=2$.}
    \]
    and torsion counts
    \[\text{$T_1=3$ and $T_2=5$}.\]
    It follows that the stable coefficient of $K$ are $\underline{\rho}(K)=(3,2,2)$. That $\rho_1=3$ follows the fact that $\rho_1=T_1$ (see Remark~\ref{rem:stable_hand_calc}). That the remaining coefficients are twos is forced by the requirement that $g = \frac12\sum_{i=1}^{\ell} \rho_i(\rho_i -1)$.
\end{ex}
Using the results of \cite{Mc17}, Lemma~\ref{lem:Ti_sigma_relationship} gives rise to the following algorithm for calculating stable coefficients.
\begin{alg}[Computation of Stable Coefficients]\label{alg:stablecoeff}
Let
\[\Delta_K(x) = a_0 + \sum_{i=1}^g a_i(x^i + x^{-i})\]
be the symmetrized Alexander polynomial of a knot of a non-trivial L-space knot $K$ normalized so that $\Delta_K(1)=1$. Then the following procedure terminates in finite time and returns compatible stable coefficients if they exist.
\begin{enumerate}
\item Compute the torsion coefficients $t_0, \dots, t_g$ and the torsion coefficient counts $T_0, T_1,\dots, T_{t_0}$, where $T_0=0$.
\item\label{step:recursive} Let $k=1$, $N=0$ and let $\underline{s}^{(0)}=()$ be the empty tuple.\\
 While $k\leq t_0$ and $T_{k}-T_{k-1}>2$ perform the following loop:
\begin{itemize}
\item Calculate 
\[
M=\begin{cases}
    \max_{\bm{\alpha}\in S_{k,N}}\bm{\alpha}\cdot \underline{s}^{(N)}. &\text{if $N>0$}\\
    0 &\text{if $N=0$}
\end{cases}
\]
\item If $M> T_k$, then terminate as no stable coefficients exist.
\item If $M <T_{k}$, then define $s_{N+1}:= T_{k}-T_{k-1}$, set
\[\underline{s}^{(N+1)}:=(s_1,\dots, s_N, s_{N+1})\]
and increment $N:=N+1$.
\item Increment $k:=k+1$ and loop.
\end{itemize}
\item\label{step:count2s} Calculate $d:= g - \frac12 \sum_{i=1}^{N} s_i(s_i-1)$.
\begin{itemize}
    \item If $d<0$, then terminate as no stable coefficients exist.
    \item If $d\geq 0$, then return
\[
\underline{\rho}=(s_1,\dots, s_{N}, \underbrace{2, \dots, 2}_{d}).
\]
as the stable coefficients.
\end{itemize}
\end{enumerate}
\end{alg}
\begin{proof}
Let
\[\bsigma=\rho_1 \e_1 +\dots + \rho_r \e_r\]
be a changemaker vector compatible with $\Delta_K(x)$ ordered so that the coefficients are non-increasing:
\[
\rho_1\geq \dots \geq \rho_r=1.
\]
Let $m\geq 0$ be maximal such that $\rho_m \geq 3$. The first task is to verify that the loop in Step~\ref{step:recursive} terminates with $N=m$ and
\[
\underline{s}^{(m)}=(\rho_1, \dots, \rho_m).
\]
If $t_0=1$, then \cite[Lemma 2.12]{Mc17} shows that $T_1=g\leq 3$ and that the stable coefficients of $\bsigma$ take the form
\[
\underline{\rho}=
\begin{cases}
(2) &\text{if $g=1$}\\
(2,2) &\text{if $g=2$}\\
(3) &\text{if $g=3$}.
\end{cases}
\]
Since $T_1-T_0=g$, we see that if $g\leq 2$, then the loop in Step~\ref{step:recursive} does not perform any steps and so terminates with $N=0$ and $\underline{s}^{(0)}=()$ as required. If $g=3$, then the loop performs one step and terminates with $N=1$ and $\underline{s}^{(1)}=(3)$, as required.

If $t_0>1$, then we may define the quantity 
\[\mu:=\min\{T_k -T_{k-1} \, | \, 1\leq k < t_0 \}.\]

If $\mu\leq 2$, then the proof of \cite[Lemma 2.16]{Mc17} shows that the loop in Step~\ref{step:recursive} will terminate with
\[\underline{s}^{(m)}=(\rho_1, \dots, \rho_m),\]
as required.

Thus it remains to consider the case that $\mu\geq 3$. In this case \cite[Lemma~2.14 and Remark~2.15]{Mc17} show that the stable coefficients of $\bsigma$ take the form
\[
\underline{\rho}= \begin{cases}
(3, \dots ,3) &\text{if $g\equiv 0 \bmod 3$}\\
(3, \dots ,3,2) &\text{if $g\equiv 1 \bmod 3$}\\
(3, \dots ,3,2,2) &\text{if $g\equiv 2 \bmod 3$},
\end{cases}
\]
where there are $\lfloor g/3 \rfloor$ threes in each case. Let $\bsigma\in \Z^r$ be a changemaker vector with stable coefficients of the form $\underline{\rho}$ given above. One can calculate that
\[
\max_{\bm{\alpha}\in S_{k,r}}\bsigma\cdot \bm{\alpha}= 3k
\]
for $1\leq k\leq \lfloor g/3 \rfloor$. The key step is to observe that the maximizing $\bm{\alpha}\in S_{k,r}$ must have all coefficients equal to zero or one. It follows from Lemma~\ref{lem:Ti_sigma_relationship} that that $T_k=3k$ for $0\leq k\leq \lfloor g/3 \rfloor$. If $g\equiv 1,2\bmod 3$, then a similar calculation shows that 
\[
\max_{\bm{\alpha}\in S_{k,r}}\bsigma\cdot \bm{\alpha}= 3k+2\geq g
\]
for $k=\lfloor g/3 \rfloor+1$. By Lemma~\ref{lem:Ti_sigma_relationship} this implies that $t_0=\lceil g/3\rceil$. 

Running the loop in Step~\ref{step:recursive} for such a sequence of $T_k$s one finds that we increment $N$ at each step and for each $0\leq N \leq \lfloor g/3 \rfloor$, we have
\[
\underline{s}^{(N)}=(\underbrace{3, \dots, 3}_{N}).
\]
Furthermore, the loop in Step~\ref{step:recursive} terminates with $N=\lfloor g/3 \rfloor$ and $k=N+1$. When $g\equiv 0\bmod 3$ this terminates because $k>t_0=\lfloor g/3 \rfloor$. When $g\equiv 1,2\bmod 3$ this terminates because $T_k-T_{k-1}\leq 2$. In either case, we see that Step~\ref{step:recursive} has calculated the required tuple.

Given that Step~\ref{step:recursive} has terminated having calculated the stable coefficients $\rho_i\geq 3$, all remaining unknown stable coefficients must have value $\rho_i=2$. The number of these coefficients are calculated in Step~\ref{step:count2s} using the relation from Lemma~\ref{lem:Ti_sigma_relationship} that
\[g = \frac12 \sum_{i=1}^\ell \rho_i(\rho_i-1).\]
Thus if a compatible changemaker vector exists, this algorithm computes the stable coefficients.
\end{proof}
\begin{rem}
    It is possible in theory that Algorithm~\ref{alg:stablecoeff} might return a potential set of coefficients, even when none exist. Thus, one should also verify that the putative stable coefficients do actually give rise to changemaker vectors that are compatible with the original Alexander polynomial.
\end{rem}

\section{The structure of \texorpdfstring{$\Salt(K)$}{Salt(K)}}
In this section, we prove Theorem~\ref{thm:salt_trichotomy}. However, first it is necessary to define and study $\D$, a class of knots admitting alternating surgeries constructed using the Montesinos trick.
\subsection{Knots in \texorpdfstring{$\D$}{D}}\label{sec:D_definition}
Let $(D,c)$ be a reduced alternating diagram $D$ of an alternating unknotting number one knot with a choice of an unknotting crossing $c$. Given such a pair we can construct a knot $K_{(D,c)}$ with alternating surgeries via the Montesinos trick \cite{Montesinos_trick}. Let $T$ be the tangle in $B^3$ obtained by taking the complement of a ball containing the crossing $c$. More specifically, the crossing arc associated to $(D,c)$ is the arc in the preimage of the double point of the crossing $c$ of the diagram $D$ that joins two points of the knot. The tangle $T$ is the complement of a ball neighborhood of the crossing arc that meets the knot in only two properly embedded arcs. Taking the double cover of $B^3$ branched along the tangle $T$ yields a 3-manifold $X_{(D,c)}$ with torus boundary. Since $c$ was an unknotting crossing, this manifold can be filled with a solid torus to yield $S^3$. Thus we can view $X_{(D,c)}$ as the complement of a knot $K_{(D,c)}$ in $S^3$. This knot admits a surgery to the double branched cover $\Sigma_2(D)$ and in fact, any replacement of $c$ by a rational tangle that yields an alternating diagram corresponds to an alternating surgery on $K_{(D,c)}$. If one calculates the slopes of these alternating surgeries carefully, one finds that these tangle replacements give an interval's worth of alternating surgeries \cite[Proposition ~5.3]{Mc15}
\begin{equation}\label{eq:intervalinSalt}
\left[ \left\lfloor \frac{\varepsilon \det D}{2} \right\rfloor, \left\lceil \frac{\varepsilon \det D}{2} \right\rceil \right]\cap \Q \subseteq \Salt(K_{(D,c)}),
\end{equation}
where the sign $\varepsilon = \pm 1$ depends on the sign of the unknotting crossing $c$ and the signature of the knot represented by $D$ via the following formula
\[
\varepsilon = \begin{cases}
(-1)^{\sigma(D)/2} &\text{if $c$ is positive}\\
-(-1)^{\sigma(D)/2} &\text{if $c$ is negative}.
\end{cases}
\]
\begin{defi}
We define $\D$ to be the set of L-space knots of the form $K_{(D,c)}$ for some alternating diagram $D$ and unknotting crossing $c$ contained therein.
\end{defi}

The class $\D$ plays a distinguished role on account of the following theorem which is a blend of \cite[Theorem~1.3]{Mc15} and \cite[Theorem~7.12]{Mc16}. Given a non-integer changemaker lattice with a planar obtuse superbase, one can always construct a corresponding knot in $\D$.
\begin{thm}\label{thm:stable_coefs_to_D}
    Let $L$ be a $p/q$-changemaker lattice for some positive $p/q\notin \Z$ with a tuple of stable coefficients $\underline{\rho}$. If $L$ admits a planar obtuse superbase, then there exists a knot $K\in \D$ with stable coefficients $\underline{\rho}(K)=\underline{\rho}$ and $[n,n+1]\cap \Q\subseteq \Salt(K)$, where $n=\lfloor p/q \rfloor$.
\end{thm}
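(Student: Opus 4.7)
The plan is to build the knot $K$ and its alternating surgery diagrams directly from the lattice data, then verify that this diagrammatic construction realizes $K\in\D$. First, by Proposition~\ref{prop:ObtSupBase=graphlattice} applied to the given planar obtuse superbase, $L$ is isomorphic to the graph lattice $\Lambda(G)$ of some planar graph $G$. Since $L$ contains no norm-one vectors (Remark~\ref{rem:CM_facts}(1)), the graph $G$ has no cut-edges and, being 2-edge-connected, we may assume after discarding self-loops that $G$ is the white graph of a reduced, non-split alternating diagram $D$ constructed in the standard way (place a crossing of positive incidence on each edge of $G$ and connect cyclically around each vertex). Then $\Lambda_D\cong L$, so $\Sigma_2(D)$ is a 3-manifold whose Goeritz form agrees with the $p/q$-changemaker lattice $L$.

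The second, and key, step is to produce the knot $K$ via the Montesinos trick. Write the continued fraction $p/q=[a_0,\dots,a_l]^-$ and recall that a non-integer $p/q$-changemaker lattice comes equipped with the vectors $w_0=\e_0+\bsigma$ and $w_1,\dots,w_l$ satisfying Conditions (I)--(IV) of Definition~\ref{def:non_int_CM_lattice}. This chain data is precisely what encodes a rational tangle replacement at a distinguished crossing of $D$: the vector $w_0$ identifies a crossing $c$ of $D$, and the further $w_i$'s, via the continued fraction of $p/q$, encode an iterated Montesinos tangle replacement at $c$ that unknots $D$. This correspondence is the main content of \cite[Theorem~1.3]{Mc15} and \cite[Theorem~7.12]{Mc16}, and gives a pair $(D,c)$ such that $K:=K_{(D,c)}\in\D$ and $p/q\in\Salt(K)$ with $\Sigma_2(D)=S^3_{p/q}(K)$.

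Once $K_{(D,c)}\in\D$ has been produced, we read off the remaining conclusions. Since $K\in\D$, the discussion in \S\ref{sec:D_definition} guarantees an entire interval of rational tangle replacements at $c$ producing alternating diagrams, yielding
\[
\Bigl[\Bigl\lfloor\tfrac{\varepsilon\det D}{2}\Bigr\rfloor,\Bigl\lceil\tfrac{\varepsilon\det D}{2}\Bigr\rceil\Bigr]\cap\Q\subseteq\Salt(K).
\]
Because $\det D=\mathrm{disc}(\Lambda_D)=\mathrm{disc}(L)=p$ and the chosen $p/q$ lies in this interval, we have $|\det D|/2=p/2$ and the interval equals $[n,n+1]\cap\Q$ with $n=\lfloor p/q\rfloor$. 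Finally, compatibility of $\bsigma$ with $\Delta_K(x)$ is forced by the changemaker surgery theorem (Theorem~\ref{thm:alt_surgery_obstructions}) applied to $p/q\in\Salt(K)$: the resulting $p/q$-changemaker lattice must have the same stable coefficients as $L$, which are $\underline{\rho}$ by hypothesis, so $\underline{\rho}(K)=\underline{\rho}$ as required.

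The main obstacle is the second step, namely verifying that the abstract chain $w_0,\dots,w_l$ really does correspond geometrically to an iterated tangle replacement at a single crossing $c$ of $D$ with the correct slope data. Rather than redo this verification, our plan is to cite \cite[Theorem~1.3]{Mc15} and \cite[Theorem~7.12]{Mc16}, which together extract exactly the pair $(D,c)$ from a non-integer $p/q$-changemaker lattice with a planar obtuse superbase; the remaining bookkeeping (matching $n$ with $\lfloor p/q\rfloor$ and matching stable coefficients) is then immediate.
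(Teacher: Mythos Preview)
Your overall approach matches the paper's: construct an alternating diagram from the planar obtuse superbase, invoke \cite[Theorem~1.3]{Mc15} and \cite[Theorem~7.12]{Mc16} to extract a knot in $\D$, and then check the interval and stable coefficients. However, there is a genuine error in your interval argument stemming from a conflation of two different diagrams.

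You use the symbol $D$ both for the diagram built from $G_B$ (whose Goeritz lattice is $L$) and for the unknotting-number-one diagram that defines the pair $(D,c)$ with $K_{(D,c)}\in\D$. These are \emph{not} the same diagram: the former is the branching set of the $p/q$-surgery, while the latter, call it $\widetilde{D}$, is the diagram containing the unknotting crossing $c$. The formula $[\lfloor \varepsilon\det/2\rfloor,\lceil \varepsilon\det/2\rceil]\cap\Q\subseteq\Salt(K)$ from \S\ref{sec:D_definition} uses $\det\widetilde{D}$, not the determinant of the diagram with Goeritz lattice $L$. Your claim ``$\det D=\mathrm{disc}(L)=p$'' computes the determinant of the wrong diagram, and in any case $[\lfloor p/2\rfloor,\lceil p/2\rceil]$ is not $[n,n+1]$ for general $p/q$ (take $p/q=100/3$: then $n=33$ but $p/2=50$). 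The correct deduction is that the cited theorems exhibit the $p/q$-surgery as one of the rational tangle replacements in the family defining $K_{(\widetilde D,c)}\in\D$; since $p/q$ is non-integer and lies in the length-one interval $[\lfloor d/2\rfloor,\lceil d/2\rceil]$ with $d=\det\widetilde{D}$, one reads off $\lfloor d/2\rfloor=\lfloor p/q\rfloor=n$.

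Your final step, deducing $\underline{\rho}(K)=\underline{\rho}$ from Theorem~\ref{thm:alt_surgery_obstructions}, is a reasonable alternative to the paper's direct citation of \cite[Proof of Theorem~7.12]{Mc16}, though strictly speaking it requires knowing that two isomorphic $p/q$-changemaker lattices (for the same $p/q$) have the same stable coefficients; this is true but you should flag it rather than treat it as automatic.
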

\begin{proof}
Given a planar obtuse superbase $B$ for $L$, we may take an embedding of the graph $G_B$ in the plane. Using this embedding, we construct a reduced alternating diagram $D$ whose Goeritz lattice is isomorphic to the graph lattice $\Lambda(G_B)$ and hence to $L$. Since $p/q$ is not an integer, the implication (ii)$\Rightarrow$(iii) of \cite[Theorem~1.3]{Mc15} shows that $D$ is isotopic to another alternating diagram $D'$, which can be obtained by replacing an unknotting crossing $c$ in an alternating diagram $\widetilde{D}$ by a rational tangle. Moreover this rational tangle replacement is precisely the one showing that $K_{(\widetilde{D},c)}$ is a knot in $\D$ satisfying $S_{p/q}^3(K_{(\widetilde{D},c)})\cong \Sigma_2(D)$. The fact that the stable coefficients satisfy $\underline{\rho}(K)=\underline{\rho}$ follows from the argument in \cite[Proof of Theorem 7.12]{Mc16}.
\end{proof}
In particular, this immediately implies the following embellishment of \cite[Theorem~7.12]{Mc16}.
\begin{thm}\label{thm:nonint_alt_surg}
Let $K$ be a non-trivial 
L-space knot in $S^3$ such that $S_{p/q}^3(K)$ is an alternating surgery for some $p/q\not\in \Z$. Then there exists a knot $K'\in \D$ such that
\begin{enumerate}
    \item $S_{p/q}^3(K)\cong S_{p/q}^3(K')$,
    \item $\Delta_{K}(x)=\Delta_{K'}(x)$ and
    \item $[n, n+1] \cap \Q \subseteq \Salt(K')$, where $n=\lfloor p/q \rfloor$.
\end{enumerate}
\end{thm}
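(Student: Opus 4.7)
The plan is to combine the two structural theorems developed earlier in the section: Theorem~\ref{thm:alt_surgery_obstructions} provides an obstruction that pins down the $p/q$--changemaker lattice realized by $K$, and Theorem~\ref{thm:stable_coefs_to_D} provides a construction turning any such lattice (with a planar obtuse superbase) into a knot in $\D$. Matching these two inputs will produce $K'$, and the remaining work is to verify that the \emph{original} knot $K$ shares both its Alexander polynomial and its $p/q$--surgery with $K'$.

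First, I would apply Theorem~\ref{thm:alt_surgery_obstructions} to $K$ at the alternating slope $p/q$. Fixing a reduced alternating diagram $D$ with $\Sigma_2(D)\cong S^3_{p/q}(K)$, we obtain compatible stable coefficients $\underline{\rho}(K)$ and an isomorphism between the $p/q$--changemaker lattice $L$ they determine and the Goeritz lattice $\Lambda_D$. Because $\Gamma_D$ is planar, $\Lambda_D$ (hence $L$) admits a planar obtuse superbase. Since $p/q\notin\Z$, Theorem~\ref{thm:stable_coefs_to_D} now applies to $L$ and yields a knot $K'\in\D$ with $\underline{\rho}(K')=\underline{\rho}(K)$ and $[n,n+1]\cap\Q\subseteq\Salt(K')$, where $n=\lfloor p/q\rfloor$. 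This already establishes conclusion~(3).

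For conclusion~(2), I would exploit the rigidity built into the notion of a changemaker vector. Both $K$ and $K'$ have compatible changemaker vectors with the \emph{same} stable coefficients $\underline{\rho}(K)=\underline{\rho}(K')$, and in each case the underlying vector $\bsigma$ has norm $\lfloor p/q\rfloor=n$ (by Remark~\ref{rem:CM_facts} in the non-integer case for $K$, and by using any integer slope in $[n,n+1]\cap\Salt(K')$ for $K'$). Remark~\ref{rem:CM_facts}(2) then guarantees that the two changemaker vectors agree (up to reordering the orthonormal basis), so the compatibility relation~\eqref{eq:sigma_ti_relation} forces $t_i(K)=t_i(K')$ for all $0\leq i\leq n/2$. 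Since $K$ and $K'$ are L-space knots admitting positive surgery at slope $p/q$, one has $n\geq 2g(K)-1$ and $n\geq 2g(K')-1$, so this range of indices covers all nonzero torsion coefficients of both knots. The formula $a_i=t_{i-1}-2t_i+t_{i+1}$ then reconstructs the Alexander polynomials identically, giving $\Delta_K(x)=\Delta_{K'}(x)$.

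Finally, conclusion~(1) follows from Corollary~\ref{cor:Alexpolydetermines}: both knots admit an alternating surgery at the common slope $p/q$ and have equal Alexander polynomials, so $S^3_{p/q}(K)\cong S^3_{p/q}(K')$. The main subtlety in the plan is the middle step: one has to be careful that the norms of the two changemaker vectors actually coincide (which uses that non-integer changemaker lattices record $\lfloor p/q\rfloor$ via $\norm{w_0}-1$) and that the compatibility range $0\leq i\leq \norm{\bsigma}/2$ is long enough to recover both Alexander polynomials. Once that bookkeeping is in place, the three conclusions follow essentially immediately from results already assembled earlier in the paper.
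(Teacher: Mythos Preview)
Your proof is correct and follows essentially the same approach as the paper: apply Theorem~\ref{thm:alt_surgery_obstructions} to obtain the planar obtuse superbase, feed it into Theorem~\ref{thm:stable_coefs_to_D} to produce $K'\in\D$ with the same stable coefficients, and deduce the remaining conclusions from equality of stable coefficients. The paper's proof is terser on conclusion~(2), simply asserting that the stable coefficients determine the torsion coefficients (and hence $\Delta_K$) via~\eqref{eq:sigma_ti_relation}, whereas you carefully check that the index range $0\le i\le \norm{\bsigma}/2$ suffices; and the paper leaves conclusion~(1) implicit, while you explicitly invoke Corollary~\ref{cor:Alexpolydetermines}.
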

\begin{proof}
    Theorem~\ref{thm:alt_surgery_obstructions} implies that the stable coefficients $\underline{\rho}(K)$ exist and the $p/q$-changemaker lattice with these stable coefficients admits a planar obtuse superbase. Since $p/q\not\in \Z$, Theorem~\ref{thm:stable_coefs_to_D} implies the existence of a knot $K'\in \D$ with stable coefficients $\underline{\rho}(K)=\underline{\rho}(K')$ and $[n,n+1] \cap \Q \subseteq \Salt(K')$. We have $\Delta_{K}(x)=\Delta_{K'}(x)$, since the torsion coefficients, and hence the Alexander polynomial, are determined by the stable coefficients as in \eqref{eq:sigma_ti_relation}. 
\end{proof}

This implies that the only knots with infinitely many alternating surgeries in a bounded interval are those in $\D$. Corollary~\ref{cor:infte_alt_surg} extends this to show that the knots with infinitely many alternating surgeries are those in $\D$.
\begin{lem}\label{lem:infinite_alt_surg_in_interval}
Let $K$ be an L-space knot admitting alternating surgeries. If there is an integer $M$ such that $\Salt(K)\cap [M,M+1]$ is infinite, then 
\[
\text{$K\in\D$ and $[M,M+1]\cap \Q \subseteq \Salt(K)$.}
\]
\end{lem}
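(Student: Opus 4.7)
The plan is to first extract a candidate knot $K'\in\D$ with the same Alexander polynomial as $K$ by using a non-integer slope in the given interval, and then to upgrade the identification $\Delta_K=\Delta_{K'}$ to the identification $K=K'$ by invoking the effective finiteness result on slopes giving common surgeries (Theorem~\ref{thm:computable_q}).

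First, I would observe that $\Salt(K)\cap[M,M+1]$ contains slopes $p/q$ with $|q|$ arbitrarily large. Indeed, in any bounded interval of $\Q$ the set of rationals with denominator at most any fixed bound is finite, so an infinite subset of $[M,M+1]\cap\Q$ must have unbounded denominators; in particular it contains some non-integer $p_0/q_0\in(M,M+1)$. Applying Theorem~\ref{thm:nonint_alt_surg} to $K$ at this slope produces a knot $K'\in\D$ satisfying $S^3_{p_0/q_0}(K)\cong S^3_{p_0/q_0}(K')$, $\Delta_K(x)=\Delta_{K'}(x)$, and $[\lfloor p_0/q_0\rfloor,\lfloor p_0/q_0\rfloor+1]\cap\Q=[M,M+1]\cap\Q\subseteq \Salt(K')$.

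Next I would show $K=K'$. For every slope $p/q\in \Salt(K)\cap[M,M+1]$, the containment $[M,M+1]\cap\Q\subseteq\Salt(K')$ gives $p/q\in\Salt(K')$ too, so by Corollary~\ref{cor:Alexpolydetermines} (applicable because $\Delta_K=\Delta_{K'}$ and both knots are L-space knots) we have $S^3_{p/q}(K)\cong S^3_{p/q}(K')$. If $K$ and $K'$ were distinct, Theorem~\ref{thm:computable_q} would bound $|q|\leq Q(K,K')$ uniformly over all such slopes, contradicting the unboundedness of denominators established in the first step. Hence $K=K'\in\D$, and the inclusion $[M,M+1]\cap\Q\subseteq\Salt(K')$ becomes the desired inclusion $[M,M+1]\cap\Q\subseteq\Salt(K)$.

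There is no real obstacle here: the hypothesis that the intersection is infinite is precisely strong enough to defeat any bound of the form $|q|\leq Q(K,K')$, so the only substantive ingredients are the existence of a non-integer alternating slope (immediate from the pigeonhole observation on denominators) and the two black-box theorems (Theorem~\ref{thm:nonint_alt_surg} and Theorem~\ref{thm:computable_q}) whose proofs lie elsewhere in the paper.
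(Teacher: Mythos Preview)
Your proof is correct and follows essentially the same approach as the paper's: apply Theorem~\ref{thm:nonint_alt_surg} at a non-integer slope in $\Salt(K)\cap[M,M+1]$ to produce $K'\in\D$ with $\Delta_K=\Delta_{K'}$ and $[M,M+1]\cap\Q\subseteq\Salt(K')$, then use Corollary~\ref{cor:Alexpolydetermines} and Theorem~\ref{thm:computable_q} with the unboundedness of denominators to force $K=K'$. You are slightly more explicit than the paper in justifying why a non-integer slope exists and why the denominators are unbounded, but the argument is the same.
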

\begin{proof}
    By Theorem~\ref{thm:nonint_alt_surg}, there exists a knot $K'\in \D$ with the same Alexander polynomial as $K$ and $[M, M+1] \cap \Q \subseteq \Salt(K')$. By Corollary~\ref{cor:Alexpolydetermines}, we have $S_{p/q}^3(K)\cong S_{p/q}^3(K')$ for every $p/q\in [M,M+1]\cap \Salt(K)$. In particular, we can find $p/q$ with $|q|$ arbitrarily large such that $S_{p/q}^3(K)\cong S_{p/q}^3(K')$. This implies that $K$ and $K'$ are isotopic (see Theorem~\ref{thm:computable_q}).
\end{proof}

One can also prove the following proposition, which is a variant of~\cite[Proposition~4.2]{Mc17}, directly using the theory of changemaker lattices.
\begin{prop}\label{prop:altsurgimpliesintOSB2}
Let $L$ be a $p/q$-changemaker lattice for some positive $p/q\notin \Z$ which admits a planar obtuse superbase. Then for $n=\lfloor p/q \rfloor$ the $n$- and $(n+1)$-changemaker lattices with the same stable coefficients as $L$ also admit planar obtuse superbases.
\end{prop}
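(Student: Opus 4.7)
The plan is to handle the half-integer case $p/q = n+\tfrac{1}{2}$ in detail---this already captures the main structural ideas---and then outline how the general non-integer case follows by induction on the length of the negative continued fraction of $p/q$. Take $L=\langle \e_{-1}-\e_0,\, \e_0+\bsigma\rangle^\perp\subseteq \Z^{r+2}$ with $\norm{\bsigma}=n$, and let $B$ be a planar obtuse superbase of $L$. By Lemma~\ref{lem:half_int_basis_form} I may assume $\bv_0 := -\e_1+\e_0+\e_{-1}\in B$. Every $\bv\in L$ satisfies $v_{-1}=v_0$, and combining this with the bound $|\sum_{\bv\in R} v_0|\leq 1$ on coordinate sums of $B$-elements (\cite[Lemma~5.3]{McCoyAltUnknotting}, cited in \S\ref{sec:half_int_basis_search}) and $\sum_{\bv\in B} v_0=0$ forces the existence of a unique further vertex $\bu'\in B$ with $u_0'=u_{-1}'=-1$, while every other vertex of $B$ has $v_0=v_{-1}=0$.

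For the $(n+1)$-changemaker lattice $L_{n+1}=\langle \e_0+\bsigma\rangle^\perp\subseteq \Z^{r+1}$, I would use the map $\phi\colon L\to L_{n+1}$ that forgets the $\e_{-1}$-coordinate. Because $v_{-1}=v_0$ on $L$, $\phi$ is an isomorphism of underlying groups and satisfies $\phi(\bv)\cdot\phi(\bv') = \bv\cdot\bv' - v_0 v_0'$. All pairings within $\phi(B)$ match those within $B$ except $\phi(\bv_0)\cdot\phi(\bu') = \bv_0\cdot\bu' + 1 = -1 - u_1'$, which is non-positive exactly when $u_1'\geq -1$. I would derive this bound from the irreducibility of $\bu'$ in $L$: decomposing $\bu' = (-\e_{-1}-\e_0+\e_1) + (\bu'+\e_{-1}+\e_0-\e_1)$ and invoking irreducibility gives $u_1'\leq 0$, while a dual decomposition against an auxiliary element of $L$ with a negative $\e_1$-coefficient (supplied by pairing $\bu'$ with the standard-basis vectors from Lemma~\ref{lem:half_int_basis_form}) yields $u_1'\geq -1$. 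Granted this, $\phi(B)$ is an obtuse superbase for $L_{n+1}$, and its graph is obtained from $G_B$ by deleting a single edge between $\bv_0$ and $\bu'$, preserving planarity.

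For the $n$-changemaker lattice $L_n=\langle\bsigma\rangle^\perp\subseteq \Z^r$, I would instead collapse the pair $\{\bv_0,\bu'\}$ into their sum, setting $B''=(B\setminus\{\bv_0,\bu'\})\cup\{\bv_0+\bu'\}$. A direct check shows $\bv_0+\bu'\in L_n$: its $\e_{-1}$- and $\e_0$-coordinates vanish and $(\bv_0+\bu')\cdot\bsigma = \bv_0\cdot\bsigma + \bu'\cdot\bsigma = 0$. Obtuseness is preserved because $\bv\cdot(\bv_0+\bu') = \bv\cdot\bv_0 + \bv\cdot\bu'\leq 0$, and spanning of $L_n$ follows because any $\bm{z}\in L_n\subseteq L$ expressed in $B$ must have equal $\bv_0$- and $\bu'$-coefficients in order to cancel its $\e_0$-component. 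The graph $G_{B''}$ is obtained from $G_B$ by contracting one of the edges joining $\bv_0$ and $\bu'$ (there is at least one by the previous sign bound), which preserves planarity.

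The main obstacles I anticipate are twofold. First, the coordinate bound $u_1'\geq -1$ requires careful verification, since Lemma~\ref{lem:irreducible_bounds} is stated only for integer changemaker lattices; one should either extend it to the half-integer setting or substantiate the decomposition arguments sketched above. Second, for general $p/q=[a_0,\dots,a_\ell]^-$ with $\ell>1$ the lattice $L$ carries a chain $w_0,w_1,\ldots,w_\ell$ rather than a single extension vector, and the proof should proceed by induction on $\ell$: at each step one peels off the terminal $w_\ell$ from the chain using the same ``forget a coordinate'' (for $L_{n+1}$) versus ``collapse two vertices'' (for $L_n$) dichotomy, producing planar obtuse superbases for two intermediate CM lattices whose continued fractions are shorter, iterating until one lands on the integer $n$- or $(n+1)$-changemaker lattice. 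The inductive analog of Lemma~\ref{lem:half_int_basis_form} identifying the terminal OSB vertices at each stage must be set up following the pattern of Lemma~\ref{lem:modified_basis}.
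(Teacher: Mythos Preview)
Your approach is genuinely different from the paper's and is direct lattice-theoretic, whereas the paper's proof is a two-line topological detour: Theorem~\ref{thm:stable_coefs_to_D} converts the planar obtuse superbase of the non-integer $p/q$-changemaker lattice into a knot $K\in\D$ with the same stable coefficients and $[n,n+1]\cap\Q\subseteq\Salt(K)$, and then Theorem~\ref{thm:alt_surgery_obstructions} immediately produces planar obtuse superbases for the $n$- and $(n+1)$-changemaker lattices. This handles every non-integer $p/q$ at once, with no continued-fraction induction and no coordinate chasing.

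Your argument, by contrast, has a real gap at exactly the point you flag. The irreducibility decomposition $\bu'=(-\bv_0)+(\bu'+\bv_0)$ gives only $u'_1\leq 0$; the ``dual decomposition'' you gesture at is not supplied, and pairing $\bu'$ with the standard-basis vectors of Lemma~\ref{lem:half_int_basis_form} yields inequalities in the wrong direction (e.g.\ $\bu'\cdot\bv^{(2)}\leq 0$ gives $u'_1\leq u'_2$, not a lower bound on $u'_1$). If $u'_1=-2$ then $\bv_0\cdot\bu'=0$, so $\bv_0$ and $\bu'$ are non-adjacent in $G_B$; your $L_{n+1}$ map $\phi$ then produces $\phi(\bv_0)\cdot\phi(\bu')=1>0$, destroying obtuseness, and your $L_n$ collapse is no longer an edge contraction, so planarity is not automatic. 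Ruling out $u'_1=-2$ would likely require first modifying $B$ in the spirit of Lemma~\ref{lem:modified_basis}, not a one-line irreducibility argument.

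The second obstacle you name is more serious still. For general $p/q=[a_0,\dots,a_\ell]^-$ with $\ell\geq 2$, you would need analogs of Lemma~\ref{lem:half_int_basis_form} identifying ``terminal'' obtuse-superbase vertices at each stage of the induction. No such statements exist in the paper or the cited literature, and developing them would be substantial---Lemma~\ref{lem:modified_basis}, which treats a far more constrained situation, already occupies all of \S\ref{sec:slack_lattices}. The paper sidesteps this entirely: once one has the knot $K\in\D$, the integer slopes $n$ and $n+1$ fall out for free.
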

\begin{proof}
    By Theorem~\ref{thm:stable_coefs_to_D} there exists a knot $K\in\D$ with stable coefficients equal to the stable coefficients of $L$ such that $n$ and $n+1$ are alternating surgery slopes for $K$. By Theorem~\ref{thm:alt_surgery_obstructions}, this implies that the corresponding changemaker lattices admit planar obtuse superbases.
\end{proof}
\subsection{Analysis of \texorpdfstring{$\Salt(K)$}{Salt(K)}}
In this section we will take $K$ to be a non-trivial L-space knot admitting alternating surgeries. We will take $\underline{\rho}=(\rho_1,\dots, \rho_\ell)$ to be the stable coefficients of $K$ and we define $N$ to be the following integer:
\begin{equation}\label{eq:N_def}
    N=\sum_{i=1}^\ell \rho_i^2 + \max_{1\leq k \leq \ell} \left(\rho_k - \sum_{i=k+1}^{\ell} \rho_i \right).
\end{equation}
First we check that $\Salt(K)$ is necessarily a subset of $[N-1,N+1]$.
\begin{lem} \label{lem:salt_bounds}
    Any alternating surgery slope $p/q$ of $K$ satisfies
    \[
    N-1\leq p/q\leq N+1.
    \]    
\end{lem}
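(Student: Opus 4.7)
The plan is to prove the two inequalities $p/q\geq N-1$ and $p/q\leq N+1$ separately. The lower bound is essentially free from the machinery already assembled: by Theorem~\ref{thm:alt_surgery_obstructions} any slope $p/q\in\Salt(K)$ forces the existence of a $p/q$-changemaker lattice with stable coefficients $\underline{\rho}(K)$, and Remark~\ref{rem:CM_facts}(3) says such a lattice exists precisely when $p/q\geq N-1$.

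For the upper bound I would first dispose of integer slopes. Suppose for contradiction that $n\in\Salt(K)\cap\Z$ with $n\geq N+2$. Theorem~\ref{thm:alt_surgery_obstructions} produces a planar obtuse superbase for the $n$-changemaker lattice $L_n$ with stable coefficients $\underline{\rho}(K)$. Since $n\geq N+1$, Remark~\ref{rem:very_slack} identifies $L_n$ as very slack, so Theorem~\ref{thm:very_slack_technical}(2) forces $L_{n-1}$ to be decomposable. Writing the changemaker for $L_{n-1}$ in Part~I indexing as
\[
\e_1+\dots+\e_{m-1}+\rho_\ell\e_m+\rho_{\ell-1}\e_{m+1}+\dots+\rho_1\e_{m+\ell-1},
\]
one reads off $\sigma_m=\rho_\ell$ and $m-1=n-1-\sum_i\rho_i^2$, and Lemma~\ref{lem:decomposable_properties} requires $\sigma_m=m-1$. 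Together these give
\[
n=\rho_\ell+1+\sum_{i=1}^{\ell}\rho_i^2.
\]

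The key numerical observation is that $\rho_\ell\leq\max_{1\leq k\leq\ell}\bigl(\rho_k-\sum_{i=k+1}^{\ell}\rho_i\bigr)=N-\sum_i\rho_i^2$, since the term $k=\ell$ in the maximum already contributes exactly $\rho_\ell$. Substituting gives $n\leq N+1$, contradicting $n\geq N+2$. To handle non-integer $p/q>N+1$ in $\Salt(K)$, I would invoke Theorem~\ref{thm:nonint_alt_surg} to produce a knot $K'\in\D$ sharing the stable coefficients of $K$ (hence the same $N$) and satisfying $[\lfloor p/q\rfloor,\lfloor p/q\rfloor+1]\cap\Q\subseteq\Salt(K')$; then $\lfloor p/q\rfloor+1\geq N+2$ is an integer alternating slope of $K'$, contradicting the integer case applied to $K'$.

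I expect the main obstacle to be the integer upper bound, where the merely qualitative output of Theorem~\ref{thm:very_slack_technical} (namely, that $L_{n-1}$ splits) has to be turned into the sharp quantitative bound $n\leq N+1$. The conversion rests on the two clean identifications $\sigma_m=\rho_\ell$ and $m-1=n-1-\sum_i\rho_i^2$ for $L_{n-1}$, together with the elementary but essential inequality $\rho_\ell\leq N-\sum_i\rho_i^2$ coming from the $k=\ell$ term in the max defining $N$. Everything else amounts to packaging previously established results.
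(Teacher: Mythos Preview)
Your proof is correct, but it takes a noticeably different and heavier route for the upper bound than the paper does.

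For the lower bound, you and the paper agree: Theorem~\ref{thm:alt_surgery_obstructions} plus the existence criterion (Proposition~\ref{prop:CM_existence_bound}/Remark~\ref{rem:CM_facts}(3)) give $p/q\geq N-1$ immediately.

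For the upper bound, the paper goes directly through Proposition~\ref{prop:OSB_m_bound}: via Proposition~\ref{prop:altsurgimpliesintOSB2} (for non-integers) or Theorem~\ref{thm:alt_surgery_obstructions} (for integers), the $\lceil p/q\rceil$-changemaker lattice with stable coefficients $\underline{\rho}(K)$ admits an obtuse superbase, and then Proposition~\ref{prop:OSB_m_bound} gives $\lceil p/q\rceil\leq 1+\rho_\ell+\sum_i\rho_i^2$; the observation $\rho_\ell\leq N-\sum_i\rho_i^2$ finishes. By contrast, you deduce the integer bound by invoking the full strength of Theorem~\ref{thm:very_slack_technical} to force $L_{n-1}$ to be decomposable, then apply Lemma~\ref{lem:decomposable_properties} to extract the \emph{equality} $n=1+\rho_\ell+\sum_i\rho_i^2$, and you handle non-integers via Theorem~\ref{thm:nonint_alt_surg} rather than the lattice-level Proposition~\ref{prop:altsurgimpliesintOSB2}. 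This is logically sound, but circuitous: Theorem~\ref{thm:very_slack_technical} is a substantially deeper statement (whose own proof already relies on Proposition~\ref{prop:OSB_m_bound}), and you only need the much softer inequality $\sigma_m\geq m-2$ that Proposition~\ref{prop:OSB_m_bound} provides directly. The paper's argument is a one-line application of that proposition; yours recovers the same numerical bound after passing through decomposability.
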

\begin{proof}
By Theorem~\ref{thm:alt_surgery_obstructions}, there is a $p/q$-changemaker lattice $L$ with stable coefficients given by $\underline{\rho}$ and $L$ admits a planar obtuse superbase. The lower bound comes from Proposition~\ref{prop:CM_existence_bound}, which, after adjusting for differences in notation, shows that the changemaker lattice $L$ exists if and only if $p/q\geq \norm{\bsigma} \geq N-1$.

If $p/q\not \in \Z$, then Proposition~\ref{prop:altsurgimpliesintOSB2} shows that the $\lceil p/q\rceil$-changemaker lattice with stable coefficients $\underline{\rho}$ also admits an obtuse superbase. If $p/q \in \Z$, then $p/q=\lceil p/q \rceil$ and the $\lceil p/q\rceil$-changemaker lattice with stable coefficients $\underline{\rho}$ is already known to admit an obtuse superbase. From Proposition~\ref{prop:OSB_m_bound} it follows that
    \[
    \lceil p/q\rceil \leq 1+\rho_\ell +\sum_{i=1}^\ell \rho_i^2,
    \]
    since $\rho_\ell$ corresponds to $\sigma_m$ in the notation of that section. Since
    \[
    \rho_\ell \leq \max_{1\leq k \leq \ell} \left(\rho_k - \sum_{i=k+1}^{\ell} \rho_i \right),
    \]
    this implies that $p/q \leq N+1$, as required.
\end{proof}

Together Lemma~\ref{lem:infinite_alt_surg_in_interval} and Lemma~\ref{lem:salt_bounds} show that the only knots with infinitely many alternating surgeries belong to $\D$.
\begin{cor}\label{cor:infte_alt_surg}
If $K$ is an L-space knot admitting infinitely many alternating surgeries, then $K\in\D$. \qed
\end{cor}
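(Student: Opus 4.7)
The plan is to combine Lemma~\ref{lem:salt_bounds} and Lemma~\ref{lem:infinite_alt_surg_in_interval} via a simple pigeonhole argument. First I would invoke Lemma~\ref{lem:salt_bounds} to conclude that every alternating surgery slope of $K$ lies in the bounded interval $[N-1, N+1]$, where $N$ is the integer defined by \eqref{eq:N_def}.

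Next, I would observe that $[N-1, N+1] = [N-1, N] \cup [N, N+1]$ is the union of two intervals of the form $[M, M+1]$ (with $M = N-1$ and $M=N$). If $\Salt(K)$ is infinite, then its intersection with at least one of these two subintervals must also be infinite, by the pigeonhole principle. Applying Lemma~\ref{lem:infinite_alt_surg_in_interval} to that subinterval then yields $K\in \D$, which is the desired conclusion.

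There is no real obstacle here; the corollary is essentially immediate once Lemma~\ref{lem:salt_bounds} and Lemma~\ref{lem:infinite_alt_surg_in_interval} are in hand. The only mild care needed is to note that $K$ is assumed nontrivial in Lemma~\ref{lem:salt_bounds}, but this is automatic since the unknot admits no alternating surgery (all of its surgeries are lens spaces, and in particular only finitely many of its surgeries could conceivably be double branched covers of alternating links; in any case, infinitely many alternating surgeries certainly precludes $K$ being trivial).
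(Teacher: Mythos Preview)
Your main argument is correct and matches the paper's approach exactly: the paper simply notes that Lemma~\ref{lem:salt_bounds} and Lemma~\ref{lem:infinite_alt_surg_in_interval} together yield the result, and your pigeonhole argument is the obvious way to combine them.

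However, your parenthetical remark about the unknot is factually wrong. Every nontrivial surgery on the unknot is a lens space, and every lens space is the double branched cover of a two-bridge link, which is alternating; indeed $\Salt(U)=\Q$ (this is stated explicitly in the proof of Theorem~\ref{thm:computable}). So infinitely many alternating surgeries certainly does \emph{not} preclude $K$ being trivial. This does not damage your proof, because the standing hypothesis of \S8.2 is that $K$ is a non-trivial L-space knot, so Lemma~\ref{lem:salt_bounds} applies directly without any extra justification. You should simply drop the parenthetical rather than replace it with an incorrect argument.
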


Using the results of \S\ref{sec:slack_lattices} we analyze the knots admitting alternating surgeries in the range $(N, N+1]$.

\begin{lem}\label{lem:pq>N_alt_surgery}
    Let $K$ be a non-trivial L-space knot admitting an alternating surgery. If $\Salt(K)\cap (N, N+1]$ is non-empty, then there exists a knot $K'\in\D$ such that $\Delta_K(x)=\Delta_{K'}(x)$ and $S_N^3(K')$ is a reducible surgery.
\end{lem}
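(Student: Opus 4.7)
The plan is to first produce a candidate knot $K' \in \D$ that shares the Alexander polynomial of $K$ and has $N+1$ as an alternating surgery slope, and then to read off the reducibility of $N$-surgery on $K'$ from the decomposability half of Theorem~\ref{thm:very_slack_technical}. The cabling conjecture for L-space knots in $S^3$ will then force $K'$ to be a cable or a torus knot with cabling slope exactly $N$.

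I will fix $p/q \in \Salt(K) \cap (N, N+1]$ and split on whether $p/q$ is an integer. If $p/q \notin \Z$, then $\lfloor p/q \rfloor = N$ and Theorem~\ref{thm:nonint_alt_surg} immediately yields $K' \in \D$ with $\Delta_{K'} = \Delta_K$ and $[N, N+1] \cap \Q \subseteq \Salt(K')$. If $p/q = N+1$, then by Theorem~\ref{thm:alt_surgery_obstructions} the $(N+1)$-changemaker lattice $L$ with stable coefficients $\underline{\rho}(K)$ admits a planar obtuse superbase, and since $L$ is very slack by Remark~\ref{rem:very_slack}, Theorem~\ref{thm:very_slack_technical}(1) transfers the planar obtuse superbase to the $(N+\tfrac{1}{2})$-changemaker lattice with the same stable coefficients. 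Theorem~\ref{thm:stable_coefs_to_D} then supplies $K' \in \D$ with $\underline{\rho}(K') = \underline{\rho}(K)$ and $[N, N+1] \cap \Q \subseteq \Salt(K')$; combining this with the compatibility formula (\ref{eq:sigma_ti_relation}) applied to the common slope $N+1$ gives $\Delta_{K'} = \Delta_K$. In either case I obtain $K' \in \D$ with $\Delta_{K'} = \Delta_K$ and $N+1 \in \Salt(K')$.

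To extract the reducibility I apply Theorem~\ref{thm:alt_surgery_obstructions} to $K'$ at slope $N+1$, producing a planar obtuse superbase on the very slack $(N+1)$-changemaker lattice of $K'$; Theorem~\ref{thm:very_slack_technical}(2) then forces the $N$-changemaker lattice to be decomposable. This lattice is isomorphic (again by Theorem~\ref{thm:alt_surgery_obstructions}) to the Goeritz lattice $\Lambda_D$ of any reduced alternating diagram $D$ with $\Sigma_2(D) \cong S_N^3(K')$, and a decomposable Goeritz lattice corresponds to $D$ being a connected sum of two non-trivial alternating diagrams. Hence $S_N^3(K')$ is a non-trivial connected sum and $N$-surgery on $K'$ is reducible. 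Invoking the cabling conjecture for L-space knots in $S^3$ then forces $K'$ to be a torus knot or a cable with cabling slope exactly $N$.

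It remains to upgrade $[N, N+1] \cap \Q \subseteq \Salt(K')$ to $\Salt(K') = [N-1, N+1] \cap \Q$. The upper bound $\Salt(K') \subseteq [N-1, N+1] \cap \Q$ is immediate from Lemma~\ref{lem:salt_bounds}, so I need only produce alternating branching sets for every slope in $[N-1, N] \cap \Q$. Having reduced to torus knots and cables with cabling slope $N$, I would finish by appealing to the explicit classification of surgeries on these knots at slopes near their cabling slope, which realizes each such slope as a double branched cover of an alternating link via a direct tangle construction on the reducible surgery diagram. I expect this last step to be the main obstacle: the very slack machinery of Theorem~\ref{thm:very_slack_technical} only delivers alternating surgeries on the upper side of $N$, so extending symmetrically to the left demands genuine geometric input from cable and torus knot surgeries rather than further lattice manipulation.
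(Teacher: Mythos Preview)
Your argument through the reducibility of $N$-surgery on $K'$ is essentially the paper's, with two minor wrinkles. First, the case split on whether $p/q$ is an integer is unnecessary: the paper simply observes via Proposition~\ref{prop:altsurgimpliesintOSB2} that any $p/q \in \Salt(K)\cap(N, N+1]$ forces a planar obtuse superbase on $L_{N+1}$, and then Theorem~\ref{thm:very_slack_technical} delivers both the planar obtuse superbase on $L_{N+\frac12}$ (hence $K'\in\D$ via Theorem~\ref{thm:stable_coefs_to_D}) and the decomposability of $L_N$ in one stroke. Second, ``the cabling conjecture for L-space knots'' is not an available theorem; the paper instead checks $N > 2g(K') - 1$ (immediate from the genus formula $g = \tfrac12\sum_i \rho_i(\rho_i-1)$ against the definition of $N$) and invokes Matignon--Sayari~\cite{MS03}, which is a theorem.

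The genuine gap is exactly where you flag it: extending $\Salt(K')$ down to $[N-1,N]\cap\Q$. Your proposed ``direct tangle construction on the reducible surgery diagram'' is not carried out, and the paper does \emph{not} proceed this way. Instead it passes to the companion. Writing $K' = C_{r,s}\circ K''$ with $rs = N$ and $s\geq 2$, the alternating surgeries
\[
S^3_{N+\frac{1}{q}}(K') \cong S^3_{\frac{Nq+1}{s^2 q}}(K''), \qquad q\geq 1,
\]
give $K''$ infinitely many alternating slopes in the bounded interval $[r/s,\, r/s + 1/s^2]\subseteq [M,M+1]$ where $M=\lfloor r/s\rfloor$. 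Lemma~\ref{lem:infinite_alt_surg_in_interval} then forces $K''\in\D$ with $[M,M+1]\cap\Q\subseteq \Salt(K'')$. But now the slopes $N-\tfrac{1}{q}$ on $K'$ correspond to $\tfrac{Nq-1}{s^2 q} = \tfrac{r}{s}-\tfrac{1}{s^2 q}\in[M,M+1]$ on $K''$, hence are alternating for every $q\geq 1$; a second application of Lemma~\ref{lem:infinite_alt_surg_in_interval} gives $[N-1,N]\cap\Q\subseteq\Salt(K')$. This companion-passing maneuver, bootstrapping Lemma~\ref{lem:infinite_alt_surg_in_interval} twice, is the key idea you are missing.
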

\begin{proof}
By Remark~\ref{rem:CM_facts}\eqref{it:rational_existence}, for any rational $r\geq N-1$ there exists an $r$-changemaker lattice with stable coefficients $\underline{\rho}=\underline{\rho}(K)$. We use $L_r$ to denote this changemaker lattice. 
If there exists a slope $p/q$ in $\Salt(K)\cap (N, N+1]$, then $L_{N+1}$ admits a planar obtuse superbase by Proposition~\ref{prop:altsurgimpliesintOSB2}. However, as noted in Remark~\ref{rem:very_slack}, the lattice $L_{N+1}$ is very slack and so Theorem~\ref{thm:very_slack_technical} shows that $L_{N+\frac12}$ admits an obtuse superbase and that $L_N$ is decomposable lattice. Given the planar obtuse superbase for $L_{N+\frac12}$, Theorem~\ref{thm:stable_coefs_to_D} implies the existence of a knot $K'\in \D$ with stable coefficients $\underline{\rho}(K')=\underline{\rho}$ and $[N,N+1]\cap \Q \subseteq \Salt(K')$. Since the stable coefficients determine the Alexander polynomial, we have $\Delta_K(x)=\Delta_{K'}(x)$.

Now consider the alternating surgery $S_N^3(K')$. This is homeomorphic to the double branched cover of a diagram $D$, whose Goeritz lattice is isomorphic to $L_N$. Since $L_N$ is decomposable, Lemma~\ref{lem:2-connected} implies that the white graph of $D$ contains a cut-vertex. This implies that $D$ is a non-prime diagram and hence that its double branched cover is a reducible manifold. Thus $N$ is a reducible surgery slope for $K'$.
\end{proof}

\begin{lem}\label{lem:N_surgery_reducible}
    Let $K$ be a non-trivial knot in $\D$. If $N$ is a reducible surgery slope for $K$, then $\Salt(K)=[N-1,N+1]\cap \Q$ and $K$ can be written as a cable of the form $C_{r,s}\circ K'$, where $K'\in \D$, $rs=N$ and $r/s\in \Salt(K')$.
\end{lem}
\begin{proof}
Using the formula for the genus found in Lemma~\ref{lem:Ti_sigma_relationship}, we see that $N>2g(K)-1$. Thus, the results of Matignon-Sahari \cite{MS03} imply that $K$ must be a cable knot or a torus knot.

Therefore we may write $K$ as a cable $K=C_{r,s}\circ K'$, where $N=rs$ is the reducible surgery slope on $K$. Here $s\geq 2$ is the winding number of the pattern and we allow the possibility that $K'$ is unknotted. By \cite[Corollary~7.3]{Gordon1983Satellite}, we have a homeomorphism 
\begin{equation}\label{eq:cable_surgery}
S_{rs \pm \frac{1}{q}}^3(K) \cong S_{\frac{rsq \pm 1}{s^2q}}^3(K')
\end{equation}
for all $q\geq 1$. Let $M$ be the integer $M=\lfloor \frac{r}{s} \rfloor$. Since $s\geq 2$, we have
\begin{equation}\label{eq:Kprimeslopebound}
    M< \frac{r}{s}-\frac{1}{s^2} \leq  \frac{rsq \pm 1}{s^2q}\leq \frac{r}{s}+\frac{1}{s^2}<M+1,
\end{equation}
for all $q\geq 1$. As shown by \eqref{eq:intervalinSalt}, $\Salt(K)$ contains a unit interval with integer endpoints. Combined with Lemma~\ref{lem:salt_bounds}, this implies that at least one of the intervals $[N-1,N]\cap \Q$ or $[N,N+1]\cap \Q$ is contained in $\Salt(K)$. By \eqref{eq:cable_surgery} and \eqref{eq:Kprimeslopebound}, this implies that $K'$ has infinitely many alternating surgery slopes in the interval $[M,M+1]\cap \Q$. By Lemma~\ref{lem:infinite_alt_surg_in_interval}, this means that $K'\in \D$ and $\left[M,M+1 \right]\cap \Q \subseteq \Salt(K')$. On the other hand, \eqref{eq:cable_surgery}, \eqref{eq:Kprimeslopebound} and the fact that $\left[M,M+1 \right]\cap \Q \subseteq \Salt(K')$ implies that $rs\pm \frac{1}{q}$ is an alternating surgery slope for $K$ for all $q$. Thus $\Salt(K)$ contains infinitely many alternating surgeries in both the intervals $[rs-1,rs]\cap \Q$ and $[rs,rs+1]\cap \Q$. By Lemma~\ref{lem:infinite_alt_surg_in_interval} and Lemma~\ref{lem:salt_bounds} this implies that
\[
[rs-1,rs+1]\cap \Q\subseteq \Salt(K),
\]
as required.
\end{proof}

We now have all the technical ingredients to prove our main result on the structure of $\Salt(K)$. Note that in contrast to the rest of this section, the statement of Theorem~\ref{thm:salt_trichotomy} use the convention that the stable coefficients are non-decreasing.

\begin{repthm}{thm:salt_trichotomy}
  \salttrichotomy 
\end{repthm}
\begin{proof}
    By Lemma~\ref{lem:salt_bounds}, we have that $\Salt(K)\subseteq [N-1,N+1]\cap \Q$.
    
    First we consider the case that $K\in \D$. We wish to show that \eqref{it:KinDcable} or \eqref{it:KinDnoncable} holds for $K$. Suppose first that $K$ admits an alternating surgery slope $p/q>N$. By Lemma~\ref{lem:pq>N_alt_surgery}, this implies the existence of a knot $K'\in \D$ such that $\Delta_{K}(x)=\Delta_{K'}(x)$ and $S_N(K')$ is a reducible alternating surgery. By Lemma~\ref{lem:N_surgery_reducible}, $K'$ is a cable knot or a torus knot with $\Salt(K')= [N-1,N+1]\cap \Q$. Since $\Salt(K)\cap \Salt(K')$ is infinite and $\Delta_{K}(x)=\Delta_{K'}(x)$, Corollary~\ref{cor:Alexpolydetermines} and Theorem~\ref{thm:computable_q} imply that $K=K'$. Thus we see that \eqref{it:KinDcable} holds for $K$ if it admits an alternating surgery slope $p/q>N$.
    Thus we can consider the case that $K\in \D$ and $\Salt(K)\subseteq [N-1,N]\cap\Q$. As shown by \eqref{eq:intervalinSalt}, for any knot in $\D$ there is a unit interval contained in $\Salt(K)$. This implies that $\Salt(K)= [N-1,N]\cap\Q$. Lemma~\ref{lem:N_surgery_reducible} implies that $N$ is not a reducible surgery slopes for $K$. Thus \eqref{it:KinDnoncable} holds in this case.

    Now suppose that $K\not\in \D$. Recall that in this case $\Salt(K)$ is finite by Corollary~\ref{cor:infte_alt_surg}. 
    
    First we observe that if $\Salt(K)$ is not a subset of $\{N-1, N\}$, then there is a knot $K'$ in $\D$ with $\Delta_K(x)=\Delta_{K'}(x)$ and $\Salt(K)\subseteq \Salt(K')$. If $\Salt(K)$ contains a slope $p/q>N$, then Lemma~\ref{lem:pq>N_alt_surgery} provides the required knot $K'\in \D$ and Lemma~\ref{lem:N_surgery_reducible} implies that $\Salt(K')=[N-1,N+1]\cap \Q$. This leaves case that $\Salt(K)\subseteq [N-1, N]\cap \Q$. However if $\Salt(K)$ contains a non-integer slope in $[N-1, N]\cap \Q$, then Theorem~\ref{thm:nonint_alt_surg} yields a knot $K'\in\D$ with $\Delta_K(x)=\Delta_{K'}(x)$ and $[N-1, N]\cap \Q\subseteq \Salt(K')$.

    From the preceding paragraph, we see that if $\Delta_K(x)\neq \Delta_{K'}(x)$ for all $K'\in \D$, then $\Salt(K)\subseteq\{N-1,N\}$. That is, \eqref{it:KnotinDgeneral} holds.

    Finally, suppose that there exist $K'\in \D$ such that $\Delta_K(x)=\Delta_{K'}(x)$. Moreover, we may assume that $\Salt(K)\subseteq \Salt(K')$. If $\Salt(K)$ is not a subset of $[N-1,N]\cap \Q$, then this follows by taking $K'$ as constructed above. If $\Salt(K)$ is a subset of $[N-1,N]\cap \Q$, then this is simply the observation that $[N-1,N]\cap \Q\subseteq \Salt(K')$ for every $K'\in \D$.
    Since $K$ and $K'$ are distinct, Corollary~\ref{cor:Alexpolydetermines} and Theorem~\ref{thm:computable_q} furnish a computable upper bound on $q$ for all $p/q$ in $\Salt(K)$. Thus \eqref{it:KnotinDAlexpolyinD} holds in this case.
\end{proof}

This implies that knots in $\D$ are determined by their Alexander polynomials.
\begin{cor}\label{cor:D_determined_by_Alex_poly}
Let $K$ and $K'$ be knots in $\D$ admitting positive alternating surgeries. If $\Delta_K(x)=\Delta_{K'}(x)$, then $K$ and $K'$ are isotopic.
\end{cor}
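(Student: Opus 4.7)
The plan is to combine the structural description of $\Salt(K)$ for knots in $\D$ with the Alexander-polynomial determination of alternating fillings and the effective bound on common surgeries.

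First I would note that since $\Delta_K(x)=\Delta_{K'}(x)$, the knots $K$ and $K'$ have identical torsion coefficients, and thus identical torsion counts $T_k$. By Algorithm~\ref{alg:stablecoeff} (or equivalently Definition~\ref{def:stable_coefs_knot} together with Lemma~\ref{lem:Ti_sigma_relationship}), the stable coefficients of an L-space knot are determined by its Alexander polynomial. Hence $\underline{\rho}(K)=\underline{\rho}(K')$, and in particular the integer $N$ defined in \eqref{eq:N_def} is the same for both knots.

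Next, since $K,K'\in\D$ and both admit positive alternating surgeries, Theorem~\ref{thm:salt_trichotomy}\eqref{it:KinDcable}--\eqref{it:KinDnoncable} applied separately to each knot shows that
\[
[N-1,N]\cap\Q \;\subseteq\; \Salt(K)\cap\Salt(K').
\]
This intersection is therefore an infinite set of rational numbers whose denominators are unbounded.

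For every slope $p/q$ in this intersection, Corollary~\ref{cor:Alexpolydetermines} provides the homeomorphism $S^3_{p/q}(K)\cong S^3_{p/q}(K')$. If $K$ and $K'$ were not isotopic, then Theorem~\ref{thm:computable_q} would give a computable constant $Q(K,K')$ with $|q|\leq Q(K,K')$ for every such slope, contradicting the fact that the common alternating surgery slopes have unbounded denominators. Therefore $K$ and $K'$ must be isotopic. The only place that requires any care is the invocation of Theorem~\ref{thm:computable_q}, but since the obstruction comes from an honest interval $[N-1,N]$ of rationals, denominators are automatically unbounded and no subtlety remains.
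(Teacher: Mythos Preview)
Your proof is correct and follows essentially the same approach as the paper: same Alexander polynomial gives the same stable coefficients and hence the same $N$, so $[N-1,N]\cap\Q\subseteq\Salt(K)\cap\Salt(K')$, whence Corollary~\ref{cor:Alexpolydetermines} and Theorem~\ref{thm:computable_q} force isotopy. The only small point you omit is that Theorem~\ref{thm:salt_trichotomy} is stated for non-trivial knots, so one should first note (as the paper does) that if $\Delta_K(x)=1$ then both knots have genus zero and are therefore unknots; otherwise both are non-trivial and your argument applies.
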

\begin{proof}
Since the genus of a knot admitting alternating surgeries is determined by the degree of its Alexander polynomial, we may assume that both $K$ and $K'$ are non-trivial. If $K$ and $K'$ have the same Alexander polynomial, then for both knots the integer $N$ appearing in Theorem~\ref{thm:salt_trichotomy} is the same. In particular for this integer we have $[N-1,N]\cap \Q \subseteq\Salt(K)\cap \Salt(K')$. Consequently, Corollary~\ref{cor:Alexpolydetermines} and Theorem~\ref{thm:computable_q} together imply that $K$ and $K'$ are isotopic.
\end{proof}

\section{Computability of \texorpdfstring{$\Salt(K)$}{Salt(K)}}

In this section, we leverage Theorem~\ref{thm:salt_trichotomy} to prove that $\Salt(K)$ is algorithmically computable for any knot $K$. First we exclude the case of $0$-surgery.

\begin{lem}\label{lem:0_slope}
    The slope $0\in \Q$ is an alternating surgery slope for $K$ if and only if $K$ is the unknot.
\end{lem}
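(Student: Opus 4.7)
The plan is to prove the two directions separately, handling slope $0$ as a special case outside the main structural framework of Theorem~\ref{thm:salt_trichotomy} (which concerns positive slopes on non-trivial L-space knots).

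The ($\Leftarrow$) direction is immediate: $S^3_0(U) \cong S^1 \times S^2$, which is the double branched cover of the $2$-component unlink, a (split) alternating link.

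For ($\Rightarrow$), suppose $S^3_0(K) \cong \Sigma_2(L)$ for some alternating link $L$. The key homological constraint is $b_1(S^3_0(K)) = 1$, since $H_1(S^3_0(K);\Z) \cong \Z$. First I would rule out the possibility that $L$ is non-split: the Ozsv\'ath--Szab\'o theorem that the double branched cover of a non-split alternating link is a Heegaard Floer L-space forces such a cover to be a rational homology sphere, incompatible with $b_1 = 1$. Thus $L$ must be split, so write $L = L_1 \sqcup \cdots \sqcup L_k$ as a decomposition into non-split sublinks (each automatically alternating). The standard formula for double branched covers of split links gives
\[
\Sigma_2(L) \cong \Sigma_2(L_1) \, \# \, \cdots \, \# \, \Sigma_2(L_k) \, \# \, \#^{k-1}(S^1 \times S^2).
\]
Since each $\Sigma_2(L_i)$ is a rational homology sphere by the previous step applied to each non-split component, we obtain $b_1(\Sigma_2(L)) = k-1$, and comparing with $b_1 = 1$ forces $k=2$. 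In particular $S^3_0(K)$ contains an $S^1 \times S^2$ connect summand, and hence is reducible.

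The argument concludes by invoking Gabai's resolution of Property R: the unknot is the only knot in $S^3$ whose zero-surgery contains a non-separating sphere (equivalently, is reducible). This forces $K$ to be the unknot. The only genuinely deep ingredient is Gabai's theorem; the remainder is a straightforward combination of the $b_1$ constraint with the L-space property of non-split alternating links, so no real obstacle is anticipated.
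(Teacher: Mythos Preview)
Your proof is correct and follows the same overall structure as the paper's: show the alternating branching set must be split, deduce an $S^1\times S^2$ summand in $S^3_0(K)$, and invoke Gabai's Property~R. The only difference is in how splitness is established: you use $b_1(S^3_0(K))=1$ together with the Ozsv\'ath--Szab\'o result that non-split alternating links have L-space (hence rational homology sphere) double branched covers, whereas the paper observes more directly that $\det(J)=|H_1(\Sigma_2(J))|=0$ and cites the classical fact that an alternating link has determinant zero if and only if it is split. The paper's route is slightly more elementary (no Heegaard Floer input needed at this step), and it also avoids your extra computation pinning down $k=2$ exactly---all one needs is $k\geq 2$ to get the $S^1\times S^2$ summand.
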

\begin{proof}
    Firstly, since $S_0^3(U)\cong S^1 \times S^2$ and $S^1\times S^2$ is the double branched cover of the 2-component unlink, we see that $0\in \Salt(U)$. Conversely, if $0\in \Salt(K)$ for a knot $K$, then $S_0^3(K)\cong \Sigma_2(J)$, where $J$ is an alternating link with $\det(J)=0$. However, an alternating link has determinant zero if and only if it is a split link \cite{lickorish}. Thus $\Sigma_2(J)$ contains an $S^1\times S^2$ summand and Gabai's proof of property R implies that $K$ is the unknot \cite{Ga87}.
\end{proof}

Next, we remind the reader that by the resolution of the Tait conjecture~\cite{Taite1,Taite2,Taite3} any reduced alternating diagram realizes the crossing number. 
Since for any non-split alternating link we have $\operatorname{cr}(J)\leq \det (J)$~\cite{det_alt}, it follows that for a given $d>0$, there are only finitely many alternating knots or links $J$ with $\det J=d$. In particular, one can find all alternating links with $\det J=d$ by enumerating all reduced alternating diagrams with at most $d$ crossings.

\begin{lem}\label{lem:pq_in_salt}
Given a knot $K$ and a fixed slope $p/q\in \Q$, there is an algorithm to decide whether $S_{p/q}^3(K)$ is an alternating surgery.  
\end{lem}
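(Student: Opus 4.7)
The strategy is to reduce the problem to a finite enumeration of candidates for the alternating branching set, followed by the $3$-manifold homeomorphism problem. First handle the degenerate case $p=0$ directly: by Lemma~\ref{lem:0_slope}, $0 \in \Salt(K)$ if and only if $K$ is the unknot, which is algorithmically decidable by Haken's unknot recognition algorithm. Henceforth assume $p \neq 0$.

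Suppose $S_{p/q}^3(K) \cong \Sigma_2(J)$ for some alternating link $J$. Then
\[
|p| \;=\; |H_1(S_{p/q}^3(K);\Z)| \;=\; |H_1(\Sigma_2(J);\Z)| \;=\; \det(J),
\]
so the determinant of the branching set is forced to equal $|p|$. By the resolution of the Tait conjectures, every reduced alternating diagram of $J$ realizes the crossing number $\operatorname{cr}(J)$, and $\operatorname{cr}(J)\leq \det(J)$ for every non-split alternating link, as recalled in the paragraph preceding the lemma. Hence every candidate branching set $J$ admits a reduced alternating diagram with at most $|p|$ crossings, and we can enumerate all such diagrams (up to the trivial moves that preserve the underlying link) to produce a finite list of candidate alternating links $J_1,\dots,J_k$.

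For each $J_i$, one constructs an explicit triangulation of the double branched cover $\Sigma_2(J_i)$ from its diagram, and one constructs a triangulation of $S_{p/q}^3(K)$ by Dehn filling a triangulation of the exterior of $K$ along the slope $p/q$. Feeding each such pair of triangulations into the decision procedure for the closed orientable $3$-manifold homeomorphism problem, one tests whether $\Sigma_2(J_i) \cong S_{p/q}^3(K)$. Then $p/q \in \Salt(K)$ if and only if at least one of these finitely many tests returns ``yes,'' which is decidable in finite time.

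The only substantive ingredient is the decidability of the homeomorphism problem for closed orientable $3$-manifolds---the same black box the authors explicitly invoke in their discussion preceding Theorem~\ref{thm:computable}, and the one responsible for the algorithm being theoretical rather than practical. Everything else is a routine finite enumeration together with standard triangulation constructions, so the hardest step is simply citing this solvability correctly.
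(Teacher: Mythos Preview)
Your proof is correct and follows essentially the same approach as the paper: handle $p=0$ via Lemma~\ref{lem:0_slope}, then for $|p|>0$ use the bound $\operatorname{cr}(J)\leq \det(J)=|p|$ to enumerate finitely many candidate alternating branching sets, and finish by invoking decidability of the $3$-manifold homeomorphism problem. The only differences are cosmetic (you spell out the triangulation constructions and name Haken's algorithm explicitly).
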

\begin{proof}
According to Lemma~\ref{lem:0_slope}, deciding if $0\in \Salt(K)$ is equivalent to checking whether or not $K$ is trivial.
For fixed $|p|>0$, first enumerate the finitely many alternating links $J_1, \dots, J_n$ with determinant $\det J_i=|p|$. Deciding whether $p/q\in \Salt(K)$ is then equivalent to deciding whether $S_{p/q}^3(K)\cong \Sigma_2(J_i)$ for some $i\in \{1,\dots, n\}$. Since the homeomorphism problem for 3-manifolds is decidable \cite{Kuperberg2019homeomorphism}, this shows that one can decide if $p/q\in \Salt(K)$ for any fixed $K$. 
\end{proof}

\begin{lem}\label{lem:exists_knot_in_D}
Given a symmetric polynomial $\Delta\in \Z[x^{\pm 1}]$ with $\Delta(1)=1$, there is an algorithm to construct a knot $K\in \D$ with $\Delta_K=\Delta$ or show that no such knot exists.
\end{lem}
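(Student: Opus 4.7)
The plan is to reduce the existence of a knot $K\in\D$ with $\Delta_K=\Delta$ to deciding whether a specific half-integer changemaker lattice admits a planar obtuse superbase, and then to invoke Theorem~\ref{thm:stable_coefs_to_D} to construct the knot when such a superbase exists.

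First, observe that every knot in $\D$ admits an alternating surgery and is therefore an L-space knot; in particular its Alexander polynomial is an L-space polynomial. So I would begin by running Algorithm~\ref{alg:stablecoeff} on $\Delta$. If it certifies that no compatible stable coefficients exist, return ``no knot in $\D$ realizes $\Delta$''. Otherwise the algorithm returns a candidate tuple $\underline{\rho}=(\rho_1,\dots,\rho_\ell)$; as flagged in the remark following Algorithm~\ref{alg:stablecoeff}, this output must then be verified to be genuinely compatible with $\Delta$ in the sense of \eqref{eq:sigma_ti_relation}. If verification fails, return ``no''.

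Assuming we have compatible stable coefficients $\underline{\rho}$, compute the integer $N$ by the formula in Theorem~\ref{thm:salt_trichotomy}. For any $K\in\D$ with stable coefficients $\underline{\rho}$, Theorem~\ref{thm:salt_trichotomy} gives $[N-1,N]\cap\Q\subseteq\Salt(K)$; in particular the slope $N-\tfrac12$ is an alternating surgery slope. By Theorem~\ref{thm:alt_surgery_obstructions}, this forces the $(N-\tfrac12)$-changemaker lattice $L$ with stable coefficients $\underline{\rho}$ (which exists by Proposition~\ref{prop:CM_existence_bound} and Remark~\ref{rem:CM_facts}) to admit a planar obtuse superbase. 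Conversely, if $L$ admits a planar obtuse superbase, then Theorem~\ref{thm:stable_coefs_to_D} (applied with $p/q=N-\tfrac12$) produces a knot $K\in\D$ with $\underline{\rho}(K)=\underline{\rho}$, and $\Delta_K=\Delta$ follows because the stable coefficients determine the torsion coefficients via \eqref{eq:sigma_ti_relation}, and hence (for L-space knots) determine the Alexander polynomial.

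Thus the question becomes: does $L$ admit a planar obtuse superbase? I would apply Algorithm~\ref{alg:half_int_OSB} to $L$. If no obtuse superbase exists, return ``no''. Otherwise, test the resulting graph $G_B$ for planarity, using any standard planarity algorithm; by the remark following Lemma~\ref{lem:2-iso}, planarity is a property of $L$ itself, so a single obtuse superbase suffices. If $G_B$ is non-planar, return ``no''. If $G_B$ is planar, follow the construction in the proof of Theorem~\ref{thm:stable_coefs_to_D}: fix a planar embedding of $G_B$, build the alternating diagram $D$ whose white graph is this embedding and whose Goeritz lattice is $L$, identify $D$ with a rational tangle replacement of an unknotting crossing $c$ in an alternating unknotting-number-one diagram $\widetilde D$, and return $K=K_{(\widetilde D,c)}$. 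Uniqueness of $K$ is guaranteed by Corollary~\ref{cor:D_determined_by_Alex_poly}.

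No individual step is a serious obstacle; this is essentially an orchestration of tools established earlier in the paper. The most delicate point is to genuinely verify compatibility of the output of Algorithm~\ref{alg:stablecoeff} with $\Delta$, since that algorithm may return a tuple even when no compatible changemaker vector exists; without this check, one could pass non-L-space polynomials through the remainder of the procedure and erroneously certify the existence of a knot in $\D$.
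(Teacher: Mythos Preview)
Your proof is correct and takes a genuinely different route from the paper's. The paper argues by brute-force enumeration: if $K=K_{(D,c)}\in\D$ has $\Delta_K=\Delta$ of degree $g$, then the half-integer alternating surgery $S^3_{d/2}(K)\cong\Sigma_2(D)$ with $d=\det D$ satisfies $d/2<4g+3$ by \cite[Theorem~1.1]{Mc17}, whence $\operatorname{cr}(D)\leq\det D\leq 8g+5$; one then iterates over all reduced alternating diagrams with at most $8g+5$ crossings and all unknotting crossings therein, checking for each whether $K_{(D,c)}$ has the right Alexander polynomial.

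Your approach instead leverages the lattice theory: compute stable coefficients via Algorithm~\ref{alg:stablecoeff}, verify compatibility, and then decide directly whether the $(N-\tfrac12)$-changemaker lattice admits a planar obtuse superbase, invoking Theorem~\ref{thm:stable_coefs_to_D} to build the knot when it does. This is exactly the route used in the practical strategy of \S11 (see Figure~\ref{fig:practical_algorithm_outline} and Strategy~\ref{strat:knot_in_D}), and in practice it is far more efficient than enumerating all alternating diagrams up to $8g+5$ crossings. The paper's enumeration argument, by contrast, is more self-contained: it needs only the genus bound from \cite{Mc17} and the crossing-number/determinant inequality, without appealing to the structure of obtuse superbases or to Theorem~\ref{thm:salt_trichotomy}. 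Both arguments yield uniqueness via Corollary~\ref{cor:D_determined_by_Alex_poly}.
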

\begin{proof} 
Let $g$ be the degree of $\Delta$, by which we mean the smallest $g\geq 0$ such that $\Delta(x)$ can be written in the form $\Delta(x)=a_0+\sum_{i=1}^g a_i (x^{i}+x^{-i})$ with $a_g\neq 0$. Let $(D,c)$ be a reduced alternating diagram $D$ with an unknotting crossing $c$ such that the corresponding knot $K=K_{(D,c)}$ is a knot in $\D$ with $\Delta_K=\Delta$. By Corollary~\ref{cor:D_determined_by_Alex_poly}, the knot $K$ is necessarily unique. Since any $K\in \D$ is an L-space knot, we have that $g(K)=g$.
 
It follows from the construction in \S\ref{sec:D_definition} that $S_{d/2}^3(K)\cong \Sigma_2(D)$ is an alternating surgery, where $d=\det D$. By \cite[Theorem~1.1]{Mc17}, this implies that $d/2 < 4g(K)+3$. Thus we see that $\operatorname{cr}(D)\leq \det D\leq 8g+5$. Thus given $\Delta$ of degree $g$, we can exhibit a knot $K\in \D$ with $\Delta_K=\Delta$ or verify its non-existence by iterating over the finitely many pairs $(D,c)$ where $D$ is a reduced alternating diagram with at most $8g+5$ crossings and $c$ is an unknotting crossing.
\end{proof}

\begin{repthm}{thm:computable}
   \thmcomputable
\end{repthm}
\begin{proof}
Firstly, one can decide whether or not $K$ is trivial. If $K$ is trivial, then $\Salt(K)=\Q$. Thus it suffices to provide an algorithm for a non-trivial knot $K$. By Lemma~\ref{lem:0_slope}, $0\notin \Salt(K)$ for any non-trivial $K$. Furthermore, for any $p/q\in \Q$, there is a homeomorphism $S_{-p/q}^3(K)\cong -S_{p/q}^3(mK)$. Therefore, it suffices to calculate in $\Salt(K)\cap \Q_{>0}$, since $\Salt(K)\cap \Q_{<0}$ is determined by $\Salt(mK)\cap \Q_{>0}$.

 We now describe an algorithm to calculate $\Salt(K)\cap \Q_{>0}$ for a non-trivial knot $K$. This algorithm is depicted in Figure~\ref{fig:algorithm_outline}. As a first step calculate the Alexander polynomial $\Delta_K$, symmetrized and normalized so that $\Delta_K(1)=1$ and $\Delta_K(x)=\Delta_K(x^{-1})$. Let $g$ be the degree of $\Delta_K(x)$. If $g=0$, then $\Salt(K)=\emptyset$, since the only L-space knot with $\Delta_K(x)=1$ is the unknot and we are assuming that $K$ is non-trivial. If $g\geq 1$, then we calculate the torsion coefficients $t_0, \dots, t_{g-1}$ of $\Delta_K(x)$. If $K$ is an L-space knot then the $t_i$ satisfy
 \begin{equation}\label{eq:Lspace_ti_conditions}
     \text{$t_{i+1}+ 1\geq t_i\geq t_{i+1}$ for $0\leq i\leq g-1$ and $t_{g-1}=1$}.
 \end{equation}
 Since alternating surgeries are L-space surgeries, we see that if the conditions of \eqref{eq:Lspace_ti_conditions} are not satisfied, then $\Salt(K)=\emptyset$. Thus we reduce to the case that the conditions \eqref{eq:Lspace_ti_conditions} are satisfied. Applying Algorithm~\ref{alg:stablecoeff} we can calculate the stable coefficients of $K$ if they exist. If Algorithm~\ref{alg:stablecoeff} does not return any stable coefficients, then $\Salt(K)=\emptyset$ as per Theorem~\ref{thm:alt_surgery_obstructions}. Thus, we may assume that Algorithm~\ref{alg:stablecoeff} returns a tuple of non-trivial stable coefficients for $K$ and consequently that we can calculate the integer $N$ of Theorem~\ref{thm:salt_trichotomy}. By Lemma~\ref{lem:exists_knot_in_D}, there is an algorithm to construct a knot $K'$ in $\D$ with $\Delta_K=\Delta_{K'}$ or show that no such knot exists. If no such knot exists, then Theorem~\ref{thm:salt_trichotomy}\eqref{it:KnotinDgeneral} shows that $\Salt(K)\subseteq\{N,N-1\}$. Lemma~\ref{lem:pq_in_salt} then shows that $\Salt(K)$, being a subset of an explicit finite set is computable. Thus suppose that there exists a knot $K'\in \D$ with $\Delta_K=\Delta_{K'}$. The next step is to check whether $K$ and $K'$ are isotopic. If so, we have $K\in \D$ and Theorem~\ref{thm:salt_trichotomy} yields two possibilities for $\Salt(K)$ which can be distinguished by checking whether $N+\frac12\in\Salt(K)$. Thus the final possibility is that $K$ and $K'$ are distinct. In this case, Theorem~\ref{thm:salt_trichotomy}\eqref{it:KnotinDAlexpolyinD} shows that there is a computable constant $Q(K, K')$ (defined via Theorem~\ref{thm:computable_q}) such that
 \[
 \Salt(K)\subseteq \{p/q \in \Q\cap [N-1,N+1] \mid q\leq Q(K,K')\}.
 \]
 As $\Salt(K)$ is contained in an explicit finite set, Lemma~\ref{lem:pq_in_salt} again implies that $\Salt(K)$ is computable.
\end{proof}

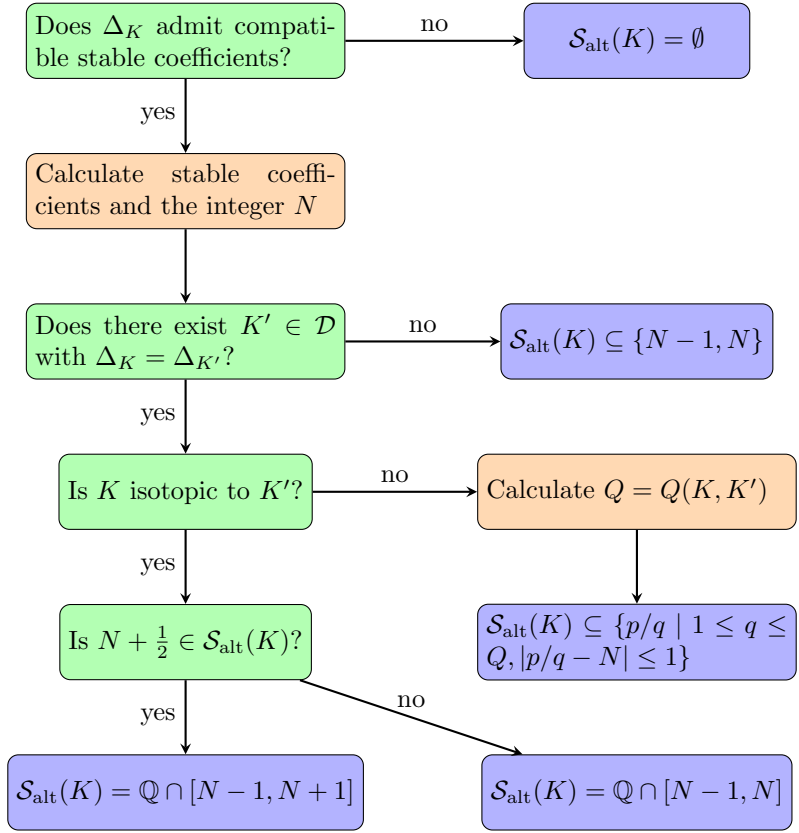
\begin{figure}
\begin{tikzpicture}[node distance=2cm]
\node (stable) [decision] {\parbox{4cm}{Does $\Delta_K$ admit compatible stable coefficients?}};
\node (returnnostable) [io, right of=stable, xshift=4cm] {$\Salt(K)=\emptyset$};
\node (calcstable) [process, below of=stable] {\parbox{4cm}{Calculate stable coefficients and the integer $N$}};
\node (polyinD) [decision, below of=calcstable] {\parbox{4cm}{Does there exist $K'\in\D$ with $\Delta_K=\Delta_{K'}$?}};
\node (polynotinD) [io, right of=polyinD, xshift=4cm] {\parbox{4.5cm}{$\Salt(K)\subseteq\{N-1,N\}$ computable by Lemma~\ref{lem:pq_in_salt}}};
\node (KinD) [decision, below of=polyinD] {Is $K$ isotopic to $K'$?};
\node (calcQ) [process, right of=KinD, xshift=4cm] {\parbox{4cm}{Calculate $Q=Q(K,K')$}};
\node (comparetoD) [io, below of=calcQ] {\parbox{5cm}{$\Salt(K)\subseteq\{p/q\mid 1\leq q\leq Q, |p/q-N|\leq 1\}$ computable by Lemma~\ref{lem:pq_in_salt}}};
\node (halfint) [decision, below of=KinD] {Is $N+\frac12\in \Salt(K)$?};
\node (returnKinD1) [io, below of=halfint] {$\Salt(K)=\Q\cap [N-1,N+1]$};
\node (returnKinD2) [io, right of=returnKinD1, xshift=4cm] {$\Salt(K)=\Q\cap [N-1,N]$};
\draw [arrow] (stable) --node[anchor=south] {no} (returnnostable);
\draw [arrow] (stable) --node[anchor=east] {yes} (calcstable);
\draw [arrow] (calcstable) -- (polyinD);
\draw [arrow] (polyinD) --node[anchor=south] {no} (polynotinD);
\draw [arrow] (polyinD) --node[anchor=east] {yes} (KinD);
\draw [arrow] (KinD) --node[anchor=south] {no} (calcQ);
\draw [arrow] (KinD) --node[anchor=east] {yes} (halfint);
\draw [arrow] (calcQ) -- (comparetoD);
\draw [arrow] (halfint) --node[anchor=east] {yes} (returnKinD1);
\draw [arrow] (halfint) --node[anchor=south] {no} (returnKinD2);
\end{tikzpicture}
\caption{The outline of an algorithm to calculate $\Salt(K)$ for a non-trivial knot $K$.}
\label{fig:algorithm_outline}
\end{figure}

\section{Genus bounds on \texorpdfstring{$\Salt(K)$}{Salt(K)}}\label{sec:genus_bounds}
In this section we prove the genus bound on alternating surgeries provided by Theorem~\ref{thm:genusbound}. 

\begin{lem}\label{lem:2strandpoly}
    Let $K$ be a knot in $S^3$ which admits a positive alternating surgery. If the symmetrized Alexander polynomial of $K$ takes the form
    \[
    \Delta_K(x)=x^g-x^{g-1}+x^{g-2} + \text{lower order terms},
    \]
    then the stable coefficients of $K$ take the form $\underline{\rho}(K)=(\underbrace{2,\dots,2}_{g})$.
\end{lem}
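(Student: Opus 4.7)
Since $K$ admits a positive alternating surgery $S_{p/q}^3(K) \cong \Sigma_2(J)$ for a non-split alternating link $J$, and the double branched cover of such a link is a Heegaard Floer L-space, $K$ is a non-trivial L-space knot. Thus Theorem~\ref{thm:alt_surgery_obstructions} guarantees the stable coefficients $\underline{\rho}(K) = (\rho_1, \dots, \rho_\ell)$ exist. The plan is to show $\rho_1 = 2$; because the coefficients satisfy $\rho_1 \geq \rho_2 \geq \dots \geq \rho_\ell \geq 2$ by convention, this will force every $\rho_i = 2$. The number of twos will then be computed from the L-space knot genus formula $g(K) = \tfrac{1}{2}\sum_i \rho_i(\rho_i - 1)$ of Lemma~\ref{lem:Ti_sigma_relationship}, combined with the fact that L-space knots satisfy $g(K) = \deg \Delta_K = g$.

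To identify $\rho_1$, I would read off from the hypothesis that the symmetrized Alexander polynomial $\Delta_K(x) = a_0 + \sum_{i=1}^g a_i(x^i + x^{-i})$ has leading coefficients $a_g = 1$, $a_{g-1} = -1$, $a_{g-2} = 1$, and then compute the uppermost torsion coefficients directly from $t_i(K) = \sum_{j \geq 1} j\, a_{i+j}$. The routine computation yields
\[
t_{g-1} = 1, \qquad t_{g-2} = -1 + 2 = 1, \qquad t_{g-3} = 1 - 2 + 3 = 2
\]
(the third identity requiring $g \geq 3$). Since the torsion coefficients of an L-space knot are non-increasing, this shows $t_0 \geq t_{g-3} = 2 > 1$ and that the indices $i$ satisfying $0 < t_i \leq 1$ are exactly $\{g-2, g-1\}$. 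Hence $T_1 = 2$, and Remark~\ref{rem:stable_hand_calc} applies to give $\rho_1 = T_1 = 2$, as required.

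The main subtlety is handling the small-genus cases $g \leq 2$, where the argument via $t_{g-3}$ is unavailable. These I would dispose of by a direct contradiction argument: if $\rho_1 \geq 3$ then the genus formula forces $g(K) \geq \rho_1(\rho_1 - 1)/2 \geq 3$, contradicting $g \leq 2$. So $\rho_1 = 2$ in these cases as well. Once $\rho_i = 2$ for every $i$ has been established, the genus formula gives $\ell = g(K) = g$, completing the proof. The only real obstacle I anticipate is bookkeeping around the edge cases $g \in \{1, 2\}$, where the Alexander polynomial hypothesis degenerates (the ``lower order terms'' portion is empty or overlaps the displayed terms); everything else is a direct application of results already in the excerpt.
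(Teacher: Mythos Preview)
Your proposal is correct and follows essentially the same route as the paper's proof: compute the top torsion coefficients to get $T_1=2$, invoke Remark~\ref{rem:stable_hand_calc} for $\rho_1=2$ when $g\geq 3$, handle $g\leq 2$ directly via the genus formula, and then count the twos. The only cosmetic difference is that the paper disposes of $g\in\{1,2\}$ by listing the forced tuples $(2)$ and $(2,2)$ rather than phrasing it as a contradiction, but the content is identical.
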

\begin{proof}
This follows from the calculation of $\rho_1$ in Remark~\ref{rem:stable_hand_calc} and the genus
\begin{equation}\label{eq:genus_formula}
    g(K)=\frac12 \sum_{i=1}^\ell \rho_i(\rho_i-1)
\end{equation}
from Lemma~\ref{lem:Ti_sigma_relationship}.
If $g=1$ or $g=2$, then according to \eqref{eq:genus_formula}, the only possibilities are $\underline{\rho}(K)=(2)$ or $\underline{\rho}(K)=(2,2)$, respectively. If $g\geq 3$, then we calculate that the torsion coefficients satisfy $t_{g}=0$, $t_{g-1}=t_{g-2}=1$ and $t_{g-3}=2$. By definition, this means $T_1=2$. Since $t_0\geq t_{g-3}>1$, we can apply Remark~\ref{rem:stable_hand_calc} to see that $\rho_1=2$. Since the stable coefficients are non-increasing it follows that $\underline{\rho}(K)=(\underbrace{2,\dots,2}_{g})$, as required.
\end{proof}

Notably the symmetrized Alexander polynomial of the knot $T_{2,2g+1}$ takes the form
\[
\Delta_{T_{2,2g+1}}(x)=x^g -x^{g-1} + x^{g-2}+ \dots + x^{-g},
\]
so the stable coefficients of $T_{2,2g+1}$ are of the form provided by Lemma~\ref{lem:2strandpoly}.

\begin{lem}\label{lem:T1=2}
Let $K$ be a knot which admits positive alternating surgeries. If the stable coefficients of $K$ satisfy $\underline{\rho}(K)=(\underbrace{2,\dots,2}_{g})$, then $K$ is the torus knot $T_{2,2g+1}$.
\end{lem}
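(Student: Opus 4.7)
The plan is to identify $T_{2,2g+1}$ as the unique knot in $\D$ sharing $K$'s Alexander polynomial, and then to use Theorem~\ref{thm:salt_trichotomy} together with surgery characterizations of the torus knot to pin $K$ down. First I would verify that $T_{2,2g+1}$ has stable coefficients $(\underbrace{2,\dots,2}_{g})$: by Remark~\ref{rem:stable_hand_calc} one computes $\rho_1=T_1=2$ for $T_{2,2g+1}$, and the genus formula $g=\tfrac{1}{2}\sum_i\rho_i(\rho_i-1)$ from Lemma~\ref{lem:Ti_sigma_relationship} combined with $g(T_{2,2g+1})=g$ forces every stable coefficient to equal $2$, with exactly $g$ of them. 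Moreover $T_{2,2g+1}\in\D$, since its standard alternating diagram contains an unknotting crossing whose Montesinos construction recovers $T_{2,2g+1}$. The associated integer is $N=4g+2=2(2g+1)$, which is precisely the cabling slope of $T_{2,2g+1}$, so case~(1) of Theorem~\ref{thm:salt_trichotomy} gives $\Salt(T_{2,2g+1})=[N-1,N+1]\cap\Q$.

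Since stable coefficients determine the Alexander polynomial via Lemma~\ref{lem:Ti_sigma_relationship}, we have $\Delta_K=\Delta_{T_{2,2g+1}}$. Applying Theorem~\ref{thm:salt_trichotomy} to $K$, case~(3) is excluded because $T_{2,2g+1}\in\D$ has the same Alexander polynomial as $K$, so the remaining possibilities are cases~(1), (2), or (4). In cases~(1) and~(2) we have $K\in\D$, and Corollary~\ref{cor:D_determined_by_Alex_poly} immediately identifies $K$ with the unique knot in $\D$ with this Alexander polynomial, namely $T_{2,2g+1}$.

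The hard part will be ruling out case~(4), in which $K\notin\D$ and $\Salt(K)$ is a nonempty finite subset of $[N-1,N+1]\cap\Q$. The plan is to pick any $p/q\in\Salt(K)$ and invoke Corollary~\ref{cor:Alexpolydetermines} to identify $S_{p/q}^3(K)\cong S_{p/q}^3(T_{2,2g+1})$; by Moser's classification of surgeries on the $(2,2g+1)$-torus knot, the right-hand side is either reducible (at $p/q=N$), a lens space (when $|p-qN|=1$), or a small Seifert fibered space over $S^2$ with three exceptional fibers of multiplicities $2$, $2g+1$, and $|p-qN|$ otherwise. In each subcase I would invoke a classical surgery characterization of the torus knot --- Gordon's cabling theorem for reducible surgeries, the Cyclic Surgery Theorem together with Ozsv\'ath--Szab\'o and Greene's analysis of lens space surgeries for the lens space cases, and Ni's Seifert fibered surgery theorem~\cite{Ni2020Seifert} for the remaining small Seifert fibered cases --- which combined with the constraint $\Delta_K=\Delta_{T_{2,2g+1}}$ forces $K=T_{2,2g+1}$, contradicting $K\notin\D$ and completing the argument.
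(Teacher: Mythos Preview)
Your overall strategy coincides with the paper's: establish $\Delta_K=\Delta_{T_{2,2g+1}}$, deduce that any alternating surgery on $K$ is homeomorphic to the corresponding surgery on $T_{2,2g+1}$ via Corollary~\ref{cor:Alexpolydetermines}, and then invoke surgery characterizations of $T_{2,2g+1}$. Your handling of the $K\in\D$ case via Corollary~\ref{cor:D_determined_by_Alex_poly} is in fact cleaner than the paper's argument, which does not separate this case out.

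However, your case~(4) sketch has two genuine gaps. First, ``Gordon's cabling theorem'' is not the right tool for the reducible slope $p/q=N=4g+2$: the cabling conjecture is open, and what is actually needed is Greene's result \cite{Greene2015genusbounds} that $4g+2$ is a characterizing slope for $T_{2,2g+1}$. Second, and more substantively, Ni's theorem \cite[Theorem~1.1]{Ni2020Seifert} only rules out \emph{hyperbolic} $K$ when $p/q>4g(K)$ and the surgery is a small Seifert fibered space; it says nothing about satellite knots. The paper handles this separately: if $K$ were a cable $C_{a,b}(K')$ with an atoroidal non-integer surgery, then $p/q=ab\pm 1/q$, while the genus formula $g(K)=\tfrac{(a-1)(b-1)}{2}+ag(K')$ together with $p/q\geq 4g(K)+1$ forces $p/q>ab+1$, a contradiction. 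This step is short but not automatic, and without it your case~(4) argument is incomplete.
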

\begin{proof}
Since the stable coefficients of a knot determine its Alexander polynomial, we see that $K$ and $T_{2,2g+1}$ must have the same Alexander polynomials. From Theorem~\ref{thm:salt_trichotomy} and by Corollary~\ref{cor:Alexpolydetermines} it follows that
\[
\Salt(K)\subseteq [4g+1,4g+3]\cap \Q = \Salt(T_{2,2g+1})
\]
and for any $p/q\in \Salt(K)$ we have $S_{p/q}^3(T_{2,2g+1})\cong S_{p/q}^3(K)$. Greene has shown that the reducible surgery slope $p/q=4g+2$ is a characterizing slope for $T_{2,2g+1}$ \cite{Greene2015genusbounds}. Rasmussen has shown that $p/q=4g+3$ is a characterizing slope for $T_{2,2g+1}$ \cite{Rasmussen2007lens}. Ni has shown that if $S_{4g+1}^3(T_{2,2g+1})\cong S_{4g+1}^3(K)$, then $K=T_{2,2g+1}$ or $\Delta_K=\Delta_{T_{5,4}}$ \cite{Ni2020Seifert}. In any case, these results imply that if $K$ has an integer alternating surgery, then $K=T_{2,2g+1}$.

Thus let us suppose that $p/q$ is a non-integer alternating surgery slope for $K$. For such a slope $S_{p/q}^3(T_{2,2g+1})\cong S_{p/q}^3(K)$ is an atoroidal Seifert fibered space. By Thurston's trichotomy for knots in $S^3$, $K$ must be either a hyperbolic knot, a satellite knot or a torus knot. Since $p/q>4g(K)$, the work of Ni immediately rules out the possibility that $K$ is a hyperbolic knot \cite[Theorem~1.1]{Ni2020Seifert}. If $K$ is a satellite knot, then $K$ must be a cable knot \cite[Corollary~1.1]{Miyazaki1997Seifert}. Suppose that $K$ is the $(a,b)$-cable of a non-trivial knot $K'$. Since $K$ is an L-space knot we must have $a,b>1$. Thus we have that $g(K)=\frac{(a-1)(b-1)}{2} + a g(K')$. Moreover, since the resulting space is atoroidal, the slope $p/q$ must take the form $p/q=ab\pm 1/q$. Since $K'$ is non-trivial we have $g(K')\geq 1$. Thus we have
\[
p/q\geq 4g(K)+1\geq 2(a-1)(b-1)+4a +1= ab+(b+2)(a-2)+7>ab+7.
\]
However this is a contradiction, since we can have an atoroidal surgery on $K$ only if $p/q$ takes the form $p/q=ab\pm 1/q\leq ab+1$.
Thus the only remaining case is that $K$ is a torus knot. However since $\Delta_{K}(X)=\Delta_{T_{2,2g+1}}(X)$, the only possibility is that $K=T_{2,2g+1}$, as required.
\end{proof}
Theorem~\ref{thm:Alexander_poly_form} is an easy consequence of Lemma~\ref{lem:T1=2}.
\begin{repthm}{thm:Alexander_poly_form}
    \thmpolyform
\end{repthm}
\begin{proof}
    Lemma~\ref{lem:2strandpoly} determines the stable coefficients of $K$ and Lemma~\ref{lem:T1=2} show that this implies that $K=T_{2,2g+1}$, as required.
\end{proof}

Similarly, we can easily obtain our genus bound on alternating surgeries.
\begin{repthm}{thm:genusbound}
    \genusbound
\end{repthm}
\begin{proof}
Suppose that $p/q>0$ is an alternating surgery slope for a non-trivial knot $K$ which is not a two-stranded torus knot. By Theorem~\ref{thm:alt_surgery_obstructions}, there exist stable coefficients for $K$ which we denote by $\underline{\rho}=(\rho_1,\dots, \rho_\ell)$. Furthermore, Proposition~\ref{prop:altsurgimpliesintOSB2} implies that the $n$-changemaker lattice with stable coefficients $\underline{\rho}$ admits an obtuse superbase, where $n=\lceil p/q \rceil$. Since $K$ is not a two-stranded torus knot, Lemma~\ref{lem:T1=2} implies that $\rho_1\geq 3$. By Theorem~\ref{thm:2s_to_3_or_bigger} and the genus formula from Lemma~\ref{lem:Ti_sigma_relationship}, this implies that the stable coefficients satisfy
\[p/q \leq n \leq 4+ \frac{3}{2}\sum_{i=1}^\ell \rho_i(\rho_i-1) = 4+3g(K),\]
which is the required bound.  
\end{proof}
We now exhibit two families of knots which show that the bound in Theorem~\ref{thm:genusbound} is sharp.
\begin{ex}
    For $g\geq 3$, the torus knot of the form $T_{3,g+1}$ satisfies
\[\text{$g(T_{3,g+1})=g$ and $\Salt(T_{3,g+1})=[3g+2,3g+4]\cap \Q$.}\]
One can verify that these have stable coefficients
\[
\underline{\rho}(T_{3,g+1})=\begin{cases}
    (3,\dots,3) &\text{if $g\equiv 0 \bmod{3}$}\\
    (3,\dots,3,2) &\text{if $g\equiv 1 \bmod{3}$}
\end{cases}.
\]
\end{ex}
Now we turn our attention to constructing a family of hyperbolic knots.
\begin{ex}\label{ex:hyp_family}
    For $n\geq 1$ the $(9n+\frac{19}{2})$-changemaker lattice
    \[
L=\langle \e_{-1}-\e_0, \e_0+\e_1+2\e_2+2\e_3+3\e_4+ \dots + 3\e_{n+3} \rangle^\bot\subseteq \Z^{n+5}
\]
    admits a planar obtuse superbase $B$. By embedding the corresponding graph $G_{B}$ in the plane, one can construct an alternating diagram $D_n$ whose Goeritz lattice is isomorphic to $L$. An example of such a diagram can be seen in Figure~\ref{fig:322stable_diag}. As per the proof of Theorem~\ref{thm:stable_coefs_to_D}, these diagrams allow us to construct a knot $K_n\in \D$ with stable coefficients 
    \[
    \underline{\rho}(K_n)=(\underbrace{3,\dots,3}_{n},2,2).
    \]
    The main result of \cite{Donald2019diagrams} implies that $K_n$ is a hyperbolic knot because $D_n$ is not a diagram of a two-bridge knot and does not contain substantial Conway sphere, as defined in that paper. Using the expressions for $g$ and $\Salt(K_n)$ in Lemma~\ref{lem:Ti_sigma_relationship} and Theorem~\ref{thm:salt_trichotomy}\eqref{it:KinDnoncable}, respectively, we see that
    \[
    \text{$g(K_n)=3n+2$ and $\Salt(K)=[3g+3,3g+4]\cap\Q$.}
    \]    
    For $n=1$, one obtains the $(-2,3,7)$-pretzel knot, which famously has two lens space surgeries (18 and 19-surgery) \cite{Fintushel1980lenssurgery}. In fact, for $n=1, \dots, 7$, the complement of the knot $K_n$ can be found in the SnapPy census. These are $m016$, $m082$, $m144$, $s086$, $v0165$, $t00324$ and $o9\_00644$, respectively.

    \end{ex}
\begin{figure}
     \centering
     \includegraphics[width=.4\textwidth]{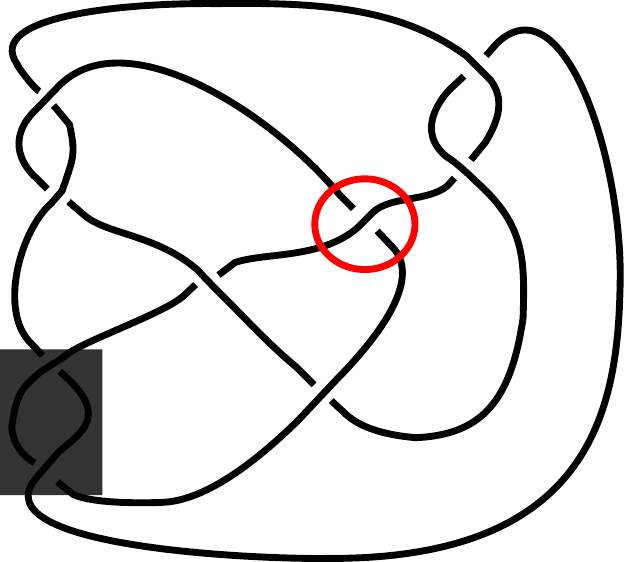} 
     \caption{The family of diagrams $\{D_n\}_{n\geq 1}$ defining the knots $K_n$ of Example~\ref{ex:hyp_family}. The chosen unknotting crossing is circled in red. The black block contains a twist of $n\geq 1$ crossings.}
     \label{fig:322stable_diag}
 \end{figure}


\part{Implementing the algorithms}

In this part of the paper we turn our attention to practical strategies to calculate $\Salt(K)$ for hyperbolic knots. We will use SnapPy~\cite{CDGW} together with sage~\cite{sagemath} and regina~\cite{BBP+} to implement some of the ideas from the general algorithm of Theorem~\ref{thm:computable} joined with some more direct ad-hoc ideas. 

\section{A practical strategy for computing \texorpdfstring{$\Salt$}{Salt}}
In practice, we were always able to determine $\Salt(K)$ by following the steps outlined in Figure~\ref{fig:practical_algorithm_outline}. For the steps pertaining to stable coefficients and the existence of obtuse suberbases, we used Algorithm \ref{alg:stablecoeff} and the methods described in \S\ref{sec:OSB_search}, respectively.

We used the following strategy to decide whether a given slope $r\in \Q$ is an alternating surgery slope for a given knot $K$. In practice, we only ever consider the case that $r$ is an integer or a half-integer. 

\begin{strat}\label{strat:single_alt}(Determining if a single slope is alternating)
    If $r$ is an alternating surgery slope for $K$, then $K$ possesses stable coefficients and the $r$-changemaker lattice $L_r$ with these stable coefficients admits a planar obtuse superbase. This obtuse superbase determines, via the Goeritz matrix, an alternating knot or link diagram $J$ such that $S_r^3(K)$ is an alternating surgery if and only if $S_r^3(K)$ is homeomorphic to the double branched cover of $J$ (Theorem~\ref{thm:alt_surgery_obstructions}). The existence of such a homeomorphism can be verified using SnapPy or regina. 
\end{strat}

This strategy allows us to complete the calculation of $\Salt(K)$ whenever it is known to be a subset of $\{N-1,N\}$.

\begin{rem}
    It turned out that in all examples of hyperbolic L-space knots, that we analyzed, the obstruction from Theorem~\ref{thm:alt_surgery_obstructions} was perfect. I.e.\ a slope $r$ of a hyperbolic L-space knot was alternating if and only if the corresponding changemaker lattice $L_r$ admitted a planar obtuse superbase. If this turns out to be true for general hyperbolic L-space knots then this would imply Conjectures~\ref{conj:atmosttwoslopes} and~\ref{conj:strong_conj} for hyperbolic knots. 
    
    On the other hand, there exist non-hyperbolic L-space knots where the obstruction from Theorem~\ref{thm:alt_surgery_obstructions} is not perfect. Indeed, the $(3,2)$-cable of $T_{3,2}$ admits no alternating surgery but has the same Alexander polynomial as $T_{3,4}$ and thus admits changemaker lattices with planar obtuse superbases. 
\end{rem}

A practical strategy to determine whether a knot is in $\D$ is as follows.

\begin{strat}\label{strat:knot_in_D}(Determining if a knot is in $\D$)
 If $K$ is in $\D$, then it admits compatible stable coefficients and the $(N-\frac12)$-changemaker lattice with these stable coefficients admits a planar obtuse superbase. As in Strategy~\ref{strat:single_alt}, this obtuse superbase allows one to construct an alternating knot diagram $J$ such that $S_{N-\frac12}^3(K)$ is homeomorphic to the double branched cover of $J$. Moreover $J$ has unknotting number one and if $K$ is in $\D$, then $K$ is of the form $K_{(J,c)}$ where $c$ is an unknotting crossing in $J$. Thus we can determine if $K$ is in $\D$ by using SnapPy to check whether the complement of $K$ is homeomorphic to the complement of $K_{(J,c)}$ for any unknotting crossing $c$ in $J$.
\end{strat}

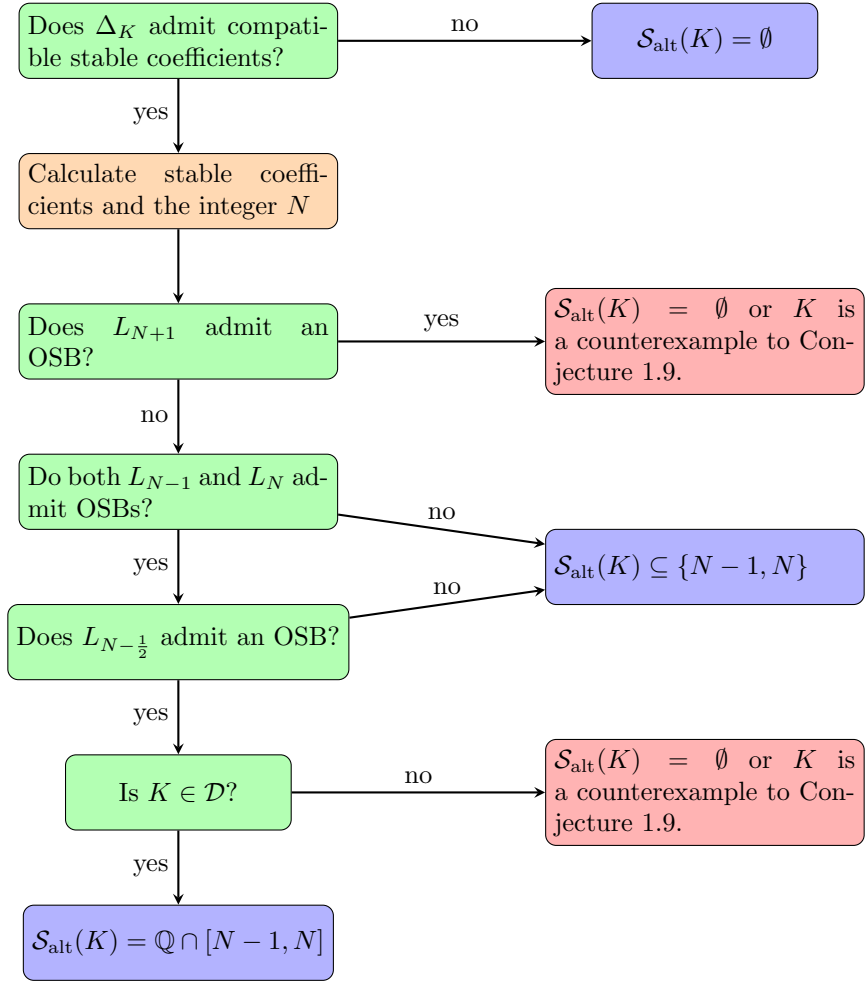
\begin{figure}
\begin{tikzpicture}[node distance=2cm]
\node (stable) [decision] {\parbox{4cm}{Does $\Delta_K$ admit compatible stable coefficients?}};
\node (returnnostable) [io, right of=stable, xshift=5cm] {$\Salt(K)=\emptyset$};
\node (calcstable) [process, below of=stable] {\parbox{4cm}{Calculate stable coefficients and the integer $N$}};
\node (LplusoneOSB) [decision, below of=calcstable] {\parbox{4cm}{Does $L_{N+1}$ admit an OSB?}};
\node (polynotinD) [unexpected, right of=LplusoneOSB, xshift=5cm] {\parbox{4cm}{$\Salt(K)=\emptyset$ or $K$ is a counterexample to Conjecture \ref{conj:strong_conj}.}};
\node (twointOSB) [decision, below of=LplusoneOSB] {\parbox{4cm}{Do both $L_{N-1}$ and $L_N$ admit OSBs?}};
\node (salttwoelements) [io, right of=twointOSB, xshift=5cm,yshift=-1cm] {\parbox{4cm}{$\Salt(K)\subseteq\{N-1,N\}$}};
\node (halfintosb) [decision, below of=twointOSB] {Does $L_{N-\frac12}$ admit an OSB?};
\node (checkKinD) [decision, below of=halfintosb] {Is $K\in \D$?};
\node (polynotinD2) [unexpected, right of=checkKinD, xshift=5cm] {\parbox{4cm}{$\Salt(K)=\emptyset$ or $K$ is a counterexample to Conjecture \ref{conj:strong_conj}.}};
\node (KinD) [io, below of=checkKinD] {$\Salt(K)=\Q\cap [N-1,N]$};
\draw [arrow] (stable) --node[anchor=south] {no}   
(returnnostable);
\draw [arrow] (stable) --node[anchor=east] {yes} (calcstable);
\draw [arrow] (calcstable) -- (LplusoneOSB);
\draw [arrow] (LplusoneOSB) --node[anchor=south] {yes} 
(polynotinD);
\draw [arrow] (LplusoneOSB) --node[anchor=east] {no}  (twointOSB);
\draw [arrow] (twointOSB) --node[anchor=south] {no} 
(salttwoelements);
\draw [arrow] (twointOSB) --node[anchor=east] {yes} (halfint);
\draw [arrow] (halfintosb) --node[anchor=east] {yes} (checkKinD);
\draw [arrow] (halfintosb) --node[anchor=south] {no} 
(salttwoelements);
\draw [arrow] (checkKinD) --node[anchor=east] {yes} 
(KinD);
\draw [arrow] (checkKinD) --node[anchor=south] {no} 
(polynotinD2);
\end{tikzpicture}
\caption{The outline of a practical strategy to calculate $\Salt(K)$ for a hyperbolic knot $K$. The justification of the output follows directly by combining Theorems~\ref{thm:salt_trichotomy} and~\ref{thm:alt_surgery_obstructions}.}
\label{fig:practical_algorithm_outline}
\end{figure}

In the following sections, we apply the above strategy to interesting families of hyperbolic L-space knots (the census knots and the Baker-Luecke knots) to determine their sets of alternating slopes. 

\section{Alternating surgeries on census knots} \label{sec:census}
In this section, we describe our procedures for classifying the alternating surgeries on the knots in the SnapPy census (i.e Theorems~\ref{thm:altsurgclassification} and~\ref{thm:bandings}). As well as calculating $\Salt(K)$ for all such knots in the census, we also explicitly determine branching sets for all their integer and half-integer alternating surgeries. If a knot admits a half-integer alternating surgery, we also create a tangle exterior which allows us to
determine the branching set for all other alternating surgeries by performing appropriate tangle fillings. This data and the code can be found at~\cite{BKM}.

\subsection*{Step 0: Restricting to L-space knots}
Since any knot which admits an alternating surgery is an L-space knot it suffices to consider L-space knots. From Dunfield's data~\cite{Du19}, we can extract a list of all L-space knot exteriors in the SnapPy census, cf.~\cite{ABG+19}. This gives a list of 632 census L-space knots which we denote by \verb|L_space_knots|. We include also two knots, $o9\_30150$ and $o9\_31440$ whose L-space status was not resolved by Dunfield in \verb|L_space_knots|, since these are now both known to be L-space knot exteriors \cite{quasi_alt_slopes}.

\subsection*{Step 1: Triage via stable coefficients and obtuse superbases}
For each knot $K$ in \verb|L_space_knots|, we first applied Algorithm~\ref{alg:stablecoeff} in order to calculate stable coefficients $\underline{\rho}(K)$ whenever they exist. This finds stable coefficients for 508 of the knots in \verb|L_space_knots| and shows that the remaining 124 knots do not admit any compatible stable coefficients and hence have no alternating surgeries.

For each knot $K$ in \verb|L_space_knots| with stable coefficients $\underline{\rho}=\underline{\rho}(K)$, let $N$ be the integer as in Theorem~\ref{thm:salt_trichotomy} and let $L_{N-1}$, $L_{N}$ and $L_{N+1}$ be the $(N-1)$-, $N$- and $(N+1)$-changemaker lattices with stable coefficients $\underline{\rho}$ respectively. Using Algorithm~\ref{alg:findOSB}, we check which of the three lattices $L_{N-1}$, $L_{N}$ and $L_{N+1}$ admit an obtuse superbase.

For the 508 knots under consideration, the lattice $L_{N+1}$ never admitted an obtuse superbase. For the lattices $L_{N-1}$ and $L_{N}$ there were three possible outcomes.
\begin{enumerate}
    \item For 115 knots, neither $L_{N-1}$ nor $L_N$ admitted an obtuse superbase. For these knots $\Salt(K)=\emptyset$.
    \item\label{it:triage_one_surgery} For 12 knots, $L_{N-1}$ is the only lattice which admits an obtuse superbase. For such knots, $K\not\in \D$ and $\Salt(K)$ contains at most one slope, namely $\Salt(K)\subseteq\{N-1\}$.
    \item\label{it:triage_in_D} For 381 knots, $L_{N-1}$ and $L_N$ both admit an obtuse superbase. For such knots, there are two possibilities, either $K\in \D$ and $\Salt(K)=[N-1,N]\cap \Q$ or $K\not\in \D$ and $\Salt(K)$ is a finite subset of $[N-1,N]\cap \Q$.
\end{enumerate}
For each of the unobstructed alternating integer surgeries, the obtuse superbases found in this search determine a Goeritz matrix and hence the crossing number for a potential alternating branching set. Using this data, we see that if $K$ is a knot in the SnapPy census with an integer alternating surgery, then that integer surgery is the branched double cover of an alternating knot or link with at most $13$ crossings.

\subsection*{Step 2: Classification of integer and half-integer alternating surgeries}\label{sec:search_for_alt}
The analysis of the preceding step implies that there are at most $774$ integer alternating surgery amongst the knots in the SnapPy census (1 for each of the 12 knots in \eqref{it:triage_one_surgery} and two for each of the 381 knots in \eqref{it:triage_in_D}). Similarly, there are at most $381$ half-integer alternating slopes on the knots in the SnapPy census.

It turns out that all of these integer and half-integer slopes are actually alternating surgery slopes. This was shown by finding an explicit alternating branching set for each potential alternating surgery. Note that in showing all of these slopes are alternating surgery slopes, we have shown that $\Salt(K)=\{N-1\}$ for all 12 knots in \eqref{it:triage_one_surgery}. The procedure for finding these branching sets was as follows.

First, we recompute these slopes in question with respect to the SnapPy basis (this will in general be different since for some knots in the SnapPy census the $(1,0)$-curve does not necessarily correspond to the meridian).

From Dunfield's data of exceptional fillings of the census knots~\cite{Du18} we can read-off that $59$ of the integer slopes are hyperbolic and the other $715$ integer slopes are exceptional. Amongst the half-integer slopes $298$ are hyperbolic and $83$ are exceptional. To handle the integer hyperbolic slopes, we iterated through all alternating links in the HT link table with crossing number at most $13$. For each of these alternating links we took the double branched cover and checked whether the resulting manifold was homeomorphic to one of the putative alternating surgeries via SnapPy. This quickly identified an explicit alternating branching set $J$ such that $S_{p}^3(K)\cong \Sigma_2(J)$ for all of the $59$ potential hyperbolic integer alternating surgery slopes.

A similar search, but iterating over all alternating knots with crossing number at most $14$ yields an alternating branching set for all of the $298$ potential hyperbolic half-integer alternating surgery slopes.

For the exceptional slopes, we begin by searching Dunfield's data of exceptional surgeries~\cite{Du18} for surgeries to small Seifert fibered spaces (including lens spaces) and check via SnapPy if the corresponding Montesinos link (whose double branched cover is the Seifert fibered space at hand) is alternating. We find that 178 of the integer surgeries are lens space surgeries and that 353 of the integer surgeries are other Seifert fibered spaces. In total, this identifies that $531$ integer slopes admit an alternating Seifert fibered surgery. We note that none of the half-integer exceptional slopes are lens space or Seifert fibered surgeries. This is in keeping with the cyclic surgery theorem and the conjecture that non-integer surgery on a hyperbolic knot is never a Seifert fibered manifold.

We are left with $184$ integer exceptional slopes and $83$ half-integer slopes which might be alternating. In each of these cases the resulting manifold is toroidal. For the integer slopes we run again through all links in the HT link table with crossing number at most $13$, take the double branched covers and check if Dunfield's recognition code~\cite{Du18} (using the combined power of SnapPy and regina) recognizes the double branched cover as the same manifold as a potential integer alternating surgery and if so search for a combinatorial equivalence through the Pachner graph. This identifies for any remaining possible alternating integer slope an explicit alternating branching set and thus verifies that all $774$ integer slopes of the census knots that admit obtuse superbases are actually alternating slopes. We find branching sets for the remaining potential half-integer surgery slopes by a similar procedure, but iterating through the alternating knots with at most 14 crossings.

\subsection*{Step 3: Verifying that knots are in \texorpdfstring{$\D$}{D}.}\label{sec:tangelExteriors}
To finish the proof of Theorem~\ref{thm:altsurgclassification} it remains to show that the 381 census knots with half-integer alternating surgeries are actually contained in the class $\D$. This is done by exhibiting for each knot $K$ an alternating diagram $D$ and an unknotting crossing $c$ such that the complement of $K_{(D,c)}$ is homeomorphic to the complement of $K$.

In practice, the pair $(D,c)$ was found as follows. From the preceding step we have a reduced alternating diagram $D$ such that $S_{N-\frac{1}{2}}^3(K)\cong \Sigma_2(D)$. The crossing $c$ is then taken to be the unknotting crossing such that its two resolutions are branching sets for the $(N-1)$- and $N$-surgeries on $K$.

In practice, to verify that $K$ and $K_{(D,c)}$ have homeomorphic complements as follows. Let $D'$ be the almost-alternating diagram of the unknot obtained by changing $c$ and let $c^*$ be the dealternating crossing in $D'$.  We convert $D'$ into a knot in $S^1\times D^2$ by deleting a small unknotted loop $l$ around the crossing $c^*$. In the double branched cover of $D'$, a Conway sphere containing $l$ and $c^*$ lifts to a torus $T$ which separates $\Sigma_2(D')$ into a copy of the complement of $K_{(D,c)}$ and a pair of pants times $S^1$. For each of the census knots, we are able to use SnapPy to construct this double branched cover and identify the splitting torus $T$ as a normal surface. Cutting along $T$ we obtain two connected components, one of which has a single boundary component. In all cases, SnapPy verifies that this piece is isometric to the original knot complement, thereby verifying that $K$ is in $\D$. This completes the calculation of $\Salt(K)$ for knots in the SnapPy census as presented in Theorem~\ref{thm:altsurgclassification}.

\subsection*{Step 4: Finding the bandings in Theorem~\ref{thm:bandings}}
Let $K$ be one of the 12 census knots such that $\Salt(K)=\{N-1\}$. In the course of proving Theorem~\ref{thm:altsurgclassification} we found an alternating diagram $J$ such that the double branched cover of $J$ yields $S_{N-1}^3(K)\cong \Sigma_2(J)$.

For each $K$ we were able to exhibit, either by hand or brute force computer calculation, an arc $\beta$ in $J$ such that banding along $\beta$ yields an almost alternating diagram of the unknot $D$, see Figure~\ref{fig:12bandings}. Furthermore, we were able to verify using SnapPy, that upon taking the double branched cover of $D$, the dual arc to $\beta$ became a knot whose complement is homeomorphic to the complement of $K$.

\begin{table}[htbp]
	\caption{The complete list of $393$ knots in the SnapPy census that admit alternating surgeries.}
	\label{tab:knots_with_alternating_surgeries}
{\tiny
	\begin{tabular}{ccccccccc}
\toprule	
$m016$&
 $m071$&
 $m082$&
 $m103$&
 $m118$&
 $m144$&
 $m194$&
 $m198$&
 $m239$\\
 $m240$&
 $m270$&
 $m276$&
 $m281$&
  $s042$&
 $s068$&
 $s086$&
 $s104$&
 $s114$\\
 $s294$&
 $s301$&
 $s308$&
 $s336$&
 $s344$&
 $s346$&
 $s367$&
 $s369$&
 $s407$\\
 $s582$&
 $s665$&
 $s684$&
 $s769$&
 $s800$&
 $v0082$&
 $v0114$&
 $v0165$&
 $v0220$\\
 $v0223$&
 $v0330$&
 $v0398$&
 $v0407$&
 $v0424$&
 $v0434$&
 $v0497$&
 $v0554$&
 $v0570$\\
 $v0573$&
 $v0707$&
 $v0709$&
 $v0715$&
 $v0740$&
 $v0741$&
 $v0759$&
 $v0765$&
 $v0847$\\
 $v0912$&
 $v0939$&
 $v0945$&
 $v1077$&
 $v1109$&
 $v1300$&
 $v1392$&
 $v1547$&
 $v1620$\\
 $v1628$&
 $v1690$&
 $v1709$&
 $v1716$&
 $v1718$&
 $v1728$&
 $v1810$&
 $v1832$&
 $v1839$\\
 $v1921$&
 $v1940$&
 $v1966$&
 $v1980$&
 $v1986$&
 $v2024$&
 $v2090$&
 $v2215$&
 $v2325$\\
 $v2759$&
 $v2930$&
 $v3354$&
 $t00110$&
 $t00146$&
 $t00324$&
 $t00423$&
 $t00434$&
 $t00729$\\
 $t00787$&
 $t00826$&
 $t00855$&
 $t00873$&
 $t00932$&
 $t01033$&
 $t01037$&
 $t01125$&
 $t01216$\\
 $t01268$&
 $t01292$&
 $t01318$&
 $t01368$&
 $t01409$&
 $t01422$&
 $t01424$&
 $t01440$&
 $t01598$\\
 $t01636$&
 $t01646$&
 $t01690$&
 $t01757$&
 $t01834$&
 $t01850$&
 $t01863$&
 $t01949$&
 $t02099$\\
 $t02104$&
 $t02238$&
 $t02378$&
 $t02398$&
 $t02404$&
 $t02470$&
 $t02537$&
 $t02567$&
 $t02639$\\
 $t03566$&
 $t03607$&
 $t03709$&
 $t03713$&
 $t03781$&
 $t03864$&
 $t03956$&
 $t03979$&
 $t04003$\\
 $t04019$&
 $t04102$&
 $t04180$&
 $t04228$&
 $t04244$&
 $t04382$&
 $t04721$&
 $t05118$&
 $t05239$\\
 $t05390$&
 $t05425$&
 $t05426$&
 $t05538$&
 $t05564$&
 $t05578$&
 $t05658$&
 $t05663$&
 $t05674$\\
 $t05695$&
 $t06001$&
 $t06440$&
 $t06463$&
 $t06525$&
 $t06570$&
 $t06605$&
 $t07348$&
 $t08111$\\
 $t08201$&
 $t08267$&
 $t08403$&
 $t09016$&
 $t09267$&
 $t09313$&
 $t09455$&
 $t09580$&
 $t09704$\\
 $t09852$&
 $t09954$&
 $t10188$&
 $t10230$&
 $t10462$&
 $t10643$&
 $t10681$&
 $t10985$&
 $t11556$\\
 $t11852$&
 $t12753$&
 $o9\_00133$&
 $o9\_00168$&
 $o9\_00644$&
 $o9\_00797$&
 $o9\_00815$&
 $o9\_01436$&
 $o9\_01496$\\
 $o9\_01584$&
 $o9\_01621$&
 $o9\_01680$&
 $o9\_01765$&
 $o9\_01953$&
 $o9\_01955$&
 $o9\_02255$&
 $o9\_02340$&
 $o9\_02350$\\
 $o9\_02386$&
 $o9\_02655$&
 $o9\_02696$&
 $o9\_02706$&
 $o9\_02735$&
 $o9\_02772$&
 $o9\_02786$&
 $o9\_02794$&
 $o9\_03032$\\
 $o9\_03108$&
 $o9\_03118$&
 $o9\_03133$&
 $o9\_03149$&
 $o9\_03162$&
 $o9\_03188$&
 $o9\_03288$&
 $o9\_03313$&
 $o9\_03412$\\
 $o9\_03526$&
 $o9\_03586$&
 $o9\_03622$&
 $o9\_03802$&
 $o9\_03833$&
 $o9\_03932$& 
 $o9\_04106$&
 $o9\_04205$&
 $o9\_04245$\\
 $o9\_04269$&
 $o9\_04313$&
 $o9\_04431$&
 $o9\_04435$&
 $o9\_04438$&
 $o9\_05021$&
 $o9\_05177$&
 $o9\_05229$&
 $o9\_05357$\\
 $o9\_05426$&
 $o9\_05483$&
 $o9\_05562$&
 $o9\_05618$&
 $o9\_05860$&
 $o9\_05970$&
 $o9\_06060$&
 $o9\_06128$&
 $o9\_06154$\\
 $o9\_06248$&
 $o9\_06301$&
 $o9\_07790$&
 $o9\_07893$&
 $o9\_07943$&
 $o9\_07945$&
 $o9\_08006$&
 $o9\_08042$&
 $o9\_08224$\\
 $o9\_08302$&
 $o9\_08477$&
 $o9\_08647$&
 $o9\_08765$&
 $o9\_08771$&
 $o9\_08776$&
 $o9\_08828$&
 $o9\_08831$&
 $o9\_08852$\\
 $o9\_08875$&
 $o9\_09213$&
 $o9\_09465$&
 $o9\_09808$&
 $o9\_10696$&
 $o9\_11248$&
 $o9\_11467$&
 $o9\_11560$&
 $o9\_11570$\\
 $o9\_11685$&
 $o9\_11795$&
 $o9\_11845$&
 $o9\_11999$&
 $o9\_12144$&
 $o9\_12230$&
 $o9\_12412$&
 $o9\_12459$&
 $o9\_12519$\\
 $o9\_12693$&
 $o9\_12736$&
 $o9\_12757$&
 $o9\_12873$&
 $o9\_12892$&
 $o9\_12919$&
 $o9\_12971$&
 $o9\_13052$&
 $o9\_13056$\\
 $o9\_13125$&
 $o9\_13182$&
 $o9\_13188$&
 $o9\_13400$&
 $o9\_13403$&
 $o9\_13433$&
 $o9\_13508$&
 $o9\_13537$&
 $o9\_13604$\\
 $o9\_13639$&
 $o9\_13649$&
 $o9\_13666$&
 $o9\_13720$&
 $o9\_13952$&
 $o9\_14018$&
 $o9\_14079$&
 $o9\_14136$&
 $o9\_14364$\\
 $o9\_14376$&
 $o9\_14495$&
 $o9\_14599$&
 $o9\_14716$&
 $o9\_14831$&
 $o9\_14974$&
 $o9\_15506$&
 $o9\_15633$&
 $o9\_15997$\\
 $o9\_16065$&
 $o9\_16141$&
 $o9\_16157$&
 $o9\_16181$&
 $o9\_16319$&
 $o9\_16356$&
 $o9\_16527$&
 $o9\_16642$&
 $o9\_16748$\\
 $o9\_16920$&
 $o9\_17450$&
 $o9\_18007$&
 $o9\_18209$&
 $o9\_18633$&
 $o9\_18813$&
 $o9\_19130$&
 $o9\_20219$&
 $o9\_21893$\\
 $o9\_21918$&
 $o9\_22129$&
 $o9\_22477$&
 $o9\_22607$& 
 $o9\_22663$&
 $o9\_22698$&
 $o9\_22925$&
 $o9\_23023$&
 $o9\_23263$\\
 $o9\_23660$&
 $o9\_23955$&
 $o9\_23961$&
 $o9\_23977$&
 $o9\_24149$&
 $o9\_24183$&
 $o9\_24534$&
 $o9\_24592$&
 $o9\_24886$\\
 $o9\_24889$&
 $o9\_25595$&
 $o9\_26604$&
 $o9\_26791$&
 $o9\_27155$&
 $o9\_27261$&
 $o9\_27392$&
 $o9\_27480$&
 $o9\_27737$\\
 $o9\_28113$&
 $o9\_28153$&
 $o9\_28529$&
 $o9\_28592$&
 $o9\_28746$&
 $o9\_28810$&
 $o9\_29246$&
 $o9\_29436$&
 $o9\_29529$\\
 $o9\_30375$&
 $o9\_30721$&
 $o9\_30790$&
 $o9\_31165$&
 $o9\_32132$&
 $o9\_32257$&
 $o9\_32588$&
 $o9\_33526$&
 $o9\_33585$\\
 $o9\_34403$&
 $o9\_35320$&
 $o9\_35549$&
 $o9\_35682$&
 $o9\_35736$&
 $o9\_35772$&
 $o9\_37754$&
 $o9\_37941$&
 $o9\_39394$\\
 $o9\_39451$&
 $o9\_40179$&
 $o9\_43001$&
 $o9\_43679$&
 $o9\_43953$&
 $o9\_44054$&&&\\
 \bottomrule
	\end{tabular}
}
\end{table}

\section{Baker--Luecke asymmetric L-space knots}\label{sec:Baker_Luecke}
Finally we examine alternating surgeries on some small Baker-Luecke knots. These are natural candidates to study since they have $\Salt(K)$ finite and non-empty. The subfamily of knots $K_{(m,b1,a1,a2,a3)}$ defined in~\cite[\S 11.4]{BL17} for non-negative integers $a_3, a_2, a_1, m, b_1$ gives a tractable collection of ``small'' examples. We applied the approach from \S\ref{sec:census} to the $14$ simplest of these knots. Surprisingly, our methods also revealed the existence of a second alternating surgery on all of these $14$ knots. 
\begin{thm} \label{thm:BLaltsurgs}
Let $K$ be one of the 14 Baker-Luecke knots listed in Table~\ref{tab:BL_Knots_alt2}. Then the following are true:
\begin{enumerate}
    \item $K$ is an asymmetric L-space knot
    \item  $\Delta_K(x)\neq \Delta_{K'}(x)$ for all $K'\in \D$
    \item $\Salt(K)=\{N-1,N\}$.
\end{enumerate}
\end{thm}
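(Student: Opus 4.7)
The plan is to apply the practical strategy of Figure~\ref{fig:practical_algorithm_outline} to each of the 14 knots in Table~\ref{tab:BL_Knots_alt2} via computer assisted calculation. Part (1), that each $K$ is an asymmetric L-space knot, is already contained in \cite{BL17}: the asymmetry is verified by SnapPy returning a trivial symmetry group for the knot complement, while the L-space property follows from the existence of the alternating surgery Baker-Luecke originally exhibit on each $K$.

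For each knot, the first task is to compute the normalized symmetric Alexander polynomial $\Delta_K(x)$ and feed it to Algorithm~\ref{alg:stablecoeff} in order to obtain the stable coefficients $\underline{\rho}(K)$ and the integer $N$ of \eqref{eq:N_def}. I would then apply Algorithm~\ref{alg:findOSB} to the integer changemaker lattices $L_{N-1}$, $L_N$, $L_{N+1}$ (in the notation of the proof of Lemma~\ref{lem:pq>N_alt_surgery}) to verify that $L_{N-1}$ and $L_N$ both admit planar obtuse superbases while $L_{N+1}$ does not, and Algorithm~\ref{alg:half_int_OSB} (aided by Lemma~\ref{lem:half_int_coef_bound}) to confirm that the half-integer lattice $L_{N-\frac12}$ likewise admits no obtuse superbase.

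The absence of an obtuse superbase for $L_{N-\frac12}$ is the key input to part (2). Any hypothetical $K'\in\D$ with $\Delta_{K'}=\Delta_K$ would share the stable coefficients of $K$, and the construction of \S\ref{sec:D_definition} guarantees $[N-1,N]\cap\Q\subseteq \Salt(K')$; in particular the non-integer slope $N-\tfrac12$ would be alternating for $K'$, forcing $L_{N-\frac12}$ to admit a planar obtuse superbase by Theorem~\ref{thm:alt_surgery_obstructions}, a contradiction. Theorem~\ref{thm:salt_trichotomy} then places $K$ in case \eqref{it:KnotinDgeneral} and yields the inclusion $\Salt(K)\subseteq\{N-1,N\}$. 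To upgrade this to equality, I would run Strategy~\ref{strat:single_alt} at both slopes: the planar obtuse superbases of $L_{N-1}$ and $L_N$ determine Goeritz matrices from which one reconstructs candidate reduced alternating diagrams $J_{N-1}$ and $J_N$, and a SnapPy/regina homeomorphism check between $\Sigma_2(J_{N-1})$ and $S^3_{N-1}(K)$, and between $\Sigma_2(J_N)$ and $S^3_N(K)$, then certifies that both slopes lie in $\Salt(K)$.

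The main obstacle will be computational. For the larger knots in Table~\ref{tab:BL_Knots_alt2} the stable coefficients are long, and certifying the non-existence of obtuse superbases for $L_{N+1}$ and especially $L_{N-\frac12}$ requires an exhaustive search through irreducible vectors; here the coefficient-wise bounds of Lemmas~\ref{lem:Cauchy_Schwarz_bound}, \ref{lem:irreducible_bounds2}, \ref{lem:sigma_m_bound}, together with the sharper restriction of Lemma~\ref{lem:half_int_coef_bound}, should be what makes the exhaustion feasible. The homeomorphism identifications $\Sigma_2(J)\cong S^3_r(K)$ may also require auxiliary normal-surface decompositions or a Pachner-graph traversal, as in Step~2 of the census analysis in \S\ref{sec:census}, since some of the relevant surgeries are expected to be non-hyperbolic.
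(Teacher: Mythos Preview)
Your proposal is correct and follows essentially the same route as the paper: verify asymmetry and the L-space property via SnapPy and \cite{BL17}, compute stable coefficients and $N$, rule out any $K'\in\D$ with matching Alexander polynomial by showing $L_{N-\frac12}$ has no obtuse superbase, invoke Theorem~\ref{thm:salt_trichotomy}\eqref{it:KnotinDgeneral} to get $\Salt(K)\subseteq\{N-1,N\}$, and then certify both slopes by exhibiting explicit alternating branching sets. The only procedural difference is that the paper first locates most branching sets by a table search (falling back to the \cite{BL17} diagrams and Goeritz reconstruction only for the eight $15$-crossing links that are not tabulated), whereas you propose going straight through the Goeritz matrices; also note that the check on $L_{N+1}$ is redundant once part~(2) is established.
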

\begin{proof}
Let $K$ be one of the 14 Baker-Luecke knots in question. One loads the surgery description of $K$ given in~\cite{BL17} into SnapPy. The verified functions in SnapPy show that $K$ is a hyperbolic knot with trivial symmetry group. That $K$ is an L-space knot follows directly from~\cite{BL17} where it is shown to have at least one alternating surgery.
    
Next, we calculate the Alexander polynomial $\Delta_K(x)$ and apply Algorithm~\ref{alg:stablecoeff} to compute its stable coefficients $\underline{\rho}(K)$ and the associated integer $N$. If there were a knot $K'\in \D$ with the same Alexander polynomial as $K$, then $K'$ would have the same stable coefficients as $K$ and, according to Theorem~\ref{thm:salt_trichotomy} would have $N-\frac{1}{2}$ as an alternating surgery slope. Thus the $(N-\frac{1}{2})$-changemaker lattice with stable coefficients $\underline{\rho}(K)$ would admit an obtuse superbase. However, a computer search using Algorithm~\ref{alg:half_int_OSB} shows that this $(N-\frac{1}{2})$-changemaker lattice does not admit an obtuse superbase. 

Thus, Theorem~\ref{thm:salt_trichotomy} shows that $\Salt(K)$ is a subset of $\{N-1, N\}$ and calculation of $\Salt(K)$ is completed by showing that both of these are alternating surgery slopes. This is achieved by finding explicit alternating branching sets for each of these slopes and using SnapPy to verify the necessary homeomorphisms. These branching sets were found through a variety of methods. Firstly, we iterated through the tables of alternating knots of at most $15$ crossings and alternating links of at most $14$ crossings, and checked if their double branched covers yield one of the alternating fillings. This identified an explicit branching set for all but $8$ of the potential alternating surgeries. The remaining $8$ surgery slopes turn out to be the branched covers of alternating links with $15$ crossings (which are not tabulated by SnapPy). It turns out that $6$ of these missing alternating branching sets are alternating $15$ crossing links that were already constructed in~\cite{BL17}. For the remaining two, we used an obtuse superbase for the relevant changemaker lattice to construct Goeritz matrices for possible alternating branching sets. We converted these Goeritz matrices into alternating diagrams by-hand (these diagrams are shown in Figure~\ref{fig:br_sets_BL}) and then used SnapPy to quickly verify that the double branched covers are indeed isometric to the required surgered manifolds.
\end{proof}

\begin{table}[htbp]
 	\caption{The first $14$ asymmetric L-space knots from~\cite{BL17}, their alternating slopes with alternating branching sets and their stable coefficients.} 

\label{tab:BL_Knots_alt2}
 \ra{1.2}
\begin{tabular}{@{}l|l|l|l@{}}
\toprule
$(m,b_1,a_1,a_2,a_3)$ & N & alternating slopes & stable coefficients \\
\midrule
		
$(1, 1, 1, 1, 0)$ & $272$ &$(271,1) : K12a402$ & $[12, 9, 5, 4, 2]$ \\
                  &       &$(272,1) : L12a955$   & \\
\hline
$(1, 1, 0, 1, 1)$& $471$ &$(470,1) : L13a1826$ & $[16, 12, 7, 4, 2]$ \\
                  &       &$(471,1) : K13a4669$   &\\
 \hline                 
$(1, 1, 1, 2, 0)$& $416$ &$(415,1) : K13a1838$ & $[12, 12, 9, 5, 4, 2]$  \\
                  &       &$(416,1) : L13a3060$   &\\
\hline                  
$(1, 1, 2, 1, 0)]$& $557$ &$(556,1) : L14a13430$ & $[17, 14, 5, 5, 4, 2]$ \\
                  &       &$(557,1) : K14a14150$   &\\
 \hline                 
$(1, 2, 1, 1, 0)$& $555$ &$(554,1) : L14a6302$ & $[17, 13, 7, 6, 3]$  \\
                  &       &$(555,1) : K14a12040$   &\\
  \hline                
$(2, 1, 1, 1, 0)$& $588$ &$(587,1) : K14a5753$ & $[18, 13, 7, 6, 2, 2]$ \\
                  &       &$(588,1) : L14a12460$   &\\
   \hline               
$(1, 1, 1, 1, 1)$& $1156$ &$(1155,1) : K15a48589$ & $[26, 17, 12, 5, 4, 2]$\\
                  &       & $(1156,1) :  \text{see \cite{BL17}} $ &\\
   \hline               
$(1, 1, 0, 2, 1)$& $1010$ &$(1009,1) : K15a63354$ & $[23, 19, 7, 7, 4, 2]$  \\
                  &       & $(1010,1) : \text{see \cite{BL17}} $ &\\
  \hline                
$(1, 2, 0, 1, 1)$& $966$ &$(965,1) : K15a80635$ & $[23, 17, 10, 6, 3]$  \\
                  &       & $(966,1) : \text{see \cite{BL17}}$ &\\
   \hline               
$(1, 1, 2, 2, 0)$& $846$ &$(845,1) : K15a71795$ & $[17, 17, 14, 5, 5, 4, 2]$\\
                  &       &$(846,1) : \text{see \cite{BL17}}$   &\\
     \hline             
$(1, 2, 1, 2, 0)$& $844$ &$(843,1) : K15a27596$ & $[17, 17, 13, 7, 6, 3]$  \\
                  &       & $(844,1) : \text{see \cite{BL17}}$  &\\
      \hline            
$(2, 1, 1, 2, 0)$& $912$ &$(911,1) : K15a50514$ & $[18, 18, 13, 7, 6, 2, 2]$\\
                  &       &$(912,1) : \text{see \cite{BL17}}$   & \\
       \hline           
$(1, 1, 0, 1, 2)$& $1143$ &$(1142,1) : \text{see Figure~\ref{fig:br_sets_BL}} $&  $[28, 12, 12, 7, 4, 2]$ \\
                  &       &$(1143,1) : K15a43818$   &\\
       \hline           
$(2, 1, 0, 1, 1)$& $1067$ &$(1066,1) : \text{see Figure~\ref{fig:br_sets_BL}} $& $[24, 18, 11, 6, 2, 2]$  \\
                  &       &$(1067,1) : K15a84691$   &\\
                  
\bottomrule
 	\end{tabular}
\end{table}

  \begin{figure}
      \centering
      \includegraphics[width=.4\textwidth]{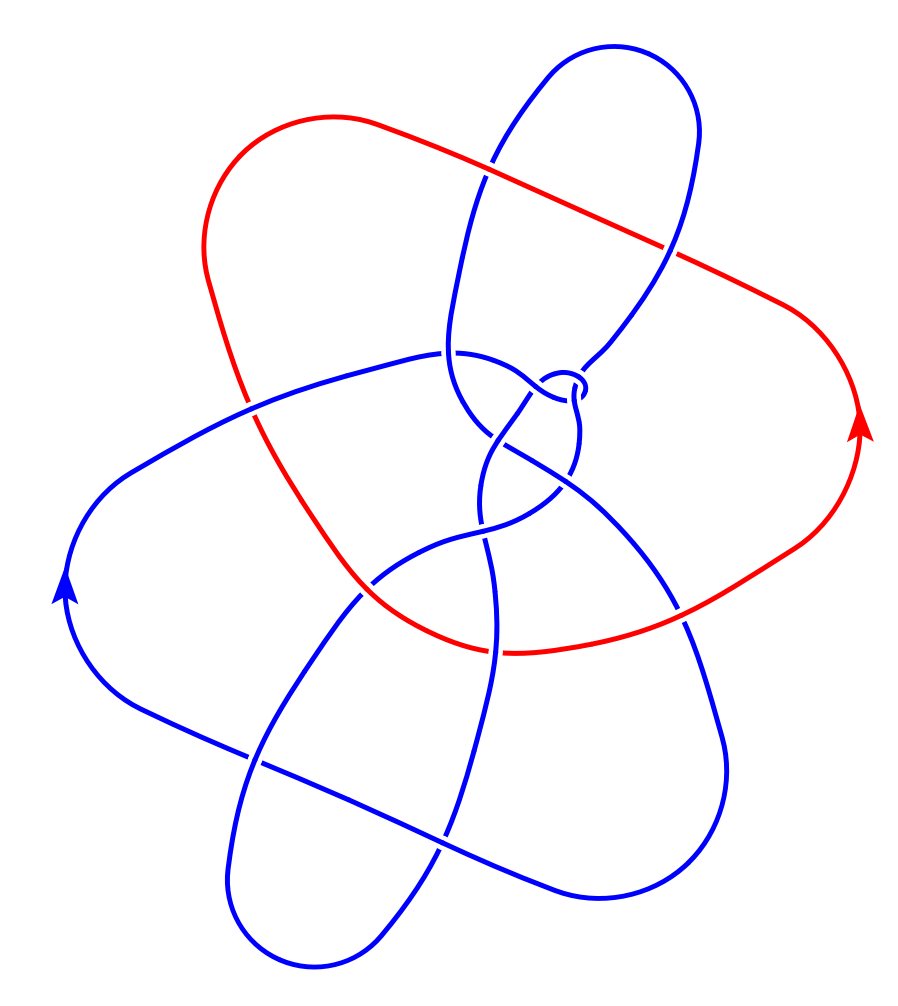} \qquad \includegraphics[width=.4\textwidth]{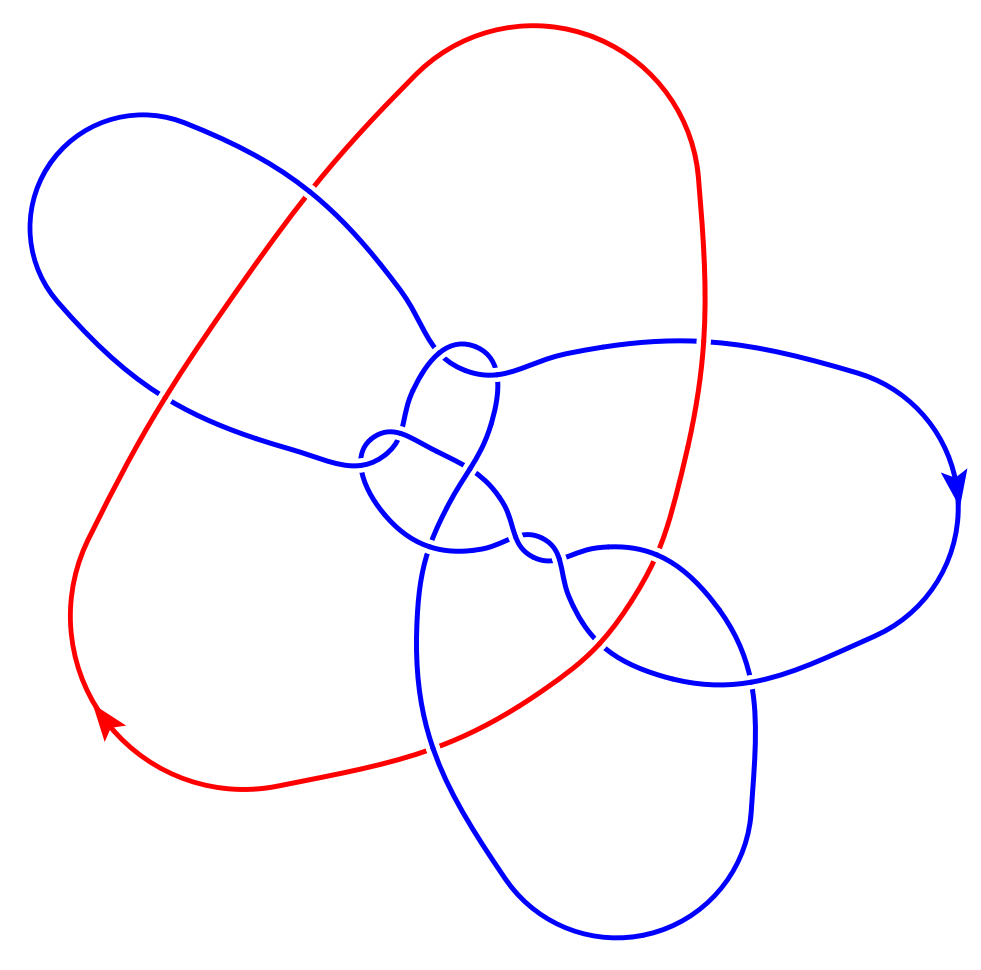} 
      \caption{Two $15$-crossing alternating diagrams of $2$-component links. The double branched cover of the left link yields $K_{(1,1,0,1,2)}(1142,1)$. The double branched cover of the right link yields $K_{(2,1,0,1,1)}(1066,1)$.  }
      \label{fig:br_sets_BL}
  \end{figure}

\newpage
\appendix

\section{Large surgeries on distinct knots.}
This section will contain the details necessary to prove Theorem~\ref{thm:computable_q}.

For a knot $K$ in $S^3$, let $X_K$ denote the exterior of $K$, that is $S^3$ with an open tubular neighbourhood of $K$ removed. In order to study when two knots can have common surgeries we make detailed use of the JSJ decompositions of their exteriors. Budney gives a nice description of the possible pieces that can arise in the JSJ decomposition of a knot exterior \cite{BudneyJSJ}. One of the components in the JSJ decomposition $X_K$ contains the boundary of $X_K$. We will refer to this distinguished piece as the {\em outermost piece} of the decomposition. This boundary component has a distinguished slope corresponding to the meridian of the knot.
 
\begin{thm}\label{thm:knot_JSJ_decomp}
Let $K$ be a non-trivial knot in $S^3$ and let $X$ be a component in the JSJ decomposition of $X_K$. Then $X$ takes one of the following forms.
\begin{enumerate}
    \item $X$ is a Seifert fiberd space over the disk with two exceptional fibers. If the JSJ decomposition of $X_K$ contains such an $X$, then it is necessarily the outermost piece and $K$ is a torus knot.
    \item $X$ is homeomorphic to $S^1\times F$ where $F$ is a compact planar surface with at least three boundary components. If such an $X$ is the outermost component, then $K$ is a composite knot.
    \item $X$ is a hyperbolic manifold. If such an $X$ is the outermost piece, then filling $X$ along the meridian yields $S^3$ or a connected sum of solid tori.
    \item $X$ is a Seifert fiberd space over the annulus with one exceptional fiber. If such an $X$ is the outermost piece, then $K$ is a cable of a non-trivial knot $K'$.
\end{enumerate}
\end{thm}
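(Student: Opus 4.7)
The plan is to derive this theorem directly from Budney's structural analysis of JSJ decompositions of knot exteriors in $S^3$ \cite{BudneyJSJ}. The first step is to invoke the standard JSJ theorem together with geometrization: any piece of $X_K$ is either hyperbolic, giving case (3), or Seifert fibered, in which case I must show it takes one of the forms (1), (2), or (4).

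For the Seifert fibered pieces, the key input is that $X_K \subset S^3$, so each JSJ torus $T$ is an incompressible torus in $S^3$ and therefore bounds a solid torus on exactly one side. This restricts the base orbifold of a Seifert fibered piece to be planar with only finitely many cone points. A case analysis on the number $b$ of boundary components and the number $e$ of cone points of the base orbifold then yields just three non-trivial possibilities: $(b,e)=(1,2)$, a Seifert fibered space over the disk with two exceptional fibers; $(b,e)=(2,1)$, a cable space; and $(b,e)=(b,0)$ with $b\geq 3$, a composing space $S^1\times F$. The remaining configurations are ruled out by minimality of the JSJ decomposition (the cases $(b,e)\in\{(1,0),(1,1),(2,0)\}$ give solid tori or $T^2\times I$), or by the constraint that the adjacent piece along each JSJ torus must itself embed appropriately in $S^3$ (ruling out $b\geq 2$, $e\geq 2$ and $b\geq 3$, $e\geq 1$).

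For the statements about the outermost piece, I would argue case by case. In case (1), the unique torus boundary component is $\partial X_K$, so the entire exterior is a small Seifert fibered space and $K$ is a torus knot. In case (2), the inner $b-1$ torus boundaries correspond to companions, and the structure of $S^1\times F$ glued to the various companion exteriors expresses $K$ as the connected sum of those companions. In case (4), the Seifert fibration of the cable space together with its inner torus identifies $K$ as a cable of the knot whose exterior lies inside that torus. In case (3), meridional filling of the outermost torus yields either $S^3$ (when $X=X_K$, i.e., $K$ itself is hyperbolic) or, when further JSJ tori are present, it yields the closure in $S^3$ of $X$ by the outer meridional solid torus, whose complement in $S^3$ is a disjoint union of knotted solid tori sitting inside the remaining JSJ boundary tori; one checks from the embedding that this closure is a connected sum of solid tori.

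The main obstacle will be the detailed case analysis restricting the Seifert fibered pieces, which relies on carefully tracking the embedding constraints imposed by $S^3$ and on the interaction of the Seifert fibrations of adjacent pieces along their common JSJ tori. Since the theorem is essentially a re-statement of Budney's classification, the proof largely consists of assembling these standard facts, and the subtlest point is the case (3) description for satellite knots whose outermost piece is hyperbolic, where one must verify the precise form of the meridional filling.
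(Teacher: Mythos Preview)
Your proposal is correct and takes essentially the same approach as the paper: both defer to Budney's classification \cite[Theorem~4.18]{BudneyJSJ}, with the paper simply citing it as a black box while you sketch the underlying case analysis. Your outline of the Seifert-fibered case analysis and the outermost-piece statements is a reasonable expansion of what the paper leaves implicit.
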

\begin{proof}
This is a subset of the information contained in \cite[Theorem~4.18]{BudneyJSJ}.
\end{proof}
Note that for all the Seifert fibered spaces appearing in Theorem~\ref{thm:knot_JSJ_decomp} all admit unique Seifert fibered structures, so we can unambiguously refer to the orders of exceptional fibers in these spaces as topological invariants. Furthermore, Mostow-Prasad rigidity implies that for a hyperbolic piece $M$, the systole $\sys(M)$ (i.e.\ the length of the shortest closed geodesic) is also a topological invariant. 

We describe a computable constant $Q(K,K')$ for Theorem~\ref{thm:computable_q} in terms of the JSJ decompositions of the exteriors $X_K$ and $X_{K'}$. We have not seriously attempted to optimize $Q(K,K')$.
\begin{defi}\label{def:Q}
For each knot $K$ in $S^3$ we define a computable constant $Q(K)$. For a distinct pair of knots $K$ and $K'$, we then set
\[
Q(K,K')=\lceil\max \{Q(K), Q(K')\}\rceil +1.
\]
For the unknot, we simply take $Q(U)=0$. For a non-trivial knot $K$, we define $Q(K)$ in terms of the JSJ decomposition of $X_K$.
Let $X$ be a component in the JSJ decomposition $X_K$ for $K$ a nontrivial knot. According to Theorem~\ref{thm:knot_JSJ_decomp} there are four possibilities for $X$ and we define $Q(X)$ as follows in each of these cases.
\begin{enumerate}
    \item If $X$ is the complement of the $T_{r,s}$ torus knot, i.e.\ Seifert fibered over the disk with exceptional fibers of order $|r|$ and $|s|$, then 
    \[Q(X)=\max\{8,|r|,|s|\}.\]
    \item If $X$ is a composing space, ie. homeomorphic to $S^1\times F$ where $F$ is a compact planar surface with at least three boundary components, then $Q(X)=1$.
    \item If $X$ is a hyperbolic manifold, then 
    \[Q(X)=\max\left\{ 32,\sqrt[4]{108}\sqrt{58+ \frac{2\pi}{\sys(X_i)}}\right\}.\]
    \item If $X$ is a Seifert fibered space over the annulus with one exceptional fiber of order $|s|$, then $Q(X)=|s|$.
\end{enumerate} 
Let $K$ be a non-trivial knot with JSJ decomposition
\[
X_K=X_0\cup \dots \cup X_n.
\]

We then define $Q(K)$ to be
\[
Q(K)=\begin{cases}
\max \{ Q(X_i)\,|\, i=0,\dots, n\} &\text{if $K$ is not a cable}\\
|r|+\max\{Q(X_i)\,|\, i=0,\dots, n\} &\text{if $K$ is an $(r,s)$-cable}
\end{cases}
\]
\end{defi}
\begin{rem}
By construction, we see that $Q(K)$ is chosen so that if $X_K$ contains a Seifert fibered piece in its JSJ decomposition, then the exceptional fibers cannot have order exceeding $Q(K)$.
\end{rem}

\begin{lem}
    $Q(K,K')$ is computable.
\end{lem}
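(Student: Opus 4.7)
The plan is to show that every ingredient appearing in Definition~\ref{def:Q} can be extracted algorithmically from a diagram of $K$. Since $Q(K,K')$ depends on $Q(K)$ and $Q(K')$ via the ceiling of a maximum, it suffices to produce an algorithm that outputs a computable upper bound for $Q(K)$ for a single knot $K$; this is enough to give an effective bound $|q|\leq Q(K,K')$ as required in Theorem~\ref{thm:computable_q}.

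First I would test whether $K$ is trivial using unknot recognition, in which case $Q(K)=0$. Otherwise I would compute the JSJ decomposition $X_K=X_0\cup\dots\cup X_n$ algorithmically via normal surface theory (the JSJ decomposition of a Haken manifold being computable by the work of Jaco--Shalen--Tollefson and others). The outermost piece is the unique one containing $\partial X_K$, and by inspecting it in light of Theorem~\ref{thm:knot_JSJ_decomp} I can decide whether $K$ is a cable knot; if so, the cabling parameter $r$ is determined by the Seifert invariants of the outermost piece together with the meridional slope on $\partial X_K$, so it too is computable.

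Next, for each piece $X_i$ one decides its geometric type. By effective geometrization of Haken manifolds, $X_i$ is algorithmically recognizable as either Seifert fibered or hyperbolic, and in the Seifert fibered case its Seifert invariants (in particular the orders of exceptional fibers) can be read off from the normal surface machinery. Thus for composing spaces ($Q(X_i)=1$), torus knot complements, and annular Seifert pieces, $Q(X_i)$ is evidently computable.

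The main obstacle is the hyperbolic case, where $Q(X_i)$ involves the systole via
\[Q(X_i)=\max\left\{32,\sqrt[4]{108}\sqrt{58+\tfrac{2\pi}{\sys(X_i)}}\right\}.\]
Since this quantity is monotonically decreasing in $\sys(X_i)$, it suffices to produce a computable positive lower bound on $\sys(X_i)$. From an ideal triangulation of $X_i$ (supplied by the JSJ algorithm) one can rigorously certify a hyperbolic structure using interval arithmetic, following for instance the verified computations underlying SnapPy's \texttt{verify} module (HIKMOT). Once a certified hyperbolic structure is in hand, the Margulis lemma bounds from below the number of short geodesics, and these can be enumerated up to any desired cutoff $L$ via verified length computations on conjugacy classes of the fundamental group; taking $L$ larger than any known closed geodesic produces a certified lower bound on $\sys(X_i)$, and hence a computable upper bound on $Q(X_i)$. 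Combining all of the above yields a computable upper bound for $Q(K)$, and therefore for $Q(K,K')$.
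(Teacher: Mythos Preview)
Your approach is the same as the paper's: compute the JSJ decomposition, decide the geometric type of each piece, read off Seifert invariants in the Seifert fibered cases, and handle the hyperbolic pieces via the systole. The paper simply cites that the length spectrum of a hyperbolic manifold is computable to arbitrary precision rather than sketching a verification scheme as you do.

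Two small points. First, you deliberately weaken the target to a computable \emph{upper bound} for $Q(K)$, arguing this suffices for Theorem~\ref{thm:computable_q}. That is true for the application, but the lemma as stated asserts that the specific number $Q(K,K')$ of Definition~\ref{def:Q} is computable; matching that literally would require approximating $\sys(X_i)$ from both sides, not just from below. (In fairness, the paper's own proof glosses over the fact that extracting $\lceil\cdot\rceil$ from arbitrary-precision approximations is not obviously decidable, so your formulation is arguably the more honest one.) Second, your systole paragraph has the direction slightly garbled: enumerating closed geodesics up to a cutoff $L$ and finding some gives an \emph{upper} bound on $\sys(X_i)$; the lower bound comes from certifying that the enumeration up to $L$ is complete, i.e.\ that no shorter geodesic was missed.
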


\begin{proof}
This follows from several deep results in the algorithmic classification of $3$-manifolds. First, we note that there exists an algorithm that returns the JSJ pieces (and their gluing maps) of a given irreducible $3$-manifold (such as a knot exterior)~\cite[Theorem~6.4.42]{Matveev}. For a piece of a JSJ decomposition, it is decidable if it is hyperbolic or Seifert fibered~\cite{decide_hyperbolic}. The Seifert invariants of a given Seifert fibered $3$-manifold are computable as a corollary of the decision problem for $3$-manifolds~\cite{Kuperberg2019homeomorphism}. Thus it is also decidable if a given knot is a cable, and if it is, determine its cable parameters $(r,s)$. Finally, one can compute the systole of a given hyperbolic manifold up to a given arbitrarily small error term~\cite{length_spectrum_computable}. 
\end{proof}

    It will also be convenient to note that Theorem~\ref{thm:computable_q} follows from known results on characterizing slopes in several cases.
\begin{prop}\label{prop:computable_q_cases}
    Let $K$ and $K'$ be distinct knots in $S^3$ such that $K$ is either (i) a torus knot or (ii) a cable of a torus knot. Then if $S_{p/q}^3(K)\cong S_{p/q}^3(K')$ for some $p/q\in \Q$, then $|q|\leq Q(K,K')$.
\end{prop}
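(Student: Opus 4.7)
The plan is to reduce each of the two cases to known characterizing-slope theorems, and then verify that the constants in Definition~\ref{def:Q} dominate the resulting thresholds.

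In Case (i), $K=T_{r,s}$: I would invoke the characterizing-slope theorem for torus knots (building on work of Ni--Zhang, McCoy, and Lackenby), which produces a computable function $f(r,s)$ such that every slope $p/q$ with $|q|>f(r,s)$ is characterizing for $T_{r,s}$. The constant $Q(T_{r,s})=\max\{8,|r|,|s|\}$ is chosen to dominate $f$, so if $|q|>Q(K,K')$ then $p/q$ is characterizing for $K$, forcing $K'=K$ and contradicting distinctness.

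In Case (iii), $K=C_{a,b}(T_{r,s})$: I would use the classical analysis of surgery on cable knots (following Gordon), reducing to Case (i) on the inner JSJ piece. The exterior $X_K$ has a two-piece JSJ decomposition with an outer Seifert-fibered piece $\Sigma$ over the annulus with one exceptional fiber of order $|a|$, glued along the cabling torus $T$ to the inner torus-knot complement $X_{T_{r,s}}$. For $|q|>|a|$ the filling slope $p/q$ is not a Seifert slope of $\Sigma$, so $T$ remains essential in $S^3_{p/q}(K)$, and the surgered manifold splits along $T$ as $Y\cup_T X_{T_{r,s}}(p'/q')$, where $Y$ is a small Seifert-fibered space whose invariants are determined by $(a,b,p,q)$ and where $|q'|=|q|a^2$. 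Reading off the JSJ decomposition of $S^3_{p/q}(K')$ gives the same splitting: the outer piece recovers $(a,b)$ and shows that $K'$ is the $(a,b)$-cable of some knot $J$, while the inner piece shows $S^3_{p'/q'}(J)\cong S^3_{p'/q'}(T_{r,s})$. Since $Q(K)=|a|+\max_iQ(X_i)$ is chosen so that $|q|>Q(K,K')$ implies $|q'|=|q|a^2>Q(T_{r,s})$, Case (i) applies to force $J=T_{r,s}$, hence $K'=C_{a,b}(T_{r,s})=K$, contradicting distinctness.

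The main obstacle is verifying that the JSJ decomposition of the surgered manifold really does expose the cabling structure. This reduces to ensuring that no accidental degeneration of $\Sigma$ occurs under Dehn filling with slope $p/q$ when $|q|>|a|$: one must check that $T$ is incompressible in $Y$ and that $Y$ admits no other Seifert fibration that could merge with $X_{T_{r,s}}(p'/q')$ to form an alternative JSJ picture. Both follow from a direct analysis of Dehn fillings of the Seifert-fibered piece over the annulus (a thrice-punctured-sphere bundle), together with the fact that $X_{T_{r,s}}$ is not a solid torus, so $T$ cannot become compressible after inner filling. With this technical point in hand, the whole argument is a bookkeeping exercise against the characterizing-slope theorem for torus knots, and the definition of $Q(K)$ is precisely designed to absorb the relevant thresholds.
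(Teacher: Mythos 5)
Your Case (i) coincides with the paper's argument: both appeal to the torus-knot characterizing-slope theorem of McCoy and verify that $Q(T_{r,s})=\max\{8,|r|,|s|\}$ dominates the threshold.

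Your Case (iii) genuinely departs from the paper. The paper simply invokes \cite[Theorem~7.3(iii)]{Sorya2023satellite}, which already establishes characterizing slopes for cables of torus knots with an explicit denominator threshold; the proposition then reduces to checking that $Q(K)=|a|+\max\{8,|b|,|r|,|s|\}$ exceeds Sorya's constant. You instead try to prove the result from scratch by examining the JSJ decomposition of the surgered manifold, and that outline has real gaps.

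First, your criterion for the cabling torus $T$ to remain essential is wrong. The torus $T$ becomes compressible in $S^3_{p/q}(C_{a,b}(J))$ exactly when $\Delta(p/q,\,ab)=|p-abq|\leq 1$, not when $|q|\leq |a|$. In particular, for \emph{every} $q$ there are slopes $p/q$ with $p=abq\pm 1$ for which $T$ compresses; these can have arbitrarily large $|q|$, so your hypothesis $|q|>Q(K,K')$ does not put you in the ``$T$ essential'' regime. That degenerate sub-case ($|p-abq|=1$), where $S^3_{p/q}(K)\cong S^3_{p/(qs^2)}(T_{r,s})$ and one is comparing a $p/q$-surgery on $K'$ with a $p/(qs^2)$-surgery on the companion torus knot, is entirely absent from your outline, and it is exactly where the slope asymmetry makes a characterizing-slope citation unavailable.

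Second, even when $T$ does remain essential, the step ``reading off the JSJ decomposition of $S^3_{p/q}(K')$ recovers $(a,b)$ and shows that $K'$ is an $(a,b)$-cable of some $J$'' is asserted but never proved. This is precisely the hard part: a priori $K'$ could be hyperbolic, composite, or a cable with different parameters whose surgered JSJ graph merely looks the same. The paper handles this (for the general Theorem~\ref{thm:computable_q}, to which Proposition~\ref{prop:computable_q_cases} is a bootstrap input) by a delicate analysis through Lemmas~\ref{lem:hyp_case}, \ref{lem:composite_case}, and~\ref{lem:cabled_case}, the last of which introduces the fiber-longitude number to distinguish cable spaces from torus-knot pieces that could masquerade as the outer piece. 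Your outline flags this as a technicality (``verifying that the JSJ decomposition... really does expose the cabling structure'') but does not address it; without an argument of that type, or a citation to Sorya in its place, the proof does not close.

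In short, if the missing sub-case is added and the JSJ-matching step is proved using an invariant such as the fiber-longitude number, your approach would essentially re-derive Sorya's theorem. As written, it is not a complete proof.
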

\begin{proof}
If $K$ is the $T_{r,s}$ torus knot, then $Q(K,K')\geq Q(K)=\max\{8,|r|,|s|\}$ and, \cite[Theorem~1.1]{McCoy2020torus_char} gives the desired conclusion. If $K$ is the $(a,b)$-cable of $T_{r,s}$, then $Q(K,K')\geq Q(K)=|a|+\max\{8,|b|,|r|,|s|\}$ and \cite[Theorem~7.3(iii)]{Sorya2023satellite} gives the desired conclusion.
\end{proof}

Next we describe how the JSJ decomposition of a knot complement changes under Dehn surgery. Note that if $q>1$, then a surgery $S_{p/q}^3(K)$ is irreducible \cite{Gordon1987reducible} and so it makes sense to discuss the JSJ decomposition of such a surgery. Given an outermost piece $Y$ in a knot exterior, the boundary component coming from the knot exterior comes equipped with a basis of homology inherited from the the knot exterior, so we may use the same coordinates and can thus refer to $Y(p/q)$ to denote the $p/q$-filling on this boundary component.
\begin{prop}[Proposition~3.6 of \cite{Sorya2023satellite}]\label{prop:surgery_JSJ_decomp}
    Let $K$ be a non-trivial knot in $S^3$ and let $p/q$ be a rational number with $|q|\geq 3$. Let
    \[X_K=X_0 \cup \dots \cup X_n\]
    be the JSJ decomposition of $X_K$ with $X_0$ the outermost piece. If $K$ is not a cable, then the JSJ decomposition of $X_K(p/q)$ takes the form
    \[
    X_K(p/q)=X_0(p/q)\cup X_1 \cup \dots \cup X_n.
    \]
    If $K$ is the $(r,s)$-cable of a non-trivial knot $J$, then the JSJ decomposition of $X_K(p/q)$ takes the form
    \[
    X_K(p/q)\cong \begin{cases}
        X_0(p/q)\cup X_1 \cup \dots \cup X_n &\text{if $|p-rsq|>1$}\\
        X_1(p/(qs^2))\cup X_2 \cup \dots \cup X_n &\text{if $|p-rsq|=1$}
    \end{cases}
    \]
    where $X_1$ is the outermost piece of $X_J$.\qed
\end{prop}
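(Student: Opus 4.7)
The plan is to show that each JSJ torus $T_i$ of $X_K$ persists as an essential torus in $X_K(p/q)$ and that the outermost piece $X_0$ gets replaced by either its own $p/q$-filling or, in the exceptional cable subcase, by a solid torus absorbed into the next piece. Since the tori $T_1,\dots,T_n$ interior to $X_K$ are disjoint from $\partial X_K$, they survive embedded in $X_K(p/q)$. The substantive tasks are then two: to verify that each $T_i$ remains incompressible and non-boundary-parallel after filling, and to identify the homeomorphism type of the new outermost piece $X_0(p/q)$.

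For the first task, any compression disk for some $T_i$ in $X_K(p/q)$ must meet the filling solid torus, since otherwise $T_i$ would already have been compressible in $X_K$. This localizes the question to whether $X_0(p/q)$ has a compressible boundary torus along a previously essential component of $\partial X_0$. The plan is to treat each of the four possible types of outermost piece from Theorem~\ref{thm:knot_JSJ_decomp} in turn. For the Seifert-fibered cases (torus knot exterior, composing space, cable space), a direct computation shows that the fiber slope on $\partial X_K$ is an integer in the meridian--longitude basis, so non-integer slopes with $|q|\geq 2$ automatically avoid it, and the bound $|q|\geq 3$ is what handles the remaining small number of exceptional integer slopes. For the hyperbolic case, one invokes the Culler--Gordon--Luecke--Shalen and Gordon--Wu bounds on exceptional surgeries to conclude that the slopes causing compressibility or essential tori in $X_0(p/q)$ form a finite set of integral slopes, which are excluded by $|q|\geq 3$.

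For the second task, in each non-cable case the piece $X_0(p/q)$ inherits the appropriate Seifert-fibered or hyperbolic structure from $X_0$ simply by capping off the boundary torus $\partial X_K$, and is therefore a valid outermost JSJ piece. For the cable case with $X_0 = C_{r,s}$, the standard Seifert-fibered description of the cable space gives that the regular fiber on $\partial X_K$ is the cabling slope $rs$. A direct calculation then shows that $C_{r,s}(p/q)$ is Seifert fibered over the disk with exceptional fibers of orders $s$ and $|p-rsq|$ when $|p-rsq|>1$, and is a solid torus when $|p-rsq|=1$. In the latter case, gluing this solid torus across $T_1$ into $X_1$ is equivalent to Dehn filling $X_1$; tracking the gluing map between $C_{r,s}$ and $X_1$ and carefully changing to the meridian--longitude basis of $J$ yields the effective slope $p/(qs^2)$.

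The main obstacle will be the hyperbolic subcase of the first task: showing that the specific bound $|q|\geq 3$ truly suffices to avoid every slope on $\partial X_K$ that would either compress an inner boundary torus of $X_0$ or introduce a new essential torus inside $X_0(p/q)$, in the presence of possibly multiple boundary components of $X_0$. This will rely crucially on combining the distance estimates for exceptional slopes with the fact that the meridian slope of a knot in $S^3$ sits integrally in the meridian--longitude basis, so that all candidate exceptional slopes are themselves integers and are therefore excluded by the condition $|q|\geq 3$.
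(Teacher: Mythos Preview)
The paper does not prove this proposition: it is quoted verbatim from \cite{Sorya2023satellite} and closed with a terminal \qed, with no proof environment. So there is nothing in the paper to compare your argument against directly; the relevant comparison is with Sorya's own proof, which proceeds by the same case analysis on the outermost piece that you outline.

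Your architecture is correct --- show the inner JSJ tori stay essential by analysing $X_0(p/q)$ according to the four cases of Theorem~\ref{thm:knot_JSJ_decomp}, and handle the cable-space collapse separately --- but two of your stated facts are wrong and would need repair. First, for a composite knot the regular fibre of the composing space on $\partial X_K$ is the \emph{meridian} $\mu$, not an integer slope; the correct computation is that filling at distance $|q|$ from $\mu$ produces a Seifert piece over a planar surface with an exceptional fibre of order $|q|$, which has incompressible boundary and is a valid JSJ piece once $|q|\geq 2$. Second, in the hyperbolic case the Cyclic Surgery Theorem is not the right tool. What you need are the distance bounds between a boundary-reducing slope (here the meridian, since $X_0(\mu)$ is a union of solid tori by Theorem~\ref{thm:knot_JSJ_decomp}) and the reducible, boundary-reducible, annular and toroidal slopes on the same cusp, due to Scharlemann, Wu, and Gordon--Luecke. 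These give $\Delta(\mu,\alpha)\leq 2$ for every such exceptional slope $\alpha$, hence $|q|\leq 2$; your assertion that all exceptional slopes are \emph{integers} is too strong and not what the literature gives you, though the weaker conclusion $|q|\leq 2$ is of course still excluded by $|q|\geq 3$. With those two corrections in place your sketch matches the standard argument, including the observation (recorded in the paper as Remark~\ref{rem:only_collapsing_once}) that in the cable-collapse case the new denominator $qs^2$ is large enough that a second collapse cannot occur.
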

\begin{rem}\label{rem:only_collapsing_once}
Proposition~\ref{prop:surgery_JSJ_decomp} implicitly uses the fact that if $K$ is a twice iterated cable of a non-trivial knot $J$, say $K=C_{r_1,s_1}(C_{r_2,s_2}(J))$ and $|p-r_1s_1q|=1$, then 
\[
|p-qr_2s_2s_1^2|>1.
\]
\end{rem}

\subsection{The hyperbolic case}
We now summarize the quantitative results of Futer-Purcell-Schleimer on hyperbolic Dehn filling that we need in order to prove Theorem~\ref{thm:computable_q}. Given a cusped hyperbolic manifold $N$ with a choice of horocusp neighbourhood $C$ for one of its cusps and a slope $\sigma$ on $\partial C$, the normalized length of $\sigma$ is defined to be
\[
L(\sigma)=\frac{\len(\sigma)}{\sqrt{\operatorname{Area}\partial C}},
\]
where $\len(\sigma)$ is the length measured in the Euclidean metric on $\partial C$.
Note that $L(\sigma)$ is invariant under rescaling $C$ and is hence independent of the choice of horocusp.
\begin{lem}\label{lem:normalized_length_bound}
Let $X_0$ be a hyperbolic outermost piece occurring in the JSJ decomposition of a non-trivial knot $K$. The normalized length of the slope $p/q$ on $\partial X_K\subseteq X_0$ satisfies
\[
L(p/q) \geq\frac{|q|}{\sqrt[4]{108}}.
\]
\end{lem}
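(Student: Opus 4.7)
The plan is to combine a Euclidean projection argument on a horocusp of $X_0$ with a known upper bound on the normalized length of the meridian itself. Fix any horocusp neighborhood $C$ of the cusp of $X_0$ corresponding to $\partial X_K$; on the flat torus $\partial C$, realize $H_1(\partial C;\Z)$ as a lattice in $\R^2$ and let $\vec\mu,\vec\lambda$ be the translation vectors associated to the meridian and longitude of $K$. Then $\operatorname{Area}(\partial C)=|\vec\mu\times\vec\lambda|$, and the slope $p/q=p\mu+q\lambda$ has Euclidean length
\[
\operatorname{len}(p/q)=|p\vec\mu+q\vec\lambda|.
\]

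The first step is a simple linear-algebra observation: the vector $p\vec\mu+q\vec\lambda$ has perpendicular distance $|q|\cdot|\vec\mu\times\vec\lambda|/|\vec\mu|$ from the line spanned by $\vec\mu$, so
\[
\operatorname{len}(p/q)\;\geq\;\frac{|q|\cdot\operatorname{Area}(\partial C)}{\operatorname{len}(\mu)}.
\]
Dividing by $\sqrt{\operatorname{Area}(\partial C)}$ yields the clean intermediate inequality
\[
L(p/q)\;\geq\;\frac{|q|}{L(\mu)},
\]
which is independent of the choice of $C$.

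The second and essential step is to bound $L(\mu)\leq \sqrt[4]{108}$. By Theorem~\ref{thm:knot_JSJ_decomp}(3), because $X_0$ is the outermost hyperbolic piece in the JSJ decomposition of $X_K$, the filling $X_0(\mu)$ is either $S^3$ or a connected sum of solid tori. In either case the filling is non-hyperbolic, and moreover either cyclic (in the $S^3$ case) or reducible (in the connect-sum case). The constant $\sqrt[4]{108}$ is precisely the normalized-length bound provided by the quantitative estimates of Lackenby--Meyerhoff (for cyclic fillings) and their analogue for reducible fillings, both of which give the same upper bound $\sqrt[4]{108}$ on the normalized length of such an exceptional slope.

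The only nontrivial point is step two: ensuring that the two possible topological outcomes for $X_0(\mu)$ are both covered by the same numerical constant $\sqrt[4]{108}$. Once this bound on $L(\mu)$ is in hand, substituting it into the displayed inequality above immediately gives $L(p/q)\geq |q|/\sqrt[4]{108}$, completing the proof.
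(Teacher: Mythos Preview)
Your overall strategy is the same as the paper's: derive the intermediate inequality $L(p/q)\geq |q|/L(\mu)$ from Euclidean geometry on the cusp torus, and then bound $L(\mu)\leq \sqrt[4]{108}$. The first step is exactly what the paper does (phrased there as $\operatorname{Area}(\partial C)\cdot\Delta(\mu,p/q)\leq \operatorname{len}(\mu)\operatorname{len}(p/q)$).

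The issue is in your justification of the second step. You attribute the bound $L(\mu)\leq\sqrt[4]{108}$ to ``Lackenby--Meyerhoff for cyclic fillings'' and an ``analogue for reducible fillings,'' but Lackenby--Meyerhoff does not contain a normalized-length bound of this form; their work bounds the \emph{number} of exceptional slopes and the pairwise \emph{distances} between them. The paper obtains $L(\mu)\leq\sqrt[4]{108}$ by combining two separate inputs: the $6$-theorem of Agol and Lackenby, which gives $\operatorname{len}(\mu)\leq 6$ in a maximal horocusp for \emph{any} non-hyperbolic filling slope, and the cusp-area lower bound $\operatorname{Area}(\partial C)\geq\sqrt{12}$ of Gabai et al. Then $L(\mu)=\operatorname{len}(\mu)/\sqrt{\operatorname{Area}(\partial C)}\leq 6/\sqrt[4]{12}=\sqrt[4]{108}$. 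Note in particular that your case split into ``cyclic'' versus ``reducible'' is unnecessary: the $6$-theorem applies uniformly to every exceptional slope, so it suffices to observe that $X_0(\mu)$ is not hyperbolic.

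So your proof plan is sound in outline but has a gap at the crucial citation; replacing your appeal to Lackenby--Meyerhoff with the $6$-theorem plus the cusp-area bound closes it and recovers exactly the paper's argument.
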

\begin{proof}
By \cite[Theorem~1.2]{Gabai_low_vol}, we may consider $C$ a horocusp neighbourhood of $\partial X_K$ such that $\operatorname{Area}\partial C=\sqrt{12}$. Let $\mu$ denote the meridian of $K$ on $\partial X_K$. Since the meridian is an exceptional slope in $X_0$, the 6-theorem implies that $\len(\mu)\leq 6$ \cite{Ag00, La00}. A geometric argument in the universal cover shows that for any slope $p/q$ on $\partial X_K$ we have
\[
\sqrt{12} \Delta(\mu,p/q) \leq \len(\mu) \len(p/q).
\]
Since $\Delta(p/q,\mu)=|q|$, all of these ingredients can be combined to bound the normalized length of $p/q$ as
\[
L(p/q) = \frac{\len(p/q)}{\sqrt[4]{12}}\geq \frac{\sqrt[4]{12} |q|}{\len(\mu)}\geq \frac{ \sqrt[4]{12}|q|}{6}=\frac{|q|}{\sqrt[4]{108}},
\]
as required.
\end{proof}

The following is a quantitative version of Thurston's Dehn filling theorem due to Futer-Purcell-Schleimer \cite{FPS19}.
\begin{thm}\label{thm:quantative_filling}
Let $N$ be a cusped hyperbolic manifold and let $\sigma$ be a slope on a cusp of $N$ such that the normalized length satisfies
\[
L(\sigma)\geq \max\left\{10.1, \sqrt{58 + \frac{2\pi}{\sys N}}  \right\}.
\]

If $M$ is the manifold obtained by filling $N$ along $\sigma$, then $M$ is hyperbolic and the core of the Dehn filling solid torus is isotopic to the shortest closed geodesic in $M$. Moreover, this satisfies
\[\sys(M)\leq \frac{2\pi}{L^2-16.03}.\]
\end{thm}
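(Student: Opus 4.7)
The plan is to use the theory of cone-manifold deformations developed by Hodgson–Kerckhoff and refined quantitatively by Bromberg and by Futer–Purcell–Schleimer, interpolating between the cusped hyperbolic structure on $N$ and the filled structure on $M$ via a one-parameter family of cone structures $M_t$ with cone angle $t \in [0, 2\pi]$ along the core of the filling solid torus. The cusped structure corresponds (after suitable reparametrization) to $t=0$ and the complete filled structure to $t=2\pi$. The normalized length $L(\sigma)$ controls the initial infinitesimal deformation, while the systole $\sys(N)$ controls how far one can push the deformation before some other short geodesic might interfere.

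First, I would verify the deformation exists throughout the parameter interval. The key analytic input is that as long as the tube radius around the core geodesic in $M_t$ stays bounded below by an explicit constant (universally, one can aim at $\mathrm{arcsinh}(1)$ or similar), no degeneration can occur. The hypothesis $L(\sigma)^2 \geq 58 + 2\pi/\sys(N)$ is engineered precisely so that the tube radius estimate holds for every $t \in [0,2\pi]$; the term $2\pi/\sys(N)$ enters to ensure that no geodesic of $N$ becomes short enough during the deformation to collide with the tube around the filling core. The constant $58$ (and the $16.03$ that will appear below) are explicit numerical outputs of Bromberg's and Futer–Purcell–Schleimer's estimates on the Hessian of the volume functional along cone deformations.

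Next, once the deformation is controlled, I would derive the length bound by integrating the Schläfli-type formula for cone manifolds. The infinitesimal change of the core length with respect to the cone angle satisfies a differential inequality of the form
\[
\left|\frac{d\,\ell(t)}{dt}\right| \;\leq\; \frac{\ell(t)}{L^2 - C}
\]
for an explicit constant $C$ (this $C$ is what eventually becomes $16.03$ after optimization). Integrating from $t=0$, where $\ell(0) = 0$ and $L$ is the normalized length of $\sigma$ in $N$, to $t=2\pi$, where $\ell(2\pi)$ is the length of the core geodesic in $M$, yields the bound $\ell(2\pi) \leq 2\pi/(L^2 - 16.03)$. The threshold $L \geq 10.1$ ensures that this denominator is strictly positive and in a regime where the linearized estimates are rigorous.

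Finally, I would argue that this core geodesic is actually the shortest geodesic in $M$. Since the tube radius around the filling core is bounded below, and since every other closed geodesic in $M$ either lies entirely outside this tube or is pulled from a closed geodesic of $N$ (whose length is at least $\sys(N)$, and which is only slightly perturbed by the deformation), one can compare lengths: the core's length is at most $2\pi/(L^2-16.03)$, which by the normalized-length hypothesis is forced to be strictly less than both $\sys(N)/2$ and any other candidate. The main obstacle in this program is obtaining clean, universal numerical constants; the Hessian estimates for the volume/length functional along cone deformations are delicate, and it is there that the full technical strength of \cite{FPS19} is needed. Once those estimates are in hand, assembling them into the stated theorem via the integration argument above is essentially bookkeeping.
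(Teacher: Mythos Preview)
The paper does not reprove this theorem: its proof is a two-line citation of \cite[Theorem~7.28]{FPS19}, together with the observation that the auxiliary function $\sysmin$ appearing there satisfies $\frac{2\pi}{L^2}<\sysmin(L)<\frac{2\pi}{L^2-58}$ for $L\geq 10.1$. You are instead sketching the internal argument of \cite{FPS19} itself, which is a much larger undertaking than what the paper requires.

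Your sketch has the right architecture (cone-manifold deformation, tube-radius control, length estimates for the core, comparison with other geodesics), but two of the key steps do not work as written. First, the differential inequality you propose, $|d\ell/dt|\leq \ell(t)/(L^2-C)$, integrates to an exponential bound on $\ell(2\pi)$, not to $2\pi/(L^2-16.03)$; the actual estimates in Hodgson--Kerckhoff and \cite{FPS19} are of a different shape and involve the tube radius rather than $\ell$ directly. Second, your argument that the core is the systole of $M$ is incomplete: showing the core is shorter than $\sys(N)$ is not enough, since the other geodesics of $N$ can shorten under the deformation. The genuine argument in \cite{FPS19} controls how much every short geodesic of $N$ can change in length (this is where the $58$ in the hypothesis enters), and this step is substantially more delicate than a comparison with $\sys(N)/2$. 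For the purposes of the present paper, the citation is the appropriate proof; if you want to expand it, you would need to reproduce a significant portion of \cite{FPS19}.
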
\begin{proof}
    This follows from \cite[Theorem~7.28]{FPS19} applied to filling along a single cusp combined with the fact that the function $\sysmin$ in that paper satisfies
    \[
\frac{2\pi}{L^2}<\sysmin(L)<\frac{2\pi}{L^2-58}
\]
for $L\geq 10.1$.
\end{proof}

The next lemma proves Theorem~\ref{thm:computable_q} in the case where one of the outermost pieces is hyperbolic. We prove a slightly stronger statement, as it will afford us more flexibility later.

\begin{lem}\label{lem:hyp_case}
Let $K, K'$ be distinct knots, where the outermost piece of $X_K$ is hyperbolic. If there are slopes $p/q$ and $p/q'$ such that
\[
\text{$S_{p/q}^3(K)\cong S_{p/q'}^3(K')$ and $q,q'> Q(K, K')$,}
\]
then $K'$ is an $(a,b)$-cable of $K$ for some coprime integers $a,b$ such that $b\geq 2$ and $p/q = \frac{ab\pm 1}{q'b^2}$.
\end{lem}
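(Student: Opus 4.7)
The plan is to compare the JSJ decompositions of $M := S_{p/q}^3(K) \cong S_{p/q'}^3(K')$ from both sides, using the quantitative Dehn filling theorem to pin down a distinguished short-systole hyperbolic piece and its shortest closed geodesic on each side. First I would apply Theorem~\ref{thm:quantative_filling} to the hyperbolic piece $X_0$ of $X_K$ at slope $p/q$: Lemma~\ref{lem:normalized_length_bound} gives $L(p/q)\geq q/\sqrt[4]{108}$, and the bounds $Q(X_0)\geq 32$ and $Q(X_0)\geq \sqrt[4]{108}\sqrt{58+2\pi/\sys(X_0)}$ built into the definition of $Q(K)$ are calibrated precisely so that $q>Q(K,K')$ forces $L(p/q)$ above both thresholds in the theorem. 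Hence $X_0(p/q)$ is hyperbolic, and its systole, realized by the core of the Dehn filling (isotopic to the unique shortest closed geodesic), is bounded above by $2\pi/(L(p/q)^2-16.03)$; by the analogous constants in $Q(K)$ and $Q(K')$, this upper bound is strictly less than the systole of any unfilled hyperbolic piece appearing in $X_K$ or $X_{K'}$.

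Next I would apply Proposition~\ref{prop:surgery_JSJ_decomp} on both sides (noting that $K$ is not a cable, since $X_0$ is hyperbolic) and case-analyze $K'$ via Theorem~\ref{thm:knot_JSJ_decomp}. Proposition~\ref{prop:computable_q_cases} immediately discards the cases that $K'$ is a torus knot or a cable of a torus knot. In the remaining cases, the outer $K'$-piece being filled is either $X_0'$ at slope $p/q'$ (when $K'$ is not a cable, or is an $(a,b)$-cable with $|p-abq'|>1$), or the outermost piece $X_1'$ of $X_J$ at slope $p/(q'b^2)$ (when $K'=C_{a,b}(J)$ with $|p-abq'|=1$). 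In $M$'s JSJ the hyperbolic piece $X_0(p/q)$ must coincide with some piece appearing on the $K'$-side. If the outer $K'$-piece being filled is Seifert fibered (cable space in the non-collapse cable case, composing space in the composite case), then the filled piece is itself Seifert fibered and cannot be $X_0(p/q)$; the candidate unfilled deeper pieces are either Seifert fibered or have systoles bounded below by construction of $Q(K')$, so they cannot be $X_0(p/q)$ either. This leaves only the possibility that the outer $K'$-piece being filled is hyperbolic, and a symmetric application of Theorem~\ref{thm:quantative_filling} then makes the filled piece hyperbolic with a short core that, by the systole comparison, must coincide with $X_0(p/q)$ as a JSJ piece of $M$.

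Under this identification of pieces, the two filling cores, each being the unique shortest closed geodesic in the common piece, are isotopic to a single geodesic $\gamma$ in $M$. Drilling $\gamma$ from $M$ yields a canonical 3-manifold that, depending on the case, is simultaneously $X_K$ and either $X_{K'}$ (in the non-cable or non-collapsing cable subcases) or $X_J$ (in the collapsing cable subcase), with matching meridian slopes on the distinguished boundary torus. In the first two subcases this forces $X_K\cong X_{K'}$ with matching meridians, so $K=K'$ by the knot complement theorem, contradicting distinctness. Thus one is left with $K'=C_{a,b}(J)$ where $b\geq 2$ and $|p-abq'|=1$; drilling identifies $X_K\cong X_J$ with matching meridians, so $J=K$ by the knot complement theorem, and matching the two filling slopes on the common boundary torus yields the stated slope identity. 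The main obstacle in executing this plan is the systole bookkeeping: one must verify the numerical inequalities built into $Q(K)$ and $Q(K')$ carefully enough to guarantee that the short-systole hyperbolic piece (and hence its shortest closed geodesic) is unambiguously the same from both JSJ descriptions of $M$, so that drilling yields a well-defined identification of knot exteriors.
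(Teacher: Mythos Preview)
Your approach is essentially the same as the paper's: compare JSJ decompositions, use the quantitative filling theorem to make $X_0(p/q)$ the uniquely short-systole hyperbolic piece, argue it cannot match any unfilled piece on the $K'$ side, identify it with the filled $K'$-piece, and drill the common core to recover the knot complements. The paper organizes this slightly more economically by not case-splitting on the type of $K'$ in advance; it simply observes that $X_0(p/q)$ must land on $X_0'(p/q')$ (or $X_1'(p/(q'b^2))$ in the cable-collapse case), and then that this piece being hyperbolic forces $X_0'$ (resp.\ $X_1'$) to be hyperbolic, since all Seifert-fibered outermost pieces fill to Seifert-fibered spaces.

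There is one genuine misstep. You invoke Proposition~\ref{prop:computable_q_cases} to discard the cases where $K'$ is a torus knot or a cable of a torus knot, but that proposition has the hypothesis $S_{p/q}^3(K)\cong S_{p/q}^3(K')$ with the \emph{same} slope on both sides, whereas Lemma~\ref{lem:hyp_case} allows $q\neq q'$. The characterizing-slope results cited in its proof do not give you anything for different denominators. This gap is easy to close without that proposition: if $K'$ is a torus knot, then $S_{p/q'}^3(K')$ is a small Seifert fibered space, hence has no hyperbolic JSJ piece, contradicting the presence of $X_0(p/q)$; if $K'$ is a cable of a torus knot, then in both the collapse and non-collapse subcases every JSJ piece of $S_{p/q'}^3(K')$ is Seifert fibered, giving the same contradiction. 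With that patch, your argument goes through and matches the paper's.
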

\begin{proof}
Let $X_K=X_0 \cup \dots \cup X_n$ be the JSJ decomposition of $X_K$ with outermost piece $X_0$ hyperbolic. Let $X_{K'}=X'_0 \cup \dots \cup X'_{n'}$ be the JSJ decomposition for $X(K')$ with outermost piece $X'_0$. Since $|q|\geq 3$, Proposition~\ref{prop:surgery_JSJ_decomp} shows that the JSJ decomposition of $S_{p/q}^3(K)$ is given by
\[
S_{p/q}^3(K)=X_0(p/q) \cup X_1\cup \dots \cup X_n.
\]
 Let $L$ denote the normalized length of the slope $p/q$ on the cusp of $X_0$. By definition of $Q(K,K')$, we have that
 \[
 \text{$|q|> 10.1\sqrt[4]{108}\approx 32.5$ and $|q|> \sqrt[4]{108}\sqrt{58+ \frac{2\pi}{\sys(X_0)}}$}.
 \]
Consequently, Lemma~\ref{lem:normalized_length_bound} implies that
 \begin{equation}\label{eq:hyp_systole_1}
 L>\max \left\{10.1, \sqrt{58+ \frac{2\pi}{\sys(X_0)}} \right\}. 
 \end{equation}
Likewise, for any hyperbolic piece $X_i'$ in $X_{K'}$, Lemma~\ref{lem:normalized_length_bound} and the definition of $Q(K,K')$ implies the inequality
\begin{equation}\label{eq:hyp_systole_2}
 L> \sqrt{58+ \frac{2\pi}{\sys(X'_i)}}. 
 \end{equation}
 
By the quantitative filling bounds of Theorem~\ref{thm:quantative_filling}, \eqref{eq:hyp_systole_1} implies that $X_0(p/q)$ is hyperbolic and that in the resulting hyperbolic structure we can assume that the core of the filling torus is the systole in $X_0(p/q)$. Furthermore, the bound in Theorem~\ref{thm:quantative_filling} combined with \eqref{eq:hyp_systole_2} show that the systole of $X_0(p/q)$ satisfies
\begin{align*}
\sys(X_0(p/q))&\leq \frac{2\pi}{L^2 -17} \\ 
&\leq \frac{2\pi \sys(X_i')}{41\sys(X_i') + 2\pi}\\
&= \sys(X_i') - \frac{41\sys(X_i')^2}{41\sys(X_i')+2\pi}\\
&<\sys(X_i'),
\end{align*}

for any hyperbolic piece $X'_i$ in the JSJ decomposition of $X_{K'}$.
Thus we see that $X_0(p/q)$ cannot be homeomorphic to $X_i'$ for any $X_i'$ in the JSJ decomposition of $X_{K'}$.

Let $f\colon S_{p/q}^3(K) \rightarrow S_{p/q'}^3(K')$ be an orientation-preserving homeomorphism, which, after a suitable isotopy, can be assumed to carry JSJ pieces of $S_{p/q}^3(K)$ to JSJ pieces of $S_{p/q'}^3(K')$. In particular, $f$ restricts to give a homeomorphism between $X_0(p/q)$ and one of the JSJ pieces of $S_{p/q'}^3(K')$. First suppose that the pair $(K',p/q')$ is not of the form $(C_{a,b}(J), ab\pm \frac{1}{q'})$. In this case Proposition~\ref{prop:surgery_JSJ_decomp} implies that the JSJ decomposition of $S_{p/q'}^3(K')$ takes the form
\[S_{p/q'}^3(K')=X'_0(p/q') \cup X'_1 \cup \dots \cup X'_{n'}.\]
Since $X_0(p/q)$ is not homeomorphic to any $X'_i$, we see that $f$ carries $X_0(p/q)$ homeomorphically onto $X_0'(p/q')$. This implies that $X_0'$ must be hyperbolic. Moreover, as above, the bounds on $q'$ imply via Lemma~\ref{lem:normalized_length_bound} and Theorem~\ref{thm:quantative_filling} that the core of the Dehn filling torus in $X_0'(p/q')$ is isotopic to the systole in $X_0'(p/q')$. Thus up to isotopy we can assume that restricting $f$ to the complement of the filling torus yields a homeomorphism between the knot complements $X_K$ and $X_{K'}$. By the knot complement theorem \cite{GL89}, this implies that $K$ and $K'$ must be isotopic, and since the meridian is unique we have that $q=q'$.

If the pair $(K',p/q')$ is of the form $(C_{a,b}(J), ab\pm \frac{1}{q'})$, then
\[
S_{p/q}^3(K)\cong S_{p/q'}^3(K')\cong S_{p/(q'b^2)}^3(J).
\]
Since $Q(K, K')\geq Q(K,J)$, we may apply the above analysis to the JSJ decomposition of $J$ in order to conclude that $J$ and $K$ are isotopic and that $q=q'b^2$.
\end{proof}
\subsection{The composite case}
Next we turn our attention to the case where one of the knots is composite. The argument is very similar to the hyperbolic case given in Lemma~\ref{lem:hyp_case}.
\begin{lem}\label{lem:composite_case}
Let $K, K'$ be distinct knots, where $K$ is a composite knot. If there are slopes $p/q$ and $p/q'$ such that
\[
\text{$S_{p/q}^3(K)\cong S_{p/q'}^3(K')$ and $q,q'> Q(K,K')$,}
\]
then $K'$ is an $(a,b)$-cable of $K$ for some coprime integers $a,b$ such that $b\geq 2$ and $p/q = \frac{ab\pm 1}{q'b^2}$.
\end{lem}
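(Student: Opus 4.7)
The plan is to mirror the proof of Lemma~\ref{lem:hyp_case}, with the role of a hyperbolic outermost piece with very short systole taken over by a Seifert fibered outermost piece whose filling exhibits an exceptional fiber of unusually large multiplicity. Write $X_K = X_0\cup X_1\cup\dots\cup X_n$ for the JSJ decomposition, with outermost composing space $X_0\cong S^1\times F$, where $F$ is planar with $k\geq 3$ boundary circles. Since $q>Q(K,K')\geq 2$, we have $|q|\geq 3$, so by Proposition~\ref{prop:surgery_JSJ_decomp} the JSJ decomposition of $S^3_{p/q}(K)$ is $X_0(p/q)\cup X_1\cup\dots\cup X_n$. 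The Seifert fibers of $X_0$ restrict on $\partial X_K$ to the meridian $\mu$ of $K$, so the filling slope $p\mu+q\lambda$ meets a regular fiber with geometric intersection number $|q|$. Hence $X_0(p/q)$ is Seifert fibered over a planar orbifold with $k-1\geq 2$ boundary circles and a single cone point of order $|q|$.

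Let $f\colon S^3_{p/q}(K)\to S^3_{p/q'}(K')$ be an orientation-preserving homeomorphism, isotoped to respect JSJ pieces, and first suppose that $(K',p/q')$ is not of the form $(C_{a,b}(J),ab\pm 1/q')$, so that Proposition~\ref{prop:surgery_JSJ_decomp} yields JSJ decomposition $X'_0(p/q')\cup X'_1\cup\dots\cup X'_{n'}$ for $S^3_{p/q'}(K')$. By Definition~\ref{def:Q}, the multiplicity $|q|$ strictly exceeds the multiplicity of every exceptional fiber appearing in any Seifert fibered JSJ piece of $X_K$ or $X_{K'}$; moreover, Lemma~\ref{lem:normalized_length_bound} combined with Theorem~\ref{thm:quantative_filling} guarantees that a hyperbolic outermost piece filled along $p/q$ or $p/q'$ remains hyperbolic. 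The image $f(X_0(p/q))$ is Seifert fibered with an exceptional fiber of multiplicity $|q|$ and has $\geq 2$ boundary tori, which rules out each $X'_i$ with $i\geq 1$ (by the multiplicity bound if Seifert fibered, by type if hyperbolic) and rules out $X'_0(p/q')$ when $X'_0$ is hyperbolic, a torus knot piece (no boundary), or a cable space (only one boundary torus after filling). Therefore $X'_0$ must itself be a composing space, $f$ maps $X_0(p/q)$ to $X'_0(p/q')$, and comparing Seifert invariants forces $|q|=|q'|$ and matching base orbifolds.

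The maximum-multiplicity exceptional fiber of $X_0(p/q)$ is a topological invariant, and its regular neighborhood is the filling solid torus recovering $X_K$ from $S^3_{p/q}(K)$; the same holds for $X'_0(p/q')$ and $X_{K'}$. Since $f$ carries distinguished fiber to distinguished fiber, drilling produces a homeomorphism $X_K\cong X_{K'}$, and the knot complement theorem \cite{GL89} then forces $K=K'$, contradicting the hypothesis. Consequently $(K',p/q')$ must have the special cable form: $K'=C_{a,b}(J)$ with $b\geq 2$, $\gcd(a,b)=1$, and $|p-abq'|=1$, so that $S^3_{p/q'}(K')\cong S^3_{p/(q'b^2)}(J)$ by Proposition~\ref{prop:surgery_JSJ_decomp}. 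Remark~\ref{rem:only_collapsing_once} ensures no further collapse, and the inequality $Q(K,K')\geq Q(K,J)$ follows directly from the cable clause in Definition~\ref{def:Q}; therefore the argument above applies verbatim to $(K,J)$ with slopes $p/q$ and $p/(q'b^2)$ and yields $K=J$ together with $q=q'b^2$. Substituting $J=K$ and $p=abq'\pm 1$ gives $K'=C_{a,b}(K)$ and the stated form of $p/q$.

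The principal obstacle is the case analysis in the second paragraph: each possible topological type for $X'_0$ must be eliminated using a distinct invariant (Seifert vs.\ hyperbolic, boundary count, fiber multiplicity, or Theorem~\ref{thm:quantative_filling}), and only once $X'_0$ is pinned down to be a composing space does the reconstruction step (whose pay-off is Gordon-Luecke) become available.
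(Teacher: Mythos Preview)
Your proof is correct and follows essentially the same approach as the paper. The only notable difference is organisational: where you perform an explicit case analysis on the possible types of $X'_0$ (invoking Theorem~\ref{thm:quantative_filling} to rule out the hyperbolic case and boundary-count arguments to rule out torus knot and cable pieces), the paper instead first observes that $X_0(p/q)$ cannot match any unchanged piece $X'_i$ since its exceptional fiber has order $|q|>Q(K,K')$, concludes directly that $f(X_0(p/q))=X'_0(p/q')$, and only then reads off from the Seifert invariants that $X'_0$ must be a composing space. Both routes arrive at the same reconstruction-via-exceptional-fiber step and the same appeal to the knot complement theorem; your appeal to Theorem~\ref{thm:quantative_filling} is valid but not actually needed here.
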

\begin{proof}
Let $X_K=X_0 \cup \dots \cup X_n$ be the JSJ decomposition for $X_K$ with outermost piece $X_0$. By hypothesis, $X_0$ is a composing space. Since $|q|\geq 3$, Proposition~\ref{prop:surgery_JSJ_decomp} shows that the JSJ decomposition of $S_{p/q}^3(K)$ is
\[
S_{p/q}^3(K)=X_0(p/q) \cup X_1\cup \dots \cup X_n.
\]
The JSJ piece $X_0(p/q)$ is a Seifert fibered space with a unique Sefiert fibered structure and the core of the filling torus is an exceptional fiber of order $|q|$ in this structure.

Let $X_{K'}=X'_0 \cup \dots \cup X'_{n'}$ be the JSJ decomposition for $X_{K'}$ with outermost piece $X'_0$. Since $q> Q(K,K')$, we see that none of the Seifert fibered JSJ pieces of $X_{K'}$ can contain an exceptional fiber of order $|q|$ -- by definition, every exceptional fiber in the Seifert fibered JSJ pieces of $X_{K'}$ have order at most $Q(K,K')$. Thus we see that $X_0(p/q)$ is not homeomorphic to $X'_i$ for any $i$.

Let $f\colon S_{p/q}^3(K) \rightarrow S_{p/q'}^3(K')$ be an orientation-preserving homeomorphism, which, after a suitable isotopy, can be assumed to carry JSJ pieces of $S_{p/q}^3(K)$ to JSJ pieces of $S_{p/q'}^3(K')$. First suppose that the pair $(K',p/q')$ is not of the form $(C_{a,b}(J), ab\pm \frac{1}{q'})$. In this case, Proposition~\ref{prop:surgery_JSJ_decomp} implies that the JSJ decomposition of $S_{p/q'}^3(K')$ takes the form
\[
S_{p/q'}^3(K')=X'_0(p/q') \cup X'_1 \cup \dots \cup X'_{n'}.
\]
By the proceeding discussion $f$ carries $X_0(p/q)$ homeomorphically onto $X_0'(p/q')$. Thus $X'_0(p/q')$ is also Seifert fibered space over a planar surface with at least two boundary components and a single exceptional fiber of order $|q|$. This implies that the outermost piece $X_0'$ is also a composing space and that the core of filling torus is the exceptional fiber. Since the exceptional fiber in such a space is unique up to isotopy, we can assume that $f$ maps the exceptional fiber of $X_0(p/q)$ to the exceptional fiber of $X_0'(p/q)$. Thus restricting to the complement of these fibers we obtain a homeomorphism $X_K$ to $X_{K'}$. By the knot complement theorem \cite{GL89}, this implies that $K$ and $K'$ are isotopic and that $p/q=p/q'$. 

If the pair $(K',p/q')$ is of the form $(C_{a,b}(J), ab\pm \frac{1}{q'})$, then
\[
S_{p/q}^3(K)\cong S_{p/q'}^3(K')\cong S_{p/(q'b^2)}^3(J).
\]
Since $Q(K')\geq Q(J)$, we may apply the above analysis to conclude that $J$ and $K$ are isotopic and that $q=q'b^2$.
\end{proof}

\subsection{Surgery on cables}
The most awkward cases of Theorem~\ref{thm:computable_q} arise when both $K$ and $K'$ are cable knots. Unlike the cases previously discussed, we cannot simply consider the topology of the filling on the outermost piece -- we must also use the information on how the JSJ pieces of the surgered manifold are glued together. The problem is that a cable space can be Dehn filled to obtain the complement of a fixed torus knot in infinitely many different ways.
\begin{defi}
    Let $M$ be a closed oriented rational homology 3-sphere and let $X$ be a component in the JSJ decomposition of $M$ with the structure of a Seifert fibered space over the disk with two exceptional fibers. The {\em fiber-longitude number} of $X$ will be the distance between the Seifert fibered slope on $\partial X$ and the rational longitude on $\partial (M\setminus \mathrm{int} X)$.
\end{defi}
Up to isotopy a Seifert fibered space admits at most one Seifert fibered structure over the disk with two exceptional fibers\footnote{The classification of Seifert fibrations shows that amongst all manifolds admitting such Seifert fibered structures, only the twisted $I$-bundle over the Klein bottle admits more than one Seifert structure and only one of these is over the disk, with the other being over the M\"obius band.}.
 
 We calculate the fiber-longitude number in two cases.
\begin{lem}\label{lem:fiber_long_cable}
Consider a non-trivial cable $C_{a,b}(J)$ and let $p/q$ be a slope such that $|p-abq|>1$. Then the fiber-longitude number of the component in $S_{p/q}^3(C_{a,b}(K))$ obtained by filling in the cable space is $|a|$.
\end{lem}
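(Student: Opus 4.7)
The strategy is to compute both the Seifert fibered slope on $\bdry X_0(p/q)$ and the rational longitude on $\bdry(M\setminus \mathrm{int}\, X_0(p/q))$ in a common system of coordinates on the torus where they live, and then take their intersection number. The natural coordinates come from the fact that the relevant torus is the JSJ torus $T_1$ separating the cable space from $X_J$ inside $X_{C_{a,b}(J)}$, which is canonically identified with $\bdry X_J$, and on which we use the standard meridian--longitude $(\mu_J,\lambda_J)$ of $J$ in $S^3$.

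The first step is to identify the outermost piece $X_0$ of $X_{C_{a,b}(J)}$ with the cable space $C$, whose two boundary components are $T_0=\bdry X_{C_{a,b}(J)}$ and $T_1=\bdry X_J$. The Seifert fibration of $C$ is inherited from the natural Seifert fibration of the tubular neighborhood $\nbhd(J)$ in which the cable $C_{a,b}(J)$ appears as a regular fiber (and the core $J$ as an exceptional fiber of order $|b|$). Consequently, on $T_1$ the regular fibers are parallel to the cable itself, which in the coordinates $(\mu_J,\lambda_J)$ represents the homology class $a\mu_J+b\lambda_J$.

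Next, because $|p-abq|>1$, Proposition~\ref{prop:surgery_JSJ_decomp} tells us that $X_0(p/q)=C(p/q)$ is Seifert fibered over the disk with two exceptional fibers, and the Seifert fibration of $C$ extends across the Dehn filling performed on $T_0$. The fibration therefore restricts to the same slope on $T_1$, so the Seifert fibered slope on $\bdry X_0(p/q)$ is still $a\mu_J+b\lambda_J$. On the complementary side we have $M\setminus \mathrm{int}\, X_0(p/q)=X_J$, which is a knot exterior in $S^3$; its rational longitude on $\bdry X_J=T_1$ is the standard longitude $\lambda_J$, since $\lambda_J$ is already null-homologous in $X_J$.

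The fiber-longitude number is then the torus intersection $\Delta(a\mu_J+b\lambda_J,\,\lambda_J)=|a|$, computed by the standard determinant formula. The only delicate point is a bookkeeping check that the paper's convention orders $(a,b)$ so that $a$ is the meridional component (equivalently, $b$ is the longitudinal wrapping number) of the cable on $T_1$; this is pinned down by consistency with the cabling surgery formula $p/(qb^{2})$ that appears in Proposition~\ref{prop:surgery_JSJ_decomp} in the borderline case $|p-abq|=1$. With the convention fixed, no further work is required.
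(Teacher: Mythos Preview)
Your proof is correct and follows essentially the same approach as the paper's: both identify the Seifert fiber slope on the inner torus $\bdry X_J$ as $a/b$ (i.e.\ $a\mu_J+b\lambda_J$), observe that the fibration extends across the filling when $|p-abq|>1$, and compute the distance from this slope to the longitude $\lambda_J$ as $|a|$. Your version is more explicit about the coordinate system and adds a useful sanity check on the $(a,b)$ convention, but the underlying argument is the same.
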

\begin{proof}
By definition we obtain the complement of $C_{a,b}(K)$ by gluing a cable space to $X_K$ so that the Seifert fiber is glued to the slope $a/b$ on $\partial X_K$. When we perform $p/q$-filling on $C_{a,b}(K)$ for $|p-abq|>1$ the the Seifert fibered structure on the cable space extends to give a Seifert fibered structure on the filling. Thus after Dehn filling the Seifert fibered slope on $\partial X_K$ is still $a/b$. This has slope distance $|a|$ from the longitude $0/1$ on $\partial X_K$.
\end{proof}

\begin{lem}\label{lem:fiber_long_torus_piece}
Let $K$ be a satellite knot containing a torus knot complement $X=X_{T_{r,s}}$ in its JSJ decomposition. If $\partial X$ remains incompressible in $S^3_{p/q}(K)$ and $p/q\neq 0/1$, then the fiber-longitude number of $X$ in $S^3_{p/q}(K)$ is $\frac{1}{g}|p-rsqw^2|$ where $w$ is the winding number of the pattern when $K$ is viewed as a satellite with companion $T_{r,s}$ and $g=\gcd(p,w^2)\geq 1$.
\end{lem}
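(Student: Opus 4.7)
The plan is to compute the rational longitude of the complementary piece $Y(p/q)$ by a direct homology calculation and then take its slope distance from the Seifert fiber. Let $V \subset S^3$ be the companion solid torus, so that $X = S^3 \setminus \operatorname{int}(V) = X_{T_{r,s}}$ and $Y = V \setminus \operatorname{int}(N(K))$. The common torus $T = \partial X = \partial V$ inherits the standard meridian--longitude basis $(\mu_V, \lambda_V)$ of $V$, which agrees with the torus knot basis $(\mu_X, \lambda_X)$ on the $X$ side. Relative to this basis, the Seifert fiber slope of $X$ is the cabling slope $rs$, i.e.\ the primitive class $rs\,\mu_V + \lambda_V$. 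The incompressibility hypothesis on $T$ in $S_{p/q}^3(K)$ ensures that $T$ remains separating and that $Y(p/q)$ is a bona fide component with a well-defined rational longitude.

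First I would identify $H_1(Y;\Z)$. Since $Y$ is the exterior of the pattern inside $V$, a standard Mayer--Vietoris calculation gives
\[
H_1(Y;\Z) \;=\; \Z\langle \mu_K, \lambda_V\rangle,
\]
together with the relations $\mu_V = w\mu_K$ (a meridional disk of $V$ meets the pattern in $w$ algebraic points) and $[\lambda_K] = w\lambda_V$ (apply Mayer--Vietoris to $S^3\setminus K = X \cup_T Y$, using that $[\lambda_V]=0$ in $H_1(X)$ and $[\lambda_K]=0$ in $H_1(S^3\setminus K)$). Performing $p/q$-surgery on $K$ amounts to filling along $p\mu_K + q\lambda_K = p\mu_K + qw\lambda_V$, so
\[
H_1(Y(p/q);\Z) \;=\; \Z\langle \mu_K,\lambda_V \mid p\mu_K + qw\lambda_V = 0\rangle.
\]

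Next, a class $\alpha = x\mu_V + y\lambda_V = xw\mu_K + y\lambda_V$ on $\partial Y(p/q) = T$ is torsion in $H_1(Y(p/q);\Q)$ if and only if $py - xqw^2 = 0$. Using $\gcd(p,q)=1$ one has $\gcd(p, qw^2) = \gcd(p,w^2) = g$, and the primitive integer solution is $(x,y) = (p/g,\, qw^2/g)$. The hypothesis $p/q \neq 0/1$ ensures $p\neq 0$, so this primitive solution is not proportional to $\lambda_V$ and the formula is not degenerate. Hence the rational longitude of $Y(p/q)$ on $T$ is
\[
\frac{p}{g}\,\mu_V + \frac{qw^2}{g}\,\lambda_V.
\]

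Finally, the fiber-longitude number is the slope distance on $T$ between this rational longitude and the Seifert fiber $rs\,\mu_V + \lambda_V$:
\[
\left| rs\cdot \frac{qw^2}{g} - 1\cdot \frac{p}{g}\right| \;=\; \frac{1}{g}\bigl|p - rs\,qw^2\bigr|,
\]
yielding the claimed formula. The only delicate point is the homology computation identifying $\mu_V = w\mu_K$ and $\lambda_K = w\lambda_V$, together with the verification that $(p/g, qw^2/g)$ is the primitive representative of the rational longitude; everything else is direct bookkeeping once $\partial X$ is known to survive as a JSJ torus after the filling.
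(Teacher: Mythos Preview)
Your proof is correct and follows the same approach as the paper: identify the Seifert fiber slope as $rs\,\mu_V+\lambda_V$, compute the rational longitude of $Y(p/q)$ via the homology calculation (which the paper simply offloads to a citation of Gordon), and take the slope distance. One cosmetic slip: you mean zero in $H_1(Y(p/q);\Q)$ (equivalently, torsion in $H_1(Y(p/q);\Z)$), not ``torsion in $H_1(Y(p/q);\Q)$''.
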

\begin{proof}
Let $\mu$ be the meridian on $\partial X$ and $\lambda$ the null-homologous longitude. In these coordinates, the Seifert fibered slope on $\partial X$ is $rs \mu + \lambda$. On the other hand, a straightforward homological calculation \cite{Gordon1983Satellite} shows that after Dehn filling, the class $p\mu - qw^2 \lambda$ is null-homologous in $X_K(p/q)\setminus \operatorname{int} X$. Thus $p/g\mu - qw^2/g \lambda$ is the rational longitude. This gives a fiber-longitude number of $\frac{1}{g}|p-rsqw^2|$.
\end{proof}

Now we apply these ideas.
\begin{lem}\label{lem:cabled_case}
Let $K$ and $K'$ be two cabled knots in $S^3$, say $K$ is an $(r,s)$-cable and $K'$ is an $(r',s')$-cable. Let $p/q,p/q'\in \Q$ be surgery slopes such that $S_{p/q}^3(K)\cong S_{p/q'}^3(K')$ and $|p-rsq|>1$, $|p-r's'q'|>1$. If $q,q' > Q(K,K')$, then $K$ and $K'$ are isotopic and $q=q'$.
\end{lem}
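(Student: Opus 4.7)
The plan is to compare the JSJ decompositions of the two surgered manifolds, identify the distinguished pieces under the homeomorphism, and then extract the cable parameters and companions from the matching.

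First, since $q, q' > Q(K,K') \geq 3$ and $|p-rsq|, |p-r's'q'|>1$, Proposition~\ref{prop:surgery_JSJ_decomp} yields JSJ decompositions
\[
S^3_{p/q}(K) = X_0(p/q) \cup X_1 \cup \cdots \cup X_n, \qquad S^3_{p/q'}(K') = X'_0(p/q') \cup X'_1 \cup \cdots \cup X'_{n'},
\]
where $X_0, X'_0$ are the outermost cable spaces of $K=C_{r,s}(J)$ and $K'=C_{r',s'}(J')$. A direct Seifert-fiber computation shows that $X_0(p/q)$ is Seifert fibered over the disk with two exceptional fibers of orders $|s|$ and $|p-rsq|$, and analogously for $X'_0(p/q')$.

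Next, for any orientation-preserving homeomorphism $f\colon S^3_{p/q}(K) \to S^3_{p/q'}(K')$, I would show that $f$ maps $X_0(p/q)$ onto $X'_0(p/q')$. By Theorem~\ref{thm:knot_JSJ_decomp}, a JSJ piece of a knot exterior that is Seifert fibered over the disk with two exceptional fibers can only appear as the outermost piece of a torus-knot complement; since $K$ and $K'$ are cables (not torus knots), the exteriors $X_K$ and $X_{K'}$ contain no such piece. Hence $X_0(p/q)$ and $X'_0(p/q')$ are the unique JSJ pieces of their respective surgered manifolds of this type, so $f$ must carry one to the other. As $f$ preserves the fiber-longitude number of matched pieces, Lemma~\ref{lem:fiber_long_cable} then gives $|r| = |r'|$.

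Restricting $f$ to the complement of these filled pieces yields a homeomorphism $X_J \to X_{J'}$ between the companion exteriors, and the Gordon-Luecke knot complement theorem \cite{GL89} implies that the companions $J$ and $J'$ are isotopic. Identifying $J = J'$ and lifting $f|_{X_J}$ to a symmetry of $(S^3, J)$ that preserves the meridian and longitude up to sign, the Seifert-fiber slopes $r\mu_J + s\lambda_J$ and $r'\mu_J + s'\lambda_J$ on the shared boundary $\partial X_J$ must agree up to sign. Combined with $|r|=|r'|$, this forces $(r,s) = \pm(r',s')$, so $K$ and $K'$ are isotopic. Finally, under this identification the meridian of $K$ on $\partial X_K$ is preserved by $f$ up to sign, and hence the filling slopes agree: $p/q = p/q'$, i.e.\ $q = q'$.

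The main technical obstacle will be tracking orientations and sign conventions carefully through these identifications, and disposing of a few mild degeneracies, namely when the two exceptional fibers of $X_0(p/q)$ coincide in order, and when $X_0(p/q)$ is the twisted $I$-bundle over the Klein bottle (where the Seifert fibration over the disk is not unique). These degeneracies can be handled by direct case analysis. One should also reduce to the case that neither $K$ nor $K'$ is a cable of a torus knot (already handled by Proposition~\ref{prop:computable_q_cases}), so that Theorem~\ref{thm:knot_JSJ_decomp} genuinely rules out additional Seifert-disk JSJ pieces in $X_K$ and $X_{K'}$.
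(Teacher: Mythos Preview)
Your overall strategy matches the paper's, but there is a genuine gap in the step where you argue that $f$ must carry $X_0(p/q)$ onto $X_0'(p/q')$. You claim that once we reduce to the case where neither $K$ nor $K'$ is a cable of a torus knot (via Proposition~\ref{prop:computable_q_cases}), the exteriors $X_K$ and $X_{K'}$ contain no JSJ piece Seifert fibered over the disk with two exceptional fibers. This is false: a torus knot complement can appear as an \emph{inner} JSJ piece of a cable knot without that knot being a cable of a torus knot. For instance, if $K=C_{r,s}(C_{c,d}(T_{a,b}))$ or $K=C_{r,s}(T_{a,b}\#L)$, then $K$ is a cable but not a cable of a torus knot, yet $X_K$ contains $X_{T_{a,b}}$ as a JSJ piece. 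Indeed, the paper itself uses exactly this situation in Lemma~\ref{lem:fiber_long_torus_piece}, so the literal reading of Theorem~\ref{thm:knot_JSJ_decomp}(1) that you are invoking is too strong; the correct statement is only that such a piece, \emph{if outermost}, forces $K$ to be a torus knot.

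The paper's proof confronts precisely this obstruction. It allows for the possibility that $f$ sends $X_0(p/q)$ to some torus knot complement $X_n'=X_{T_{s,b'}}$ sitting deep in $X_{K'}$, and rules it out by comparing the \emph{fiber-longitude numbers}: by Lemma~\ref{lem:fiber_long_cable} this number for $X_0(p/q)$ equals $|r|$, whereas by Lemma~\ref{lem:fiber_long_torus_piece} the fiber-longitude number of $X_n'$ in $S^3_{p/q'}(K')$ is $|p-q's b'(w')^2|$. Playing this off against the symmetric computation for $X_0'(p/q')$ yields two equalities that together force $q'\leq |r|+|b|\leq Q(K)$, contradicting $q'>Q(K,K')$. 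This is the missing ingredient in your argument, and it is the reason the paper introduces the fiber-longitude number at all. Once that claim is established, your endgame (Gordon--Luecke on the companions, then matching Seifert slopes and rational longitudes) is essentially the same as the paper's.
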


\begin{proof}
By Proposition~\ref{prop:surgery_JSJ_decomp}, the conditions $|p-rsq|>1$ and $|p-r's'q'|>1$ imply that the JSJ decompositions of $S_{p/q}^3(K)$ and $S_{p/q'}^3(K')$ are of the form
\[
S_{p/q}^3(K)=X_0(p/q) \cup X_1 \cup \dots \cup X_n
\]
and
 \[
S_{p/q'}^3(K')=X'_0(p/q') \cup X'_1 \cup \dots \cup X'_{n},
\]
respectively (note that they necessarily have the same number of pieces since $S_{p/q}^3(K)$ and $S_{p/q'}^3(K')$ are homeomorphic). The piece $X_0(p/q)$ is a Seifert fibered space over the disk with two exceptional fibers of orders $|s|$ and $|p-rsq|$ respectively and $X'_0(p/q')$ is a Seifert fibered space over the disk with two exceptional fibers of orders $|s'|$ and $|p-r's'q'|$.
Let $f\colon S_{p/q}^3(K) \rightarrow S_{p/q'}^3(K')$ be an orientation-preserving homeomorphism, which, after a suitable isotopy, can be assumed to carry JSJ pieces of $S_{p/q}^3(K)$ to JSJ pieces of $S_{p/q'}^3(K')$. The first task to is to show that $f$ carries $X_0(p/q)$ homeomorphically onto $X'_0(p/q)$.
\begin{claim}
    The homeomorphism $f$ carries $X_0(p/q)$ onto $X'_0(p/q)$.
\end{claim}
\begin{proof}[Proof of claim]
    If $f$ does not carry $X_0(p/q)$ homeomorphically onto $X'_0(p/q)$, then it must provide a homeomorphism between $X_0(p/q)$ and some other piece in the JSJ decomposition of $X_{K'}$, which we assume to be $X'_n$. Since $X_0(p/q)$ is a Seifert fibered space over this disk, this implies that $X'_n$ must be the complement of a torus knot $T_{a',b'}$. By comparing the orders of exceptional fibers, we may assume that this knot is of the form $a'=s\geq 2$ and $|b'|=|p-rsq|$. Thus we may write $K'$ as a satellite knot of the form $K'=P'(T_{s, b'})$ with a pattern $P'$ such that its complement in $S^1\times D^2$ is homeomorphic to $X'_0 \cup \dots \cup X'_{n-1}$. Let $w'$ denote the winding number of $K'$ in $P'$. 
    
    Since $K'$ is an $(r',s')$-cable knot, the winding number $w'$ of $K'$ in $P'$ is divisible by $s'$. Next we verify that $\gcd(p,w')=1$. Observe that $f$ restricts to give a homeomorphism between $X'_0(p/q') \cup X'_1 \cup \dots \cup X'_{n-1}$ and $X_1 \cup \dots \cup X_n$. Since the latter space is a knot complement, it is a homology solid torus. However \cite[Lemma~3.3(i)]{Gordon1983Satellite} shows that the homology of $X'_0(p/q') \cup X'_1 \cup \dots \cup X'_{n-1}$ contains torsion of order $\gcd(p,w)$. Hence the conclusion $\gcd(p,w')=1$.

    By Lemma~\ref{lem:fiber_long_cable}, the fiber-longitude number of $X_0(p/q)$ in $S_{p/q}^3(K)$ is $|r|$. On the other hand, Lemma~\ref{lem:fiber_long_torus_piece} calculates the fiber-longitude number of the complement of $T_{s, b'}$ to be $|p-q'sb'(w')^2|$ (using that $\gcd(p,w')=1$). Thus we have that
    \[
    |r|=|p-q'sb'(w')^2|.
    \]

    Likewise, $X'_0(p/q')$ must be homeomorphic to some piece in the JSJ decomposition of $X_K$. This allows us to write $K$ as a satellite in the form $K=P(T_{s',b})$, where $|b|=|p-r's'q'|$. 
    Since $r'$ and $s'$ are coprime, the product $r's'$ is not divisible by $(s')^2$. However, since $w'$ is divisible by $s'$, we see that $(s')^2$ divides $sb'(w')^2$. Thus the integers $sb'(w')^2$ and $r's'$ are distinct. Consequently we see that the conditions
    \[
    \text{$|r|=|p-q'sb'(w')^2|$ and $|b|=|p-r's'q'|$}
    \]
    can only be satisfied if $q'\leq r+b\leq Q(K)$.
\end{proof}
Thus $f$ restricts to a homeomorphism between $X_0(p/q)$ and $X'_0(p/q')$ and between $X_1\cup \dots \cup X_n$ and $X'_1 \cup \dots \cup X'_{n}$. Thus if we write $K$ as the $(r,s)$-cable of $J$ and $K'$ as the $(r',s')$-cable of $J'$, we see that $f$ restricts to a homeomorphism between the complements of $J$ and $J'$. Thus by the knot complement theorem, we have that $J$ and $J'$ are isotopic \cite{GL89}. Furthermore, since the homeomorphism must carry the meridian and longitude of $J$ to the meridian and longitude of $J'$, respectively, we see that $K$ and $K'$ must both be the $(r,s)$-cable of $J$. 

Finally we verify that $q'=q$. The homeomorphism between $X_0(p/q)$ and $X'_0(p/q')$ must preserve the rational longitudes of these manifolds. However, this rational longitude, when expressed as a slope on $\partial X_J$ takes the form $p/(qs^2)$ and $p/(q's^2)$. This implies that $q'=q$ as required.  
\end{proof}

\subsection{Proof of Theorem~\ref{thm:computable_q}}
\begin{thm}\label{thm:computable_q}
\computableq
\end{thm}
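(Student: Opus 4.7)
My plan is to combine Proposition~\ref{prop:computable_q_cases} with Lemmas~\ref{lem:hyp_case}, \ref{lem:composite_case}, and \ref{lem:cabled_case}, performing a case analysis on the outermost JSJ pieces of $X_K$ and $X_{K'}$ and inducting on the total JSJ complexity $n(K)+n(K')$ to handle cable reductions. Suppose $K\neq K'$ and $S_{p/q}^3(K)\cong S_{p/q}^3(K')$ with $|q|>Q(K,K')$; the goal is to derive a contradiction. The unknot case is a trivial base: if $K=U$ then $K'$ is non-trivial, $Q(U,K')=Q(K')+1$, and the arguments below applied to $K'$ give the bound. So I assume both knots non-trivial throughout.

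First I would dispatch the cases where the outermost piece of $X_K$ is one of the three "easy" types of Theorem~\ref{thm:knot_JSJ_decomp}. If $K$ is a torus knot or a cable of a torus knot, Proposition~\ref{prop:computable_q_cases} applies directly. If the outermost piece of $X_K$ is hyperbolic, apply Lemma~\ref{lem:hyp_case} with $q=q'$: its conclusion says that $K'$ is an $(a,b)$-cable of $K$ with $b\geq 2$ satisfying $p/q=(ab\pm 1)/(qb^2)$, which after clearing denominators gives $pb^2=ab\pm 1$, forcing $b\mid 1$ and contradicting $b\geq 2$. If $K$ is composite, Lemma~\ref{lem:composite_case} yields the same contradiction by the same computation. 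The symmetric cases when $X_{K'}$ has one of these three outermost structures are handled identically, after swapping the roles of $K$ and $K'$.

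The remaining case is that both $K$ and $K'$ are cables of non-trivial, non-torus knots; write $K=C_{r,s}(J)$ and $K'=C_{r',s'}(J')$. If both $|p-rsq|>1$ and $|p-r's'q|>1$, Lemma~\ref{lem:cabled_case} forces $K=K'$, contradicting distinctness. Otherwise, without loss of generality $|p-rsq|\leq 1$; since $|q|>Q(K)\geq |r|$, this must in fact be the equality $|p-rsq|=1$, and Proposition~\ref{prop:surgery_JSJ_decomp} identifies $S_{p/q}^3(K)\cong S_{p/(qs^2)}^3(J)$. Thus $(J,K')$ is a pair of knots with a common surgery at the possibly distinct slopes $p/(qs^2)$ and $p/q$, both denominators exceeding $Q(J,K')$ by the monotonicity $Q(J,K')\leq Q(K,K')$ (which holds because the JSJ pieces of $X_J$ form a subcollection of those of $X_K$). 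Since Lemmas~\ref{lem:hyp_case}, \ref{lem:composite_case}, and \ref{lem:cabled_case} are stated with possibly distinct slopes, I would reapply the case analysis to $(J,K')$, whose total JSJ complexity is strictly smaller; Remark~\ref{rem:only_collapsing_once} guarantees that no further cable reduction can occur immediately, so the recursion terminates.

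The main obstacle I foresee is the degenerate subcase $J=K'$, in which the cable reduction produces the cosmetic identity $S_{p/(qs^2)}^3(K')\cong S_{p/q}^3(K')$ on a single knot rather than on a distinct pair. My plan to handle this is to exploit that $K'$, lying in the present case, is itself a non-trivial cable $C_{r',s'}(J')$: analyzing both filled surgeries via Proposition~\ref{prop:surgery_JSJ_decomp} and comparing the fiber-longitude numbers furnished by Lemmas~\ref{lem:fiber_long_cable} and~\ref{lem:fiber_long_torus_piece} (which vary with the filling slope) rules out the two distinct fillings producing homeomorphic JSJ pieces in $S_{p/(qs^2)}^3(K')\cong S_{p/q}^3(K')$. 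Assembling all cases yields the contradiction and hence the desired computable bound $|q|\leq Q(K,K')$.
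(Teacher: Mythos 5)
Your proposal follows essentially the same route as the paper: handle the unknot, torus knot, and cable-of-torus-knot cases by known results, use Lemma~\ref{lem:hyp_case}/Lemma~\ref{lem:composite_case} when an outermost JSJ piece is hyperbolic or a composing space (forcing a contradiction from the cable conclusion with $b\geq 2$), and in the both-cables case either apply Lemma~\ref{lem:cabled_case} directly or collapse one cable via Proposition~\ref{prop:surgery_JSJ_decomp} and continue, with Remark~\ref{rem:only_collapsing_once} forcing termination after at most one collapse per knot. The paper does this with an explicit two-level unwinding rather than an induction, but the content is the same.

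Two small remarks. First, the degenerate subcase $J=K'$ you flag as the main obstacle is not actually one that requires separate machinery: in that situation $K'$ is a cable, so $J=K'$ cannot occur in the outermost-hyperbolic/composing subcase (those demand a non-cable outermost piece of $X_J$), and when $J$ is itself a cable, Lemma~\ref{lem:cabled_case} is stated without a distinctness hypothesis; its conclusion ``$q=q'$'', i.e. $qs^2=q$, is already absurd regardless of whether $J$ and $K'$ coincide. The fiber-longitude analysis you propose is thus redundant. Second, your handling of the unknot base case is circular as written (``the arguments below applied to $K'$ give the bound''); the clean route, which the paper takes, is simply to invoke that every slope is a characterizing slope for the unknot \cite{Kronheimer2007lensspacsurgeries}.
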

\begin{proof}
\setcounter{case}{0}
We will show that if $K$ and $K'$ are two knots and $p/q$ is a slope such that $S_{p/q}^3(K)\cong S_{p/q}^3(K')$ for some $q>Q(K,K')$, then $K$ and $K'$ must be isotopic. We can assume that both $K$ and $K'$ are non-trivial, since every slope is a characterizing slope for the unknot \cite{Kronheimer2007lensspacsurgeries}. Furthermore, by Proposition~\ref{prop:computable_q_cases} we can assume that neither $K$ nor $K'$ is a torus knot. In accordance with Theorem~\ref{thm:knot_JSJ_decomp}, we can consider each of the remaining possibilities for the outermost pieces in the JSJ decompositions of $X_K$ and $X_{K'}$ in turn.

\begin{case}
At least one of $K$ or $K'$ is a composite or has a hyperbolic outermost piece in its JSJ decomposition.
\end{case}
\begin{proof}
Without loss of generality suppose that $K$ is composite or has hyperbolic outermost piece in its JSJ decomposition. Lemma~\ref{lem:hyp_case} and Lemma~\ref{lem:composite_case} imply that $K'$ must be a cable of $K$ with winding number $s$, where $p/q=p/(qs^2)$. This is clearly impossible since for $K'$ to be a non-trivial cable of $K$ we must have $|s|\geq 2$.
\end{proof}
Thus it remains to consider the case that both $K$ and $K'$ are cables. For the remainder of the proof, we assume that $K$ is the $(r,s)$-cable of $J$ and $K'$ is the $(r',s')$-cable of $J'$, where $s,s'>1$ and $J$ and $J'$ are non-trivial knots in $S^3$. Furthermore, Lemma~\ref{lem:cabled_case} implies the theorem if $|p-rsq|>1$ and $|p-r's'q|>1$. Thus, it remains to consider the case that we have $|p-rsq|=1$ or $|p-r's'q|=1$. So without loss of generality we will further assume that $|p-rsq|=1$.

Under these assumptions, we have
\begin{equation}\label{eq:cable_homeo_case}
    S^3_{p/q}(K)\cong S^3_{p/(qs^2)}(J)\cong S^3_{p/q}(K').
\end{equation}
Using Theorem~\ref{thm:knot_JSJ_decomp} we perform a case-by-case analysis based on the outermost piece of $X_J$. Note that that if $J$ is a torus knot then, $K$ is a cable of a torus knot and the theorem follows from Proposition~\ref{prop:computable_q_cases}.
\begin{case}
$J$ is a composite knot or has a hyperbolic outermost piece.
\end{case}
\begin{proof}
    If $J$ has hyperbolic outermost piece or is a composite knot, then Lemma~\ref{lem:hyp_case} and Lemma~\ref{lem:composite_case} applied to the homeomorphism in \eqref{eq:cable_homeo_case} imply that $J$ and $J'$ are isotopic,  $|p-r's'q|=1$ and that $qs^2=qs'^2$. Since $q\geq 3$, the conditions $|p-rsq|=1$ and  $|p-r's'q|=1$ imply that $r's'=rs$. Since $s^2=s'^2$, this implies that $r/s=r'/s'$ and hence we $K$ and $K'$ are both isotopic to the $(r,s)$-cable of $J$.
\end{proof}

Thus the only remaining possibility is that $J$ is itself a cable knot.

\begin{case}
$J$ is a cable.
\end{case}
\begin{proof}
In this case, suppose that $J$ is the $(a,b)$-cable of a knot $L$. By Remark~\ref{rem:only_collapsing_once}, we have $|p-abs^2q|>1$. If $|p-r's'q|>1$, then Lemma~\ref{lem:cabled_case} would imply that $s^2q=q$, which is absurd. Thus we have that $|p-r's'q|=1$ and hence that $S_{p/(qs^2)}^3(J)\cong S_{p/(qs'^2)}^3(J')$. Now we may assume that $J'$ is itself a cable. If it were not, then we could reverse the roles of $K$ and $K'$ and apply the analysis of the preceding case to conclude that $K$ and $K'$ and are isotopic. Thus we may assume that $J'$ is the $(a',b')$-cable of a non-trivial knot. Furthermore, we have that $|p-a'b's'^2q|>1$. In this case, we may apply Lemma~\ref{lem:cabled_case} to see that $J$ and $J'$ are isotopic and that $qs^2=qs'^2$. Furthermore, if $|q|\geq 3$, then we have $rs=r's'$. Altogether, this implies that $K$ and $K'$ are isotopic, both being the $(r,s)$-cable of $J$. 
\end{proof}
This completes the final case and the proof of Theorem~\ref{thm:computable_q}.
\end{proof}

\subsection{Surgeries on torus knots}
Here we present a concrete example of non-isotopic torus knots that share infinitely many surgeries with \textit{different} slopes. This shows that Lemma~\ref{lem:hyp_case} and Lemma~\ref{lem:composite_case} cannot hold for torus knots.

\begin{ex}
We consider the non-isotopic torus knots $T_{3,5}$ and $T_{3,8}$. Then it follows from Moser's classification of surgeries on torus knots~\cite{Moser1971elementary} that for any integer $n$ both $S_{(120n+53)/(8n+3)}^3(T_{3,5})$ and $S_{(120n+53)/(5n+2)}^3(T_{3,8})$ are homeomorphic Seifert fibered spaces.
\end{ex}

\let\MRhref\undefined
\bibliographystyle{hamsalpha}
\bibliography{altsurg.bib}

\end{document}